\documentclass[11pt]{amsart}   	
\usepackage[margin=0.9in]{geometry}             
\usepackage[dvipsnames]{xcolor}

\usepackage{graphicx}				
\usepackage{amssymb,mathrsfs,latexsym}
\usepackage{amsthm,amsmath,stmaryrd}
\usepackage{tikz}
\usepackage{tikz-cd}
\usepackage{tikz-3dplot}
\tdplotsetmaincoords{70}{120}

\usepackage{accents,upgreek,enumerate}
\usepackage{bm}
\usepackage{mathtools}
\usepackage[all]{xy}
\usepackage{caption}
\usepackage[euler-digits]{eulervm}
\usepackage[normalem]{ulem}

\usepackage{ wasysym }

\usetikzlibrary{shadings,fadings}

\usepackage{setspace}
\setstretch{1.15}

\tikzset{
  commutative diagrams/.cd, 
  arrow style=tikz, 
  diagrams={>=stealth}
}


\newenvironment{customthm}[1]
  {\innercustomthm}
  {\endinnercustomthm}

\newenvironment{customcor}[1]
  {\innercustomcor}
  {\endinnercustomcor}

  \makeatletter
\def\@tocline#1#2#3#4#5#6#7{\relax
  \ifnum #1>\c@tocdepth 
  \else
    \par \addpenalty\@secpenalty\addvspace{#2}%
    \begingroup \hyphenpenalty\@M
    \@ifempty{#4}{%
      \@tempdima\csname r@tocindent\number#1\endcsname\relax
    }{%
      \@tempdima#4\relax
    }%
    \parindent\z@ \leftskip#3\relax \advance\leftskip\@tempdima\relax
    \rightskip\@pnumwidth plus4em \parfillskip-\@pnumwidth
    #5\leavevmode\hskip-\@tempdima
      \ifcase #1
       \or\or \hskip 1em \or \hskip 2em \else \hskip 3em \fi%
      #6\nobreak\relax
    \dotfill\hbox to\@pnumwidth{\@tocpagenum{#7}}\par
    \nobreak
    \endgroup
  \fi}
\makeatother

\usetikzlibrary{calc}
\usetikzlibrary{fadings}
\usetikzlibrary{decorations.pathmorphing}
\usetikzlibrary{decorations.pathreplacing}

\newcounter{marginnote}
\setcounter{marginnote}{0}

\setlength{\parskip}{5pt}

\DeclareMathAlphabet{\mathpzc}{OT1}{pzc}{m}{it}

\usepackage[backref=page]{hyperref}
\hypersetup{
  colorlinks   = true,          
  urlcolor     = blue,          
  linkcolor    = violet,          
  citecolor   = VioletRed             
}

\newtheorem{theorem}{Theorem}[subsection]
\newtheorem{corollary}[theorem]{Corollary}
\newtheorem{lemma}[theorem]{Lemma}
\newtheorem{proposition}[theorem]{Proposition}

\newtheorem{quasi-theorem}[theorem]{Quasi-Theorem}

\theoremstyle{definition}
\newtheorem{definition}[theorem]{Definition}

\newtheorem{remark}[theorem]{Remark}

\newtheorem{construction}[theorem]{Construction}

\newtheorem{blank remark}[theorem]{}

\newtheorem{not1}[theorem]{Notation}

\newcommand{\A}{{\mathbb{A}}}

\newcommand{\NN} {{\mathbb N}}		
\newcommand{\PP}{\mathbb{P}}         
\newcommand{\QQ} {{\mathbb Q}}		
\newcommand{\RR} {{\mathbb R}}		
\newcommand{\ZZ} {{\mathbb Z}}



\def\setminus{\smallsetminus}






\newcommand{\cal}{\mathcal}

\def\cD{{\cal D}}

\def\cM{{\cal M}}

\def\cX{{\cal X}}


\def\fM{\mathfrak{M}}





\newcommand{\Mbar}{\overline{\mathsf M}\vphantom{\cM}}


\newcommand{\Spec}{\operatorname{Spec}}



\newcommand{\vr}{\vec{r}}




\begin{document}

\title{Gromov--Witten theory, degenerations, and the tautological ring}

\author{Davesh Maulik {\it \&} Dhruv Ranganathan}

\address{Davesh Maulik \\ Department of Mathematics\\
Massachusetts Institute of Technology, Cambridge, MA, USA}
\email{\href{mailto:maulik@mit.edu}{maulik@mit.edu}}

\address{Dhruv Ranganathan \\ Department of Pure Mathematics {\it \&} Mathematical Statistics\\
University of Cambridge, Cambridge, UK}
\email{\href{mailto:dr508@cam.ac.uk}{dr508@cam.ac.uk}}

\begin{abstract}
Gromov--Witten (GW) theory produces Chow and cohomology classes on the moduli of curves, and there are several conjectures/speculations about their relation to the tautological ring. We develop new degeneration techniques to address these.

In Chow, we show that GW cycles of complete intersections in products of projective spaces (and more generally a broad class of toric varieties) with restricted insertions are tautological. This gives significant evidence for a 2010 speculation of Pandharipande that GW cycles of varieties over $\overline{\QQ}$ are tautological. In particular, the 0-cycle for curves on the quintic threefold is proportional to a zero stratum in $\Mbar_g$.

In cohomology, we show that in normal crossings degenerations, GW classes of the general fiber lie in the span of absolute GW classes of the special fiber strata. This confirms a 2006 conjecture of Levine–-Pandharipande for targets that degenerate into elementary pieces, including complete intersections in products of projective spaces and many toric varieties.

Our proofs rely on several reconstruction theorems in logarithmic GW theory, which make the logarithmic degeneration formula an inductive tool to compute GW cycles via snc degenerations. We prove a folklore conjecture that logarithmic GW cycles of a pair are determined by absolute invariants of the strata.  We prove a conjecture of Urundolil Kumaran and the second author that GW cycles of toric pairs are tautological, and analogous results for broken toric bundles. We also develop tools to study GW cycles with vanishing cohomology and strengthen the logarithmic degeneration formula to allow iteration.
	\end{abstract}

\maketitle

\setcounter{tocdepth}{1}
\tableofcontents
\newpage

\section*{Introduction}

\subsection{Cycles from Gromov--Witten theory} Let $X$ be a smooth projective variety. The moduli space $\Mbar_{g,n}(X,\beta)$ of stable maps to $X$ from $n$ pointed genus $g$ curves with class $\beta$ is a proper Deligne--Mumford stack with morphisms
\[
\begin{tikzcd}
\Mbar_{g,n}(X,\beta)\arrow{r}{\mathsf{ev}}\arrow[swap]{d}{\pi} & X^n\\
\Mbar_{g,n}.&
\end{tikzcd}
\]
It carries a virtual fundamental class:
\[
[\Mbar_{g,n}(X,\beta)]^{\sf vir} \in \mathsf{CH}_{\sf vdim}(\Mbar_{g,n}(X,\beta);\QQ). 
\]
The cycle map induces a virtual fundamental class in homology. The virtual dimension is equal to $(\dim X-3)(1-g)+c_1(X)\cdot\beta+n$. 

The virtual class is a correspondence that relates the cohomology and cycle theory of the moduli space of curves with that of any particular $X$. The {\it Gromov--Witten (GW) cycles} of $X$ in Chow (resp. cohomology) are cycles of the form
\[
\pi_\star\left(\mathsf{ev}^\star(\alpha)\cap [\Mbar_{g,n}(X,\beta)]^{\sf vir}\right), \ \ \textrm{for } \alpha\in {\sf CH}^\star(X^n) \ \ (\textnormal{resp. in}\   {\sf H}^\star(X^n)). 
\]
The class $\alpha$ is called an {\it insertion}. These cycles play a basic role in the study of the Chow and cohomology of the moduli space of curves~\cite{FP00,GV05,Jan17b,Pan12}. 

\subsection{Speculations} Our study of GW cycles is guided by well-known speculations/conjectures. The rational Chow rings of the moduli spaces of curves have {\it tautological subalgebras}, denoted $\mathsf R^\star(\Mbar_{g,n})$. These are the smallest subalgebras, defined simultaneously for all $(g,n)$, that are closed under pushforward under forgetful and gluing maps. The tautological ring in cohomology is its image under the cycle map, so we obtain
\[
\mathsf R^\star(\Mbar_{g,n})\subset \mathsf {CH}^\star(\Mbar_{g,n};\QQ) \ \ \textnormal{and} \ \ \mathsf {RH}^\star(\Mbar_{g,n})\subset \mathsf {H}^\star(\Mbar_{g,n};\QQ).
\]
The subrings are small, come with an explicit set of additive generators~\cite{GP03} and a conjecturally complete set of relations~\cite{PPZ}, and are amenable to analysis via computer algebra systems~\cite{DSvZ}. The tautological Chow ring has finite dimension as a $\mathbb Q$-vector space~\cite{GP03}.

However, most tautological groups, either in cohomology or in Chow, are either known or expected to be very far from the full group. The cohomology of $\Mbar_{g,n}$ can carry holomorphic forms, so even its Chow group of $0$-cycles can be uncountable. The space $\Mbar_{g}$ has nonzero odd cohomology for sufficiently large $g$, which implies the rational Chow ring is uncountable. The tautological cohomology of $\Mbar_g$ misses all odd cohomology classes, even all non-Hodge classes. It is also known that the cohomology of $\Mbar_g$ has non-tautological algebraic cycles once $g\geq 12$. See~\cite{CanningHolomorphic,CLP23,FP11,GP03,vZ18} for a review and references. 

Nevertheless, there has long been speculation that GW cycles usually lie in the tautological ring. A precise conjecture in cohomology first appeared in work of Levine--Pandharipande~\cite{LP09}, and was strengthened in~\cite{ABPZ}. 

\noindent
{\bf Cohomology Speculation.} {\it All Gromov--Witten cycles $\pi_\star\left(\mathsf{ev}^\star(\alpha)\cap [\Mbar_{g,n}(X,\beta)]^{\sf vir}\right)$ lie in the tautological subring of $H^\star(\Mbar_{g,n};\QQ)$ of the moduli space of curves.}

The speculation implies a remarkable amount of vanishing that is difficult to explain geometrically. For example, by picking $\alpha$ appropriately, we can arrange the GW cycle to be of odd cohomological degree. The conjecture implies that these cycles vanish. 

The evidence is fairly limited. Before this work, the speculation was verified for toric varieties and homogeneous spaces~\cite{GP99} and complete intersections in projective space~\cite{ABPZ,FP,Jan17}. The statement is also compatible with products and blowups~\cite{Beh97,Fan21,HLR08}.

The statement in Chow is motivated in part by the Bloch--Beilinson conjectures. It was first stated in 2010 by Pandharipande~\cite[Question 2]{PanOpen}, although the line of inquiry goes back further~\cite{FP}.

\noindent
{\bf Chow Speculation.} {\it If $X$ is defined over $\overline \QQ$, then the pushforward class $\pi_\star([\Mbar_{g,n}(X,\beta)]^{\sf vir})$ lies in the tautological subring of ${\sf CH}^\star(\Mbar_{g,n};\QQ)$.}

One can also consider the stronger statement, with insertions also defined over $\overline\QQ$. The Chow speculation has even less evidence. The statement with insertions holds for toric varieties and homogeneous spaces by~\cite{GP99} and partial results hold for curves via~\cite{FP}. The speculation is compatible with products and blowups. Without the assumption that $X$ is defined over $\overline\QQ$, the statement is false whenever $\mathsf{CH}_0(\Mbar_g)$ contains non-tautological class. The statement with insertions is false whenever $\mathsf{CH}_0(\Mbar_{g,n})$ has a non-tautological class\footnote{The group $\mathsf{CH}_0(\Mbar_{1,11})$ has non-tautollogical classes, while $\mathsf{CH}_0(\Mbar_{g})$ is expected to have non-tautological classes for large $g$. We comment further on this briefly.}. Both failures happen when $X$ is a curve, see~\cite[Section~0.4]{PZ19}. 

\subsection{Results: GW cycles and the tautological ring} We study these speculations via logarithmic degeneration techniques. We first state our results in Chow and then in cohomology. 

Given a subvariety $X\subset \mathbb P$, we refer to Chow cohomology classes that are restricted from $\mathbb P$ as {\it ambient classes}. 

\begin{customthm}{A}\label{thm: tautological-chow}
Let $X\subset \mathbb P$ be a smooth complete intersection in a product of projective spaces. The Gromov--Witten cycles of $X$ in Chow with ambient insertions lie in the tautological subring of $\mathsf{CH}^\star(\Mbar_{g,n})$. 

In fact, the result holds for $X$ a complete intersection in a toric variety, such that each hypersurface comprising the complete intersection has breakable Newton polytope. 
\end{customthm}

A Newton polytope is {\it breakable} if it admits a regular unimodular subdivision, see Figure~\ref{fig: unimod-sub}.\footnote{This class of polytopes is closed under dilations, products, joins, and certain fiber products. Each new example of a polytope that admits a unimodular triangulation therefore significantly enlarges the class of breakable polytopes. From the tautology that a unimodular simplex admits a unimodular triangulation, one obtains all complete intersections in products of projective space. Other examples include hypersimplices, generalized permutohedra, and order polytopes~\cite{HPPS21}.} Geometrically, it means that the ambient toric variety together with the toric variety, can be degenerated to a union of projective spaces, such that the hypersurface become linear.  

\begin{figure}[h!]\begin{tikzpicture}[tdplot_main_coords]

  \begin{scope}[scale=1.2]
    \definecolor{softlavender}{RGB}{223,210,255}

    \filldraw[fill=softlavender,opacity=0.2, draw=none] (0,0,0) -- (3,0,0) -- (0,3,0) -- (0,0,3) -- cycle;

    \newcommand{\drawtetra}[4]{
      \filldraw[fill=softlavender, draw=black, opacity=0.2, line width=0.3pt]
        (#1) -- (#2) -- (#3) -- cycle;
      \filldraw[fill=softlavender, draw=black, opacity=0.2, line width=0.3pt]
        (#1) -- (#2) -- (#4) -- cycle;
      \filldraw[fill=softlavender, draw=black, opacity=0.2, line width=0.3pt]
        (#1) -- (#3) -- (#4) -- cycle;
      \filldraw[fill=softlavender, draw=black, opacity=0.2, line width=0.3pt]
        (#2) -- (#3) -- (#4) -- cycle;
    }

    \coordinate (O) at (0,0,0);
    \coordinate (X) at (3,0,0);
    \coordinate (Y) at (0,3,0);
    \coordinate (Z) at (0,0,3);

    \foreach \a in {0,...,2} {
      \foreach \b in {0,...,2} {
        \foreach \c in {0,...,2} {
          \pgfmathtruncatemacro{\s}{\a+\b+\c}
          \ifnum\s<3
            \pgfmathsetmacro{\aB}{\a+1}
            \pgfmathsetmacro{\bC}{\b+1}
            \pgfmathsetmacro{\cD}{\c+1}
            \coordinate (A) at (\a,\b,\c);
            \coordinate (B) at (\aB,\b,\c);
            \coordinate (C) at (\a,\bC,\c);
            \coordinate (D) at (\a,\b,\cD);
            \drawtetra{A}{B}{C}{D}
          \fi
        }
      }
    }

    \foreach \i in {0,...,3} {
      \foreach \j in {0,...,3} {
        \foreach \k in {0,...,3} {
          \pgfmathtruncatemacro{\s}{\i+\j+\k}
          \ifnum\s<4
            \fill (\i,\j,\k) circle (0.3pt);
          \fi
        }
      }
    }

    \draw[thick, densely dotted, color=violet!80] (O) -- (X);
    \draw[thick, densely dotted, color=violet!80] (O) -- (Y);
    \draw[thick, color=violet!80] (Y) -- (X);
    \draw[thick, densely dotted, color=violet!80] (O) -- (Z);
    \draw[thick, color=violet!80] (X) -- (Z);
    \draw[thick, color=violet!80] (Y) -- (Z);
  \end{scope}

  \begin{scope}[xshift=6cm, yshift=0.5cm,scale=1.8]
    \definecolor{softlavender}{RGB}{223,210,255}
    \newcommand{\drawtetra}[4]{
      \filldraw[fill=softlavender, draw=black, opacity=0.2, line width=0.3pt]
        (#1) -- (#2) -- (#3) -- cycle;
      \filldraw[fill=softlavender, draw=black, opacity=0.2, line width=0.3pt]
        (#1) -- (#2) -- (#4) -- cycle;
      \filldraw[fill=softlavender, draw=black, opacity=0.2, line width=0.3pt]
        (#1) -- (#3) -- (#4) -- cycle;
      \filldraw[fill=softlavender, draw=black, opacity=0.2, line width=0.3pt]
        (#2) -- (#3) -- (#4) -- cycle;
    }

    \foreach \a in {0,...,2} {
      \foreach \b in {0,...,2} {
        \foreach \c in {0,...,2} {
          \pgfmathtruncatemacro{\sum}{\b+\c}
          \ifnum\sum>2
          \else
            \fill (\a,\b,\c) circle (0.3pt);
          \fi
        }
      }
    }
    \foreach \a in {0,1} {
      \foreach \b in {0,...,1} {
        \foreach \c in {0,...,1} {
          \pgfmathtruncatemacro{\sum}{\b+\c}
          \ifnum\sum>1
          \else
            \coordinate (A) at (\a,\b,\c);
            \coordinate (B) at (\a,\b+1,\c);
            \coordinate (C) at (\a,\b,\c+1);

            \coordinate (D) at ({\a+1},\b,\c);
            \coordinate (E) at ({\a+1},\b+1,\c);
            \coordinate (F) at ({\a+1},\b,\c+1);

            \drawtetra{A}{B}{C}{D};
            \drawtetra{B}{C}{E}{F};
            \drawtetra{B}{D}{E}{F};
            \drawtetra{C}{D}{E}{F};
            \drawtetra{B}{C}{D}{F};
            \drawtetra{C}{E}{D}{F};
          \fi
        }
      }
    }

    \coordinate (O) at (0,0,0);
    \coordinate (X) at (2,0,0);
    \coordinate (Y) at (0,2,0);
    \coordinate (Z) at (0,0,2);
    \coordinate (X1) at (2,0,0);
    \coordinate (Y1) at (2,2,0);
    \coordinate (Z1) at (2,0,2);

    \draw[thick, densely dotted, violet!80] (O) -- (Y);
    \draw[thick, densely dotted, violet!80] (O) -- (Z);
    \draw[thick, color=violet!80] (Y) -- (Z);

    \draw[thick, violet!80] (X) -- (Y1);
    \draw[thick, violet!80] (X) -- (Z1);
    \draw[thick, color=violet!80] (Y1) -- (Z1);
    \draw[thick, densely dotted, color=violet!80] (O) -- (X);
    \draw[thick, color=violet!80] (Y) -- (Y1);
    \draw[thick, color=violet!80] (Z) -- (Z1);
  \end{scope}

\end{tikzpicture}

\caption{Products of dilated simplices are examples of breakable Newton polytopes and correspond to degenerations of hypersurfaces in products of projective space. Unimodular triangulations of $3\cdot\Delta_3$ and $2\cdot\Delta_2\times2\cdot\Delta_1$ are shown.}\label{fig: unimod-sub}
\end{figure}

Logarithmic techniques seem essential in Chow -- traditional ``double point'' degeneration techniques~\cite{Li01,Li02,MP06} are often ineffective, since the diagonal in the double locus may not admit a K\"unneth decomposition. More extreme simple normal crossings degenerations often have strata that admit Chow--K\"unneth decompositions because they are {\it linear} in the sense of~\cite{Tot14}.

We note the following particular corollary:

\begin{customcor}{B}\label{cor: chowquintic}
Let $X_5$ be a quintic threefold in $\mathbb P^4$, then the pushforward of 
$[\Mbar_g(X_5,d)]^{\sf vir}$ in 
$\mathsf {CH}_{0}(\Mbar_{g})$
is proportional to a $0$-stratum. 
\end{customcor}

Since $\mathsf R_0(\Mbar_g)$ is $\mathbb Q$, the Chow class is known via the Gromov--Witten invariant. 

The Chow group of $0$-cycles ${\sf CH}_0(\Mbar_{g,n})$ is expected to be of infinite rank except for finitely many $(g,n)$, see~\cite[Speculation~1.1]{SP20}. 

It is known that the Chow groups of $0$-cycles of 
\[
\Mbar_{1,\geq 11}, \Mbar_{2,\geq 14}, \Mbar_{3,\geq 15} \ \textnormal{and } \Mbar_{4,\geq 16}
\]
have infinite rank. We produce geometrically interesting cycles here, for example, as follows. The quartic threefold $X_4$ is an irrational, unirational Fano. Curves of genus $g$ curves of degree $d$ move in virtual dimension $d$. By imposing point and line conditions we obtain $0$-cycles, in any of these infinite rank Chow groups, that are proportional to $0$-strata. There are many such examples. 

We move on to cohomology. 

\begin{customthm}{C}\label{thm: tautological-gw}
Let $\mathcal X$ be a smooth projective variety equipped with a flat and proper morphism
\[
\mathcal X\to B
\]
to a smooth proper curve. Assume the singular fibers are simple normal crossings divisors. Let $b\in B$ be a regular value. Then the cohomological Gromov--Witten cycles of $\cX_b$ lie in the $\mathsf {RH}^\star(\Mbar_{g,n})$-linear span of the Gromov--Witten cycles of the strata of any singular fiber $\cX_0$. 

In particular, if the GW cycles of the strata of the degeneration are tautological, then the cycles associated to the general fiber are also tautological. 
\end{customthm}

The statement of this theorem does not involve logarithmic structures at all; it is entirely about the absolute GW theory of the strata. The proof will, of course, use logarithmic GW theory. 

The theorem involves both constructive algorithms and purely theoretical arguments. We discuss the ingredients in Section~\ref{sec: intro-details} below.

We deduce the cohomology speculation for a large class of targets. 

\begin{customcor}{D}\label{cor: breakable}
Let $X$ be a smooth complete intersection of hypersurfaces in a toric variety with breakable Newton polytopes. All Gromov--Witten cycles of $X$ are tautological in cohomology. 
\end{customcor}

To the best of our knowledge, the results of the corollary are new for hypersurfaces in products of projective spaces. For hypersurfaces in projective spaces, see~\cite[Theorem~E]{ABPZ}. 

The breakability condition for toric hypersurfaces produces a broad class of examples that can be worked with concretely, but we expect it to be far from exhaustive. 

A polytope can be breakable, but not splittable into two, see Figure~\ref{fig: three-not-two} for an example. This fact produces examples of hypersurfaces without obvious double point degenerations but with good snc degenerations. 

\begin{figure}[h!]
\tdplotsetmaincoords{70}{120}
\begin{tikzpicture}[tdplot_main_coords, scale=4]
\definecolor{softlavender}{RGB}{223,210,255}

\coordinate (A) at (0,0,0);
\coordinate (B) at (1,0,0);
\coordinate (C) at (0,1,0);
\coordinate (D) at (1,1,2);
\coordinate (P) at (1,1,1);

\draw[very thick, violet, densely dotted] (A) -- (B);
\draw[very thick, violet, densely dotted] (A) -- (C);
\draw[very thick, violet, densely dotted] (A) -- (D);

\draw[very thick, violet] (B) -- (D);
\draw[very thick, violet] (D) -- (C);
\draw[very thick, violet] (B) -- (C);

\filldraw[fill=softlavender, opacity=0.15] (A) -- (B) -- (C) -- cycle;
\filldraw[fill=softlavender, opacity=0.15] (A) -- (B) -- (P) -- cycle;
\filldraw[fill=softlavender, opacity=0.15] (A) -- (C) -- (P) -- cycle;
\filldraw[fill=softlavender, opacity=0.4] (B) -- (C) -- (P) -- cycle;

\filldraw[fill=softlavender, opacity=0.2] (B) -- (C) -- (D) -- cycle;
\filldraw[fill=softlavender, opacity=0.2] (B) -- (D) -- (P) -- cycle;
\filldraw[fill=softlavender, opacity=0.2] (C) -- (D) -- (P) -- cycle;
\filldraw[fill=softlavender, opacity=0.2] (B) -- (C) -- (P) -- cycle;

\draw[black, opacity=0.2] (A) -- (P);
\draw[ black, opacity=0.3] (B) -- (P);
\draw[ black, opacity=0.3] (C) -- (P);
\draw[ black, opacity=0.3] (D) -- (P);

\foreach \pt in {A, B, C, D}
  \filldraw[violet] (\pt) circle (0.4pt);

\filldraw[gray] (P) circle (0.2pt);

\node[anchor=north, font=\scriptsize] at (A) {$(0,0,0)$};
\node[anchor=north west, font=\scriptsize] at (B) {$(1,0,0)$};
\node[anchor=west, font=\scriptsize] at (C) {$(0,1,0)$};
\node[anchor=south west, font=\scriptsize] at (D) {$(1,1,2)$};
\node[anchor=west, font=\scriptsize] at (P) {$(1,1,1)$};

\end{tikzpicture}
\caption{An example of a polytope that is breakable but cannot be cut into just two pieces. This gives rise to an orbifold snc degeneration and can be used to produce four dimensional polytopes with analogous properties.}\label{fig: three-not-two}
\end{figure}
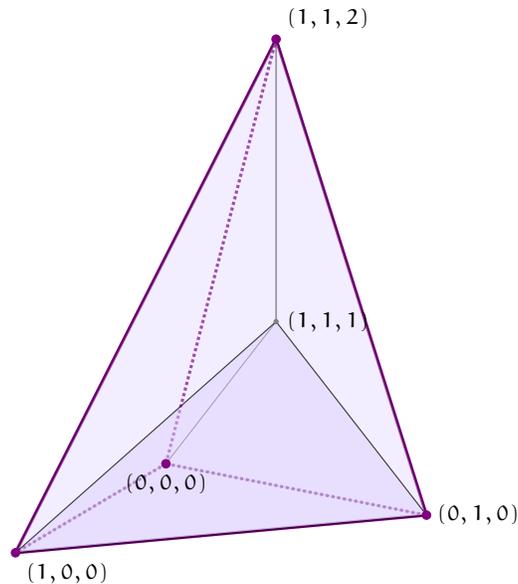

In contrast to Theorem~\ref{thm: tautological-chow}, there is no restriction on the insertions in these results. Theorem~\ref{thm: tautological-gw} is also significantly stronger than the corollary, which is perhaps the most elementary consequence of the result. The theorem itself applies much more broadly. For example, we expect that it implies the cohomology speculation for all Fano threefolds~\cite{CCGK16} and several hundred Fano fourfolds~\cite{CKP15}, but we discuss this in future work.

\subsection{Methods: from degeneration to reconstruction}\label{sec: intro-details} The principle that GW theory can be reconstructed from the strata of a degeneration has been a central tool in the subject for over two decades~\cite{FP,LR01,OP06}. One of the goals of this paper is to see how far these results can be pushed using advances in logarithmic GW theory. Our principal reconstruction result, stated in Theorem~\ref{thm: tautological-gw}, might be viewed as an end-state for this line of inquiry. 

We make use of much of the development of the subject over the last two decades. 
We summarize this before explaining what has to be done in order to prove the main results of this paper.

The first inputs are the foundational theory in the papers~\cite{AC11,Che10,GS13} which relates the GW theory of a smooth fiber $\cX_\eta$ to the {\it logarithmic} GW theory of a singular fiber $\cX_0$, and the decomposition formula that relates it to contributions from certain tropical curves~\cite{ACGS15}. These in turn build on J. Li's foundational work on the double point degeneration formula~\cite{Li01,Li02} and prior developments in symplectic geometry~\cite{IP01,IP04,LR01}. See also B. Parker's work on exploded manifolds~\cite{Par11}.

The next step is our previous work on the degeneration formula. \footnote{A parallel development of degeneration formulas using {\it punctured} Gromov--Witten theory has been undertaken by Abramovich--Chen--Gross--Siebert~\cite{ACGS17}. B. Parker has also proved gluing formulas in~\cite{Par17a}. For the reasons outlined in~\cite[Section~0.6]{MR23} we are not aware of how to trade our formalism for theirs in pursuing the results here.} The formula relates the tropical contributions to the logarithmic GW theory of curves in targets with prescribed tangency along a simple normal crossings divisor. The pairs that arise are compactified torus bundles over strata in $\cX_0$, see~\cite{MR23,R19} and Figure~\ref{fig: degeneration-picture} below. This is the starting point for the present paper.

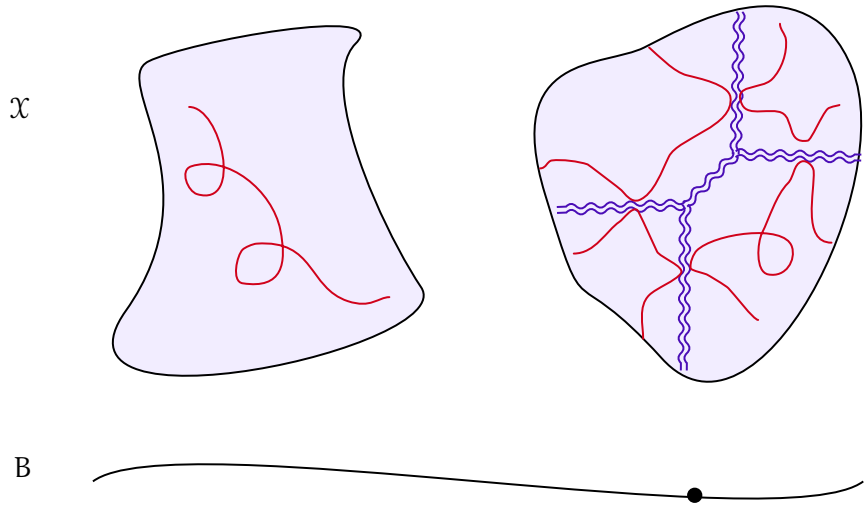
\begin{figure}[h!]
\tikzset{every picture/.style={line width=0.75pt}} 

\begin{tikzpicture}[x=0.75pt,y=0.75pt,yscale=-1,xscale=1]

\draw  [fill={rgb, 255:red, 223; green, 210; blue, 255 }  ,fill opacity=0.4 ] (146,2291) .. controls (166,2281) and (270.5,2262) .. (250.5,2282) .. controls (244.77,2287.73) and (243.39,2297.79) .. (244.81,2309.96) .. controls (248.33,2340.27) and (269.23,2383.59) .. (283.5,2405) .. controls (303.5,2435) and (88.5,2482) .. (134.5,2417) .. controls (180.5,2352) and (126,2301) .. (146,2291) -- cycle ;
\draw    (118,2503) .. controls (158,2473) and (466.5,2533) .. (506.5,2503) ;
\draw [color={rgb, 255:red, 0; green, 0; blue, 0 }  ,draw opacity=1 ]   (421.5,2510) ;
\draw [shift={(421.5,2510)}, rotate = 0] [color={rgb, 255:red, 0; green, 0; blue, 0 }  ,draw opacity=1 ][fill={rgb, 255:red, 0; green, 0; blue, 0 }  ,fill opacity=1 ][line width=0.75]      (0, 0) circle [x radius= 3.35, y radius= 3.35]   ;
\draw  [color={rgb, 255:red, 208; green, 2; blue, 27 }  ,draw opacity=1 ][line width=0.75] [line join = round][line cap = round] (166.5,2314) .. controls (182.67,2314) and (194.16,2365.1) .. (170.5,2358) .. controls (165.83,2356.6) and (161.56,2345.98) .. (167.5,2344) .. controls (186.03,2337.82) and (208.56,2356.3) .. (212.5,2376) .. controls (215.91,2393.07) and (213.72,2406.11) .. (193.5,2403) .. controls (187.7,2402.11) and (190.59,2388.91) .. (193.5,2386) .. controls (194.63,2384.87) and (198.55,2383.14) .. (200.5,2383) .. controls (230.28,2380.87) and (226.26,2405.25) .. (246.5,2412) .. controls (258.28,2415.93) and (261.87,2410) .. (267.5,2410) ;
\draw  [fill={rgb, 255:red, 223; green, 210; blue, 255 }  ,fill opacity=0.4] (396,2284) .. controls (416,2274) and (478,2237) .. (501,2295) .. controls (524,2353) and (453,2494) .. (405,2441) .. controls (357,2388) and (368,2430) .. (346,2358) .. controls (324,2286) and (376,2294) .. (396,2284) -- cycle ;
\draw [color={rgb, 255:red, 74; green, 13; blue, 182 }  ,draw opacity=1 ]   (445.5,2266.04) .. controls (447.12,2267.75) and (447.08,2269.41) .. (445.37,2271.04) .. controls (443.66,2272.67) and (443.62,2274.33) .. (445.24,2276.04) .. controls (446.86,2277.75) and (446.82,2279.41) .. (445.11,2281.03) .. controls (443.4,2282.66) and (443.36,2284.32) .. (444.99,2286.03) .. controls (446.61,2287.74) and (446.57,2289.4) .. (444.86,2291.03) .. controls (443.15,2292.66) and (443.11,2294.32) .. (444.73,2296.03) .. controls (446.35,2297.74) and (446.31,2299.4) .. (444.6,2301.03) .. controls (442.89,2302.66) and (442.85,2304.32) .. (444.47,2306.03) .. controls (446.09,2307.74) and (446.05,2309.4) .. (444.34,2311.02) .. controls (442.63,2312.65) and (442.59,2314.31) .. (444.22,2316.02) .. controls (445.84,2317.73) and (445.8,2319.39) .. (444.09,2321.02) .. controls (442.38,2322.65) and (442.34,2324.31) .. (443.96,2326.02) .. controls (445.58,2327.73) and (445.54,2329.39) .. (443.83,2331.02) .. controls (442.12,2332.65) and (442.08,2334.31) .. (443.7,2336.02) -- (443.65,2337.96) -- (443.65,2337.96)(442.5,2265.96) .. controls (444.12,2267.67) and (444.08,2269.34) .. (442.37,2270.96) .. controls (440.66,2272.59) and (440.62,2274.25) .. (442.24,2275.96) .. controls (443.87,2277.67) and (443.83,2279.33) .. (442.12,2280.96) .. controls (440.41,2282.58) and (440.37,2284.24) .. (441.99,2285.95) .. controls (443.61,2287.66) and (443.57,2289.32) .. (441.86,2290.95) .. controls (440.15,2292.58) and (440.11,2294.24) .. (441.73,2295.95) .. controls (443.35,2297.66) and (443.31,2299.32) .. (441.6,2300.95) .. controls (439.89,2302.58) and (439.85,2304.24) .. (441.47,2305.95) .. controls (443.1,2307.66) and (443.06,2309.32) .. (441.35,2310.95) .. controls (439.64,2312.58) and (439.6,2314.24) .. (441.22,2315.95) .. controls (442.84,2317.66) and (442.8,2319.32) .. (441.09,2320.94) .. controls (439.38,2322.57) and (439.34,2324.23) .. (440.96,2325.94) .. controls (442.58,2327.65) and (442.54,2329.31) .. (440.83,2330.94) .. controls (439.12,2332.57) and (439.08,2334.23) .. (440.7,2335.94) -- (440.65,2337.89) -- (440.65,2337.89) ;
\draw [color={rgb, 255:red, 74; green, 13; blue, 182 }  ,draw opacity=1 ]   (443.21,2338.99) .. controls (443.2,2341.35) and (442.02,2342.52) .. (439.66,2342.51) .. controls (437.3,2342.5) and (436.12,2343.68) .. (436.11,2346.04) .. controls (436.1,2348.39) and (434.92,2349.57) .. (432.57,2349.56) .. controls (430.21,2349.55) and (429.03,2350.72) .. (429.02,2353.08) .. controls (429.01,2355.44) and (427.83,2356.62) .. (425.47,2356.61) .. controls (423.12,2356.6) and (421.94,2357.78) .. (421.93,2360.13) .. controls (421.92,2362.49) and (420.74,2363.67) .. (418.38,2363.66) -- (417.72,2364.31) -- (417.72,2364.31)(441.1,2336.86) .. controls (441.09,2339.22) and (439.91,2340.4) .. (437.55,2340.39) .. controls (435.19,2340.38) and (434.01,2341.55) .. (434,2343.91) .. controls (433.99,2346.27) and (432.81,2347.44) .. (430.45,2347.43) .. controls (428.09,2347.42) and (426.91,2348.6) .. (426.91,2350.96) .. controls (426.9,2353.32) and (425.72,2354.49) .. (423.36,2354.48) .. controls (421,2354.47) and (419.82,2355.64) .. (419.81,2358) .. controls (419.8,2360.36) and (418.62,2361.54) .. (416.26,2361.53) -- (415.6,2362.18) -- (415.6,2362.18) ;
\draw [color={rgb, 255:red, 74; green, 13; blue, 182 }  ,draw opacity=1 ]   (416.72,2364.75) .. controls (415.13,2366.48) and (413.46,2366.55) .. (411.73,2364.96) .. controls (410,2363.37) and (408.33,2363.44) .. (406.73,2365.17) .. controls (405.14,2366.9) and (403.47,2366.97) .. (401.74,2365.38) .. controls (400,2363.79) and (398.33,2363.86) .. (396.74,2365.6) .. controls (395.15,2367.33) and (393.48,2367.4) .. (391.75,2365.81) .. controls (390.02,2364.22) and (388.35,2364.29) .. (386.75,2366.02) .. controls (385.15,2367.75) and (383.48,2367.82) .. (381.75,2366.23) .. controls (380.01,2364.64) and (378.35,2364.71) .. (376.76,2366.45) .. controls (375.16,2368.18) and (373.49,2368.25) .. (371.76,2366.66) .. controls (370.03,2365.07) and (368.36,2365.14) .. (366.77,2366.87) .. controls (365.18,2368.61) and (363.51,2368.68) .. (361.77,2367.09) .. controls (360.04,2365.5) and (358.37,2365.57) .. (356.78,2367.3) -- (352.06,2367.5) -- (352.06,2367.5)(416.6,2361.75) .. controls (415,2363.48) and (413.33,2363.55) .. (411.6,2361.96) .. controls (409.87,2360.37) and (408.2,2360.44) .. (406.6,2362.17) .. controls (405.01,2363.91) and (403.35,2363.98) .. (401.61,2362.39) .. controls (399.88,2360.8) and (398.21,2360.87) .. (396.61,2362.6) .. controls (395.02,2364.33) and (393.35,2364.4) .. (391.62,2362.81) .. controls (389.88,2361.22) and (388.21,2361.29) .. (386.62,2363.03) .. controls (385.03,2364.76) and (383.36,2364.83) .. (381.63,2363.24) .. controls (379.9,2361.65) and (378.23,2361.72) .. (376.63,2363.45) .. controls (375.04,2365.18) and (373.37,2365.25) .. (371.64,2363.66) .. controls (369.9,2362.07) and (368.23,2362.14) .. (366.64,2363.88) .. controls (365.05,2365.61) and (363.38,2365.68) .. (361.65,2364.09) .. controls (359.92,2362.5) and (358.25,2362.57) .. (356.65,2364.3) -- (351.94,2364.5) -- (351.94,2364.5) ;
\draw [color={rgb, 255:red, 74; green, 13; blue, 182 }  ,draw opacity=1 ]   (418.16,2363.26) .. controls (419.81,2364.94) and (419.8,2366.61) .. (418.12,2368.26) .. controls (416.44,2369.91) and (416.43,2371.58) .. (418.08,2373.26) .. controls (419.73,2374.94) and (419.72,2376.61) .. (418.04,2378.26) .. controls (416.36,2379.91) and (416.35,2381.58) .. (418,2383.26) .. controls (419.65,2384.94) and (419.64,2386.61) .. (417.96,2388.26) .. controls (416.28,2389.91) and (416.27,2391.58) .. (417.92,2393.26) .. controls (419.57,2394.94) and (419.56,2396.61) .. (417.88,2398.26) .. controls (416.2,2399.91) and (416.19,2401.58) .. (417.84,2403.26) .. controls (419.49,2404.93) and (419.48,2406.6) .. (417.81,2408.26) .. controls (416.13,2409.91) and (416.12,2411.58) .. (417.77,2413.26) .. controls (419.42,2414.94) and (419.41,2416.61) .. (417.73,2418.26) .. controls (416.05,2419.91) and (416.04,2421.58) .. (417.69,2423.26) .. controls (419.34,2424.94) and (419.33,2426.61) .. (417.65,2428.26) .. controls (415.97,2429.91) and (415.96,2431.58) .. (417.61,2433.26) .. controls (419.26,2434.94) and (419.25,2436.61) .. (417.57,2438.26) .. controls (415.89,2439.91) and (415.88,2441.58) .. (417.53,2443.26) -- (417.5,2447.01) -- (417.5,2447.01)(415.16,2363.24) .. controls (416.81,2364.92) and (416.8,2366.59) .. (415.12,2368.24) .. controls (413.44,2369.89) and (413.43,2371.56) .. (415.08,2373.24) .. controls (416.73,2374.92) and (416.72,2376.59) .. (415.04,2378.24) .. controls (413.36,2379.89) and (413.35,2381.56) .. (415,2383.24) .. controls (416.65,2384.92) and (416.64,2386.59) .. (414.96,2388.24) .. controls (413.28,2389.89) and (413.27,2391.56) .. (414.92,2393.24) .. controls (416.57,2394.92) and (416.56,2396.59) .. (414.88,2398.24) .. controls (413.2,2399.89) and (413.19,2401.55) .. (414.84,2403.23) .. controls (416.49,2404.9) and (416.48,2406.57) .. (414.81,2408.23) .. controls (413.13,2409.88) and (413.12,2411.55) .. (414.77,2413.23) .. controls (416.42,2414.91) and (416.41,2416.58) .. (414.73,2418.23) .. controls (413.05,2419.88) and (413.04,2421.55) .. (414.69,2423.23) .. controls (416.34,2424.91) and (416.33,2426.58) .. (414.65,2428.23) .. controls (412.97,2429.88) and (412.96,2431.55) .. (414.61,2433.23) .. controls (416.26,2434.91) and (416.25,2436.58) .. (414.57,2438.23) .. controls (412.89,2439.88) and (412.88,2441.55) .. (414.53,2443.23) -- (414.5,2446.99) -- (414.5,2446.99) ;
\draw [color={rgb, 255:red, 74; green, 13; blue, 182 }  ,draw opacity=1 ]   (442.19,2336.43) .. controls (443.9,2334.8) and (445.57,2334.85) .. (447.19,2336.56) .. controls (448.82,2338.27) and (450.48,2338.31) .. (452.19,2336.68) .. controls (453.9,2335.06) and (455.56,2335.1) .. (457.19,2336.81) .. controls (458.81,2338.52) and (460.47,2338.56) .. (462.18,2336.94) .. controls (463.89,2335.32) and (465.55,2335.36) .. (467.18,2337.07) .. controls (468.81,2338.78) and (470.47,2338.82) .. (472.18,2337.2) .. controls (473.89,2335.58) and (475.55,2335.62) .. (477.18,2337.33) .. controls (478.81,2339.04) and (480.47,2339.08) .. (482.18,2337.46) .. controls (483.89,2335.84) and (485.55,2335.88) .. (487.18,2337.59) .. controls (488.8,2339.3) and (490.46,2339.34) .. (492.17,2337.72) .. controls (493.88,2336.09) and (495.54,2336.13) .. (497.17,2337.84) .. controls (498.8,2339.55) and (500.46,2339.59) .. (502.17,2337.97) -- (505.54,2338.06) -- (505.54,2338.06)(442.11,2339.43) .. controls (443.82,2337.8) and (445.48,2337.84) .. (447.11,2339.55) .. controls (448.74,2341.26) and (450.4,2341.3) .. (452.11,2339.68) .. controls (453.82,2338.06) and (455.48,2338.1) .. (457.11,2339.81) .. controls (458.74,2341.52) and (460.4,2341.56) .. (462.11,2339.94) .. controls (463.82,2338.32) and (465.48,2338.36) .. (467.11,2340.07) .. controls (468.73,2341.78) and (470.39,2341.82) .. (472.1,2340.2) .. controls (473.81,2338.58) and (475.47,2338.62) .. (477.1,2340.33) .. controls (478.73,2342.04) and (480.39,2342.08) .. (482.1,2340.46) .. controls (483.81,2338.84) and (485.47,2338.88) .. (487.1,2340.59) .. controls (488.73,2342.3) and (490.39,2342.34) .. (492.1,2340.71) .. controls (493.81,2339.09) and (495.47,2339.13) .. (497.1,2340.84) .. controls (498.72,2342.55) and (500.38,2342.59) .. (502.09,2340.97) -- (505.46,2341.06) -- (505.46,2341.06) ;
\draw  [color={rgb, 255:red, 208; green, 2; blue, 27 }  ,draw opacity=1 ][line width=0.75] [line join = round][line cap = round] (398.5,2284) .. controls (403.38,2288.27) and (411.01,2296.38) .. (417.5,2298) .. controls (423.71,2299.55) and (438.59,2302.62) .. (439.5,2309) .. controls (441.64,2323.96) and (413.27,2329.45) .. (408.5,2339) .. controls (404.98,2346.04) and (401.07,2357.97) .. (393.5,2361) .. controls (389.47,2362.61) and (386.33,2357.83) .. (384.5,2356) .. controls (383.57,2355.07) and (367.88,2343.13) .. (367.5,2343) .. controls (362.77,2341.42) and (354.63,2340.36) .. (349.5,2341) .. controls (346.97,2341.32) and (346.77,2345) .. (343.5,2345) ;
\draw  [color={rgb, 255:red, 208; green, 2; blue, 27 }  ,draw opacity=1 ][line width=0.75] [line join = round][line cap = round] (464.5,2272) .. controls (464.5,2275.06) and (467.17,2277.04) .. (467.5,2280) .. controls (468.23,2286.56) and (465.36,2295.14) .. (460.5,2300) .. controls (454.9,2305.6) and (439.96,2301.91) .. (445.5,2313) .. controls (448.54,2319.08) and (459.95,2314.23) .. (465.5,2317) .. controls (471.73,2320.11) and (467.72,2332.8) .. (478.5,2331) .. controls (480.36,2330.69) and (485.46,2318.04) .. (488.5,2315) .. controls (489.73,2313.77) and (495.4,2313) .. (494.5,2313) ;
\draw  [color={rgb, 255:red, 208; green, 2; blue, 27 }  ,draw opacity=1 ][line width=0.75] [line join = round][line cap = round] (453.5,2421.5) .. controls (451.17,2421.5) and (441.19,2408.19) .. (438.5,2405.5) .. controls (436.97,2403.97) and (428.45,2398.98) .. (427.5,2398.5) .. controls (426.77,2398.14) and (419.75,2396.51) .. (419.5,2395.5) .. controls (418.82,2392.78) and (420.92,2388.08) .. (422.5,2386.5) .. controls (431.01,2377.99) and (466.57,2372.72) .. (470.5,2384.5) .. controls (472.71,2391.14) and (467.62,2400.32) .. (458.5,2398.5) .. controls (451.96,2397.19) and (453.07,2382.37) .. (455.5,2377.5) .. controls (459.18,2370.14) and (462.47,2358.53) .. (468.5,2352.5) .. controls (471.66,2349.34) and (471.22,2342.93) .. (475.5,2341.5) .. controls (477.9,2340.7) and (481.32,2343.57) .. (481.5,2346.5) .. controls (482.1,2356.43) and (475.32,2382.5) .. (490.5,2382.5) ;
\draw  [color={rgb, 255:red, 208; green, 2; blue, 27 }  ,draw opacity=1 ][line width=0.75] [line join = round][line cap = round] (360.5,2388) .. controls (371.33,2388) and (377.05,2376.3) .. (383.5,2372) .. controls (385.7,2370.53) and (389.84,2364.23) .. (392.5,2366) .. controls (393.3,2366.53) and (401.46,2384.92) .. (402.5,2387) .. controls (403.94,2389.89) and (416.84,2394.49) .. (414.5,2398) .. controls (411.96,2401.81) and (402.17,2405.33) .. (398.5,2409) .. controls (392.8,2414.7) and (395.5,2424.29) .. (395.5,2431) ;

\draw (74.18,2308) node [anchor=north west][inner sep=0.75pt]    {$\mathcal{X}$};
\draw (76.43,2489) node [anchor=north west][inner sep=0.75pt]    {$B$};

\end{tikzpicture}
\caption{A depiction of a degeneration with simple normal crossings special fiber, and the snc pair arising in the special fiber.}\label{fig: degeneration-picture}
\end{figure}

However, the degeneration formula by itself is quite far from a reconstruction result that would compute the Gromov--Witten theory of a variety from that of the strata of a degeneration, for example as in Theorem~\ref{thm: tautological-gw}.   Several problems need to be overcome in order to produce a useful algorithm.

The main issues are the following:
\begin{enumerate}[(i)]
\item {\bf Logarithmic from absolute.} The cycles in the degeneration formula involve the {\it logarithmic} Gromov-Witten theory of projective bundles over the strata of the central fiber, and these need to be related to {\it absolute} GW cycles.

\item {\bf Exotic insertions.} The degeneration formula of \cite{MR23} requires exotic insertions along the boundary -- these are non-local incidence conditions, coupled across multiple boundary strata. They arise from the logarithmic cohomology of products of strata in $\cX_0$, and logarithmic cohomology almost never admits a K\"unneth decomposition. The exotic insertions therefore need to be converted to more geometric, non-exotic insertions.

\item {\bf Vanishing cohomology.} The degeneration formula works with cohomology insertions that live on the total space of the degeneration; a formalism is needed to treat classes that don't extend to the singular fiber, e.g., primitive cohomology classes.

\item {\bf Projective bundles.} We require several results about projectivizations of split vector bundles, and more generally, ``broken'' toric bundles. These arise in two ways -- in deformation to the normal cone analysis for the logarithmic/absolute conversion, and in the final step converting logarithmic GW classes of expansions into classes of the strata of $\cX_0$.
\end{enumerate}

The bulk of this paper is devoted to handling each of these issues.  In total, they convert the logarithmic degeneration formula into an effective tool for proving theorems in Gromov--Witten theory inductively, via snc degenerations. 

A similar package, excluding (iii), was established for double point degenerations in~\cite{MP06}. There are several obstacles that prevent these arguments from being generalized to our setting. The key techniques there are torus localization and the double point degeneration formula~\cite{GV05,Li02}.  In the full logarithmic context, there is no appropriate replacement for localization and the combinatorial calculus governing the degeneration formula is much more complicated. The need for exotic insertions is also invisible in the double point setting. The proofs of most of our results are therefore of a different nature, and include a workaround for localization and significant input from tropical geometry and logarithmic intersection theory~\cite{HMPPS,MR21,PRSS}. 

For the remainder of this introduction, we sketch how we handle each of the items outlined above.


\subsection{Reconstruction I: logarithmic/absolute}

Let $(Y|\partial Y)$ be a simple normal crossings pair with divisor components labelled $E_i$. The space $\Mbar_\Lambda(Y|\partial Y)$ parameterizes logarithmic stable maps from $n$ pointed, genus $g$:
\[
C\to Y,
\]
with fixed curve class and prescribed tangency orders $c_{ij}$ at the marked point $p_j$ along the divisor $E_i$. The space is proper and carries a virtual class. Its evaluation space $\mathsf{Ev}$ is a product of strata of $(Y|\partial Y)$ -- the stratum associated with a marked point is the smallest stratum that it maps to, given its contact order. 

Logarithmic GW classes are obtained by the pull/push operation above, with insertions allowed to come from the {\it logarithmic} cohomology of the evaluation space. We say more shortly. 

\begin{customthm}{E}[Section~\ref{sec: log-absolute}]\label{thm: log-absolute}
Let $(Y|\partial Y)$ be a simple normal crossings pair. The logarithmic Gromov--Witten cycles of $(Y|\partial Y)$ can be uniquely and effectively reconstructed from the absolute GW cycles of the strata of $(Y|\partial Y)$. 
\end{customthm}

The geometric content is that logarithmic GW theory does not produce new invariants -- tangency conditions in logarithmic geometry are well-approximated by absolute descendant invariants. 

The key to the proof is understanding how to remove a divisor from the boundary $\partial Y$. When the degeneration formula is applied to the degeneration to the normal cone of a divisor, it expresses the invariants of a pair via a system of equations involving the special fiber of a degeneration, one piece of which {\it adds} $D$ to the boundary~\cite{MR23}. The theorem is a consequence of an invertibility statement for this system of equations, obtained by an analysis of tropical curves. 

The theorem itself has been a folklore conjecture in logarithmic Gromov--Witten theory for many years; it is alluded to in~\cite{AW,R19}. It simultaneously generalizes parallel statements in the smooth pair case~\cite[Theorem~2]{MP06} and the genus $0$ case~\cite[Corollary~Z]{BNR22}.

The theorem also confirms a conjecture of Urundolil Kumaran and the second author, that the logarithmic GW cycles of smooth toric varieties with any toric logarithmic structure lie in the tautological ring~\cite[Conjecture~C]{RUK22}. It also follows that logarithmic GW cycles of products of projective spaces, with {\it any} logarithmic structure, lie in the tautological ring, confirming a number of cases of~\cite[Conjecture~D]{RUK22}.

\subsection{Reconstruction II: exotic/non-exotic}

In earlier work, we introduced ``exotic'' evaluation conditions in logarithmic cohomology~\cite{MR23}. They are needed to apply logarithmic degeneration formulas, but are also used here to control primitive cohomology, see Section~\ref{sec: primitive} for further details. 

The usual evaluation map is given by
\[
\Mbar_\Lambda(Y|\partial Y)\to \mathsf{Ev}.
\]
The space $\mathsf{Ev}$ is recalled in the next section, but for now we note that it is a smooth space with a simple normal crossings boundary. The space comes with an expression as a product of varieties -- one for each marked point. In addition to standard cohomology classes, in the formalism of~\cite{MR23}, we can pull back any cohomology class from a birational model obtained by blowups along strata. An {\it exotic insertion} is a cohomology class on a blowup that is not pulled back from $\mathsf{Ev}$.  Heuristically, these encode tangency conditions that are coupled between different strata of the boundary divisor.

While they are more complicated than standard insertions, these exotic insertions are crucial to formulate the degeneration formula -- they are needed to express a logarithmic GW invariant of a reducible normal crossings variety in terms of those of its strata.  In order to handle this new complexity, we prove the following reconstruction result.

\begin{customthm}{F}[Sections~\ref{sec: exotic-evaluations} {\it \&}~\ref{sec: rigidification}]\label{thm: exotic-non-exotic}
Let $(Y|\partial Y)$ be a simple normal crossings pair. Its Gromov--Witten cycles with exotic insertions can be uniquely and effectively reconstructed from the Gromov--Witten cycles with non-exotic insertions.  
\end{customthm}

An important piece of the proof is {\it rigidification}. The degeneration formula does not, a priori, split strata in the logarithmic mapping space into pieces, due to the presence of non-rigid targets. This comes from the isotropy groups on the stack of logarithmic structures. This is more of an obstacle in the logarithmic context compared to smooth pairs, as the isotropy groups act non-locally on the expansion~\cite{CN21}. 

We introduce a procedure that takes a GW cycle on a stratum in $\Mbar_\Lambda(Y|\partial Y)$ and expresses it as a GW class on a component of the space of maps to a degeneration. This procedure should be of use in other contexts. 

We then use this rigidification and splitting for the strata of $\Mbar_\Lambda(Y|\partial Y)$ to prescribe an invertible transformation rule that trades exotic GW classes for non-exotic ones. 

\subsection{Reconstruction III: vanishing cohomology}

As it happens, exotic insertions also provide the solution to the issue of vanishing cohomology insertions.  Given a degeneration
$$\mathcal{X} \rightarrow B,$$
a cohomology class on a general fiber $\cX_b$ may not be the restriction of a class on 
$\mathcal{X}$.  The obstruction arises from the monodromy action on the cohomology of $\cX_b$.  However, we show the following:


\begin{customthm}{G}[Section~\ref{sec: primitive}]\label{thm: primitive-move}
The GW cycles on $\cX_b$ with arbitrary insertions lie in the span of the
\textit{exotic} GW classes of a special fiber $\cX_0$
\end{customthm}

The result is related to the work~\cite{ABPZ}, which produces an algorithmic reconstruction result for complete intersections in projective space. Our result is a ``pure thought'' argument and does not use an explicit algorithm; this is both a feature and bug -- it applies in complete generality, but further analysis is necessary to extract formulas.

We highlight a key consequence. In combination with Theorem~\ref{thm: exotic-non-exotic}, this allows the logarithmic degeneration formula to be {\it iterated} in cohomology -- once a variety is degenerated, its pieces can be degenerated further and then analyzed independently.

\subsection{Reconstruction IV: bundles}

The final key step in our arguments is the treatment of projective bundles. Given a pair $(Y,\partial Y)$, as above, and a line bundle $L$ on $Y$, we can form the associated projective completion $p: \mathbb P_Y\to Y$. The space $\mathbb P_Y$ can be equipped with a logarithmic structure by using four different divisors:
\[
p^{-1}(\partial Y), \ \ \ p^{-1}(\partial Y)\cup Y_0, \ \ \ p^{-1}(\partial Y)\cup Y_\infty, \ \ \ p^{-1}(\partial Y)\cup Y_0\cup Y_\infty.
\]
This construction gives rise to four distinct GW theories associated with the total space $\mathbb{P}_Y$. By further blowups along certain strata, we obtain four corresponding variants of ``broken'' $\mathbb{P}^1$-bundle theories over $Y$.

\begin{customthm}{H}[Sections~\ref{sec: uni-P1-bundle} {\it \&}~\ref{sec: uni-log-bundle}]\label{thm: bundle-reconstruction}
The Gromov--Witten classes of each of the four theories can be uniquely and effectively reconstructed from the Gromov--Witten classes of $(Y|\partial Y)$. 
\end{customthm}

Broken projective and toric bundles are ubiquitous in logarithmic enumerative geometry. In the expanded formalism~\cite{R19} they arise as components in expansions, while in the punctured theory~\cite{ACGS17} they arise from maps to targets with generically nontrivial logarithmic structure. The theorem allows us to analyze the deformation to the normal cone of a divisor, which is key to Theorem~\ref{thm: log-absolute}.

When $\partial Y$ is empty, we recover~\cite[Theorem~1]{MP06}, but a different proof is required in the general case. The key ingredient there is the localization formula, which, as we mentioned, is not available in the logarithmic setting. Our approach is to prove a version of the result over the universal Picard stack instead of $\Mbar_\Lambda(Y|\partial Y)$ and then promote it to logarithmic cohomology to deduce the statement. In particular, no new logarithmic localization formula is required. This is a point of contact for our work with recent work on higher double ramification cycles~\cite{BHPSS,HMPPS,HS21,MR21,RUK22}.

\subsection{Further questions} Let us return to the cohomology speculation in light of the results. By Theorem~\ref{thm: tautological-gw}, if a smooth variety $X$ can be degenerated to a union of varieties whose strata have tautological GW theory, then $X$ itself has tautological GW theory. Since the degeneration formula can be iterated, we can degenerate, take apart the special fiber, degenerate again, and so on. Even beyond the ones we state here, the strategy should provide a wealth of examples. A fundamental open question is how close this is to all varieties. This question seems related to algebraic cobordism, and motivated the conjecture of~\cite{LP09}. 

The cohomology speculation also has a ``birational'' nature. Let $X$ be a smooth projective variety and $Z$ a smooth subvariety. By the degeneration formula and the results of~\cite{Fan21}, the GW cycles of $X$ can be reconstructed from the GW theory of $\mathsf{Bl}_Z X$ and those of $Z$. It is also known that the GW cycles of $\mathsf{Bl}_Z X$ can be reconstructed from those of $X$ and of $Z$ by~\cite{HLR08,Fan21}.

\noindent
{\bf Observation.} {\it If $X$ and $X'$ are birational surfaces or threefolds, then $X$ has tautological GW theory if and only if $X'$ does. }

Independent of our results, this means that every rational threefold has tautological GW theory. The case of rational fourfolds seems completely open. It leads to the following:

\noindent
{\bf Question.} {\it Is every surface (resp. threefold) birational to one that admits an snc degeneration to a union of rational surfaces (resp. to threefolds with rational double locus)?}

In a different direction, Theorem~\ref{thm: tautological-gw} allows us to confirm the cohomology speculation for complete intersections with breakable Newton polytope, but more generally one can ask:

\noindent
{\bf Question.} {\it Let $X$ be a smooth complete intersection in a toric variety. Does $X$ have tautological GW theory?}

The case of a hypersurface in a weighted projective space -- viewed as a smooth orbifold -- is open. A {\it fake weighted projective space} is a toric variety defined by a lattice simplex with no lattice points except the vertices. Every hypersurface in a toric variety admits a degeneration to a union of hypersurfaces with such Newton polygons.\footnote{Hypersurfaces of this form are not always rational. The 1983 Warwick PhD thesis of Fine constructs a surface of this form with non-negative Kodaira dimension, see~\cite[Appendix to \S 4]{Reid87}.} If their GW cycles are tautological, then all hypersurfaces in toric varieties have tautological GW theory.

For each new class of varieties that can be degenerated effectively, the package of Theorem~\ref{thm: tautological-gw} establishes new verifications of the speculation. Toric complete intersections offer first examples, but that is unlikely to be the limit of the techniques.

\noindent
{\bf Question.} {\it Do subvarieties of Grassmannians (or other homogeneous spaces) given by sections of tautological bundles admit adequate snc degenerations?}

Gromov--Witten theory of $K3$ and abelian surfaces (and other holomorphic symplectic manifolds) offers a different direction. The ordinary Gromov--Witten theory of a $K3$ surface vanishes, but the reduced virtual class provides nontrivial classes in the cohomology of $\Mbar_{g,n}$.

\noindent
{\bf Question.} {\it Do the reduced GW classes of K3 and abelian surfaces lie in the tautological ring?}

These surfaces admit good degenerations to unions of rational surfaces, but the resulting degeneration of the reduced virtual class is not fully understood. For primitive classes on K3 surfaces, one can use~\cite[Theorem~20]{MPT10}. In the abelian setting, the question should be closely related the recent theory of correlated GW classes due to Blomme--Carocci~\cite{BC25b,BC25}. For related results in Chow, see~\cite[Theorem~1.5]{SP20}. 

The reconstruction procedures above can be carried out algorithmically, with one exception -- the analysis of vanishing cohomology. A constructive procedure requires an analysis of the monodromy action in particular types of degenerations. 

\noindent
{\bf Problem.} {\it Produce explicit formulas for the GW invariants of families with good snc degenerations in toric varieties or flag varieties.}

A different direction of inquiry is related to {\it punctured maps}~\cite{ACGS17}. Heuristically, punctured invariants are combinations of standard logarithmic maps and higher double ramification cycles. The latter are converted to invariants of the base by Theorem~\ref{thm: bundle-reconstruction}. We make the following:

\noindent
{\bf Conjecture.} {\it Punctured GW cycles of $(Y|\partial Y)$ are determined by absolute GW cycles of the strata.}

In fact, there are multiple versions of this conjecture, depending on which class one chooses on the puncturing stack. A canonical choice that behaves well under the splitting axiom is the refined punctured class of~\cite{BNR24}.\footnote{Johnston has informed us that his forthcoming preprint establishes results along the lines of the above conjecture for the punctured theory of split toric bundles. }

\subsection{Roadmap of paper} In Section~\ref{sec: loggy-background} we recall the basics of logarithmic GW theory and the degeneration formula, including the exotic evaluation formalism of~\cite{MR23}. We analyze vanishing cohomology in Section~\ref{sec: primitive}, in terms of exotic insertions. In Section~\ref{sec: stars-partial-ordering}, we introduce an ordering on the discrete data for logarithmic moduli problems involving the geometry of tropical curves. In Section~\ref{sec: exotic-evaluations} we introduce {\it strata GW classes}, which mediate between exotic and non-exotic insertions. In Section~\ref{sec: rigidification} we explain how to convert strata GW classes into non-exotic classes, completing the exotic/non-exotic trade. The next two sections deal with projective bundles. In Section~\ref{sec: uni-P1-bundle} we construct GW classes over the Picard stack using the universal $\mathbb P^1$-bundle. We include a discussion of how such a theory can be used to circumvent logarithmic localization. In Section~\ref{sec: uni-log-bundle} we complete this picture and deduce that $\mathbb P^1$-bundles, and broken toric bundles, can be constructed from GW cycles of the base. The results are recorded in Section~\ref{sec: broken-bundles}. In Section~\ref{sec: log-absolute}, we use this, with an invertibility argument, to prove Theorem~\ref{thm: log-absolute}. In Section~\ref{sec: final-proofs} we construct the necessary degenerations to establish Theorems~\ref{thm: tautological-chow} and~\ref{thm: tautological-gw}, as well as Corollary~\ref{cor: breakable}. 

\subsection{Notation and conventions} We work over the complex numbers. Stacks are assumed to be locally of finite type. For logarithmic stacks, we work in the fine and saturated logarithmic category, and logarithmic structures on our spaces will often be specified by distinguished maps to certain Artin fans. Our running notation is $\cX\to B$ for a degeneration and $(Y|\partial Y)$ for a simple normal crossings (snc) pair. Chow and cohomology will be taken with rational coefficients unless explicitly stated otherwise. 

\subsection*{Acknowledgments} We have benefited from many conversations with friends and colleagues over the years about tautological rings, degeneration formulas, and related topics. We especially thank L. Battistella, F. Carocci, S. Canning, R. Cavalieri, S. Johnston, P. Kennedy-Hunt, S. Koyama, M. Gross, S. Molcho, N. Nabijou, I. Smith, T. Song, Q. Shafi, A. Urundolil Kumaran, and J. Wise. We are grateful to R. Pandharipande for encouraging us to pursue the questions in Chow and for helpful comments on an earlier draft. The work was first presented at the conference ``Curves, abelian varieties, and their moduli'' at Humboldt-Universit\"t zu Berlin in May 2024, organized by G. Farkas and R. Pandharipande. We thank them for this opportunity.

\subsection*{Funding} D.M. was supported by a Simons Investigator grant. D.R. was supported by EPSRC Horizon Europe Guarantee EP/Y037162/1 in lieu of an ERC starting grant and by EPSRC New Investigator Award EP/V051830/1. 

\section{A review of the logarithmic degeneration package}\label{sec: loggy-background}

The degeneration package in logarithmic geometry has undergone several years of development. The foundations were laid in~\cite{AC11,Che10,GS13}, the approach via expanded degenerations is established in~\cite{MR20,R19}, and the degeneration formula in the form we will need is proved in~\cite{MR23}. 

\subsection{Stable maps to pairs}\label{sec: stable-maps-to-pairs} Let $(Y|\partial Y)$ be a simple normal crossings pair and let $\Sigma_{Y|\partial Y}$ be its cone complex. We fix the following discrete data for a logarithmic stable maps problem:
\begin{enumerate}[(i)]
    \item a curve class $\beta\in H_2(Y;\ZZ)$,
    \item a positive integer $g$ for the genus,
    \item a number $n$ of marked points, labelled by the set $[n]$, and
    \item for a component $E_j\subset\partial Y$ and point $p_i$ a non-negative integer {\it contact order}, recorded in a matrix $[c_{ij}]$.
\end{enumerate}

\noindent
The discrete data will be collectively denoted by $\Lambda$. 

\begin{definition}[Disjoint contact orders]\label{def: disjoint-contact}
A contact order matrix $[c_{ij}]$ is {\it disjoint} if for each fixed $i$, there is at most one $j$ such that $c_{ij}$ is nonzero. That is, each row has at most one nonzero entry. 
\end{definition}

Given a logarithmic blowup $Y'\to Y$, the discrete data for $Y$ has a canonical lift to discrete data $\Lambda'$ for maps to $Y'$. Indeed, each vector $(c_{ij})_j$ for $i$ fixed determines a vector in $\Sigma_{Y|\partial Y}$ -- add all the ray generators weighted by the contact order they have with $p_i$. The map 
\[
\Sigma_{Y'|\partial Y'}\to \Sigma_{Y|\partial Y}
\]
is a bijection on lattice points so this lifts canonically. Reversing the process gives the new contact order matrix. 

The contact order matrix determines intersection numbers for a curve on $Y'$ with each component of $\partial Y'$. By Poincar\'e duality, there is a unique curve class on $Y'$ that pushes forward to $\beta$ and has these fixed intersection numbers. 

We refer to this as the {\it canonical lift} of the discrete data under a logarithmic blowup. 

\begin{lemma}
Let $\Lambda$ be a discrete data set for $Y$. There exists a logarithmic blowup $Y'\to Y$ such that the canonical lift of $\Lambda$ has disjoint contact orders. If the contact order matrix of $\Lambda$ is disjoint, then that of its canonical lift to any logarithmic blowup is disjoint. 
\end{lemma}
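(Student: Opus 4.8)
The plan is to produce the required blowup by a combinatorial construction on the cone complex $\Sigma_{Y|\partial Y}$, translating the statement about contact order matrices into a statement about separating rays. Recall that each row $(c_{ij})_j$ of the contact matrix, for $i$ fixed, determines a single lattice vector $v_i \in \Sigma_{Y|\partial Y}$, namely the weighted sum of the ray generators of the components $E_j$ with weight $c_{ij}$. The condition that the canonical lift of $\Lambda$ to a blowup $Y' \to Y$ has disjoint contact orders is precisely the condition that, for each $i$, the image of $v_i$ in $\Sigma_{Y'|\partial Y'}$ lies on a single ray of the subdivided complex — equivalently, that $v_i$ lies in the relative interior of a ray (one-dimensional cone) of the subdivision.

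First I would set up the dictionary: since $\Sigma_{Y'|\partial Y'} \to \Sigma_{Y|\partial Y}$ is a subdivision (and a bijection on lattice points, as noted in the excerpt), a lattice point $v_i$ has the property that its image component set is a single divisor component if and only if the smallest cone of the subdivision containing $v_i$ is one-dimensional. So it suffices to find a subdivision of $\Sigma_{Y|\partial Y}$ in which each of the finitely many lattice vectors $v_1, \dots, v_n$ lies on a ray. Second, I would invoke (or construct) such a subdivision: given a finite set of lattice points in a rational polyhedral cone complex, one can always find a projective subdivision through them — for instance, take the common refinement with the normal fan cut out by generic rational piecewise-linear functions pinning down each $v_i$, or more concretely take iterated star subdivisions at the $v_i$ (and enough auxiliary rays to keep things simplicial/rational if desired). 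By toroidal resolution / the correspondence between projective subdivisions of $\Sigma_{Y|\partial Y}$ and logarithmic blowups $Y' \to Y$, this subdivision is realized by an actual logarithmic blowup, giving the desired $Y'$. Taking the canonical lift of $\Lambda$ along this blowup then yields a disjoint contact order matrix, since each $v_i$ now meets only one ray, i.e.\ only one boundary component of $\partial Y'$.

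For the second assertion — that disjointness is preserved under \emph{any} further logarithmic blowup — the argument is purely combinatorial and essentially immediate. If $[c_{ij}]$ is already disjoint, then for each $i$ the vector $v_i$ is supported on a single ray $\rho_i$ of $\Sigma_{Y|\partial Y}$, hence lies in the relative interior of $\rho_i$ (it is a positive multiple of the primitive generator). Any subdivision $\Sigma'' \to \Sigma_{Y|\partial Y}$ restricts to a subdivision of the ray $\rho_i$, which is again a union of rays (subdividing a one-dimensional cone only inserts lattice points along it); so the minimal cone of $\Sigma''$ containing $v_i$ is still one-dimensional. Translating back through the dictionary, the canonical lift of $\Lambda$ to the corresponding blowup again has at most one nonzero entry in row $i$, for every $i$, i.e.\ it is disjoint.

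The main obstacle — really the only point needing care — is the first half: ensuring that the combinatorial subdivision separating the points $v_i$ can be chosen \emph{projective} (so that it corresponds to an honest logarithmic blowup, not merely a logarithmic modification), and compatible with the fine-and-saturated conventions in force. This is standard toric/toroidal technology (existence of projective star subdivisions through a prescribed finite set of lattice points), so I would either cite it directly or spell out the iterated-star-subdivision construction, checking at each step that the newly inserted rays are rational and that the subdivision remains projective; one should also note that it is harmless to insert extra rays, since adding boundary components that no $p_i$ touches does not affect disjointness. The remaining bookkeeping — that the canonical lift genuinely records the intersection numbers dictated by which ray each $v_i$ now lies on, which is exactly the content recalled just before the lemma — is routine.
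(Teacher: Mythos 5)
Your proposal is correct and follows essentially the same route as the paper: the paper's proof consists precisely of observing that disjointness is the condition that each lattice point $v_i$ lies on a ray of the cone complex, after which both claims are declared obvious, and your write-up simply spells out the standard subdivision-through-prescribed-lattice-points construction and the preservation statement that the paper leaves implicit. One tiny remark: for the second claim you can argue even more directly, since a one-dimensional cone admits only the trivial subdivision (one cannot "insert lattice points along" a ray in a cone complex), so $v_i$ automatically still lies on a ray of any subdivision.
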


\begin{proof}
Disjointness is exactly the condition that the lattice point associated to each marked point lies on a ray. This makes both claims obvious.
\end{proof}

We will mostly work in the disjoint case, but there are some places where the more general setup is convenient. 

Fix a map of pairs
\[
(Y|\partial Y)\to (X|\partial X)
\]
and discrete data $\Lambda$ on $(Y|\partial Y)$. There are ``pushforward'' discrete data on $X$ -- push forward $\beta$ and keep $g$ and $n$ the same; for the contact order, express the tangency data in terms of lattice points on the cone complex $\Sigma_{Y|\partial Y}$ and use the induced map of cone complexes.

We adopt a notational convention -- if we have fixed discrete data $\Lambda$ and a morphism $Y\to X$, we will not introduce new symbols for the pushforward data, and use $\Lambda$ for the induced data as well. This should not cause confusion, where it is used. 

Fix discrete data $\Lambda$. The foundational papers in logarithmic Gromov--Witten theory deal with a moduli problem on logarithmic schemes, parameterizing maps
\[
\begin{tikzcd}
    C\arrow{r}{f}\arrow{d} & (Y|\partial Y)\\
    S,
\end{tikzcd}
\]
where $C/S$ is an $n$-marked logarithmically smooth curve of genus $g$, and $f$ is a logarithmic morphism. The tangency orders $c_{ij}$ are determined by the induced maps on characteristic monoids between the $E_j$ and $p_i$. A logarithmic map is {\it stable} if the underlying map on schemes $C\to Y$ is stable. The papers of Abramovich--Chen and Gross--Siebert establish the following~\cite{AC11,Che10,GS13}.

\begin{theorem}
The moduli problem on the category of logarithmic schemes of logarithmic stable maps with data $\Lambda$ is representable by a Deligne--Mumford stack with logarithmic structure, denoted $\Mbar_\Lambda(Y|\partial Y)$. The forgetful morphism
\[
\Mbar_\Lambda(Y|\partial Y)\to \Mbar_{g,n}(Y,\beta)
\]
is quasi-finite and proper. The space $\Mbar_\Lambda(Y|\partial Y)$ is equipped with a relatively perfect obstruction theory over the stack of logarithmic structures, and carries a virtual fundamental class $[\Mbar_\Lambda(Y|\partial Y)]^{\sf vir}$ defined in Chow homology, of homological degree
\[
{\sf vdim} = (\dim X-3)(1-g)+c_1(T^{\sf log}_{Y|\partial Y})\cdot \beta+n.
\]
\end{theorem}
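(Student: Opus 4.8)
The plan is to follow the strategy of Chen and Gross--Siebert~\cite{Che10,GS13}: factor the target through its Artin fan so that the moduli problem becomes combinatorial, deduce algebraicity and quasi-finiteness from finiteness of the relevant tropical types, establish properness via the valuative criterion together with a boundedness input, and build the obstruction theory by the Behrend--Fantechi formalism with the logarithmic tangent complex in place of the usual one. Concretely, write $\mathcal{A}$ for the Artin fan of $(Y|\partial Y)$, an algebraic stack with logarithmic structure that is \'etale-locally a quotient $[\mathbb{A}^r/\mathbb{G}_m^r]$ and receives a strict morphism $Y\to\mathcal{A}$. A logarithmic stable map to $(Y|\partial Y)$ is the same datum as an ordinary stable map $C\to Y$ from a logarithmically smooth $n$-marked genus $g$ curve, together with a logarithmic map $C\to\mathcal{A}$ lifting $C\to Y\to\mathcal{A}$ strictly; the contact orders $[c_{ij}]$ are then read off from the induced maps of characteristic monoids. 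This realizes $\Mbar_\Lambda(Y|\partial Y)$ as a locus inside the fiber product of $\Mbar_{g,n}(Y,\beta)$ with the stack of logarithmic maps from logarithmically smooth curves to $\mathcal{A}$, cut out by the discrete data $\Lambda$.

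For algebraicity and quasi-finiteness, the point is that the stack of logarithmic maps to $\mathcal{A}$, restricted to a fixed tropical combinatorial type, is a logarithmic algebraic stack that is idealized log smooth over the stack $\mathfrak{M}_{g,n}$ of logarithmically smooth curves: its tropicalization is a rational polyhedral cone, and the stack is assembled from the corresponding toric charts. Since $g$, $n$, the class $\beta$ and the matrix $[c_{ij}]$ are fixed, only finitely many tropical types of maps $C^{\trop}\to\Sigma_{Y|\partial Y}$ occur, and the union of the finitely many charts is algebraic. The same finiteness shows that the forgetful morphism $\Mbar_\Lambda(Y|\partial Y)\to\Mbar_{g,n}(Y,\beta)$ has finite fibers over geometric points -- the logarithmic enhancements of a fixed ordinary stable map are indexed by finitely many tropical types together with finitely many integral vertex placements -- hence is quasi-finite.

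Separatedness and finite type now follow from the previous step and from separatedness of $\Mbar_{g,n}(Y,\beta)$, so properness reduces to the existence part of the valuative criterion over a DVR $R$ with fraction field $K$. Given a logarithmic stable map over $\operatorname{Spec} K$, one first extends the underlying ordinary stable map using properness of $\Mbar_{g,n}(Y,\beta)$; then, after a ramified base change and a logarithmic modification of the central fiber -- equivalently, a subdivision chosen so that the limiting tropical curve has integral vertices -- the logarithmic enhancement extends, and its uniqueness follows from the rigidity of the tropical limit. Promoting the resulting universal closedness to properness requires a boundedness statement, namely that the curve classes and tropical types occurring in all such limits form a bounded family. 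I expect this ingredient -- the valuative criterion combined with the boundedness estimate -- to be the main obstacle; it is exactly the content that needed separate and substantial treatment in the foundational literature~\cite{AC11,Che10,GS13}.

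Finally, for the obstruction theory, let $\pi\colon\mathcal{C}\to\Mbar_\Lambda(Y|\partial Y)$ be the universal curve, $f$ the universal map, and let $\mathfrak{M}$ be the stack parametrizing a logarithmically smooth curve together with a logarithmic map to $\mathcal{A}$, so that $\Mbar_\Lambda(Y|\partial Y)\to\mathfrak{M}$ is strict and representable. Exactly as in Behrend--Fantechi, the dual of $R\pi_\star f^\star T^{\sf log}_{Y|\partial Y}$ is perfect of amplitude $[-1,0]$ and defines a relative perfect obstruction theory over $\mathfrak{M}$; the only change from the absolute stable maps case is that the logarithmic tangent sheaf $T^{\sf log}_{Y|\partial Y}=T_Y(-\log\partial Y)$ replaces $T_Y$, which is precisely what makes the tangency constraints impose no expected codimension. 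The intrinsic normal cone construction then produces $[\Mbar_\Lambda(Y|\partial Y)]^{\sf vir}$ in Chow homology. For the rank, Riemann--Roch gives $\chi\bigl(C,f^\star T^{\sf log}_{Y|\partial Y}\bigr)=c_1(T^{\sf log}_{Y|\partial Y})\cdot\beta+\dim Y\,(1-g)$, while $\mathfrak{M}$ has dimension $3g-3+n$, the logarithmic map to $\mathcal{A}$ being combinatorial data that contributes no further dimension; adding these yields ${\sf vdim}=(\dim Y-3)(1-g)+c_1(T^{\sf log}_{Y|\partial Y})\cdot\beta+n$.
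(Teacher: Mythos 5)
This theorem is not proved in the paper at all: it is imported verbatim from the foundational works of Abramovich--Chen and Gross--Siebert (and the Artin fan reformulation you use is that of Abramovich--Wise), and your outline -- factoring through the Artin fan, minimality/finiteness of tropical types for algebraicity and quasi-finiteness, stable reduction plus boundedness for properness, and the logarithmic tangent complex $R\pi_\star f^\star T^{\sf log}_{Y|\partial Y}$ relative to the stack of log curves with a map to the Artin fan for the obstruction theory and the dimension count -- is exactly the standard route taken there. The one substantive step you defer, properness/boundedness of combinatorial types, is indeed precisely the content supplied by those references, so your proposal matches the intended argument.
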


Given a point $p_i$, let $W_i$ be the intersection of all divisor components $E_j$ in $\partial Y$ such that the contact order $c_{ij}$ is positive. The stratum $W_i$ carries a natural logarithmic structure, given by the intersection of $W_i$ with those $E_j$ that {\it do not} contain it. 

There is a logarithmic evaluation morphism\footnote{Note that some authors prefer to use the logarithmic structure on $W_i$ that is pulled back from $Y$, e.g.~\cite{ACGS17} This forces the logarithmic structure on the moduli space to be enhanced. We prefer not to do this. See~\cite[Section~2.1]{RUK22} for a discussion.}
\[
{\sf ev}_i\colon \Mbar_\Lambda(Y|\partial Y)\to W_i.
\]

A crucial result for us is Abramovich and Wise's logarithmic birational invariance~\cite[Theorem~1.1.1]{AW}.

\begin{theorem}
Let $Y'\to Y$ be a logarithmic blowup. Given discrete data $\Lambda$ for $Y$ and canonical lift $\Lambda'$ for $Y'$, the morphism
\[
\Mbar_{\Lambda'}(Y'|\partial Y')\to \Mbar_\Lambda(Y|\partial Y)
\]
induced by composition and stabilization identifies virtual classes under proper pushforward.  
\end{theorem}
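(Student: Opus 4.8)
The plan is to lift the comparison to the level of Artin fans, where the morphism becomes a combinatorial subdivision, and then transport the conclusion back down using the compatibility of virtual pullback with proper pushforward. Write $\pi$ for the morphism in question. First I would use that the logarithmic blowup $Y'\to Y$ is pulled back from a subdivision of Artin fans $\cA_{Y'}\to\cA_Y$, and that the moduli of logarithmic stable maps relates to the moduli of logarithmic maps to the Artin fan. Writing $\fM_\Lambda(\cA_Y)$ for the stack of minimal logarithmic maps from prestable $n$-pointed genus $g$ curves to $\cA_Y$ with discrete data $\Lambda$ -- a toroidal algebraic stack that plays the role of the ``stack of logarithmic structures'' appearing in the representability theorem above -- there is a commutative square
\[
\begin{tikzcd}
\Mbar_{\Lambda'}(Y'|\partial Y')\arrow{r}{\varphi'}\arrow[swap]{d}{\pi} & \fM_{\Lambda'}(\cA_{Y'})\arrow{d}{\rho}\\
\Mbar_\Lambda(Y|\partial Y)\arrow{r}{\varphi} & \fM_\Lambda(\cA_Y).
\end{tikzcd}
\]
The structural input I need is that this square is Cartesian on underlying stacks after passing to minimal objects, and that the relatively perfect obstruction theory of $\Mbar_\Lambda(Y|\partial Y)$ over $\fM_\Lambda(\cA_Y)$ pulls back, along $\rho$, to that of $\Mbar_{\Lambda'}(Y'|\partial Y')$ over $\fM_{\Lambda'}(\cA_{Y'})$. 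The latter should follow because the relative obstruction theory is governed by $T^{\sf log}_{Y|\partial Y}$ pulled back along the universal map, and $T^{\sf log}_{Y'|\partial Y'}$ is the pullback of $T^{\sf log}_{Y|\partial Y}$ along the logarithmically \'etale morphism $Y'\to Y$. In particular $\varphi'$ carries the virtual pullback presentation $[\Mbar_{\Lambda'}(Y'|\partial Y')]^{\sf vir} = (\varphi')^![\fM_{\Lambda'}(\cA_{Y'})]$, compatibly with the corresponding statement for $\varphi$.

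The second step is the combinatorial core: showing that $\rho$ is a logarithmic modification of $\fM_\Lambda(\cA_Y)$, so that $\rho_\star[\fM_{\Lambda'}(\cA_{Y'})] = [\fM_\Lambda(\cA_Y)]$ as cycles. Here $\rho$ is induced by the subdivision $\cA_{Y'}\to\cA_Y$ together with the canonical lift of discrete data; on tropicalizations it refines a tropical stable map to $\Sigma_{Y|\partial Y}$ to one to $\Sigma_{Y'|\partial Y'}$ by the minimal subdivision of the source compatible with the finer fan. One must check that this operation subdivides the cone stack of tropical types, rather than coarsening or contracting it, so that $\rho$ is proper, restricts to an isomorphism over a dense open substack of each component, and contracts no component; the bookkeeping of marked points that come to lie on newly created rays is handled, after a further logarithmic blowup if needed, by the disjoint contact order reduction of the lemma above.

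Finally I would assemble the pieces. The morphism $\pi$ is proper, since both moduli spaces are proper over $\Mbar_{g,n}(Y,\beta)$. Combining the virtual pullback presentation with Manolache's theorem that virtual pullback commutes with proper pushforward, together with base change along the Cartesian square, gives
\[
\pi_\star [\Mbar_{\Lambda'}(Y'|\partial Y')]^{\sf vir}
= \pi_\star (\varphi')^![\fM_{\Lambda'}(\cA_{Y'})]
= \varphi^! \rho_\star [\fM_{\Lambda'}(\cA_{Y'})]
= \varphi^! [\fM_\Lambda(\cA_Y)]
= [\Mbar_\Lambda(Y|\partial Y)]^{\sf vir}.
\]
I expect the main obstacle to be the structural claim in the first step: that the square is (virtually) Cartesian and that the two relative obstruction theories are genuinely identified under $\rho$. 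This is where one must handle minimality, the deformation theory of logarithmic maps relative to the stack of logarithmic structures, and the principle that Artin fans record exactly the logarithmically \'etale geometry of the target. In short, it is precisely the content of Abramovich--Wise's birational invariance argument; the combinatorial second step, though conceptually transparent, still requires a genuine verification about cone stacks of tropical curves rather than a purely formal one.
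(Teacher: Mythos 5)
Your outline is correct and is precisely the Artin-fan strategy behind this statement: realize both mapping spaces via virtual pullback from the stacks of maps to the Artin fans, check the square is Cartesian with the obstruction theory pulled back along the logarithmically \'etale modification, show the induced map of Artin-fan mapping stacks is a proper subdivision pushing fundamental class to fundamental class, and conclude by Costello/Manolache push--pull. The paper does not reprove this result but quotes it directly from Abramovich--Wise, whose argument is exactly the one you reconstruct, so your route coincides with the intended justification.
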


In light of this, we will be cavalier about choice of logarithmic birational models. By birational invariance and the projection formula, we can replace $(Y|\partial Y)$ with a logarithmic blowup without {\it losing} any GW classes. Therefore, there is generally no harm in assuming that the contact order matrix $[c_{ij}]$ has the disjointness property of Definition~\ref{def: disjoint-contact}, although, as we mentioned, there will be moments when we consider the general case.  We flag these instances in the appropriate context.

There is a proper forget-and-stabilize morphism
\[
\Mbar_\Lambda(Y|\partial Y)\to \Mbar_{g,n}
\]
which we will use to construct logarithmic GW cycles.

\subsection{Stable maps to degenerations} Let $\cX$ non-singular quasi-projective variety and $B$ a non-singular connected curve. Consider a {\it simple normal crossings degeneration} -- a flat and proper morphism
\[
\cX\to B
\]
such that the fiber over every critical value is a simple normal crossings divisor in $\cX$. If we equip $\cX$ with the divisorial logarithmic structure associated to these singular fibers, and equip $B$ with the logarithmic structure coming from the critical values, the map $\cX\to B$ is logarithmically smooth. It will be useful to allow the additional generality that $\cX$ is equipped with divisors that dominate $B$ and, together with all singular fibers, have simple normal crossings. In other words, we allow the generic fiber to have nontrivial logarithmic structure. 

Fix a curve class $\beta$, supported on fibers\footnote{When monodromy acts nontrivially on the cohomology of a general fiber, we can view $\beta$ as an element of monodromy coinvariants of $H_2(X_\eta;\mathbb Z)$. Since the Hodge class $\beta$ has finite monodromy orbit, we can alternatively pass to a finite cover of $B$. In practice, we fix a finite set of divisors on $X_\eta$, and record the intersection numbers with these divisors. As long as the set of divisors includes an ample, we will obtain sensible formulas.}. We can consider the family version of the space of logarithmic stable maps, over the base $B$. Its objects are commutative diagrams of logarithmic schemes
\[
\begin{tikzcd}
    C\arrow{r}\arrow{d} & \mathcal X\arrow{d}\\
    S\arrow{r} & B,
\end{tikzcd}
\]
where $C/S$ is a family of logarithmically smooth curves as before, and the underlying map is a family of stable maps in the usual sense. 

The foundations, in this situation relative to $B$, are also established in~\cite{AC11,Che10,GS13}. The moduli problem described above is representable by a Deligne--Mumford stack, is proper over $B$, and is equipped with a logarithmic structure. The morphism 
\[
\Mbar_{g,n}(\mathcal X/B,\beta)\to B
\]
carries a perfect obstruction theory relative to the stack of logarithmic structures over $B$. This leads to a virtual class
\[
\left[\Mbar_{g,n}(\mathcal X/B,\beta)\right]^{\sf vir}\in H_\star(\Mbar_{g,n}(\mathcal X/B,\beta);\QQ).
\]

Let $\cD_1,\ldots,\cD_r$ be the divisors in $\cX$ that are horizontal with respect to $\cX\to B$ and for each $I\subset [r]$ we let $\cD_I$ denote the stratum where $\cD_i$ intersect for $i\in I$. For each marked point $p_i$, the logarithmic tangency order $c_{ij}$ of a stable map
\[
C\to\cX
\]
with $\cD_j$ at the point $p_i$ is well-defined and locally constant in flat families. If we let $\Lambda$ denote the discrete data $(g,n,\beta,[c_{ij}])$ then the moduli spaces further decompose as
\[
\Mbar_\Lambda(\cX/B)\to B.
\]
Each of these components is open in the larger moduli, and so also has a virtual class. 

The fibers of $\cX\to B$ are logarithmically smooth, and by the foundational theory~\cite{GS13}, the logarithmic mapping spaces to the fibers are also equipped with virtual classes. The following result captures the compatibility between fibers and the family:

\begin{theorem}
If $\iota_b:\{b\}\hookrightarrow B$ then:
\[
[\Mbar_\Lambda(\mathcal X_b)]^{\sf vir} = \iota_b^! [\Mbar_\Lambda(\mathcal X/B)]^{\sf vir} \ \ \textnormal{in } \mathsf{H}_\star(\Mbar_\Lambda(\cX_b);\mathbb Q).
\]
\end{theorem}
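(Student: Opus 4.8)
The plan is to realise $\Mbar_\Lambda(\cX_b)$ as a base change of $\Mbar_\Lambda(\cX/B)$, compatibly with obstruction theories and with the refined Gysin homomorphism $\iota_b^!$, and then to invoke the standard base-change compatibility of virtual fundamental classes. Write $M=\Mbar_\Lambda(\cX/B)$ and $M_b=\Mbar_\Lambda(\cX_b)$, and let $\fM$, $\fM_b$ denote the stacks of logarithmic structures over $B$ and over the logarithmic point $b$, relative to which the respective logarithmic obstruction theories are defined. The first step is to check that, with its induced logarithmic structure, $\cX_b$ is the fibre product $\cX\times_B\{b\}$ formed in the fine and saturated logarithmic category; granting this, an unwinding of the moduli functors identifies $M_b$ with $M\times_B\{b\}=M\times_{\fM}\fM_b$ and $\fM_b$ with $\fM\times_B\{b\}$, so that there is a Cartesian square with vertical maps $M\to\fM$, $M_b\to\fM_b$ and horizontal maps $M_b\to M$, $\fM_b\to\fM$. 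Since $\{b\}\hookrightarrow B$ is a regular embedding (a Cartier divisor on a smooth curve) and $\fM\to B$ is flat, the embeddings $\fM_b\hookrightarrow\fM$ and $M_b\hookrightarrow M$ are regular, with normal bundles pulled back from $N_{\{b\}/B}$; in particular the refined Gysin homomorphism of $M_b\hookrightarrow M$ agrees with $\iota_b^!$.

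The second step is the compatibility of the two obstruction theories. The relative obstruction theory of $M\to\fM$ is $\bigl(R\pi_\star f^\star T^{\sf log}_{\cX/B}\bigr)^\vee$, where $\pi$ and $f$ are the universal curve and universal map over $M$. Because $\cX\to B$ is logarithmically smooth, $T^{\sf log}_{\cX/B}$ is a vector bundle whose restriction to $\cX_b$ is $T^{\sf log}_{\cX_b/\{b\}}$; cohomology and base change for the universal curve then identifies the derived pullback of this obstruction theory along $M_b\hookrightarrow M$ with the relative obstruction theory of $M_b\to\fM_b$, compatibly with the maps to the logarithmic cotangent complexes. Thus $M\to\fM$ and $M_b\to\fM_b$ carry a compatible pair of perfect relative obstruction theories over the regular embedding $\fM_b\hookrightarrow\fM$.

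The statement then follows from the standard compatibility of virtual classes with base change along a regular embedding \cite{BF97}: denoting by $\sigma^!$ the refined Gysin homomorphism of $\fM_b\hookrightarrow\fM$, one has $[\mathfrak C_{M_b/\fM_b}]=\sigma^![\mathfrak C_{M/\fM}]$ inside the pulled-back obstruction bundle stack, and commuting $\sigma^!$ past the zero-section Gysin map yields $[M_b]^{\sf vir}=\sigma^![M]^{\sf vir}$. All the cone constructions and Gysin operations involved are available for the (locally finite type) Artin stacks of logarithmic Gromov--Witten theory via Kresch's Chow theory. Since $\sigma^!=\iota_b^!$ by the first step, this is the asserted identity; the equality of Chow homology classes implies the corresponding identity in homology via the cycle class map.

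The one genuinely logarithmic point, and the main obstacle, is the identification in the first step of $\cX\times_B\{b\}$ --- the fibre product taken in the fs logarithmic category --- with $\cX_b$ endowed with the logarithmic structure under which its Gromov--Witten theory is defined. This is delicate precisely when $b$ is a critical value, where $\cX_b$ is a normal crossings divisor rather than logarithmically smooth over a trivial base, and one must verify both that saturation adds nothing and that the stack of logarithmic structures restricts correctly to the fibre. This is exactly the content worked out in the foundational references~\cite{AC11,Che10,GS13}, and is why the statement is formulated for the logarithmically smooth morphism $\cX\to B$; with it in hand, the remaining steps are the routine Behrend--Fantechi formalism, carried out over $\fM$ rather than a point.
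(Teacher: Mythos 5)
The paper offers no proof of this statement—it is quoted as a standard compatibility result from the foundational theory of \cite{AC11,Che10,GS13}—and your argument (identify $\Mbar_\Lambda(\cX_b)$ with the fibre of $\Mbar_\Lambda(\cX/B)$ over $b$, check compatibility of the relative perfect obstruction theories over the stack of logarithmic structures, then apply Behrend--Fantechi/Manolache base-change along the regular embedding $\{b\}\hookrightarrow B$) is exactly the standard argument underlying those references, and is correct in substance. Two small remarks: since $\{b\}\hookrightarrow B$ with the pullback logarithmic structure is strict, the fs fibre product identification is immediate rather than delicate, and your assertion that $\Mbar_\Lambda(\cX_b)\hookrightarrow\Mbar_\Lambda(\cX/B)$ is itself a regular embedding is neither needed nor true in general (components of the family moduli space can lie entirely over $b$); only the regularity of $\{b\}\hookrightarrow B$ enters the definition of $\iota_b^!$.
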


If the curve $B$ is a rational curve, then the statement above holds in the Chow group of $\Mbar_\Lambda(\cX_b)$ with rational coefficients. 

The target of the natural evaluation map from this space is the fibered power:
\[
\Mbar_{g,n}(\mathcal X/B,\beta)\to \mathcal X^n_B.
\]
This can be refined slightly in the presence of horizontal divisors. If $p_i$ is a marked point, the indices $j$ for which $c_{ij}$ is positive distinguish a subset of the divisors, and so by intersection, a stratum of the form $\cD_I$, which we will denote $\mathcal W_i$. If $c_{ij}$ is $0$ for all $j$ and a fixed $i$, we define $\mathcal W_i = \mathcal X$. Consolidating, we have an evaluation morphism
\[
\Mbar_{g,n}(\mathcal X/B,\beta)\to \mathcal{E}v_\Lambda :=\mathcal W_1\times_B\cdots\times_B \mathcal W_n.
\]
Note that this {\it evaluation space} $\mathcal E v_\Lambda$ is usually a singular variety. 

The deformation invariance of the virtual class formally leads to the following.

\begin{corollary}
Fix a cohomology class $\gamma\in H^\star(\mathcal E v_\Lambda)$ and denote the restriction to the fibers by $\gamma_b$. The Gromov--Witten class in $H^\star(\Mbar_{g,n})$ associated to $\gamma_b$ is independent of $b$. 
\end{corollary}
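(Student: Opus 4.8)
The plan is to deduce this Corollary directly from the preceding Theorem on compatibility between the family virtual class and the fiberwise virtual classes, together with the fact that cohomology of $B$ restricts trivially across points. First I would set up the pushforward. The evaluation space $\mathcal{E}v_\Lambda = \mathcal W_1\times_B\cdots\times_B\mathcal W_n$ carries the class $\gamma$, and there is a commuting square relating the family evaluation map $\mathsf{ev}\colon\Mbar_\Lambda(\cX/B,\beta)\to\mathcal{E}v_\Lambda$ and the fiberwise evaluation map $\mathsf{ev}_b\colon\Mbar_\Lambda(\cX_b)\to\mathcal{E}v_{\Lambda,b}$, where $\mathcal{E}v_{\Lambda,b}$ is the fiber of $\mathcal{E}v_\Lambda$ over $b$ and $\gamma_b = \iota_b^\star\gamma$ its restriction. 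There is also a compatible square relating the forget-and-stabilize maps to $\Mbar_{g,n}$. The GW class attached to $\gamma_b$ is
\[
\Phi(b) := \pi_{b,\star}\left(\mathsf{ev}_b^\star\gamma_b\cap[\Mbar_\Lambda(\cX_b)]^{\sf vir}\right)\in H^\star(\Mbar_{g,n}).
\]

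The key step is to apply the refined Gysin pullback $\iota_b^!$ and the compatibility theorem. Using $[\Mbar_\Lambda(\cX_b)]^{\sf vir} = \iota_b^![\Mbar_\Lambda(\cX/B,\beta)]^{\sf vir}$, together with the compatibility of $\iota_b^!$ with the flat (Cartesian) pullback of the evaluation class — the square defining $\mathsf{ev}_b$ from $\mathsf{ev}$ is Cartesian over $\iota_b$, so $\mathsf{ev}_b^\star\iota_b^\star\gamma\cap\iota_b^![\ldots]^{\sf vir} = \iota_b^!\bigl(\mathsf{ev}^\star\gamma\cap[\ldots]^{\sf vir}\bigr)$ — and the compatibility of refined Gysin pullback with proper pushforward along $\pi$, one obtains
\[
\Phi(b) = \pi_\star\,\iota_b^!\left(\mathsf{ev}^\star\gamma\cap[\Mbar_\Lambda(\cX/B,\beta)]^{\sf vir}\right) = \iota_b^!\,\pi_\star\left(\mathsf{ev}^\star\gamma\cap[\Mbar_\Lambda(\cX/B,\beta)]^{\sf vir}\right),
\]
where in the last equality I use that $\pi$ is defined over $B$ and commutes with the base change $\iota_b$. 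The class $\Xi := \pi_\star\bigl(\mathsf{ev}^\star\gamma\cap[\Mbar_\Lambda(\cX/B,\beta)]^{\sf vir}\bigr)$ lives in $H^\star(\Mbar_{g,n}\times B)$, and $\Phi(b) = \iota_b^!\Xi = (\mathrm{id}\times\iota_b)^\star\Xi$. Since $B$ is connected and smooth, the restriction of any cohomology class on $\Mbar_{g,n}\times B$ to $\Mbar_{g,n}\times\{b\}$ is independent of $b\in B$ — the inclusions $\{b\}\hookrightarrow B$ are all homotopic (or: $H^\star(\Mbar_{g,n}\times B)\to\prod_b H^\star(\Mbar_{g,n})$ factors through the locally constant sections). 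Hence $\Phi(b)$ is independent of $b$.

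The main obstacle is bookkeeping: making sure that the refined Gysin pullback $\iota_b^!$ genuinely commutes with both $\mathsf{ev}^\star$ and $\pi_\star$ in the relevant (virtual, singular $\mathcal{E}v_\Lambda$) setting. The cleanest route is to work entirely in homology, where one has the standard compatibilities of $\iota_b^!$ with proper pushforward and with flat pullback (Vistoli/Fulton), and only at the very end dualize via the cap product with the (fixed, $b$-independent) fundamental class of $\Mbar_{g,n}$ to land in cohomology — equivalently one notes that $\mathsf{ev}^\star\gamma\cap[\Mbar_\Lambda(\cX_b)]^{\sf vir}$ is itself the Gysin restriction of a fixed homology class on the total space, and $\pi_\star$ followed by Poincaré duality on the smooth proper $\Mbar_{g,n}$ turns deformation invariance in homology into the asserted independence in cohomology. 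One small point to address is that $\mathcal W_i$ can be singular; but the Cartesianness of the evaluation square and functoriality of refined intersection products do not require smoothness of $\mathcal{E}v_\Lambda$, so this causes no difficulty. Everything else is the standard deformation-invariance argument, here packaged as the statement that a family of GW classes over a connected base is constant.
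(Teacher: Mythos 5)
Your argument is correct and is precisely the formal deduction the paper intends: it invokes the preceding compatibility $[\Mbar_\Lambda(\cX_b)]^{\sf vir}=\iota_b^![\Mbar_\Lambda(\cX/B)]^{\sf vir}$, commutes $\iota_b^!$ with the evaluation insertion and the proper pushforward, and then uses that restriction from $\Mbar_{g,n}\times B$ to a fiber over the connected base $B$ is independent of $b$. The paper states this corollary as an immediate formal consequence of deformation invariance, so your proposal matches its (implicit) proof.
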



If $\cX_b$ is a general fiber of $\cX\to B$, with induced divisors $D_b$, then the corollary moves the calculation of a logarithmic Gromov--Witten class associated to $(\cX_b,D_b)$ onto the singular fibers, with the caveat that, in the setup above, the cohomology classes must extend from $X$ to the total space of this singular degeneration. 

\begin{remark}
If we fix some $b\in B$, the restriction map on cohomology of this evaluation space, for example, the map
\[
H^\star(\mathcal X_B^n)\to H^\star(\cX^n_b)
\]
is typically not surjective. Any cohomology class that extends to the total space is necessarily invariant under monodromy, but since the total space is singular, even algebraic classes on a general fiber may not extend to cohomology classes on the total space. The corollary above does not address this issue, but we will deal with it in the next section by classical arguments --resolution and monodromy invariance. 
\end{remark}

\subsection{Stable maps to the singular fiber}\label{sec: decomposition} Fix an snc degeneration $\cX\to B$, possibly with horizontal divisors. Both $\cX$ and $B$ have cone complexes. For $B$ it is a union of rays, attached to each other at their vertices. The number of rays is equal to the number of singular fibers of $\cX\to B$. Since $\cX\to B$ is a map of pairs, there is an induced map of cone complex $\Sigma_{\cX}\to\Sigma_B$. The associated map on topological spaces is a proper map if and only if there are no horizontal divisors. 

Let $b$ be a point of $B$. It has a dual polyhedral complex $\Delta(X_b)$ -- if $b$ is a regular value, it is the fiber of $\Sigma_{\cX}\to\Sigma_B$ over the vertex, if $b$ is a critical value, it is the fiber over $1$ in the cone corresponding to $b$. The vertices of $\Delta(X_b)$ are indexed by a set, say $[N]$, of irreducible components over $b$. We compress the notation to $\Delta$ when it is clear from context. 

The compact simplices of $\Delta$ are indexed by subsets $I\subset [N]$. Each subset corresponds to a stratum of $X_0$ whose codimension is the dimension of the corresponding cell of $\Delta$. Denote the strata by $Y_I$. Each cell spans an integral affine subspace of the space $\mathbb R^{\Delta[1]}$ where $\Delta[1]$ is the set of vertices. An unbounded cell in $\Delta$ has some unbounded rays attached to a bounded simplex -- a cell of this type is dual to the intersection of the stratum corresponding to the simplex with the horizontal stratum corresponding to the unbounded rays. 

Each stratum $Y_I$ has split normal bundle in $\cX$, since $Y_I$ is the complete intersection of $Y_i$ over $i\in I$. This will be relevant when we analyze components in fibers of logarithmic blowups and ramified base changes of $\cX\to B$.

Let $\cX_b$ be a fiber of $\cX\to B$. We will mostly be interested in the singular fibers. The decomposition theorem of~\cite{ACGS15} asserts that the space $\Mbar_{g,n}(\cX_b,\beta)$ has a decomposition into ``virtual'' irreducible components:
\[
\Mbar_{g,n}(\cX_b,\beta) = \bigcup_\gamma \Mbar_{\gamma}(\cX_b,\beta).
\]
The indexing set for this decomposition is the set of {\it rigid tropical curves} in $\Delta(\cX_b)$. We refer to~\cite{ACGS15} for details, but for our purposes, it will be sufficient to note that a rigid tropical curve $\gamma$ determines the following data:
\begin{enumerate}[(i)]
\item a $(g,n)$ prestable dual graph ${{\sf G}(\gamma)}$, including a genus decoration $g_V$ and a subset $[n]_V$ of legs based at $V$, 
\item a curve class decoration $\beta_V$ on the associated stratum for each vertex $V$ of ${{\sf G}(\gamma)}$ adding up, after push forward, to the total class $\beta$,
\item a cell attached to every vertex/edge of ${{\sf G}(\gamma)}$,
\item for each oriented edge of ${{\sf G}(\gamma)}$, an edge direction in the associated cell.
\end{enumerate}

{\it Rigidity} is the property that these data cannot be deformed without changing the combinatorial type, see Figure~\ref{fig: rigid-tropical-curve}.

\begin{figure}[h!]
\begin{tikzpicture}[scale=1.8]

  \definecolor{softlavender}{RGB}{223,210,255}

  \begin{scope}[shift={(-3,0)}]
    \coordinate (A) at (0,0);
    \coordinate (B) at (2,0);
    \coordinate (C) at (0,2);
    \coordinate (P) at (0.5,0.5); 

    \filldraw[fill=softlavender, draw=none,opacity=0.4] (A) -- (B) -- (C) -- cycle;

    \draw[thick, violet] (A) -- (B) -- (C) -- cycle;

    \draw[violet!70, line width=0.8pt] (P) -- (A);
    \draw[violet!70, line width=0.8pt] (P) -- (B);
    \draw[violet!70, line width=0.8pt] (P) -- (C);

    \foreach \pt in {A,B,C} {
      \fill[color=violet] (\pt) circle (1pt);
    }
    \fill[violet!90!black] (P) circle (1.1pt);
  \end{scope}

  \coordinate (A) at (0,0);
  \coordinate (B) at (2,0);
  \coordinate (C) at (0,2);
  
  \coordinate (PA) at (0.25,0.25); 
  \coordinate (PB) at (1.25,0.25); 
  \coordinate (PC) at (0.25,1.25); 

  \path let 
    \p1 = (PA), \p2 = (PB), \p3 = (PC)
    in 
    coordinate (Centroid) at 
      ($ (0.333*\x1 + 0.333*\x2 + 0.333*\x3, 0.333*\y1 + 0.333*\y2 + 0.333*\y3) $);

  \def\shrink{0.7}

  \coordinate (PAp) at ($ (Centroid) ! \shrink ! (PA) $);
  \coordinate (PBp) at ($ (Centroid) ! \shrink ! (PB) $);
  \coordinate (PCp) at ($ (Centroid) ! \shrink ! (PC) $);

  \filldraw[fill=softlavender, draw=none,opacity=0.4] (A) -- (B) -- (C) -- cycle;

  \draw[thick, violet] (A) -- (B) -- (C) -- cycle;

  \draw[violet!70, line width=0.8pt] (PA) -- (A);
  \draw[violet!70, line width=0.8pt] (PB) -- (B);
  \draw[violet!70, line width=0.8pt] (PC) -- (C);
  \draw[violet!80, line width=0.9pt] (PA) -- (PB) -- (PC) -- cycle;

  \draw[violet!40, dashed, line width=0.8pt] (PAp) -- (PBp) -- (PCp) -- cycle;
  \draw[violet!40, dashed, line width=0.7pt] (PAp) -- (PA);
  \draw[violet!40, dashed, line width=0.7pt] (PBp) -- (PB);
  \draw[violet!40, dashed, line width=0.7pt] (PCp) -- (PC);

  \foreach \pt in {PA,PB,PC} {
    \fill[violet!90!black] (\pt) circle (1.1pt);
  }

  \foreach \pt in {PAp,PBp,PCp} {
    \fill[violet!40] (\pt) circle (1pt);
  }

  \foreach \pt in {A,B,C} {
    \fill[color=violet] (\pt) circle (1pt);
  }

\end{tikzpicture}
\caption{A rigid tropical curve in a dual complex $\Delta$ on the left, given by the three interior edges. A non-rigid curve on the right that arises as a deformation. The dual complex is the triangle and the tropical curve is the union of the edges in the interior.}\label{fig: rigid-tropical-curve}
\end{figure}
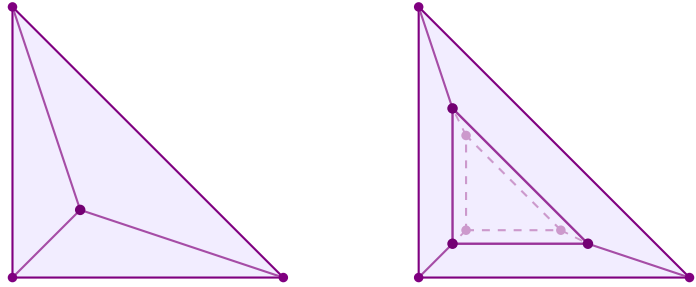

These data determine a moduli problem of stable maps to snc pairs associated with every vertex $V$ in $\Gamma$. We need the following notion, which will come up repeatedly. 

\begin{definition}[Broken toric bundle]
Let $(B|\partial B)$ be an snc pair and let $Y_\sim \to B$ be the projectivization of a direct sum of line bundles and take $\partial Y_\sim$ to be the union of the pullback of $\partial B$ with the coordinate subspaces of $Y_\sim$. Let $Y\to Y_\sim$ be any logarithmic blowup of it, with the induced logarithmic structure given by $\partial Y$.

A variety of the form $(Y|\partial Y)\to (B|\partial B)$ described above will be called a {\it broken toric bundle}. 
\end{definition}

Each vertex determines a target $Y_{\gamma(V)}$, which is a broken toric bundle over a stratum of $\cX_0$. The vertex $V$ has an attached cell in $\Delta(\cX_0)$, and therefore a stratum of $\cX_0$. For our purposes, it is sufficient to note that $Y_{\gamma(V)}$ is a broken toric bundle coming from the normal bundle of the stratum. The logarithmic structure on $Y_{\gamma(V)}$ will be given by pull back the residual divisors obtained by intersection of the stratum with the components of $X_0$ {\it and} the invariant divisors for the normal torus torsor.\footnote{ The precise choice is not important for us, provided the contact order for the problem is taken to be disjoint.}

We state the decomposition theorem~\cite[Theorem~1.2]{ACGS15}
\begin{theorem}
There is an equality of virtual classes:
\[
\iota_0^![\Mbar_{g,n}(\mathcal X,\beta)]^{\sf vir} = [\Mbar_{g,n}(X_0,\beta)]^{\sf vir} = \sum_\gamma m_\gamma\cdot [\Mbar_{\gamma}(X_0,\beta)]^{\sf vir}.
\]
The constant $m_\gamma$ is a positive rational number. 
\end{theorem}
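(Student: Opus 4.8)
The plan is to follow the strategy of~\cite{ACGS15}. The first equality, $\iota_0^![\Mbar_{g,n}(\cX,\beta)]^{\sf vir}=[\Mbar_{g,n}(X_0,\beta)]^{\sf vir}$, is nothing but the compatibility of the virtual class with restriction to a fibre recorded above, applied at $b=0$ (summed over the relevant discrete data when horizontal divisors are present); when $B$ is a rational curve this holds in Chow homology. So all the content is in the second equality, and the idea is to trade the decomposition of a \emph{virtual} class for the decomposition of an ordinary \emph{fundamental} cycle, where a decomposition into irreducible components with multiplicities is automatic. Concretely: let $\cA=\cA_{X_0}$ be the Artin fan of $X_0$ and let $\fM$ be the stack of logarithmic prestable maps from $(g,n)$-marked logarithmically smooth curves to $\cA$ over the standard logarithmic point $\Spec(\CC,\NN)$ attached to $0\in B$, with numerical invariants fixed by $(g,n,\beta)$ and the contact data. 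By the foundational construction of the logarithmic obstruction theory~\cite{AC11,Che10,GS13} there is a strict morphism $\Mbar_{g,n}(X_0,\beta)\to\fM$, and the perfect obstruction theory of $\Mbar_{g,n}(X_0,\beta)$ is relative to $\fM$; hence $[\Mbar_{g,n}(X_0,\beta)]^{\sf vir}$ is the virtual pullback of the fundamental cycle $[\fM]$. Since virtual pullback is additive in cycles, it is enough to decompose $[\fM]=\sum_\gamma m_\gamma\,[\overline{\fM_\gamma}]$ into its irreducible components with multiplicities, to show the components are indexed exactly by rigid tropical curves $\gamma$, and then to identify the virtual pullback of each $[\overline{\fM_\gamma}]$ with $[\Mbar_\gamma(X_0,\beta)]^{\sf vir}$.

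For the classification of components --- the main point --- I would use that $\fM$ is logarithmically smooth, so it is stratified by combinatorial type: to each type $\tau$ of tropical stable map to the dual complex $\Delta(X_0)$ there is a smooth locally closed substack $\fM_\tau$, whose dimension is computable from $\tau$ (source components mapping to deeper strata of $X_0$ carry fewer moduli, and the log directions are counted by the cone of $\tau$ in the cone stack $\Sigma(\fM)$). The combinatorial heart is the dimension bound: $\dim\fM_\tau$ is at most the generic dimension of $\fM$, with equality precisely when $\tau$ is rigid in the sense recalled above --- a non-rigid type admits a family of tropical maps of the same type obtained by sliding interior vertices inside $\Delta(X_0)$, and one checks that this enlarges the cone of $\tau$ while placing $\fM_\tau$ in the closure of a type supported on a lower-dimensional stratum. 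It follows that every irreducible component of $\fM$ is the closure of $\fM_\gamma$ for a unique rigid curve $\gamma$, and that distinct rigid curves give distinct components. The multiplicity $m_\gamma$ is then the length of the local ring of $\fM$ at the generic point of $\fM_\gamma$; $\fM$ is non-reduced there because it is only log smooth over the log point $\Spec(\CC,\NN)$, and unwinding the local toric model identifies this length with the index of the sublattice cut out by the balancing and contact conditions of $\gamma$ inside the ambient lattice --- a positive rational number.

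To identify the pieces, fix a rigid $\gamma$ with dual graph ${{\sf G}(\gamma)}$. Restricting to $\overline{\fM_\gamma}$ and splitting the universal curve along the nodes indexed by the edges of ${{\sf G}(\gamma)}$ realizes $\overline{\fM_\gamma}$, up to a finite gluing correspondence, as a fibre product over the boundary strata of $\cA$ of the Artin-fan moduli stacks attached to the vertices; these are exactly the stacks relative to which the broken toric bundles $Y_{\gamma(V)}$ over the strata of $X_0$ (the ones coming from the normal bundles of the strata, as in the discussion above) carry their obstruction theories. Base-changing the perfect obstruction theory of $\Mbar_{g,n}(X_0,\beta)$ along this splitting, and comparing with the definition of $\Mbar_\gamma(X_0,\beta)$ as the gluing of the logarithmic mapping spaces of the $Y_{\gamma(V)}$, identifies the virtual pullback of $[\overline{\fM_\gamma}]$ with $[\Mbar_\gamma(X_0,\beta)]^{\sf vir}$ (the finite gluing factor being accounted for inside that construction). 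Assembling this with the previous paragraph gives $[\Mbar_{g,n}(X_0,\beta)]^{\sf vir}=\sum_\gamma m_\gamma\,[\Mbar_\gamma(X_0,\beta)]^{\sf vir}$ with each $m_\gamma\in\QQ_{>0}$, as desired.

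The step I expect to be the real obstacle is the classification of components of $\fM$ together with the multiplicities: one needs the uniform dimension count for $\fM_\tau$ in terms of $\tau$, the equivalence ``$\fM_\tau$ is top-dimensional $\iff$ $\tau$ is rigid'' (which in turn requires that every combinatorial type degenerates to a rigid one without dropping dimension, so that non-rigid types produce no spurious components), and the lattice-index computation of the length of $\fM$ along each component. By contrast, the reduction to a fundamental-cycle statement and the identification of the individual pieces are comparatively formal, resting on the obstruction-theoretic and logarithmic birational invariance inputs recorded above together with the standard behaviour of virtual pullback under gluing.
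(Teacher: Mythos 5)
The statement you are proving is not proved in the paper at all: it is quoted verbatim as the decomposition theorem of~\cite[Theorem~1.2]{ACGS15}, so the only fair comparison is with that source. Your first two paragraphs are a faithful reconstruction of its strategy: the first equality is the fibre-compatibility of the virtual class recorded just above; the second is reduced, via the obstruction theory of $\Mbar_{g,n}(X_0,\beta)$ relative to the stack $\fM$ of logarithmic prestable maps to the Artin fan over the standard logarithmic point, to decomposing the fundamental cycle $[\fM]$ into irreducible components; the components are indexed by rigid tropical maps via the stratification by combinatorial type and a dimension count; and the multiplicities come from the non-reducedness of $\fM$ over $\Spec(\CC,\NN)$, computed by a local toric/lattice-index argument. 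Two small corrections: the length of the local ring is a positive \emph{integer}, and the rationality of $m_\gamma$ in the statement comes from the automorphism factors of $\gamma$ (and saturation indices), not from the length itself; this is harmless but worth saying precisely.

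The step I would push back on is your third paragraph. In~\cite{ACGS15}, and implicitly in this paper, $\Mbar_\gamma(X_0,\beta)$ is \emph{defined} as the stack of logarithmic stable maps marked by the rigid tropical curve $\gamma$, i.e.\ essentially the base change of $\overline{\fM_\gamma}\subset\fM$ along $\Mbar_{g,n}(X_0,\beta)\to\fM$; with that definition, the identification of the virtual pullback of $m_\gamma[\overline{\fM_\gamma}]$ with $m_\gamma[\Mbar_\gamma(X_0,\beta)]^{\sf vir}$ is formal, and no splitting of the universal curve is needed. Your proposed identification instead realizes $\overline{\fM_\gamma}$ as a fibre product over the vertices of ${{\sf G}(\gamma)}$ and invokes "the definition of $\Mbar_\gamma(X_0,\beta)$ as the gluing of the logarithmic mapping spaces of the $Y_{\gamma(V)}$, the finite gluing factor being accounted for inside that construction." That is not the definition, and in the simple normal crossings setting the vertex-wise splitting is emphatically \emph{not} formal: it is the content of the separate degeneration formula (Theorem~\ref{thm: degeneration-formula}, proved in~\cite{MR23}), which requires blowing up the evaluation spaces and introducing exotic insertions precisely because the naive diagonal/K\"unneth gluing you are appealing to fails. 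So as written, your last step either assumes the harder gluing theorem or leaves a genuine gap; the fix is simply to define $\Mbar_\gamma(X_0,\beta)$ as maps marked by $\gamma$ and deduce the identification from compatibility of the relative obstruction theories, postponing any splitting over vertices to the degeneration formula where it belongs.
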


The constant $m_\gamma$ is combinatorial but not relevant here.

\subsection{Cutting maps to a reducible target} From this point on, until we say otherwise, we assume {\it disjointness} of the contact order matrix, as in Definition~\ref{def: disjoint-contact}. 

If we fix a rigid tropical curve $\gamma$, the associated moduli space admits a cutting map:
\[
\kappa: \Mbar_{\gamma}(X_0,\beta)\to \prod_{V\in V({{\sf G}(\gamma)})} \Mbar_{\gamma,V}(Y_V|\partial Y_V).
\]
The rigid tropical curve $\gamma$ induces genus, marking, curve class, and tangency data for each $Y_V$, and these determine the spaces of maps in the factors on the right hand side of the arrow above (as a space of logarithmic stable maps to pairs). 

The degeneration formula is proved in our previous work~\cite{MR23,R19} and expresses the pushforward
\[
\kappa_\star[\Mbar_{\gamma}(X_0,\beta)]^{\sf vir} \in H_\star\left(\prod_{V\in V({{\sf G}(\gamma)})} \Mbar_{\gamma,V}(Y_V|\partial Y_V)\right)
\]
in terms of the factor-wise virtual classes, evaluation classes, and logarithmic blowup operations.

In order to do this, two key notions are needed -- virtual birational models of the mapping spaces and exotic evaluation classes.

Fix a vertex $V$ of ${{\sf G}(\gamma)}$ and the associated target $Y_V$ described above. The logarithmic structure on $Y_V$ determines a cone complex $\Sigma_V$. For each marked point $p_i$ with $i$ in $[n]_V$, there is an associated edge/leg. The edge direction decoration determines tangency orders for $p_i$ with the components of the boundary divisor $\partial Y_V$. 

We have an evaluation map
\[
\Mbar_{\gamma,V}(Y_V|\partial Y_V)\to \mathsf{Ev}_{\gamma,V}(Y|\partial Y) = \prod_i W_i
\]
where the final product is taken over the half-edges incident to $V$. 

The moduli space $\Mbar_{\gamma,V}(Y_V|\partial Y_V)$ carries a virtual class, but in fact, this virtual class lifts to logarithmic Chow homology~\cite[Section~3.6]{R19}. We explain this without using the formalism of logarithmic Chow. 

Let $\Mbar_\Lambda(Y|\partial Y)$ be a logarithmic stable mapping space and consider the evaluation space
\[
{\sf ev}\colon \Mbar_\Lambda(Y|\partial Y)\to \mathsf{Ev}_\Lambda(Y|\partial Y).
\]
The space $\mathsf{Ev}_\Lambda(Y|\partial Y)$ is itself a simple normal crossings pair. Let
\[
\mathsf{Ev}_\Lambda(Y|\partial Y)^\diamond\to \mathsf{Ev}_\Lambda(Y|\partial Y)
\]
be a strata blowup -- a sequence of blowups along smooth centers. We can form the pullback in the category of fine and saturated logarithmic algebraic stacks:
\[
\begin{tikzcd}
    \Mbar_\Lambda(Y|\partial Y)^\diamond\arrow{r}\arrow{d} & \Mbar_\Lambda(Y|\partial Y)\arrow{d}\\
    \mathsf{Ev}_\Lambda(Y|\partial Y)^\diamond\arrow{r}& \mathsf{Ev}_\Lambda(Y|\partial Y).
\end{tikzcd}
\]
By the results of~\cite[Section~3]{R19}, the stack $\Mbar_\Lambda(Y|\partial Y)^\diamond$ carries a virtual fundamental class, and the morphism
\[
\Mbar_\Lambda(Y|\partial Y)^\diamond\to \Mbar_\Lambda(Y|\partial Y)
\]
is proper, and identifies virtual classes under pushforward. We therefore refer to these spaces as {\it virtual birational models}. 

We will be interested in classes obtained by pulling back cohomology classes along
\[
\Mbar_\Lambda(Y|\partial Y)^\diamond\to \mathsf{Ev}_\Lambda(Y|\partial Y)^\diamond
\]
and capping with the virtual class. There will be several variants of this -- in singular homology, Chow homology, logarithmic singular, and  logarithmic Chow -- but the basic picture is the same for all of them. 

\begin{definition}\label{def: exotic-insertion}
    An {\it exotic insertion} is an element in the (singular or Chow) cohomology of some blowup $\mathsf{Ev}_\Lambda(Y|\partial Y)^\diamond$ of $\mathsf{Ev}_\Lambda(Y|\partial Y)$ that is not pulled back from $\mathsf{Ev}_\Lambda(Y|\partial Y)$.
\end{definition}

Two important things should be kept in mind when working with exotic invariants and virtual birational models. 

\begin{remark}
The square above is typically {\it not} Cartesian in the category of algebraic stacks, so one cannot use pull/push compatibility to recover classes of these form, even after pushforward to $\mathsf{Ev}_\Lambda(Y|\partial Y)$, just by using the space $\Mbar_\Lambda(Y|\partial Y)$ and its evaluation space. See~\cite{MR21,PR24} for further discussion.    
\end{remark}

\begin{remark}
    The space $\mathsf{Ev}_\Lambda(Y|\partial Y)$ is itself a product of various strata of $(Y|\partial Y)$. It is crucial for our purposes that the strata blowups of $\mathsf{Ev}_\Lambda(Y|\partial Y)$ need not preserve the product decomposition -- they are ``criss-cross''. 
\end{remark}

\subsection{The degeneration formula}\label{sec: deg-formula} We state the degeneration formula in the form needed for this paper. See~\cite[Section~8]{MR23} for a slightly more explicit discussion. 

Fix a rigid tropical curve $\gamma$ as above. The evaluation space at each vertex a preferred decomposition into a product, one for each leg incident to a vertex. Given $\gamma$, each edge $E$ in ${{\sf G}(\gamma)}$ corresponds to a leg in two mapping spaces -- with targets $Y_V$ and $Y_{V'}$, if $V$ and $V'$ are the vertices of $E$. Therefore, if we consider the product
\[
\prod_{V\in V({{\sf G}(\gamma)})} \mathsf{Ev}_{\gamma,V}(Y_V|\partial Y_V)
\]
then for each edge $E$, there is a repeated factor in the product. This defines a natural ``diagonal'' locus
\[
\Delta_\gamma\subset \prod_{V\in V({{\sf G}(\gamma)})} \mathsf{Ev}_{\gamma,V}(Y_V|\partial Y_V).
\]
We will now blowup the ambient space in which this diagonal lives. The strict transform of of this diagonal will always be denoted $\Delta^\diamond_\gamma$. We have an evaluation morphism
\[
\mathsf{ev}: \prod_{V} \Mbar_{\gamma,V}(Y_V|\partial Y_V)^\diamond \to \prod_{V} \mathsf{Ev}_{\gamma,V}(Y_V|\partial Y_V)^\diamond
\]
by the discussion in the previous section.

We are now able to state the degeneration formula~\cite[Section~8]{MR23}. 

\begin{theorem}\label{thm: degeneration-formula}
For any sufficiently refined logarithmic modification
\[
\prod_{V\in V({{\sf G}(\gamma)})} \mathsf{Ev}_\gamma(\partial Y_V)^\diamond\to \prod_{V\in V({{\sf G}(\gamma)})} \mathsf{Ev}_\gamma(\partial Y_V)
\]
there is an equality of virtual classes:
\[
\kappa_\star[\Mbar_{\gamma}(X_0,\beta)]^{\sf vir,\diamond} = k_\gamma\cdot \left[\prod_{V\in V({{\sf G}(\gamma)})} \Mbar_{\gamma,V}(Y_V|\partial Y_V)^\diamond\right]^{\sf vir} \cap \mathsf{ev}^\star(\Delta_\gamma^\diamond)
\]
in Chow or Borel--Moore homology of $\prod_{V\in V(\Gamma)} \Mbar_{\gamma,V}(Y_V|\partial Y_V)^\diamond$ with rational coefficients, where $k_\gamma$ is a positive rational constant. 
\end{theorem}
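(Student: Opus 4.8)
The plan is to exhibit the cutting map $\kappa$ as, up to a combinatorial factor, a base change of the small diagonal $\Delta_\gamma$ along the product of evaluation morphisms, and then to match virtual classes via the normalization sequence of the universal curve, working throughout after a logarithmic modification that makes the diagonal well-behaved. First I would analyze $\kappa$ set-theoretically: given a logarithmic stable map $f\colon C\to X_0$ of tropical type $\gamma$, normalizing $C$ along the nodes dual to the edges of ${\sf G}(\gamma)$ produces for each vertex $V$ a subcurve mapping to the corresponding stratum of $X_0$, which the tropical data enhances to a logarithmic stable map to the broken toric bundle $Y_V$ with the prescribed contact orders; conversely such a tuple glues back to a map of type $\gamma$ exactly when, for each edge $E=(V,V')$, the two relevant marked points agree in the common stratum $W_E$ with matching contact orders. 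Since the matching of contact orders is forced by $\gamma$, the gluing locus is precisely $\mathsf{ev}^{-1}(\Delta_\gamma)$, and $\kappa$ factors through it, the discrepancy being a combinatorial multiplicity $k_\gamma$ that records edge automorphisms of ${\sf G}(\gamma)$, the expansion weights on the edges, and the index of the sublattice spanned by the edge directions --- all manifestly positive rational quantities.

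Next I would upgrade this to virtual classes. Over the stack of logarithmic prestable curves of combinatorial type ${\sf G}(\gamma)$, the normalization exact sequence, after pushing forward along $f$ and twisting by $f^\star T^{\sf log}_{X_0}$, identifies the relative perfect obstruction theory of $\Mbar_\gamma(X_0,\beta)$ with the sum of the pulled-back obstruction theories of the vertex spaces $\Mbar_{\gamma,V}(Y_V|\partial Y_V)$ corrected by the conormal contribution of the node-evaluation locus. This is the usual splitting mechanism for gluing maps, adapted to the logarithmic tangent sheaf, and formally yields
\[
\kappa_\star[\Mbar_\gamma(X_0,\beta)]^{\sf vir} = k_\gamma\cdot \Delta_\gamma^!\left[\textstyle\prod_V \Mbar_{\gamma,V}(Y_V|\partial Y_V)\right]^{\sf vir},
\]
\emph{provided} $\Delta_\gamma$ is regularly embedded, so that the refined Gysin map $\Delta_\gamma^!$ makes sense and agrees with cap product against $[\Delta_\gamma]$. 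It is not: each evaluation space $\mathsf{Ev}_{\gamma,V}$ carries an snc boundary, the diagonals meet this boundary in positive codimension, and $\Delta_\gamma$ is neither lci of the expected codimension nor flat over the factors. This is exactly the obstruction that the virtual birational models, recorded by the superscript $\diamond$, are designed to remove.

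The resolution --- and the heart of the matter --- is the passage to these models. Here I would invoke logarithmic intersection theory, in the form of \cite{R19,MR21} (and logarithmic Chow): after a sufficiently fine criss-cross strata blowup $\mathsf{Ev}^\diamond\to\mathsf{Ev}$ the strict transform $\Delta_\gamma^\diamond$ becomes regularly embedded of the expected codimension, and the models $\Mbar^\diamond$ carry virtual classes pushing forward to the original ones. One then reruns the normalization-sequence computation after base change to the logarithmic modification, using that virtual pullback commutes with saturated logarithmic blowup, to obtain the stated identity with $\Delta_\gamma^!$ replaced by the honest cap product $\cap\,\mathsf{ev}^\star(\Delta_\gamma^\diamond)$. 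The subtle point, and the reason the formula is phrased only after logarithmic modification, is that the blowup square relating $\Mbar^\diamond$ and $\mathsf{Ev}^\diamond$ to $\Mbar$ and $\mathsf{Ev}$ is \emph{not} Cartesian over algebraic stacks, so the compatibility of the reducible-target virtual class with the product virtual class cannot be read off by naive pull/push and must be re-derived inside the logarithmic framework. I expect this --- controlling the non-Cartesian, criss-cross modifications and proving that $[\Mbar_\gamma(X_0,\beta)]^{\sf vir}$ is compatible with the product class \emph{after} blowup --- to be the principal obstacle; the genus, curve-class, and tangency bookkeeping encoded by $\gamma$, and the evaluation of $k_\gamma$, are comparatively routine.
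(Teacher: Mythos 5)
The first thing to say is that the paper does not prove this statement at all: Theorem~\ref{thm: degeneration-formula} is recalled verbatim from the authors' earlier work (\cite[Section~8]{MR23}), and the only "proof content" in the present paper is the sentence following the theorem, which records the precise meaning of ``sufficiently refined'': one must subdivide so that each $\Mbar_{\gamma,V}(Y_V|\partial Y_V)^\diamond \to \mathsf{Ev}_{\gamma,V}(Y_V|\partial Y_V)^\diamond$ becomes \emph{combinatorially flat}. Your sketch, judged against the known argument of \cite{MR23,R19}, has the right overall shape (cut at the nodes dual to edges of ${\sf G}(\gamma)$, glue along a diagonal in the evaluation spaces, correct by logarithmic modifications), but it misidentifies the key technical point. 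You locate the obstruction in $\Delta_\gamma$ failing to be regularly embedded and claim the blowup makes $\Delta_\gamma^\diamond$ regularly embedded of the expected codimension; in fact $\Delta_\gamma$ is already a smooth subvariety of the smooth product $\prod_V \mathsf{Ev}_{\gamma,V}$, so regular embedding was never the issue. The genuine problem is that gluing \emph{logarithmic} maps is governed by fine and saturated fiber products and tropical gluing data at the nodes, so the space $\Mbar_\gamma(X_0,\beta)$ is not the naive fiber product of the vertex spaces over the diagonal, and the comparison of its virtual class with $\bigl[\prod_V \Mbar_{\gamma,V}^\diamond\bigr]^{\sf vir}\cap\mathsf{ev}^\star(\Delta_\gamma^\diamond)$ only becomes accessible after subdivisions that make the evaluation maps combinatorially flat — that is the content of ``sufficiently refined,'' and your criterion does not deliver it.

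Second, your virtual-class step leans on ``the usual splitting mechanism for gluing maps, adapted to the logarithmic tangent sheaf,'' i.e.\ a normalization exact sequence argument, and you call the rest comparatively routine. In the simple normal crossings setting this is precisely what is \emph{not} available: there is no predeformability/normalization splitting as in Jun Li's smooth-divisor theory, the components map to expansions (broken toric bundles) rather than to the strata themselves, and the obstruction-theory comparison must be carried out over Artin fans/tropical moduli after the modification, which is the bulk of the work in \cite{MR23}. You do correctly flag that the relevant squares are Cartesian only in the fs logarithmic category, but the proposal as written would not close the gap between that observation and the stated identity; it substitutes a transversality-of-the-diagonal fix for the combinatorial-flatness mechanism that the actual proof (and the paper's own gloss on the theorem) relies on.
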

Again, the constant $k_\gamma$ is explicit, and comes from gluing multiplicities and automorphisms, but it is irrelevant here.

Note that the class $\Delta_\gamma^\diamond$ lives on a product of spaces indexed by vertices of the graph ${{\sf G}(\gamma)}$ and, in singular cohomology or in Chow when there is a Chow--K\"unneth decomposition, admits a K\"unneth splitting across the factors. These components on the factors are often {\it exotic} insertions. The evaluation spaces can also be written as a product over {\it half-edges} at each vertex, but this product structure cannot usually be preserved while achieving sufficient refinedness. See~\cite[Section~9]{MR23}.

There is an explicit combinatorial criterion for when a modification $\prod_{V} \mathsf{Ev}_\gamma(\partial Y_V)^\diamond$ is {\it sufficiently refined}, for the theorem above to hold. For each vertex, we can take any subdivision such that the induced map
\[
 \Mbar_{\gamma,V}(Y_V|\partial Y_V)^\diamond \to \mathsf{Ev}_{\gamma,V}(Y_V|\partial Y_V)^\diamond
\]
is {\it combinatorially flat}, i.e. on the induced map on cone complexes, every cone surjects onto a cone. This can always be achieved, for instance after sufficiently many barycentric subdivisions, see for instance~\cite[Section~4]{AK00}.


\section{Vanishing cohomology in the degeneration}\label{sec: primitive}

In the degeneration formula of Theorem~\ref{thm: degeneration-formula}, it was important to work with insertions coming from the blowup of the evaluation space -- the {\it exotic} insertions of Definition~\ref{def: exotic-insertion} and~\cite{MR23}, and in particular, to work with blowups of products that are not themselves products. 

We now consider a seemingly unrelated issue -- the degeneration formula in the presence of insertions in the general fiber of a degeneration that may not extend to the total space. Perhaps surprisingly, we find that the same geometry -- exotic insertions -- provide a solution to this problem as well, but now applied to the ``internal'', i.e. zero contact order, markings. 

Unlike much of this paper, the discussion in this section is new already in the smooth pair setting~\cite{ABPZ,Li02,MP06}\footnote{The traditional degeneration formula, e.g. in~\cite{Li02} in the smooth pair geometry, is a formula for GW cycles of the general fiber with insertions {\it from the total space}. }. It is also specific to singular homology, rather than Chow.

\subsection{Setup and the goal} We have a smooth proper variety $\mathcal X$ equipped with a flat and proper map $\mathcal X\to B$ to a smooth proper curve, whose fibers are reduced, simple normal crossings divisors in the total space. 

There are two things that we will analyze. To explain the first, fix a point $b$ in $B$, the restriction map
\[
H^\star(\mathcal X)\to H^\star(\mathcal X_b)
\]
may not be surjective, so there may be more Gromov--Witten cycles associated to $\mathcal X_b$ than those that arise from insertions in the cohomology of $\mathcal X$ via restriction. A class on a general fiber extend to the total space if and only if it is monodromy invariant. 

The second is that the natural space of insertions for GW cycles built from $\mathcal X_b$ is the cohomology of $H^\star(\mathcal X_b)^{\otimes n}$, which is a fiber of the fibered power $\mathcal X^n_B$ of $\mathcal X/B$. However, even if a class in $H^\star(\mathcal X_b)^{\otimes n}$ is monodromy invariant, it may not extend, as the fibered power $\mathcal X^n_B$ has {\it singular} total space, e.g. if $\mathcal X/B$ has a double point fiber, the fibered square has quadric cone singularity. 

The first issue will be controlled by the monodromy invariance property of the virtual class. The second issue -- singularity of the evaluation family -- will be handled by exotic insertions.

\noindent
{\bf Goal.} {\it Describe a subspace of $H^\star(\Mbar_{g,n};\QQ)$ containing all GW cycles of a general fiber $\cX_\eta$ in terms of appropriate ``exotic'' logarithmic GW cycles of the strata of a singular fiber $\cX_0$.}

\begin{remark}
    In special geometries, one can make higher resolution statements by understanding the monodromy action in more detail and obtain explicit formulae, see~\cite{ABPZ} for the case when $\mathcal X_b$ is a complete intersection in $\mathbb P^m$. However, to our knowledge, even the case of hypersurfaces in smooth toric varieties has not been understood in this level of detail. 
\end{remark}


\subsection{Some basic Hodge theory}\label{sec: hodge-basics} The result of the section is based on Deligne's study of monodromy actions applied to $\cX_B^n\to B$. 

We also know, again from work of Deligne~\cite{HodgeI}, the action of $\pi_1$ of curve $B$ minus the critical points on the cohomology of a smooth fiber is semisimple. We therefore have a projection operator
\[
H^\star(\cX_\eta)\to H^\star(\cX_\eta)^{\pi_1}. 
\]
So every cohomology class on $\cX_\eta^n$ has a monodromy invariant projection.

\begin{remark}\label{sec: poincare-compatibility}
The monodromy group preserves the Poincar\'e pairing on $H^\star(\mathcal X_b)$, and the invariant subspace is orthogonal (with respect to the Poincar\'e pairing) to the complementary summand.
\end{remark}

We record the following:

\begin{proposition}
    Let $\cX_\eta$ be a general fiber of $\cX\to B$. Let $\alpha\in H^\star(\cX_\eta^n)$ be cohomology class and let $\overline\alpha$ be its monodromy invariant projection. Consider the evaluation map
    \[
    {\sf ev}\colon \Mbar_{g,n}(\cX_\eta,\beta)\to \cX_\eta^n
    \]
    and let $\pi\colon \Mbar_{g,n}(\cX_\eta,\beta)\to \Mbar_{g,n}$ be the forgetful map. There is an equality of GW cycles
    \[
    \pi_\star\left([\Mbar_{g,n}(\cX_\eta,\beta)]^{\sf vir}\cap {\sf ev}^\star(\alpha)\right) = \pi_\star\left([\Mbar_{g,n}(\cX_\eta,\beta)]^{\sf vir}\cap {\sf ev}^\star(\overline{\alpha})\right) \  \textnormal{in } H^\star(\Mbar_{g,n};\QQ).
    \]
\end{proposition}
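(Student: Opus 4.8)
The plan is to deduce the proposition from the deformation invariance of the virtual class together with the semisimplicity of the monodromy action. The key point is that the difference $\alpha - \overline\alpha$ lies in the sum of the nontrivial monodromy eigenspaces, and classes in that complement cannot contribute to a correspondence that is pulled back from a family over all of $B$. Concretely, I would fix a small disc $U\subset B$ around the critical value $0$, with $\eta\in U^\star = U\setminus\{0\}$ a nearby point, and work with the restricted family $\cX_{U}\to U$ and the associated family of moduli spaces $\Mbar_{g,n}(\cX_{U^\star}/U^\star,\beta)\to U^\star$, together with its evaluation map to $\cX^n_{U^\star}$ and forgetful map to $\Mbar_{g,n}$.

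\emph{Step 1: reduce to a statement about local systems.} The local system $R = R^{2k}(\mathrm{ev}_\star \mathrm{pr}^! \text{-})$ -- more precisely, the fiberwise-GW correspondence $\gamma\mapsto \pi_\star([\Mbar_{g,n}(\cX_t,\beta)]^{\sf vir}\cap \mathrm{ev}^\star\gamma)$ -- defines, as $t$ varies in $U^\star$, a morphism of local systems from $(R^\star \mathrm{pr}_\star \QQ)^{\otimes n}$ (restricted to the fibered power) to the constant local system $H^\star(\Mbar_{g,n};\QQ)$ on $U^\star$. This is exactly the content of the Corollary in Section~\ref{sec: loggy-background} stating deformation invariance of the GW class: the assignment $t\mapsto \pi_\star([\Mbar_{g,n}(\cX_t,\beta)]^{\sf vir}\cap \mathrm{ev}^\star\gamma_t)$ is independent of $t$, i.e. the map of local systems factors through the coinvariants, hence through the invariants by semisimplicity.

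\emph{Step 2: kill the non-invariant part.} A morphism of local systems on $U^\star$ landing in a \emph{constant} local system must vanish on any sub-local-system with no nonzero invariant global sections. By Deligne's semisimplicity theorem~\cite{HodgeI}, $H^\star(\cX_\eta^n) = H^\star(\cX_\eta^n)^{\pi_1} \oplus V$ where $V$ has no $\pi_1$-invariants (this uses that the tensor/external product of semisimple local systems is semisimple, and that the invariants of a direct sum decompose). The class $\alpha - \overline\alpha$ is a section of the subsystem $V$ over the point $\eta$. Since the GW correspondence is a map of local systems to a constant one, and $V$ contributes nothing, we get $\pi_\star([\Mbar_{g,n}(\cX_\eta,\beta)]^{\sf vir}\cap \mathrm{ev}^\star(\alpha-\overline\alpha)) = 0$, which is the desired equality. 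Alternatively, and perhaps more transparently, one can phrase this via the Poincar\'e pairing (Remark~\ref{sec: poincare-compatibility}): the GW cycle associated to $\gamma$ depends only on the pairing of $\gamma$ against classes that extend over the family, all of which are monodromy invariant, hence orthogonal to $V$.

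\emph{The main obstacle} I anticipate is making precise the claim that the GW correspondence \emph{is} a morphism of local systems in a way that legitimately allows one to evaluate it on a \emph{single} fiber's non-invariant class, rather than only on global sections. The honest argument is: the invariant $\pi_\star([\Mbar_{g,n}(\cX_t,\beta)]^{\sf vir}\cap\mathrm{ev}^\star\gamma_t)$, as $\gamma$ ranges over a monodromy orbit of a fixed class $\xi\in V$, is constant in $t$; averaging over the (finite, since $\xi$ is a Hodge class with finite monodromy orbit, or after passing to a finite cover where monodromy is unipotent) orbit shows the GW cycle of $\xi$ equals that of the average, but the average of an orbit inside $V$ is zero since $V^{\pi_1}=0$. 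This averaging step is where the semisimplicity and the finiteness of relevant monodromy orbits both enter, and it is the place to be careful -- in particular one wants the monodromy orbit to lie in a finite-dimensional space on which $\pi_1$ acts through a finite or at least reductive group, which is guaranteed here because $\xi$ may be taken algebraic (the span of the GW cycles over all insertions is unchanged if we restrict to algebraic insertions) so its orbit spans a $\QQ$-Hodge substructure on which the geometric monodromy is quasi-unipotent, and the semisimple part suffices.
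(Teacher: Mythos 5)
Your proposal is correct and is essentially the paper's own argument: deformation invariance makes the pushforward of the virtual class (equivalently, the fiberwise GW correspondence) monodromy invariant, and Deligne semisimplicity together with the Poincar\'e-pairing orthogonality of Remark~\ref{sec: poincare-compatibility} then kills $\alpha-\overline\alpha$, which is exactly the two-line proof in the paper. Your closing detour through finite monodromy orbits and the reduction to algebraic insertions is both unnecessary and shaky (the claim that algebraic insertions span the same GW cycles is unjustified), but it does no harm, since the equivariance-plus-orthogonality argument you already give is complete.
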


\begin{proof}
    Since the virtual class is monodromy invariant, the pushforward of the virtual class to the cohomology of $\Mbar_{g,n}\times\cX_\eta^n$ is orthogonal to $\alpha-\overline{\alpha}$. The result follows from Remark~\ref{sec: poincare-compatibility}.
\end{proof}

Next, we recall the statement of Deligne's global invariant cycle theorem~\cite{HodgeII}. 

\begin{theorem}
Let $Y^\circ\to B^\circ$ be a smooth projective map of algebraic manifolds. Let $Y^\circ \hookrightarrow Y$ be an open immersion into a projective manifold with a map to a proper base $B$, and fix $b\in B$. Then the image of $H^\star(Y)$ in $H^\star(Y_b)$ under restriction coincides with the subspace of monodromy invariants in $H^\star(Y_b)$. 
\end{theorem}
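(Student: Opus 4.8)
The plan is to deduce the statement by assembling three classical inputs: Deligne's degeneration theorem for the Leray spectral sequence of a smooth projective morphism, the functoriality and strictness of mixed Hodge structures~\cite{HodgeII}, and the elementary identification of global sections of a local system on a connected base with the monodromy invariants of a stalk.

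First I would dispose of the easy containment. The restriction $H^k(Y)\to H^k(Y_b)$ factors through $H^k(Y^\circ)$, so it suffices to see that the image of $H^k(Y^\circ)$ in $H^k(Y_b)$ is monodromy invariant. For a class $\alpha\in H^k(Y^\circ)$, the Leray edge map produces a global section of the local system $R^k f^\circ_\star\mathbb Q$ on $B^\circ$ whose value at $b$ is $\alpha|_{Y_b}$; since $B^\circ$ is connected, global sections of this local system are exactly the $\pi_1(B^\circ,b)$-invariants of the stalk $H^k(Y_b)$. This argument also records, for later reuse, that the restriction $j_b^\star\colon H^k(Y^\circ)\to H^k(Y_b)$ factors as the Leray edge map $H^k(Y^\circ)\to H^0(B^\circ, R^k f^\circ_\star\mathbb Q)$ followed by the injection $H^0(B^\circ, R^k f^\circ_\star\mathbb Q)\hookrightarrow H^k(Y_b)$ onto the invariant subspace.

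The reverse containment I would split into two steps. The first is a weight comparison between $Y$ and $Y^\circ$: since $Y$ is a projective manifold, $H^k(Y)$ is pure of weight $k$, and by Deligne's theory its image in $H^k(Y^\circ)$ equals the lowest-weight piece $W_k H^k(Y^\circ)$; because $j_b^\star$ is a morphism of mixed Hodge structures and $H^k(Y_b)$ is pure of weight $k$ (as $Y_b$ is smooth projective, so $W_k H^k(Y_b)=H^k(Y_b)$), strictness forces $j_b^\star\!\big(W_k H^k(Y^\circ)\big)=j_b^\star\!\big(H^k(Y^\circ)\big)$, and hence the image of $H^k(Y)$ in $H^k(Y_b)$ already equals the image of $H^k(Y^\circ)$. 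The second step is the heart of the matter: by Deligne's theorem the Leray spectral sequence of the smooth projective morphism $f^\circ\colon Y^\circ\to B^\circ$ degenerates at $E_2$, so the edge map $H^k(Y^\circ)\twoheadrightarrow E_\infty^{0,k}=H^0(B^\circ, R^k f^\circ_\star\mathbb Q)$ is surjective. Combined with the factorization recorded above, the image of $H^k(Y^\circ)$ in $H^k(Y_b)$ is all of the invariant subspace, and the first step transfers this conclusion to $H^k(Y)$, completing the proof degree by degree.

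The only nonformal ingredient is Deligne's $E_2$-degeneration theorem for smooth projective morphisms, and I expect that to be the conceptual core of the argument — I would cite it (it rests on hard Lefschetz for the fibres, and may also be viewed as an instance of the decomposition theorem) rather than reprove it. One small preliminary point is worth handling before running the weight argument: by resolving the boundary one may assume $Y\smallsetminus Y^\circ$ is a simple normal crossings divisor, so that the mixed Hodge structure on $H^\star(Y^\circ)$ and the identification $W_k H^k(Y^\circ)=\operatorname{im}\big(H^k(Y)\to H^k(Y^\circ)\big)$ apply in the stated form; such a modification changes neither side of the claimed equality.
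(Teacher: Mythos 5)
Your proposal is correct, and it is essentially the argument behind the statement: this is Deligne's global invariant cycle theorem, which the paper does not prove but simply recalls with a citation to~\cite{HodgeII}, and your sketch (Leray degeneration at $E_2$ for $f^\circ$, identification of $H^0(B^\circ,R^kf^\circ_\star\mathbb Q)$ with the $\pi_1$-invariants of $H^k(Y_b)$, then strictness of MHS morphisms plus $W_kH^k(Y^\circ)=\operatorname{im}\bigl(H^k(Y)\to H^k(Y^\circ)\bigr)$ to replace $H^k(Y^\circ)$ by $H^k(Y)$) is exactly Deligne's proof. Two cosmetic remarks: the point $b$ must of course be taken in $B^\circ$ so that $Y_b$ is a smooth projective fiber, and the final snc-resolution step is harmless but unnecessary, since the identification of $W_kH^k(Y^\circ)$ with the image of $H^k(Y)$ holds for any smooth projective compactification.
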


In particular, every monodromy invariant cycle in $H^\star(\cX_\eta^n)$ extends over any semistable modification of $\cX_B^n\to B$. The next step will be to construct such a semistable degeneration. 

The smoothness of $Y$ is crucial in the theorem -- toroidal families do not suffice. Indeed, even the Poincar\'e dual of a hypersurface in the general fiber may not extend as a cohomology class in the presence of toric singularities.

\subsection{A semistable degeneration of the evaluation space}\label{sec: ss-eval} We assume for simplicity in this section that the evaluation space ${\mathcal E}v_\Lambda\to B$ is equal to the fibered power of $\cX\to B$. It is a notational exercise to extend this to other evaluation spaces.

In order to apply Deligne's theorem, the total space of the degeneration must be smooth, and as we have noted the evaluation space is not because a fibered power of an snc family is not. We can perform semistable reduction to the evaluation space as follows. The family
\[
\mathcal X^n_B\to B
\]
is toroidal. Its cone complex structure can be described as follows. Recall we described the map of cone complexes,
\[
\Sigma_{\cX}\to \Sigma_B,
\]
induced by $\cX\to B$. The target has a single vertex and one ray for each singular fiber in the family. Each cone of $\Sigma_{\cX}$ maps to $\Sigma_B$ as a cone over a unimodular simplex, of dimension equal to the fiber dimension of $\cX/B$. The cone complex of the fibered power is the fibered power of the cone complex, so the cones of 
\[
\Sigma_{\cX^n_B}\to \Sigma_B
\]
are cones over products of unimodular simplices. We could now perform semistable reduction following~\cite{KKMSD}, but the toroidal singularities here are of a very particular form one can do this very efficiently:

\begin{lemma}
    There exists a subdivision $\Sigma^\diamond_{\cX^n_B}\to \Sigma_{\cX^n_B}$ that (i) is smooth, (ii) adds no new rays. In particular, the primitive generators for the rays $\Sigma^\diamond_{\cX^n_B}$ generate the image lattice in $\Sigma_B$. The associated birational modification $\cX_B^{n,\diamond}\to B$ is semistable.
\end{lemma}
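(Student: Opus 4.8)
The statement is a combinatorial assertion about the cone complex $\Sigma_{\cX^n_B}$, which we have already identified as a cone complex over $\Sigma_B$ whose cones are cones over products of unimodular simplices. The plan is to produce a smooth subdivision simplex-by-simplex that does not introduce new rays, using the standard fact that a product of unimodular simplices admits a unimodular triangulation using only the original vertices. Concretely, the fibers of $\Sigma_{\cX^n_B} \to \Sigma_B$ over the generators of the rays of $\Sigma_B$ are products of unimodular simplices $\Delta_{d} \times \cdots \times \Delta_{d}$ ($n$ factors), sitting at ``height one'' over each ray; the cones of $\Sigma_{\cX^n_B}$ are the cones over these polytopes (and their faces). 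The first step is to recall and invoke the classical triangulation: the staircase (or pulling) triangulation of a product of simplices $\Delta_{a}\times\Delta_{b}$ is unimodular and uses exactly the lattice points $\{(e_i,f_j)\}$, i.e.\ the vertices of the product. Iterating over the $n$ factors, one obtains a unimodular triangulation of each height-one fiber polytope whose vertices are precisely the vertices of the original product of simplices --- equivalently, the primitive ray generators of $\Sigma_{\cX^n_B}$.

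Next I would globalize this: the triangulations of the individual maximal cones must be chosen compatibly on shared faces. The standard device here is to fix a single total order on the vertices (equivalently, on the rays of $\Sigma_{\cX^n_B}$) and take the \emph{pulling triangulation} induced by that order on every cone simultaneously; pulling triangulations with respect to a global vertex order automatically agree on common faces, so they glue to a subdivision $\Sigma^\diamond_{\cX^n_B}\to\Sigma_{\cX^n_B}$. One then checks that on each maximal cone this pulling triangulation restricts to (a refinement of) the unimodular staircase triangulation of the product of simplices --- this is where the special ``cone over a product of unimodular simplices'' structure is used, since pulling triangulations of such products are unimodular. Smoothness of the subdivision (property (i)) is exactly unimodularity of every maximal cone; property (ii), that no new rays are added, is immediate because a pulling triangulation uses only the vertices already present. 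The final sentence about primitive generators is then automatic: the rays of $\Sigma^\diamond_{\cX^n_B}$ are the rays of $\Sigma_{\cX^n_B}$, whose primitive generators sit at height one over $\Sigma_B$ and hence map to the primitive generators of the rays of $\Sigma_B$, so they generate the image lattice.

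Finally, to conclude that the associated modification $\cX^{n,\diamond}_B\to B$ is semistable, I would translate the combinatorial conclusions back to geometry via the standard dictionary for toroidal morphisms: a toroidal morphism to a curve (with its standard log structure) is semistable if and only if, over each ray, every maximal cone maps to the ray by sending the primitive generators of its rays to the primitive generator of the target ray --- that is, the morphism is ``reduced'' on each cone and the total space is smooth. Smoothness of the total space is property (i). The reducedness/height-one condition is property (ii) together with the observation that the new rays' primitive generators generate the image lattice in $\Sigma_B$: each maximal cone of $\Sigma^\diamond_{\cX^n_B}$ is spanned by rays all of whose primitive generators have $\Sigma_B$-coordinate equal to $1$, so the induced map on that cone carries the primitive generators to the primitive generator of the ray of $\Sigma_B$, which is precisely the semistability condition. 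Invoking~\cite{KKMSD} for the construction of $\cX^{n,\diamond}_B$ from $\Sigma^\diamond_{\cX^n_B}$ then finishes the argument.

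\textbf{Main obstacle.} The only genuinely non-formal point is the compatibility of the per-cone unimodular triangulations across shared faces while simultaneously keeping every maximal cone smooth; the fibered-power structure makes the cones products of unimodular simplices (over $\Sigma_B$), and the resolution is to commit to one global vertex order and use the induced pulling triangulation everywhere, invoking the classical fact that pulling triangulations of products of unimodular simplices are unimodular. Everything else --- no new rays, the statement about image lattices, and the passage from the combinatorics to semistability of $\cX^{n,\diamond}_B$ --- is then bookkeeping in the toroidal dictionary.
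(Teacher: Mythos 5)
Your proposal is correct and follows essentially the same route as the paper, whose entire proof is the observation that products of unimodular simplices can be triangulated into unimodular simplices without adding new vertices (via the staircase triangulation, citing~\cite{HPPS21}), from which the lemma follows. Your extra care about gluing the per-cone triangulations via a global vertex order and the translation to semistability is sound bookkeeping that the paper leaves implicit.
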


\begin{proof}
    Products of unimodular simplices can be triangulated into unimodular simplices without adding new vertices, for instance, using a staircase triangulation, see~\cite[Section~2.3]{HPPS21}. The lemma is a consequence.
\end{proof}

\begin{remark}
    For a more detailed discussion of semistabilizations of the evaluation space, including moduli interpretations, see~\cite{SCM24}. For discussion in the case of double point degenerations, see~\cite{PP12}.
\end{remark}

\subsection{Decomposition across tropical curves}\label{sec: decomposition-prim} The properties in the preceding lemma are not crucial, but the construction is pleasant and avoids the ramified base change step that is needed in the general case, so we have used it. The rest of the discussion can be carried out for an arbitrary semistabilization. 

We have a semistabilization
\[
\cX_B^{n,\diamond}\to \cX_B^n\to B
\]
of the family of evaluation spaces. The mapping space $\Mbar_{g,n}(\cX,\beta)$ will typically not have a map to this modification,  for the same reasons as we saw in Section~\ref{sec: deg-formula}. Perform fine and saturated logarithmic base change:
\[
\begin{tikzcd}
    \Mbar_{g,n}(\cX,\beta)^\diamond\arrow{d}\arrow{r} & \Mbar_{g,n}(\cX,\beta)\arrow{d}\\
    \cX_B^{n,\diamond}\arrow{r} & \cX_B^n.
\end{tikzcd}
\]
Again, the square is not typically cartesian. There is a compatibility of virtual classes by the same arguments as before, so we can conclude that pushforward along the top arrow identifies virtual structures on each fiber over $B$, so for any fiber $b$, we have a proper map
\[
\Mbar_{g,n}(\cX_b,\beta)^\diamond\to \Mbar_{g,n}(\cX_b,\beta),
\]
and this identifies virtual classes. In particular, this is true for any singular fiber. 

Let $\cX_b$ be a singular fiber of $\cX\to B$ and consider an insertion $\overline\alpha$ in $H^\star(\cX_\eta^n)$ that is invariant under the action of the monodromy group. Let $\alpha_b$ be the specialization to $\cX_b^{n,\diamond}$. We now decompose the GW cycle associated to $\overline\alpha_b$. 

The decomposition theorem stated in Section~\ref{sec: decomposition} applies equally well to $\Mbar_{g,n}(\cX_b,\beta)^\diamond$, as explained in~\cite[Section~7]{MR23}. There is a decomposition
\[
[\Mbar_{g,n}(\cX_b,\beta)^\diamond]^{\sf vir} = \sum_\gamma [\Mbar_{\gamma}(\cX_b,\beta)^\diamond]^{\sf vir}
\]
where $\gamma$ is a tropical map. These tropical maps are not ``rigid'' in the usual sense, but there are a finite set of distinguished ones, as explained below.

\begin{remark}
    The tropical curves $\gamma$ index the rays of the cone complex of $\Mbar_{g,n}(\cX_b,\beta)^\diamond$ that dominate the cone in $\Sigma_B$ corresponding to the point $b$. In the usual situation~\cite{ACGS15}, each $\gamma$ is a {\it rigid tropical map} -- a map that cannot be deformed within its combinatorial type. In the model $\Mbar_{g,n}(\cX_b,\beta)^\diamond$, the rays of the corresponding cone complex need not be indexed by rigid tropical maps -- they can correspond to an arbitrary tropical map. The tropical maps $\gamma$ will simply correspond to the generators of the rays in the subdivision that defines $\Mbar_{g,n}(\cX_b,\beta)^\diamond$.
\end{remark}

Fix an index $i\in[n]$. The marked point $p_i$ is attached to a vertex $V$ of $\gamma$. The tropical map $\gamma$ maps the vertex $V$ to a point in the cone complex $\Sigma_{\cX^n_B}$, and by subdividing to include this point, we obtain a component $Y_V$. This component is an equivariant compactification of the normal torus bundle of the stratum corresponding to the cone containing $V$. As we have noted before, the precise compactification of this torsor is not important. 

We have an evaluation map
\[
{\sf ev}_i\colon \Mbar_\gamma(\cX_b,\beta)\to Y_V. 
\]
We can arrange for there to be a factorization:
\[
\Mbar_\gamma(\cX_b,\beta)^\diamond\to \left(\prod_V Y_V \right)^\diamond\to \cX_b^{n,\diamond}
\]
where $\left(\prod_V Y_V \right)^\diamond$ is a blowup of the product. Indeed, we can first produce a map
\[
\prod_V Y_V\to \cX_b^n
\]
by identifying each $Y_V$ with a torus torsor over a stratum of $\cX_0$. The evaluation space from $\Mbar_\gamma(\cX_b,\beta)$ itself, before blowup, factors through $\prod_V Y_V$ by construction. We then pull back the logarithmic modification $\cX_b^{n,\diamond}\to\cX_b^{n}$, and obtain the factorization above. 

We now have the ingredients to establish the main goal of this section. The statement of the result requires the notion of an exotic insertion from Definition~\ref{def: exotic-insertion}. For convenience we introduce one more piece of terminology -- given $\cX\to B$ and a stratum of a special fiber $\cX_b$, we call its the exceptional fiber of the blowup of the stratum the {\it inflation} of the stratum. 

\begin{theorem}\label{thm: primitive-in-exotic}
    Let $\alpha$ be a insertion in $H^\star(\cX_\eta^n)$. The GW cycle $\pi_\star\left([\Mbar_{g,n}(\cX_\eta,\beta)]^{\sf vir}\cap {\sf ev}^\star(\alpha)\right)$ lies in the span of the exotic logarithmic GW for all pairs $(Y|\partial Y)$ arising as inflations of strata of $\cX_b$. 
\end{theorem}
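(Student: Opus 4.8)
The plan is to feed the Hodge-theoretic input of Section~\ref{sec: hodge-basics} into the decomposition theorem and the degeneration formula of Section~\ref{sec: deg-formula}, using the semistable model of Section~\ref{sec: ss-eval} as the bridge that makes Deligne's theorem applicable, and then to read off the exotic insertions from the factorizations of the evaluation morphisms of the individual tropical pieces. First I would reduce to the case that $\alpha$ is monodromy invariant: by the projection Proposition of Section~\ref{sec: hodge-basics}, the GW cycle attached to $\alpha$ equals the one attached to its monodromy-invariant projection $\overline\alpha$, so we may assume $\alpha=\overline\alpha$. Next, by the Lemma of Section~\ref{sec: ss-eval} there is a semistable modification $\cX_B^{n,\diamond}\to\cX_B^n$ with $\cX_B^{n,\diamond}$ smooth and projective, which is an isomorphism over the generic point $\eta$. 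Applying Deligne's global invariant cycle theorem to $\cX_B^{n,\diamond}\to B$ and a smooth fiber, $\overline\alpha$ is the restriction of a class $\widetilde\alpha\in H^\star(\cX_B^{n,\diamond})$; it is essential here that $\cX_B^{n,\diamond}$, and not the toroidal $\cX_B^n$, is smooth. Fix a singular fiber, call it $\cX_0$, and set $\alpha_0:=\widetilde\alpha|_{\cX_0^{n,\diamond}}$.

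Now transport the computation to the singular fiber. Performing fine and saturated logarithmic base change along $\cX_B^{n,\diamond}\to\cX_B^n$ produces $\Mbar_{g,n}(\cX,\beta)^\diamond$ with a morphism to $\cX_B^{n,\diamond}$; by the usual compatibility its virtual class pushes forward to that of $\Mbar_{g,n}(\cX,\beta)$ on each fiber, so deformation invariance of the virtual class over $B$ gives
\[
\pi_\star\big([\Mbar_{g,n}(\cX_\eta,\beta)]^{\sf vir}\cap{\sf ev}^\star(\alpha)\big)=\pi_\star\big([\Mbar_{g,n}(\cX_0,\beta)^\diamond]^{\sf vir}\cap{\sf ev}^\star(\alpha_0)\big)
\]
in $H^\star(\Mbar_{g,n};\QQ)$. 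Applying the decomposition theorem in the form valid for the model $\Mbar_{g,n}(\cX_0,\beta)^\diamond$ (as in~\cite[Section~7]{MR23}), write $[\Mbar_{g,n}(\cX_0,\beta)^\diamond]^{\sf vir}=\sum_\gamma m_\gamma[\Mbar_\gamma(\cX_0,\beta)^\diamond]^{\sf vir}$ over the finitely many tropical maps $\gamma$ indexing the relevant rays, so the GW cycle becomes $\sum_\gamma m_\gamma\,\pi_{\gamma,\star}\big([\Mbar_\gamma(\cX_0,\beta)^\diamond]^{\sf vir}\cap{\sf ev}^\star(\alpha_0)\big)$.

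Fix $\gamma$. Each vertex $V$ of $\gamma$ carries the target $Y_V$, the inflation of the stratum of $\cX_0$ attached to $V$, and after passing to a common sufficiently refined model the evaluation of $\Mbar_\gamma(\cX_0,\beta)^\diamond$ factors through a blowup $\big(\prod_V Y_V\big)^\diamond\to\prod_V Y_V\to\cX_0^n$ obtained by pulling back the modification $\cX_0^{n,\diamond}\to\cX_0^n$; thus $\alpha_0$ pulls back to a class $\alpha_0^\diamond$ on $\big(\prod_V Y_V\big)^\diamond$, which --- being a class on a blowup of a product that need not descend --- is a combination of exotic insertions in the sense of Definition~\ref{def: exotic-insertion}. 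On the same model, Theorem~\ref{thm: degeneration-formula} gives $\kappa_\star[\Mbar_\gamma(\cX_0,\beta)]^{\sf vir,\diamond}=k_\gamma\big[\prod_V\Mbar_{\gamma,V}(Y_V|\partial Y_V)^\diamond\big]^{\sf vir}\cap{\sf ev}^\star(\Delta_\gamma^\diamond)$. Since $\pi_\gamma$ factors as $\kappa$ followed by the forget-and-glue morphism $\prod_V\Mbar_{g_V,\mathbf{n}_V}\to\Mbar_{g,n}$ attached to ${\sf G}(\gamma)$, the projection formula identifies the $\gamma$-contribution with the gluing pushforward of $k_\gamma\big[\prod_V\Mbar_{\gamma,V}(Y_V|\partial Y_V)^\diamond\big]^{\sf vir}\cap{\sf ev}^\star(\Delta_\gamma^\diamond\cdot\alpha_0^\diamond)$. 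The class $\Delta_\gamma^\diamond\cdot\alpha_0^\diamond$ lives on a blowup of $\prod_V\mathsf{Ev}_{\gamma,V}(Y_V|\partial Y_V)$; over $\QQ$ in singular cohomology, the blowup formula together with the Künneth decomposition of the (products of) strata appearing expresses it as a finite sum $\sum_j\boxtimes_V\epsilon_{V,j}$ of external products of classes on blowups of the $\mathsf{Ev}_{\gamma,V}(Y_V|\partial Y_V)$, i.e. of exotic insertions. Hence the $\gamma$-contribution is a rational linear combination of boundary pushforwards of external products $\boxtimes_V\pi_{V,\star}\big([\Mbar_{\gamma,V}(Y_V|\partial Y_V)^\diamond]^{\sf vir}\cap{\sf ev}_V^\star(\epsilon_{V,j})\big)$, each factor being an exotic logarithmic GW cycle of the pair $(Y_V|\partial Y_V)$, the inflation of a stratum of $\cX_0$. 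Summing over $\gamma$ proves the theorem.

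The main obstacle is the compatibility bookkeeping in the last step: one must organize the semistabilization $\diamond$ of the evaluation space, the logarithmic base change of the mapping space, the sufficiently refined model demanded by Theorem~\ref{thm: degeneration-formula}, and the blowup $\big(\prod_V Y_V\big)^\diamond$ into a single coherent tower, so that $\alpha_0$ and the gluing class $\Delta_\gamma^\diamond$ live on one space and can be decomposed simultaneously across the vertices of $\gamma$. This is also precisely where the argument is confined to singular cohomology: it uses the Künneth decomposition of exotic classes on a blowup of a product unconditionally over $\QQ$, whereas the Chow analogue would need Chow--Künneth decompositions for the relevant strata. A secondary but genuine point is that Deligne's theorem requires the smoothness supplied by the semistabilization Lemma; the naive fibered power $\cX_B^n$, being only toroidal, does not suffice.
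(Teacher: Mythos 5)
Your first three steps -- replacing $\alpha$ by its monodromy-invariant projection, passing to the semistable model $\cX_B^{n,\diamond}$ so that Deligne's global invariant cycle theorem applies, specializing to $\alpha_0$ on the singular fiber, and decomposing over the tropical maps $\gamma$ indexing the rays of the modified mapping space -- coincide with the paper's argument. The gap is in the final step. You assert that $\Delta_\gamma^\diamond\cdot\alpha_0^\diamond$, which lives on a blowup of $\bigl(\prod_{i}Y_{V(i)}\bigr)\times\prod_V\mathsf{Ev}^{\sf pos}_{\gamma,V}(Y_V|\partial Y_V)$, can be written ``by the blowup formula together with K\"unneth'' as a finite sum of external products of classes on blowups of the individual vertex evaluation spaces. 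That is not true in general: the modification carrying $\alpha_0^\diamond$ is pulled back from the semistabilization $\cX_b^{n,\diamond}\to\cX_b^n$, which is a criss-cross subdivision mixing the factors attached to different vertices, and the cohomology of a blowup of a product is strictly larger than the span of external products of classes on blowups of the factors (already for the blowup of a point on $\PP^1\times\PP^1$, the exceptional class is not in that span). So a class on this criss-cross model is not, as stated, a combination of per-vertex exotic insertions, and capping it with the virtual class does not obviously land in the span of products of exotic GW cycles of the pairs $(Y_V|\partial Y_V)$. The ``compatibility bookkeeping'' you flag at the end is precisely this issue, and it is not bookkeeping.

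The paper closes this gap with an additional geometric device that your proposal lacks: it regroups $\prod_i Y_{V(i)}=\prod_V\mathsf F_V$, chooses vertex-wise subdivisions $\mathsf F_V^\dagger$ and $\mathsf{Ev}^{\sf pos,\dagger}_{\gamma,V}$ so that each $\Mbar_{\gamma,V}(Y_V)^\dagger\to\mathsf F_V^\dagger$ is combinatorially flat, and compares the criss-cross (diamond) model with the genuine product (dagger) model through a square which, after fine and saturated base change, is stack-theoretically cartesian because of that flatness. Compatible obstruction theories and a push--pull argument then transfer the whole virtual-class computation to $\prod_V\Mbar_{\gamma,V}(Y_V)^\dagger$, whose evaluation target is an honest product over vertices; only there does the K\"unneth decomposition over $\QQ$ apply, producing the per-vertex exotic insertions and hence the claimed span. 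Without this flattening/cartesian-square step (or an equivalent substitute), your invocation of the degeneration formula plus K\"unneth does not yield the conclusion.
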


\begin{proof}
    We bring together the elements of the discussion of the section. First, given $\alpha$, we can use the discussion of Section~\ref{sec: hodge-basics} we can replace it with its projection $\overline \alpha$ onto the monodromy invariants, and this does not change the GW class. Next, we can pass to a semistable model $\cX_B^{n,\diamond}$ of the evaluation space and specialize $\overline\alpha$ to an insertion $\alpha_b$ on the special fiber. We can apply the discussion of Section~\ref{sec: decomposition-prim} to write this new GW cycle as a sum over tropical curves, and for each tropical curve, write its contribution as a pullback of a cohomology class along the enhanced evaluation:
    \[
    \Mbar_\gamma(\cX_b,\beta)^\diamond\to \left(\prod_V Y_V \right)^\diamond.
    \]

    This reduces the problem to the special fiber. To conclude the theorem, we will apply the degeneration formula of Theorem~\ref{thm: degeneration-formula} and a push/pull argument. 
    
    Fix a tropical map $\gamma$ contributing to the calculation above. By the degeneration formula, the GW cycle associated to $\gamma$ can be obtained by pulling back a cohomology class along 
    \[
    \left( \prod_V \Mbar_{\gamma,V}(Y_V) \right)^\diamond\to \left(\prod_{i=1}^n Y_{V(i)}\right)^\diamond\times \left(\prod_V \mathsf{Ev}^{\sf pos}_{\gamma,V}(Y_V|\partial Y_V)\right)^\diamond
    \]
    and pushing forward to $\Mbar_{g,n}$; the superscript ${\sf pos}$ indicates that we have separated out the factors corresponding to contracted legs of $\gamma$, i.e. contact order $0$ markings, and the rest; and also $V(i)$ is the vertex to which the marked point $i$ belongs.

    We can group $\prod_{i=1}^n Y_{V(i)}$ to write it as a product over vertices of $\gamma$:
    \[
    \prod_{i=1}^n Y_{V(i)} = \prod_V {\sf F}_V,
    \]
    where ${\sf F}_V$ is the product of $Y_i$'s with $V(i) = V$. Observe that we can pass to subdivisions such that 
    \[
    \Mbar_{\gamma,V}(Y_V)^\dagger\to {\sf F}_V^\dagger
    \]
    is combinatorially flat for each $V$. We can similarly do this for the evaluation spaces. This leads to a commutative diagram:
    \[
    \begin{tikzcd}
        \left( \prod_V \Mbar_{\gamma,V}(Y_V) \right)^\diamond\arrow{r}\arrow{d} & \prod_V \Mbar_{\gamma,V}(Y_V)^\dagger\arrow{d}\\
        \left(\prod_{i=1}^n Y_{V(i)}\right)^\diamond \times \left(\prod_V \mathsf{Ev}^{\sf pos}_{\gamma,V}(Y_V|\partial Y_V)\right)^\diamond \arrow{r} & \prod_V {\sf F}_V^\dagger\times \prod_V \mathsf{Ev}^{\sf pos,\dagger}_\gamma(Y_V|\partial Y_V).
    \end{tikzcd}
    \]
    The GW cycle we are interested in is the pushforward to $\Mbar_{g,n}$ of a pullback of a cohomology class along the left vertical, capped with the virtual class. The cohomology class is itself the pullback of $\overline\alpha_b$ on the first factor, and the strict transform of the diagonal on the second factor.

    We can also assume both spaces on the bottom row are smooth, and so the arrow has an lci pullback. 
    
    Now by compatibility of virtual classes under subdivisions, the GW class of interest is unchanged if we replace the top left of the diagram by its fine and saturated base change, so we assume it is logarithmically cartesian. But the right vertical is combinatorially flat by construction, so it is also stack theoretically cartesian. By the same discussion used in~\cite[Section~12]{MR23}, the vertical arrows have compatible obstruction theories giving their usual virtual classes, and so a pull/push argument shows that the GW class of interest is equal to a class on the space $\prod_V \Mbar_{\gamma,V}(Y_V)^\dagger$. Since these are, by definition, exotic cycles, we conclude the result. 
\end{proof}

\section{An ordering on tropical curves}\label{sec: stars-partial-ordering}

We have seen that {\it exotic} cycles are basic features in our analysis of the behaviour of GW cycles under degenerations, both in Theorem~\ref{thm: degeneration-formula} and Theorem~\ref{thm: primitive-in-exotic}. Our next conceptual task is to reconstruct the exotic cycles in terms of standard logarithmic GW cycles. The reconstruction is inductive in nature -- we start with a non-exotic approximation of an exotic cycle, and argue that the rest of the terms are of ``smaller complexity'', and can be split off into smaller problems of the same kind. 

In order to make the notion of ``smaller complexity'' precise, in this section, we introduce and analyze an ordering on the discrete data governing the different moduli problems. 

\begin{remark}[Tropical data structures]
    The standard data type that indexes strata in the space of logarithmic stable maps are combinatorial types of {\it tropical stable maps} -- metric dual graphs with genus, degree, and marking decorations, with a piecewise linear map to $\Sigma$, see~\cite{ACGS15,GS13}. In logarithmic DT theory, the natural data type is a {\it Hilbert $1$-complex}, see~\cite[Section~2]{MR23} -- a $1$-dimensional polyhedral subcomplex of $|\Sigma|$ with Euler characteristic, degree, and weights on the edges. Both give rise to a common type of object -- a {\it Chow $1$-complex}, which we define precisely in a moment. Although DT theory does not appear here, the ordering, as well as the exotic/non-exotic manoeuvre, will be useful in later work. We therefore set the theory up in as uniform a way as possible.
\end{remark}

\subsection{Preliminaries} Fix a target $Y$ with simple normal crossings boundary divisor $\partial Y$. Let $\Sigma$ denote the cone complex. It will be convenient to view
\[
|\Sigma|\subset\mathbb R^r
\]
where $r$ is the number of rays: each ray of $\Sigma$ is embedded as the positive direction in the corresponding axis, and the rest is determined by linearity\footnote{Strictly speaking if this is to be an embedding we require the strata of $(Y|\partial Y)$ to be connected. If this is not the case, we can blowup to make it true, or adapt the arguments here with a little more care. We leave the details to the reader.}.

Let $p$ be a point of $|\Sigma|$. The {\it space tangent directions at $p$} are the set of integral tangent vectors at $p$ in the ambient $\mathbb R^r$ that point into one of the cones of $\Sigma$ that contain $p$. We denote it by $T_p\Sigma$ and refer to its elements simply as tangent vectors. It is useful to view a tangent vector as a primitive vector times a positive integer {\it weight}, which we will refer to as the weight of the tangent direction.

\subsubsection{The logarithmic balancing condition}  A tangent direction encodes a possible contact order at a point of a logarithmic map to $Y$, or to a component in an degeneration thereof. The point $p$, and specifically the cone of $\Sigma$ in which it lies, determines the component of such an expanded degeneration. 

The tangent directions that could arise are constrained by the balancing condition~\cite[Section~1]{GS13}. We recall this. Let $\ell_1,\ldots,\ell_r$ be the linear coordinate functions on $\mathbb R^r$. Each corresponds to a boundary divisor in $Y$, say $E_1,\ldots, E_r$. 

\begin{definition}
Let $p\in\sigma^\circ$ be a point of $\Sigma$ and let $\beta\in H_2(W_\sigma;\ZZ)$ be an effective curve class on the associated stratum. A tuple of tangent vectors $( v_1,\ldots,v_m)$ with $v_i\in T_p\Sigma$ is called {\it balanced with respect to $\beta$} if for every index $1\leq i\leq r$, the following equality holds:
\[
\sum_{j=1}^m \ell_i(v_j) = \beta\cdot [E_i]\in \mathbb Z^r.
\]
The {\it balancing direction} associated to $\beta$ is the vector
\[
v_\beta = -(\beta\cdot [E_1],\ldots,\beta\cdot [E_m]).
\]
\end{definition}

In other words, to check if $(v_1,\ldots,v_m,\beta)$ is balanced, append $v_\beta$ to the list and check balancing in the traditional sense in tropical geometry -- the sum of the outgoing vectors at $p$ is equal to $0$. We say that a collection of vectors is {\it traditionally balanced} if this latter condition holds without adding any appendages. 

\begin{definition}[Stars and equivalence]\label{def: star}
A {\it star} in $\Sigma$ is a tuple $(p,v_1,\ldots,v_m,\beta,K)$, denoted $\mathsf v$, where $p$ is a point in a cone $\sigma^\circ$, the $v_i$ are tangent vectors based at $p$, the element $\beta$ is an effective curve class on the stratum $W_\sigma$ dual to $\sigma$, and $K$ is a finite set, subject to the condition is $(v_1,\ldots,v_m)$ are balanced with respect to $\beta$. The point $p$ is the {\it base} of the star, the element $\beta$ is its {\it curve class}, and $K$ is its {\it set of internal markings}. 

Two stars $\mathsf v$ and $\mathsf v'$ are {\it equivalent} if their base points both lie in $\sigma^\circ$, translation identifies their set of tangent directions, the curve classes are the same, and if their sets of internal markings have the same cardinality. 
\end{definition}

\begin{remark}[Stars and discrete data]\label{rem: stars-discrete-data}
A star determines partial discrete data for a logarithmic moduli problem for maps. It is partial for two reasons -- because the genus is not part of the data and, although the total tangency with any given divisor is determined, the precise partition is not specified. To record the rest of the discrete data from a star, we first need a target. The closed stratum $W_p$ dual to the cone containing $p$ has a split normal bundle in $Y$. Pass to the associated torus bundle of this normal bundle, obtained by deleting $0$-sections. Let $\Sigma(p)$ be the star of this cone\footnote{Recall from toric geometry that the star is the projection of the union cones in $\Sigma$ containing the cone $\sigma$ attached to $p$ as a face, into the quotient by the linear space spanned by $\sigma$.}. A broken toric bundle can be built by choosing a fan structure on $\Sigma(p)\times N_{\RR}(p)$, where $N_{\mathbb R}(p)$ is the cocharacter lattice of the fiber torus. The $v_i$ are naturally vectors in this space. Choose a fan structure such that the $v_i$ lie on rays; this corresponds to the disjointness condition. The curve class is fixed; its intersection with the boundary divisors of the bundle is determined by these $v_i$, and the curve class in the base direction, namely $W_p$, is fixed by $\beta$. 
\end{remark}

\begin{definition}\label{def: Chow-1-complex}
A {\it weighted $1$-complex in $\Sigma$} is a $1$-dimensional rational polyhedral complex $\Gamma$ embedded in $|\Sigma|$, together with a positive integer weight on each of its edges given by $w_\Gamma\colon E(\Gamma)\to \NN$. Each flag $(V,E)$ of a vertex determines a tangent vector, given by the primitive direction in the outgoing direction of $E$ scaled by the edge weight. 

A {\it Chow $1$-complex} is a tuple 
$$
\left(\Gamma, w_\Gamma, (\beta_V)_{V\in V(\Gamma)}, (K_V)_{V\in V(\Gamma)}\right).
$$
where $(\Gamma,w_\Gamma)$ is a weighted $1$-complex, the elements $\beta_V$ are effective curve class decorations on the strata determined by $V$, and the $K_V$ are finite sets of internal markings, such that for every $V$, the tangent directions at $V$ are balanced with respect to $\beta_V$. 
\end{definition}

We will slightly abuse notation and write $\Gamma$ for the full data of the Chow $1$-complex. 

One of our main tools is the inductive structure of the moduli space of logarithmic stable maps. In each genus, the natural data structure indexing the different moduli spaces are exactly stars, so in order to control the induction, we need a partial ordering on the set of stars. 

\subsection{Ordering stars -- the traditional case} We first treat the case of traditionally balanced stars and traditionally balanced Chow $1$-complexes. This essentially corresponds to the toric case. We then deduce the general case from this case.

A {\it traditionally balanced star} in $\mathbb R^r$ is a finite set of integral vectors $v_1,\ldots,v_m$ whose sum is zero. A {\it traditional tropical curve} is a weighted $1$-dimensional polyhedral complex in $\mathbb R^r$ whose star at each vertex is traditionally balanced. 

The curve class and internal marking data do not appear for this next part of the discussion.

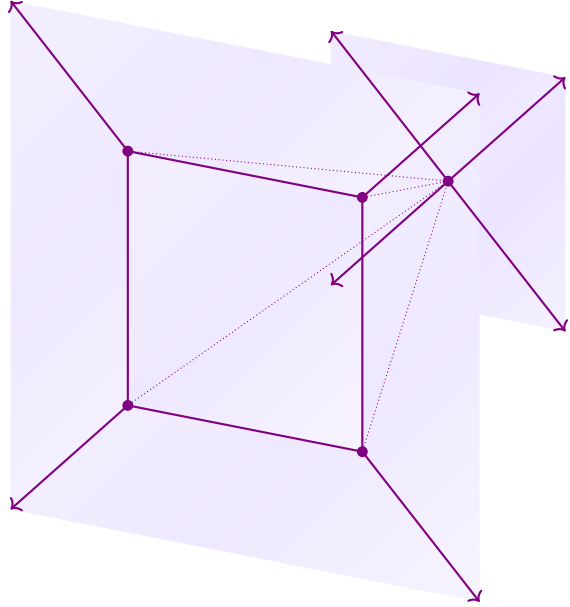
\begin{figure}
\begin{tikzpicture}[tdplot_main_coords, scale=1.8]

\definecolor{customblue}{RGB}{223,210,255}

\pgfdeclareverticalshading{bluecone1}{100bp}{
  color(0bp)=(customblue!20!white);
  color(50bp)=(customblue!60!white);
  color(100bp)=(customblue!35!white)
}
\pgfdeclareverticalshading{bluecone2}{100bp}{
  color(0bp)=(customblue!15!white);
  color(50bp)=(customblue!50!white);
  color(100bp)=(customblue!30!white)
}

\shade[shading=bluecone1, shading angle=45]
  (-2,1,1) -- (-2,0,0) -- (-2,1,-1) -- cycle;
\shade[shading=bluecone1, shading angle=45]
  (-2,1,-1) -- (-2,0,0) -- (-2,-1,-1) -- cycle;
\shade[shading=bluecone1, shading angle=45]
  (-2,-1,-1) -- (-2,0,0) -- (-2,-1,1) -- cycle;
\shade[shading=bluecone1, shading angle=45]
  (-2,-1,1) -- (-2,0,0) -- (-2,1,1) -- cycle;

\shade[shading=bluecone2, shading angle=45]
  (1,1,1) -- (1,2,2) -- (1,2,-2) -- (1,1,-1) -- cycle;
\shade[shading=bluecone2, shading angle=45]
  (1,2,2) -- (1,1,1) -- (1,-1,1) -- (1,-2,2) -- cycle;
\shade[shading=bluecone2, shading angle=45]
  (1,-2,2) -- (1,-1,1) -- (1,-1,-1) -- (1,-2,-2) -- cycle;
\shade[shading=bluecone2, shading angle=45]
  (1,-2,-2) -- (1,-1,-1) -- (1,1,-1) -- (1,2,-2) -- cycle;
\shade[shading=bluecone2, shading angle=45]
  (1,1,1) -- (1,-1,1) -- (1,-1,-1) -- (1,1,-1) -- cycle;

\draw[violet, line width=0.8pt]
  (1,1,1) -- (1,1,-1) -- (1,-1,-1) -- (1,-1,1) -- cycle;

\draw[->,violet, line width=0.8pt] (1,1,1) -- (1,2,2);
\draw[->,violet, line width=0.8pt] (1,1,-1) -- (1,2,-2);
\draw[->,violet, line width=0.8pt] (1,-1,1) -- (1,-2,2);
\draw[->,violet, line width=0.8pt] (1,-1,-1) -- (1,-2,-2);

\draw[->,violet, line width=0.8pt] (-2,0,0) -- (-2,1,1);
\draw[->,violet, line width=0.8pt] (-2,0,0) -- (-2,-1,1);
\draw[->,violet, line width=0.8pt] (-2,0,0) -- (-2,1,-1);
\draw[->,violet, line width=0.8pt] (-2,0,0) -- (-2,-1,-1);

\draw[violet, densely dotted]
  (-2,0,0) -- (1,1,1);
\draw[violet, densely dotted]
  (-2,0,0) -- (1,1,-1);
\draw[violet, densely dotted]
  (-2,0,0) -- (1,-1,1);
\draw[violet, densely dotted]
  (-2,0,0) -- (1,-1,-1);

\fill[violet] (-2,0,0) circle (0.4mm);
\foreach \x/\y/\z in {1/1/1,1/-1/1,1/1/-1,1/-1/-1}
  \fill[violet] (\x,\y,\z) circle (0.4mm);

\end{tikzpicture}

\caption{The asymptotic star of a $1$-complex.}\label{fig: asymptotic}
\end{figure}

Recall that a traditional tropical curve in $\mathbb R^r$ has an {\it asymptotic fan}, obtained constructing the cone
\[
\mathsf{Cone}(\Gamma)\subset \mathbb R^r\times\mathbb R_{\geq 0}\to \mathbb R_{\geq 0}
\]
and taking the fiber over $0$. The fiber is naturally weighted -- there is retraction of graphs from the underlying graph of the fiber over $1$ to the fiber over $0$, and the weight is given by adding the weights of the edges according to this retraction map. The star of the asymptotic fan is called the {\it asymptotic star}, see Figure~\ref{fig: asymptotic}.

There are two ways to obtain (traditionally balanced) stars from traditionally balanced tropical curves -- choose a vertex and use its tangent directions or take the asymptotic fan. These correspond, respectively, to the component of a reducible map and the general fiber of a smoothing. The interaction between these gives rise to the ordering. 

Recall that two stars are equivalent if they differ by translation. 

\begin{definition}
Let $\mathsf v$ and $\mathsf w$ be traditionally balanced stars in $\mathbb R^r$. We say that $\mathsf w$ is {\it smaller} than $\mathsf v$, written as $\mathsf w\preceq \mathsf v$ if there exists a traditionally balanced tropical curve $\Gamma$ in $\mathbb R^r$ with asymptotic star equivalent to $\mathsf v$ and a vertex $V$ of $\Gamma$, such that the star at $V$ is equivalent to $\mathsf w$. 
\end{definition}

We now prove that this is a useful ordering for inductive purposes.

\begin{proposition}\label{prop: ordering}
The following properties hold for the relation $\preceq$ among stars in $\mathbb R^r$ up to equivalence:
\begin{enumerate}[(i)]
\item If $\mathsf w$ is smaller than $\mathsf v$, and $\mathsf v$ is smaller than $\mathsf w$, then they must be equivalent. 
\item The relation is ``finite below'', i.e. for any star $\mathsf v$, the set of $\mathsf w$ such that $\mathsf w\preceq \mathsf v$ is finite. 
\end{enumerate}
\end{proposition}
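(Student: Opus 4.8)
The plan is to equip the set of stars with a numerical monotone invariant. For a star $\mathsf v$ with tangent vectors $v_1,\dots,v_m$ based at $p$, set $\deg(\mathsf v) := \sum_{j=1}^m\|v_j\|_1$ where $\|x\|_1 = \sum_{i=1}^r|\ell_i(x)|$; by the balancing condition this equals $2\sum_{i=1}^r d_i(\mathsf v)$, where $d_i(\mathsf v) := \sum_{j\,:\,\ell_i(v_j)>0}\ell_i(v_j)$ is the outgoing $\ell_i$-flux at the base. It is translation invariant, hence well defined on equivalence classes, and for every bound $d$ only finitely many classes have $\deg\le d$: each $v_j$ is then a nonzero integral vector of $\ell^1$-norm at most $d$, and there are at most $d$ of them. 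I will also record $\#\mathsf v$, the number of tangent vectors, and order the pairs $N(\mathsf v) := (\deg(\mathsf v),\#\mathsf v)$ lexicographically.

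\textbf{Monotonicity.} The key step is: if $\mathsf w\preceq\mathsf v$ then $d_i(\mathsf w)\le d_i(\mathsf v)$ for every $i$, hence $\deg(\mathsf w)\le\deg(\mathsf v)$. Fix a traditionally balanced tropical curve $\Gamma$ with asymptotic star equivalent to $\mathsf v$ and a vertex $V$ whose star is equivalent to $\mathsf w$. For fixed $i$, let $\phi_i(c)$ be the flux function: for generic $c$, the sum of $w_e\,|\ell_i(\hat e)|$ over edges $e$ of $\Gamma$ whose $\ell_i$-image has $c$ in its interior. Balancing at a vertex $V''$ reads $\sum_{e\ni V''}w_e\,\ell_i(\hat e)=\ell_i\!\big(\sum_{e\ni V''}w_e\hat e\big)=0$, and the standard bookkeeping across each critical level then shows $\phi_i$ is constant; letting $c\to+\infty$ identifies the constant with $d_i(\mathsf v)$. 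On the other hand, for $c$ just below $\ell_i(V)$ every down-$\ell_i$-edge at $V$ crosses level $c$, so $\phi_i(c)\ge\sum_{e\ni V,\ \ell_i(\hat e)<0}w_e|\ell_i(\hat e)| = d_i(\mathsf w)$, the last equality by balancing at $V$. This proves the claim, and property (ii) follows at once, since $\{\mathsf w : \mathsf w\preceq\mathsf v\}$ sits inside the finite set of classes with $\deg\le\deg(\mathsf v)$.

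\textbf{The equality case, giving (i).} Suppose $\mathsf w\preceq\mathsf v$ and $\mathsf v\preceq\mathsf w$. Monotonicity forces $d_i(\mathsf w)=d_i(\mathsf v)$ for all $i$, so the inequalities above are equalities. Reading the argument backwards in the curve $\Gamma$ realizing $\mathsf w\preceq\mathsf v$: the flux $\phi_i$ is already saturated by the edges at $V$, i.e.\ for each $i$ no edge of $\Gamma$ other than those incident to $V$ crosses any level in a punctured neighbourhood of $\ell_i(V)$. (In particular, components of $\Gamma$ not containing $V$ carry no flux in any coordinate, hence have no edges, so $\Gamma$ is connected.) Propagating these constraints along $\Gamma$ — using crucially that $\Gamma$ is \emph{embedded}, so that an edge leaving $V$ in a direction flat in the $i$-th coordinate stays in the affine hyperplane $\{\ell_i=\ell_i(V)\}$ and therefore cannot meet a vertex carrying an $\ell_i$-nonzero edge — one shows that each leg of $\Gamma$ descends from a unique edge of the star at $V$, and that $\mathrm{asymp}(\Gamma)$ is obtained from $\mathrm{star}(V)$ by iterating the move ``replace a vector $u$ by $u'+u''$ with $u',u''$ nonzero integral vectors in a common closed orthant''. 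Such orthant refinements only increase the number of rays, so $\#\mathsf v\ge\#\mathsf w$; symmetrically $\#\mathsf w\ge\#\mathsf v$. Then $N(\mathsf w)=N(\mathsf v)$, the refinement is a bijection on rays, and since $\mathsf w$ is an asymptotic star it has no two parallel rays, so the bijection is the identity and $\mathsf w\equiv\mathsf v$. (The same monovariant argument shows $\preceq$ has no cycles at all, which is the form the later induction will actually use.)

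\textbf{Main obstacle.} The bookkeeping that $\phi_i$ is constant, and the finiteness count, are routine. The substantive point, which I expect to be the crux, is the structural assertion in the previous paragraph: that the saturated-flux condition, imposed simultaneously in all $r$ coordinates, confines $\Gamma$ to an orthant refinement of the star at $V$. This is exactly where embeddedness of the tropical curve matters — a purely combinatorial balanced graph would permit the ``untwisting'' the proposition forbids — and it is proved by following each edge out of $V$ and checking that, at every vertex it encounters, balancing together with saturation leaves only the options of continuing straight or splitting into orthant-compatible directions.
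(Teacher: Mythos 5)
Your monotone-flux argument is sound as far as it goes: the unsigned $\ell_i$-flux of a balanced curve is indeed constant across generic levels, it dominates the down-flux at any vertex, and this gives $d_i(\mathsf w)\le d_i(\mathsf v)$ and hence part (ii) via the $\deg$ bound. This is a legitimately more elementary route to finiteness than the paper's, which works in $\mathbb R^2$ with dual Newton polygons and reduces the general case by integral projections (invoking Nishinou--Siebert for boundedness).

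Part (i), however, has a genuine gap, located precisely at the counting step. Even granting your (only sketched) propagation claim that $\mathrm{asymp}(\Gamma)$ is an orthant refinement of the star at $V$, the asymptotic star is formed by \emph{consolidating parallel ends} into a single weighted ray, so orthant refinements do \emph{not} only increase the number of rays, and equality of your invariants does not force $\mathsf w\equiv\mathsf v$. Concretely, in $\mathbb R^2$ take $V=(0,0)$ with an end of direction $(1,0)$, an end $(-2,-2)$ (weight $2$), and a bounded edge of direction $(1,2)$ ending at $V'=(1,2)$, where it splits into ends $(1,0)$ and $(0,2)$; this curve is embedded and balanced. Its star at $V$ is $\mathsf w=\{(1,0),(1,2),(-2,-2)\}$, while its asymptotic star is $\mathsf v=\{(2,0),(0,2),(-2,-2)\}$ because the two parallel $(1,0)$ ends merge. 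Here $d_1(\mathsf w)=d_1(\mathsf v)=2$, $d_2(\mathsf w)=d_2(\mathsf v)=2$, $\deg(\mathsf w)=\deg(\mathsf v)=8$, and $\#\mathsf w=\#\mathsf v=3$, yet $\mathsf w\prec\mathsf v$ strictly; moreover the saturation conditions you derive are all satisfied by this curve. So your numerical invariants cannot detect strict descent, and the chain ``equal fluxes $\Rightarrow$ saturation $\Rightarrow$ bijection on rays $\Rightarrow$ $\mathsf w\equiv\mathsf v$'' collapses: the hypothesis $\mathsf v\preceq\mathsf w$ is used only to equalize these numbers, which is not enough. One needs a quantity that \emph{strictly} decreases under passing from the asymptotic star to a vertex star unless the two agree; the paper uses the lattice area of the dual Newton polygon (in the example above the areas are $2$ versus $1$), proved in $\mathbb R^2$ and then transported to general $r$ by projecting to $\mathbb R^2$ and noting a star is determined by its planar projections. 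To repair your proof you would need either such a finer monovariant or a structural argument that genuinely uses both witnessing curves, not just the numerical consequences of the reverse relation.
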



We prove this proposition in two steps. First for $r\leq 2$, and then in general. 

\begin{lemma}\label{lem: two-dimensional-ordering}
The statement of Proposition~\ref{prop: ordering} holds when $r\leq 2$. 
\end{lemma}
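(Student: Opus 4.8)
The plan is to attach to each traditionally balanced star $\mathsf v$ in $\RR^r$ a combinatorial shadow that is monotone for $\preceq$: for $r=1$, the \emph{degree} $d(\mathsf v)=\tfrac12\sum_i|v_i|$; and for $r=2$, the \emph{dual polygon} $\Delta(\mathsf v)$, the convex lattice polygon obtained by rotating each $v_i$ through ninety degrees and concatenating the resulting vectors in cyclic angular order (positively proportional vectors merging into a single edge). I would then show that $\mathsf w\preceq\mathsf v$ forces $d(\mathsf w)\le d(\mathsf v)$, respectively forces $\Delta(\mathsf w)$ to be a translate of a cell of a polyhedral subdivision of $\Delta(\mathsf v)$. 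The first half of each statement confines $\mathsf w$, up to equivalence, to a finite set, which gives (ii); the equality cases pin $\mathsf w$ down up to equivalence, which gives (i).

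For $r\le 1$, given a traditionally balanced tropical curve $\Gamma$ in $\RR$, I would consider $f(t)=\sum_e w_\Gamma(e)$, the sum over edges $e$ of $\Gamma$ whose image contains a fixed generic point $t\in\RR$. The balancing condition at each vertex $V$ is precisely the statement that the total weight of outgoing edges of $\Gamma$ on either side of the image of $V$ agree, i.e.\ that $f$ is continuous there; since $f$ is piecewise constant it is globally constant, with value $d$ of the asymptotic star. Hence every vertex star $\mathsf w$ of $\Gamma$ has $d(\mathsf w)\le d(\mathsf v)$, and as there are finitely many stars of bounded degree in $\RR$ (each is a pair of partitions of a common integer $\le d(\mathsf v)$) we obtain (ii). For (i), the hypothesis $\mathsf v\preceq\mathsf w$ forces $\mathsf w$ to be equivalent to the star of a one dimensional fan in $\RR$, hence of the form $\{a,-a\}$; symmetrically $\mathsf v=\{b,-b\}$; and the two degree inequalities give $a=b$, so $\mathsf w\sim\mathsf v$.

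For $r=2$ I would invoke the standard duality between balanced weighted rational graphs in the plane and (not necessarily regular) subdivisions of their Newton polygon: for a traditionally balanced tropical curve $\Gamma$ with asymptotic star $\mathsf v$ the Newton polygon is $\Delta(\mathsf v)$, and the two dimensional cells of the dual subdivision are exactly the polygons $\Delta(\mathsf w_V)$ for $V$ a vertex of $\Gamma$, each embedded in $\Delta(\mathsf v)$ (if $\Delta(\mathsf v)$ is degenerate the curve lies in a line and the problem reduces to the case $r\le 1$). Thus $\mathsf w\preceq\mathsf v$ implies $\Delta(\mathsf w)$ is a translate of a lattice sub-polygon of $\Delta(\mathsf v)$. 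For (ii) this alone is not enough, since $\mathsf w$ need not be reduced --- parallel outgoing edges at a vertex correspond to subdivided boundary edges of its dual cell --- so I would also note that the total weight $\sum_i\ell(v_i)$ of $\mathsf w$, with $\ell$ the lattice length, equals the lattice perimeter of $\Delta(\mathsf w)$, which is at most the number of lattice points of $\Delta(\mathsf v)$; hence $\mathsf w$ ranges over a finite set of multisets. For (i), $\mathsf w\preceq\mathsf v$ and $\mathsf v\preceq\mathsf w$ first force both $\mathsf v$ and $\mathsf w$ to be reduced, being equivalent to asymptotic stars; then $\operatorname{area}\Delta(\mathsf w)\le\operatorname{area}\Delta(\mathsf v)\le\operatorname{area}\Delta(\mathsf w)$ forces the subdivision witnessing $\mathsf w\preceq\mathsf v$ to be trivial, so $\Delta(\mathsf w)=\Delta(\mathsf v)$ up to translation; and since a reduced star is recovered from its dual polygon, $\mathsf w\sim\mathsf v$.

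The main obstacle is the case $r=2$, and within it the bookkeeping around non-reduced stars: the naive intuition that "a plane tropical curve of bounded degree has bounded combinatorics" fails for the \emph{number} of edges at a vertex without the lattice-perimeter estimate, and one must apply the plane duality in the full generality of arbitrary --- possibly reducible, non-generic, non-coherent --- balanced weighted graphs. Once the dual-polygon dictionary and the perimeter bound are in place, parts (i) and (ii) for $r\le 2$ follow as above; the extension to general $r$ is handled separately in the next lemma.
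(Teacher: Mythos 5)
Your proof is correct and follows essentially the same route as the paper: the plane-curve/Newton-polygon duality identifying vertex stars with cells of a subdivision of the dual polygon of the asymptotic star, area monotonicity for antisymmetry, boundedness of subpolygons of a fixed lattice polygon for finiteness, and reduction of the degenerate (bivalent/collinear) case to the line. Your explicit flow argument for $r=1$ and the lattice-perimeter bound are minor refinements of what the paper leaves implicit, not a different approach.
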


\begin{proof}
If $r = 1$ the statement is essentially empty. If a star is bivalent in any dimension, then every tropical curve with those asymptotic directions has only bivalent vertices by the balancing condition. The result in that case again follows from elementary arguments. 

Now suppose $r = 2$ and neither star is bivalent. We appeal to Newton polygon techniques\footnote{We refer the reader to~\cite[Chapter~6,7]{GKZ}.}. By standard toric geometry, associated to any star $\mathsf v$ is a Newton polygon $\mathsf N(\mathsf v)$.\footnote{Geometrically, we take the toric surface determined by the primitive tangent directions of $\mathsf v$ and consider the line bundle whose intersection multiplicities with the divisors are given by the weights. This is ample as the weights are positive, so determines a polarized toric surface.} There is a natural bijection:
\[
\{\textnormal{Tropical curves with asymptotic star $\mathsf v$}\}\leftrightarrow \{\textnormal{Coherent polytopal lattice subdivisions of $\mathsf N({\sf v})$}\}.
\]
Under this bijection, the vertices of the tropical curve correspond to the top dimensional cells of the polytopal complex, and the polytope determined by the star of a vertex is exactly the corresponding polytopal cell. Now, if $\mathsf w\preceq \mathsf v$, then the volume of $\mathsf N(\mathsf w)$ is at most that of $\mathsf N(\mathsf w)$ with equality only when $\mathsf w = \mathsf v$. The first statement of Proposition~\ref{prop: ordering} is a consequence. Since the volume of a lattice simplex is bounded below, the finiteness from below also follows. 
\end{proof}

\noindent
{\it Proof of Proposition~\ref{prop: ordering}}. Let $\pi\colon \mathbb R^r\to\mathbb R^2$ be an integral linear projection. The image of a star resp. a tropical curve in $\mathbb R^r$ under a projection of this form is again a star. resp. a tropical curve. Furthermore, the relation is compatible with specialization of tropical curves, in the sense that if $\mathsf w\preceq \mathsf v$ then $\pi(\mathsf w)\preceq \pi(\mathsf v)$. By Lemma~\ref{lem: two-dimensional-ordering}, if $\mathsf w$ is smaller than $\mathsf v$, and $\mathsf v$ is smaller than $\mathsf w$, then they must be equal after {\it any projection} from $\mathbb R^r\to\mathbb R^2$. But it is straightforward to see that a tropical curve is completely determined by all its projections to $\mathbb R^2$. 

For the finiteness from below, we again appeal to the result of Nishinou--Siebert~\cite[Section~2]{NS06}. Specifically,  it is known that every tropical curve in $\mathbb R^r$ with asymptotics $\mathsf v$ is a subcurve in a complete intersection of pullbacks of of $(r-1)$ tropical curves under projections $\mathbb R^r\to \mathbb R^2$, in such a way that the required projections depend only on $\mathsf v$, and the asymptotic directions of these tropical curves are determined by the images of $\mathsf v$. The finiteness from below is a consequence. \qed

\subsection{Ordering stars -- the general case}  Equipped with Proposition~\ref{prop: ordering} we place an ordering on stars for any target. Fix $Y$ with cone complex $\Sigma$; the definitions below need the information of $Y$ even to make sense. 

The notions of the star at a vertex and the asymptotic star make perfect sense for any Chow $1$-complex in $\Sigma$. For the asymptotic version, the asymptotic fan is constructed exactly as in the traditional version, with the additional fact that the curve classes are pushed forward along $W_\sigma\hookrightarrow X$ and then added. Similarly, we take the disjoint union of the marking sets. 

\begin{definition}\label{def: general-ordering}
Let $\mathsf v = (p,v_1,\ldots,v_m,\beta_{\mathsf v},K_{\mathsf v})$ and $\mathsf w = (q,w_1,\ldots,w_s,\beta_{\mathsf w},K_{\mathsf w})$ be stars in $\Sigma$. We say that $\mathsf w$ is {\it smaller} than $\mathsf v$, written as $\mathsf w\preceq \mathsf v$ if:
\begin{enumerate}[(i)]
\item The cone whose relative interior contains $q$ contains the cone whose relative interior contains $p$ as a face, and in the case of equality, 
\item the curve class $\beta_{\mathsf w}$ is smaller than $\beta_{\mathsf v}$ in the usual partial order on effective classes, and in the case of equality
\item after appending the balancing ray $v_\beta$, and looking at the associated traditionally balanced stars in $\sigma^{\sf gp}$, denoted $\widetilde w$ and $\widetilde v$, the star $\widetilde w$ is smaller than $\widetilde v$.
\end{enumerate}
\end{definition}

The ordering is geometrically motivated -- as a logarithmic stable map degenerates, its tropicalization changes, and we are paying attention to $\mathsf v$ which is the imagined stable map we start with, and $\mathsf w$ which is a component in the imagined degeneration. Geometrically, (i) corresponds to components moving into deeper strata, (ii) corresponds to a curve breaking and the curve class distributing, and (iii) is the specialization of the tangency data. The final one is subtle, because there are in principle infinitely many potential divisors, corresponding to divisors in blowups and expansions. But this is exactly the case handled in the previous section using tropical techniques.

\begin{proposition}
Let $\mathsf w$ and $\mathsf v$ be stars in $\Sigma$. If according to the ordering of Definition~\ref{def: general-ordering}, we have both that $\mathsf w$ is smaller than $\mathsf v$ and $\mathsf v$ is smaller than $\mathsf w$, then $\mathsf v$ is equal to $\mathsf w$. Furthermore, this ordering is finite below in the set of all such stars. 
\end{proposition}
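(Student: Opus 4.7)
My plan is to observe that the ordering of Definition~\ref{def: general-ordering} is built lexicographically from three simpler partial orders: (i) the face relation on cones of $\Sigma$, (ii) the effectivity partial order on curve classes, and (iii) the ordering on traditionally balanced stars from Proposition~\ref{prop: ordering}. Both claims in the proposition will then follow from the corresponding claim at each layer, together with the observation that a finite lexicographic product of antisymmetric (resp.\ finite-below) partial orders remains antisymmetric (resp.\ finite-below).

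For antisymmetry, suppose $\mathsf w \preceq \mathsf v$ and $\mathsf v \preceq \mathsf w$. At layer (i), the cones $\sigma_p, \sigma_q\in\Sigma$ containing the basepoints each contain the other as a face, so $\sigma_p = \sigma_q$; call the common cone $\sigma$. At layer (ii), the effectivity partial order on $H_2(W_\sigma;\mathbb{Z})$ is antisymmetric (the sum of two effective classes whose sum is zero is zero), so $\beta_{\mathsf v} = \beta_{\mathsf w}$; call this $\beta$. With $\sigma$ and $\beta$ fixed, the balancing ray $v_\beta$ is a well-defined vector in $\sigma^{\mathrm{gp}}$, so the augmented traditional stars $\widetilde v, \widetilde w$ live in the same ambient lattice. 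Layer (iii) combined with Proposition~\ref{prop: ordering}(i) forces $\widetilde v$ and $\widetilde w$ to be equivalent as traditional stars, which unwinds to equivalence of $\mathsf v$ and $\mathsf w$ in the sense of Definition~\ref{def: star}.

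For finiteness below, we bound the count layer by layer. Since $\Sigma$ is a finite polyhedral complex, only finitely many cones contain $\sigma_p$ as a face. For each such $\sigma$, the set of effective classes on the stratum $W_\sigma$ dominated (via the face inclusion and pushforward) by $\beta_{\mathsf v}$ is finite by a standard boundedness argument: intersecting with an ample class pins down only finitely many effective classes of bounded degree on a projective variety. For each fixed $(\sigma,\beta)$, Proposition~\ref{prop: ordering}(ii) provides only finitely many traditional stars $\widetilde w$ with $\widetilde w \preceq \widetilde v$. Finally, the set of internal markings of any $\mathsf w \preceq \mathsf v$ arising as a vertex of a tropical degeneration is a subset of $K_{\mathsf v}$, so only finitely many cardinalities occur. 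Multiplying these finite counts gives the claim.

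The one substantive point (the rest being bookkeeping) is verifying that layer (iii) is actually well-posed once layers (i) and (ii) have collapsed to equalities: namely, that the augmented stars $\widetilde v, \widetilde w$ are genuinely traditionally balanced in the \emph{same} group $\sigma^{\mathrm{gp}}$ with the \emph{same} appended direction. This is where the definition of the ordering was tailored: condition (i) identifies the ambient cones, condition (ii) identifies the curve classes and hence the balancing vector $v_\beta$, and only then does the traditional comparison of Proposition~\ref{prop: ordering} make literal sense. Once that alignment is recorded, the two propositions fit together mechanically and the induction is complete.
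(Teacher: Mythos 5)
Your proposal is correct and follows essentially the same route as the paper: the paper's own proof observes that the face poset of $\Sigma$ and the monoid of effective curve classes are already partial orders, so the only nontrivial case is equality in layers (i) and (ii), where everything reduces to Proposition~\ref{prop: ordering}. Your write-up simply spells out the lexicographic bookkeeping (including the finiteness-below count layer by layer, which the paper leaves implicit), so no further changes are needed.
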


\begin{proof}
Since the faces of $\Sigma$ form a partially ordered set already and the monoid of effective curve classes also forms a partially ordered set, the only nontrivial check occurs when there is equality in (i) and (ii). In this case, the statement reduces to Proposition~\ref{prop: ordering}. 
\end{proof}

\subsection{A remark about the fully toric case} The following discussion is not logically necessary, but some readers may find it helpful. Suppose $Y$ is a compact toric variety and $\partial Y$ is its {\it full} toric boundary divisor. The embedding procedure we have used places the abstract cone complex $\Sigma$ into a vector space $\mathbb R^r$. The vector space is of much higher dimension than the rank of the torus. We study polyhedral complexes with curve class decorations that are balanced, according to the balancing procedure described above. 

There is nothing wrong with this formalism, but for this fully toric situation, some pleasant features of the geometry are obscured. In essence, instead of Chow $1$-complexes, in this case, we should work with traditionally balanced tropical curves in $N_{\RR}$ with no curve class decoration. These are equivalent data. 

We have an embedding $|\Sigma|\hookrightarrow \mathbb R^r$, which is linear on each cone of $\Sigma$, but only piecewise linear overall. There is a natural map
\[
\mathbb R^r\to N_{\RR}.
\]
To describe it, observe that any linear function $\mathbb R^r$ gives a piecewise linear function on $|\Sigma|$. This determines a line bundle by toric geometry. The line bundles that are trivial determine a subspace of the dual that is identified with the character lattice of the torus. The map above is the dual surjection to the subspace inclusion. The quotient sends the cone complex $\Sigma$ isomorphically onto the toric fan of $Y$. 

Suppose $\Gamma$ is a Chow $1$-complex in $\Sigma$, with respect to $Y$, viewed only as a logarithmic scheme rather than a toric variety. By toric intersection theory, for any balanced tropical curve, all the balancing rays lie in the quotient of this map, and the image in $N_{\RR}$ is balanced in the traditional sense. The class $\beta_{\mathsf v}$ on a star can be recovered from the directions and multiplicities of the rays -- this is true because the toric boundary generates the Picard group.

The upshot -- in the fully toric situation, we can dispense with the Chow $1$-complex and work with traditionally balanced tropical curves. 

\section{Exotic evaluations}\label{sec: exotic-evaluations}

In the overall blueprint for controlling GW cycles by degeneration, by Theorem~\ref{thm: degeneration-formula} and Theorem~\ref{thm: primitive-in-exotic}, any GW cycle of the general fiber $\cX_\eta$ of a degeneration can be expressed in terms of logarithmic GW cycles of bundles over strata in a special fiber $\cX_0$. However, these logarithmic GW cycles will involve exotic insertions. 

In this section and the next one we analyze exotic evaluations and turn them back into standard insertions. We do this in a two-step procedure. In the first step, we turn exotic GW classes into certain {\it decorated strata GW classes}. These are virtual strata of the moduli space of logarithmic maps, with additional tautological classes from the Artin fan of the moduli space, but without any exotic insertions. In the second step, we take these more complicated classes and develop a rigidification calculus to convert them into standard cycles. 

The first step is the focus of the present section, with the main result appearing as Theorem~\ref{thm: exotic-to-strata} below; the second step is carried out in Section~\ref{sec: rigidification}.

\begin{remark}[Genus independence of the algorithm]\label{rem: genus-independence}
    We prove something stronger in the analysis, by producing an algorithm to convert exotic insertions into strata classes in a way that is independent of the genus parameter. The additional strength is not necessary here, but is useful in GW/DT/PT comparison statements, and will appear in future work.
\end{remark}

\subsection{Numerical setup} Fix a pair $(Y|\partial Y)$ as above with cone complex $\Sigma$, and a star in the sense of Definition~\ref{def: star}, based at the origin in $\Sigma$. It is specified by a collection
\[
(\mathsf v,\beta,[k])
\]
of a set of tangent vectors, balanced with respect to a curve class $\beta$ on $Y$, and a set of internal markings indexed by $[k]$. We also assume that the contact order is disjoint, i.e. the $v_i$ are parallel to the rays of $\Sigma$. 

For each tangent vector $v_i$ in $\mathsf v$, we can view it as
\[
v_i = w_i\cdot \overline v_i, \ \ w_i\in\mathbb N.
\]
for $w_i$ a {\it weight} and $\overline v_i$ a primitive integral vector. For each $i$ we fix an {\it ordered} partition
\[
\mu_i\vdash w_i
\]
of the weight. Altogether we fix a vector of partitions, one for each $v_i$ denoted
\[
\bm \mu = (\mu_1,\ldots,\mu_m).
\]
The discrete data $(\mathsf v,\beta,[k],{\bm\mu})$ can almost be converted into the discrete data of a logarithmic stable maps problem. We also need to specify the genus. Once we fix a genus $g$, then we obtain discrete data $\Lambda$: the curve class $\beta$ is fixed, the sum of the lengths of the partitions in ${\bm \mu}$ plus $k$ is equal to the number of marked points. The partitions $\mu_i$ also determine the tangency order along the markings; the $k$ markings have $0$ tangency.

\subsection{Spaces of Chow $1$-complexes} We work towards building the appropriate moduli spaces of Chow $1$-complexes. We do this in steps; in the first step, we ignore the internal markings and tangency conditions.

We introduced the notion of a Chow $1$-complex in Definition~\ref{def: Chow-1-complex}. Given a Chow $1$-complex $\Gamma$, there is a star obtained by the asymptotic fan construction.

The collection of all Chow $1$-complexes with asymptotic star equal to $\mathsf v$ is parameterized by a space $\mathcal T_\mathsf v(Y|\partial Y)$. As with many of the constructions in~\cite{MR23}, the cone structure on this space is not canonical, but there exists an inverse system parameterizing cone structures on it.\footnote{The space of Chow $1$-complexes with curve class decoration is constructed in~\cite{MR23} without the imposed balancing condition. Only cosmetic changes are needed in the construction to include the balancing condition.} 

\begin{definition}
Let $\Gamma$ be a Chow $1$-complex in $\Sigma$. A vertex of valency $0$ is called a {\it free vertex}. A bivalent vertex $V$ of $\Gamma$ is called {\it linear} if some open neighborhood of $V$ is contained in the relative interior of a cone of $\Sigma$, and moreover is contained in a straight line in that cone. 

A Chow $1$-complex $\Gamma$ is {\it stable} if it has neither linear bivalent vertices with curve class $0$ nor free vertices with curve class $0$. 
\end{definition}

To orient the reader, note that unstable Chow $1$-complexes can arise from geometry -- the presence of a logarithmic stable map can stabilize it. The definition captures those vertices that look unstable at the level of the underlying cycle. Free vertices are relatively rare in connected GW theory, but are ever-present in DT, PT, and disconnected GW theory.

Let $T_{\mathsf v}(Y|\partial Y)\subset \mathcal T_{\mathsf v}(Y|\partial Y)$ be the subspace of stable Chow $1$-complexes. 

\begin{proposition}
    The space of stable Chow $1$-complexes $T_{\mathsf v}(Y|\partial Y)$ is a finite dimensional cone complex. 
\end{proposition}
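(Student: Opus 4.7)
The plan is to exhibit $T_{\mathsf v}(Y|\partial Y)$ as a finite union of rational polyhedral cones glued along faces, with one cone per \emph{combinatorial type} of stable Chow $1$-complex with asymptotic star $\mathsf v$. A combinatorial type $\tau$ records a finite graph $G$ with legs, an assignment $\sigma\colon V(G) \to \Sigma$ specifying the cone containing each vertex, curve class and marking decorations $(\beta_V, K_V)$, and a primitive tangent direction together with a positive integer weight on each flag; the tangent directions at each vertex are required to be balanced with respect to $\beta_V$, and the asymptotic fan obtained from the unbounded legs is required to be equivalent to $\mathsf v$.

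First, for a fixed combinatorial type $\tau$, I would describe the parameter space. A Chow $1$-complex of type $\tau$ is determined by vertex positions $p_V \in \overline{\sigma(V)} \subset \RR^r$ and bounded edge lengths $\ell_E \in \RR_{\geq 0}$, subject to the rational linear relation $p_{V'} - p_V = \ell_E \overline v_E$ along each bounded edge $E = (V,V')$ and the closed conditions that each such segment lie in $|\Sigma|$. These constraints cut out a rational polyhedral cone $\sigma_\tau$ inside $\RR^{r|V(G)|} \times \RR_{\geq 0}^{|E(G)|}$. Faces of $\sigma_\tau$ correspond to degenerations of $\tau$ in which either an edge length collapses to zero (merging two vertices) or a vertex migrates into a proper face of $\sigma(V)$ (possibly splitting an edge). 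Each such face is naturally identified with the cone $\sigma_{\tau'}$ of the degenerate type, producing the gluing data.

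The principal obstacle, and the second step of the plan, is to verify that only finitely many combinatorial types $\tau$ contribute. Here the star ordering of Section~\ref{sec: stars-partial-ordering} is the main tool: the local star at each vertex $V$ is smaller than $\mathsf v$ in the sense of Definition~\ref{def: general-ordering}, and the finiteness-from-below property gives finitely many options per vertex (bounding $\beta_V$, the attached cone, and the local edge multiplicities). It remains to bound the total number of vertices. Vertices with $\beta_V \neq 0$ are bounded in number by the length of any effective decomposition of the fixed pushforward class $\beta_{\mathsf v}$. Vertices with $\beta_V = 0$ cannot be free and cannot be linear bivalent by stability, so they are either (i) vertices of valency at least three, bounded in number by the total weight at infinity, or (ii) bivalent vertices at transitions between cones of $\Sigma$, bounded by the combinatorics of the piecewise linear path an edge chain can take through $|\Sigma|$ with fixed endpoint directions. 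Case (ii) is the most delicate, because unchecked chains of bivalent $\beta=0$ vertices would create extra cone structure without violating stability; controlling them requires noting that once the edge direction is pinned by the asymptotic data, such a chain can only change direction at lower-dimensional strata of $\Sigma$, of which there are finitely many. Combining these bounds yields finitely many combinatorial types, each contributing a rational polyhedral cone of dimension at most $r|V(G)| + |E(G)|$, so the glued object $T_{\mathsf v}(Y|\partial Y)$ is a finite dimensional cone complex.
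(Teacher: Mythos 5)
Your overall architecture (one rational polyhedral cone of vertex positions and edge lengths per combinatorial type, glued along faces given by edge contractions and vertex specializations) is the right picture, and your bounds on the $\beta_V\neq 0$ vertices (finitely many effective decompositions of the total class) and on chains of bivalent $\beta_V=0$ vertices (each straight segment meets only finitely many faces of $\Sigma$) are sound. The genuine gap is in your treatment of the $\beta_V=0$ vertices of valency at least three: you assert they are "bounded in number by the total weight at infinity," but that Euler-characteristic style count is only valid when $\Gamma$ is a tree. Nothing in the definition of a Chow $1$-complex fixes the genus — the paper explicitly stresses that boundedness here holds \emph{without} fixing the genus — so you must separately bound the first Betti number of $\Gamma$, i.e.\ rule out balanced $1$-complexes with fixed asymptotic star $\mathsf v$ but arbitrarily many cycles and hence arbitrarily many trivalent vertices. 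This is exactly the nontrivial content of the proposition, and your appeal to the finiteness-from-below of the star ordering does not supply it: that result constrains the finitely many possible \emph{local} stars at each vertex, but says nothing about how many vertices, or how many independent cycles, a global complex can have.

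The paper closes this hole in one step: append the balancing rays $v_{\beta_V}$ at each vertex (there are finitely many possible multisets of such rays, since the fixed class $\beta$ of $\mathsf v$ admits only finitely many effective decompositions), so that the Chow $1$-complex becomes a traditionally balanced tropical curve in $\mathbb R^r$ with asymptotic directions determined by $\mathsf v$, and then invoke the boundedness of Nishinou--Siebert~\cite[Section~2]{NS06}, which bounds the number of vertices and edges (hence the genus and the set of combinatorial types) of such curves via projections to $\mathbb R^2$ and Newton-polygon duality. If you insert this step — or reprove the NS06 bound directly — in place of your "total weight at infinity" count, the rest of your argument goes through and recovers the cone-complex structure the paper leaves implicit.
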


\begin{proof}
    Given a Chow $1$-complex, we obtain a traditionally balanced tropical curve by appending the balancing rays, as in Section~\ref{sec: stars-partial-ordering}.  The result now follows from the boundedness of~\cite[Section~2]{NS06}.
\end{proof}

Observe that we obtain boundedness without fixing the genus. This requires both stability and balancing; it is parallel to boundedness of the Chow variety of cycles of fixed degree. 

There is a {\it stabilization} map
\[
\mathcal T_{\mathsf v}(Y|\partial Y)\to  T_{\mathsf v}(Y|\partial Y)
\]
obtained by ``erasing'' the linear bivalent vertex -- a neighborhood of a linear vertex looks like a subdivided open interval in a vertex space, and we simply undo the subdivision. At the level of topological spaces, the composition
\[
|T_{\mathsf v}(Y|\partial Y)|\subset |\mathcal T_{\mathsf v}(Y|\partial Y)|\to  |T_{\mathsf v}(Y|\partial Y)|
\]
is the identity, so this is a section. We promote this to be compatible with cone structures. 

\begin{proposition}
For appropriate choices of cone structures, we have a diagram of cone complexes,
\[
T_{\mathsf v}(Y|\partial Y) \subset \mathcal T_{\mathsf v}(Y|\partial Y) \to  T_{\mathsf v}(Y|\partial Y)
\]
where the first map is an inclusion of a union of faces and the second map is combinatorially flat. 
\end{proposition}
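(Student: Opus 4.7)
The plan is to construct explicit cone structures on both spaces, indexed by combinatorial types of Chow $1$-complexes, so that the stabilization map is transparently a cone surjection on each stratum and the stable locus is cut out by equations defining a union of faces. A \emph{combinatorial type} of Chow $1$-complex with asymptotic $\mathsf{v}$ records the underlying graph $G$, the edge weights, the vertex curve-class and marking decorations, the assignment of each vertex and edge to a cone of $\Sigma$, and a primitive tangent direction along each edge, all subject to the balancing condition at every vertex and matching the asymptotic star $\mathsf{v}$. For each type $T$, the moduli of edge lengths and vertex positions cuts out a rational polyhedral cone $\sigma_T$. The family of cones $\sigma_T$ as $T$ varies over all (resp.\ stable) types defines a cone structure on $\mathcal{T}_{\mathsf{v}}(Y|\partial Y)$ (resp.\ $T_{\mathsf{v}}(Y|\partial Y)$) lying in the respective inverse system, with finitely many cones thanks to the boundedness established earlier in this section.

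With these structures in hand, the stabilization map has a transparent cell-by-cell description. Given an unstable type $T'$ with stabilization $T$, each edge $E$ of $T$ of length $\ell_E$ is subdivided in $T'$ into sub-edges of lengths $\ell_{E,1}, \ldots, \ell_{E,k_E}$ satisfying $\ell_{E,1} + \cdots + \ell_{E,k_E} = \ell_E$, and the induced map $\sigma_{T'} \to \sigma_T$ sends these sub-edge length coordinates to their sum, while forgetting the position coordinates of any deleted free vertices with trivial curve class. This is a surjective linear map of cones; taking products over edges, the stabilization map $\mathcal{T}_{\mathsf{v}}(Y|\partial Y) \to T_{\mathsf{v}}(Y|\partial Y)$ is combinatorially flat. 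For the inclusion, each stable type is a fortiori a type in the larger collection, so each cone of $T_{\mathsf{v}}(Y|\partial Y)$ appears verbatim as a cone of $\mathcal{T}_{\mathsf{v}}(Y|\partial Y)$, and the section property $T_{\mathsf{v}} \hookrightarrow \mathcal{T}_{\mathsf{v}} \to T_{\mathsf{v}}$ equals the identity automatically because a stable complex is its own stabilization.

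It remains to verify that the image of the inclusion is a union of faces of $\mathcal{T}_{\mathsf{v}}(Y|\partial Y)$, and this is where the main obstacle lies. A face of a stable cone $\sigma_T$ arises from contracting an edge of $T$ or sending a vertex to a deeper cone of $\Sigma$; if such a contraction creates a linear bivalent vertex with trivial curve class (or a free vertex with trivial class), the corresponding face is naturally indexed by an \emph{unstable} type $T''$, and I must verify that the resulting face relation in $\mathcal{T}_{\mathsf{v}}$ is consistent with the stabilization of $T''$ back into the stable locus. I plan to resolve this by taking, within the inverse systems on both sides, a common refinement of the cone structures so that every edge-contraction and vertex-degeneration chain starting from a cone of $T_{\mathsf{v}}$ is resolved into transparent cone inclusions. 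The boundedness of stable Chow $1$-complexes with asymptotic $\mathsf{v}$, combined with the finiteness from below of the ordering on stars established in Section~\ref{sec: stars-partial-ordering}, ensures that this common refinement is finite and exists in the inverse system.
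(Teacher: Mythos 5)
There is a genuine gap, and it sits exactly where the paper says the subtlety lies. Your construction starts from the assertion that the cells indexed by combinatorial types of Chow $1$-complexes assemble into cone structures on both $\mathcal T_{\mathsf v}(Y|\partial Y)$ and $T_{\mathsf v}(Y|\partial Y)$ ``lying in the respective inverse system.'' This is precisely what is not available: the paper stresses that the cone structure on these spaces is \emph{not} canonical, and the type strata do not form a cone complex on the nose. A Chow $1$-complex is an embedded polyhedral complex, so the locus of a fixed combinatorial type is cut out not only by linear conditions (vertex positions, edge lengths, membership in cones of $\Sigma$) but also by openness/embeddedness constraints; its closure contains complexes of other types, and these closures need not be convex cones nor intersect along common faces. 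So the type-indexed ``cones'' $\sigma_T$ are not a legitimate choice of cone structure, and both the flatness claim and the ``appears verbatim as a cone'' claim are built on sand. You do sense the problem in your last paragraph, but the proposed remedy --- a ``common refinement'' within the inverse systems --- is exactly where the content of the proposition lives and is left unargued: refining $\mathcal T_{\mathsf v}$ so that the stable locus becomes a union of faces can destroy the property that each cone surjects onto a cone of $T_{\mathsf v}$, unless the refinement is constructed compatibly with the chosen structure downstairs. The appeal to the ordering on stars from Section~\ref{sec: stars-partial-ordering} is not relevant to this issue; that ordering governs degenerations of stars, not choices of cone structures on spaces of $1$-complexes.

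The paper resolves this by reversing the logic, and this is the idea your proposal is missing: first fix an \emph{arbitrary} cone structure on the stable space $T_{\mathsf v}(Y|\partial Y)$ (restricting an auxiliary structure on the ambient space of all $1$-complexes), and only then build the cone structure on $\mathcal T_{\mathsf v}(Y|\partial Y)$ \emph{relatively} over it: a point of $|\mathcal T_{\mathsf v}|$ differs from its stabilization by finitely many linear bivalent vertices on edge interiors, so over each cone $\sigma$ of $T_{\mathsf v}$ and each choice of edge one forms a cone complex $\widetilde\sigma\to\sigma$ parameterizing subdivisions of that edge; these glue over all cones and edges to a cone structure on $\mathcal T_{\mathsf v}$. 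With this relative construction, combinatorial flatness and the fact that the stable locus sits inside as the union of ``no-subdivision'' faces are automatic, with no compatibility left to check. Your edge-by-edge description of stabilization (sub-edge lengths summing to the edge length) is the right local picture and is essentially the fibre of the paper's relative construction, but without the relative/over-the-base organization you have not produced genuine cone structures for which your flatness and face claims make sense.
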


\begin{proof}
The point is to first construct the space $T_{\mathsf v}(Y|\partial Y)$ of stable $1$-complexes, and then build $\mathcal T_{\mathsf v}(Y|\partial Y)$ over it in a way that makes it obviously flat. By the combinatorial results in~\cite{MR20,MR23}, we can choose some arbitrary cone structure on $|\mathcal T_{\mathsf v}(Y|\partial Y)|$, call it $\mathcal T^{\sf aux}_{\mathsf v}(Y|\partial Y)$ and take the induced cone structure on the stable locus to be $\mathcal T_{\mathsf v}(Y|\partial Y)$. Now we forget about $\mathcal T^{\sf aux}_{\mathsf v}(Y|\partial Y)$. A point of $|\mathcal T_{\mathsf v}(Y|\partial Y)|$ differs from its stabilization only by finitely many $2$-valent vertices on the interiors of its edges. So for each cone $\sigma$ in the space $\mathcal T_{\mathsf v}(Y|\partial Y)$ and a choice of edge in the underlying graph of the associated $1$-complex, we can build a cone complex $\widetilde \sigma$, relatively over $\sigma$, parameterizing subdivisions of this edge. The map $\widetilde\sigma\to\sigma$ is combinatorially flat. Ranging over all cones of $\mathcal T_{\mathsf v}(Y|\partial Y)$ and all edges associated to these cones, the resulting cone complexes glue together to a cone structure on $\mathcal T_{\mathsf v}(Y|\partial Y)$. 

\end{proof}

Note that by semistable reduction, if we change the cone structure on $T_{\mathsf v}(Y|\partial Y)$ by a subdivision , we can always make further refinements $\mathcal T_{\mathsf v}(Y|\partial Y)$ to ensure the flatness. 

\subsection{Tracking the tangency data} We now include the tangency data determined by ${\bm\mu}$. Let $\overline{\bm\mu}$ be the associated vector of {\it unordered partitions} $\overline \mu_i$. We additionally fix finite set $K$ of ``internal labels''. 

\begin{definition}
Denote the subcomplex 
\[
    T_{\mathsf v}^{\overline{\bm\mu}}(Y|\partial Y)\subset T_{\mathsf v}(Y|\partial Y)
\]
defined by taking the closure of the locus of $[\Gamma]$ that satisfy the following condition: for each $i$, the unbounded rays that are parallel to $v_i$ each have multiplicity $\overline \mu_i$. Let
\[
T_{\mathsf v}^{{\bm\mu}}(Y|\partial Y)\to T_{\mathsf v}^{\overline{\bm\mu}}(Y|\partial Y)
\]
be the finite cover obtained by marking the unbounded rays so they are labelled by $\bm\mu$. We refer to this as the {\it space of Chow $1$-complexes with fixed tangency}. Finally, we can also allow for internal markings, forming a space $T_{\mathsf v,K}^{{\bm\mu}}(Y|\partial Y)$ where we allow Chow $1$-complexes $\Gamma$ with up to $|K|$ linear $2$-valent or free vertices with $0$ curve class, record a map
\[
K\to V(\Gamma)
\]
and impose that every such unstable vertex receives at least one element of $K$. If $K = [k]$, we follow the standard conventions and write $T^{\bm \mu}_{\mathsf v,k}(Y|\partial Y)$. We refer to it as the {\it space of Chow $1$-complex with fixed tangency and internal markings labelled by $K$}. The set of labels of the ends, together with $K$, will be referred to as {\it the markings} of the $1$-complex. 

There are analogous cone complexes $\mathcal T_{\mathsf v,k}^{\bm \mu}(Y|\partial Y)$ obtained without the stability condition; these are obtained by pulling back the map above along the stabilization map. 
\end{definition}

\subsection{Maps, evaluations, and Artin fans} Continue to fix discrete data $\mathsf v$, as well as a genus $g$. We can view
\[
\Lambda = (\beta,g,n,[c_{ij}]) ``=" (g,\mathsf v,\bm \mu)
\]
since $\mathsf v$ includes the non-tangency markings and the vector $\bm \mu$ captures the tangency markings. 

Consider the standard moduli space $\Mbar_\Lambda(Y|\partial Y)$ of logarithmic stable maps. The cone space $\mathcal T_{\mathsf v,k}^{\bm \mu}(Y|\partial Y)$ of possibly unstable Chow $1$-complexes has an associated Artin fan, which we denote by $\mathsf{a} \mathcal T_{\mathsf v,k}^{\bm \mu}(Y|\partial Y)$. See~\cite{CCUW} for the basic theory. As explained in~\cite[Section~2.5]{MR23}, there is typically no map
\[
\Mbar_\Lambda(Y|\partial Y)\nrightarrow \mathsf a\mathcal T_{\mathsf v,k}^{\bm \mu}(Y|\partial Y),
\]
however, there is a morphism\footnote{This is a logarithmic morphism, with the obvious logarithmic structures on the two sides, but it is not strict. The logarithmic structure here does not detect nodes of the domain curve that map to the smooth locus of the target.} from a subdivision
\[
\Mbar_\Lambda(Y|\partial Y)^\diamond\to \mathsf a\mathcal T_{\mathsf v,k}^{\bm \mu}(Y|\partial Y).
\]
The space $\Mbar_\Lambda(Y|\partial Y)^\diamond$ can be interpreted as a moduli space of expanded stable maps~\cite{MR23,R19}, though only the existence of the arrow is needed. The map is equipped with a relative perfect obstruction theory, and gives rise to the standard virtual class.

There is a similar picture for the evaluation space $\mathsf{Ev}_\Lambda(Y|\partial Y)$. The evaluation map is, by definition, a product
\[
\mathsf{Ev}(Y|\partial Y) = \prod_i W_i.
\]
The cone complex of this moduli space is $P_{\mathsf v,k}^{\bm \mu}(Y|\partial Y)$. It is just the product of fans of these strata $W_i$. There is a tropical evaluation morphism
\[
\mathcal T_{\mathsf v,k}^{\bm \mu}(Y|\partial Y)\to P_{\mathsf v,k}^{\bm \mu}(Y|\partial Y)
\]
of cone complexes. We can describe it separately for each marked point. For a marked point $i$ that has tangency $0$, it is just the evaluation that tracks the location of the marking in $\Sigma$. For a marked point with positive tangency, in the disjoint case, this corresponds to a ray parallel to one of the rays of $\Sigma$. The position of this marked point in the quotient by the span of the ray, gives the evaluation. In the case where tangencies are not disjoint, it similarly evaluates to the quotient of the span of the cone attached to the marking; we will not use this directly though.

Since the evaluation $\mathcal T_{\mathsf v,k}^{\bm \mu}(Y|\partial Y)\to P_{\mathsf v,k}^{\bm \mu}(Y|\partial Y)$ is insensitive to free or linear bivalent vertices, there is a factorization through the retraction:
\[
\mathcal T_{\mathsf v,k}^{\bm \mu}(Y|\partial Y)\to T_{\mathsf v,k}^{\bm \mu}(Y|\partial Y)\to P_{\mathsf v,k}^{\bm \mu}(Y|\partial Y).
\]
Passing to Artin fans, we have the following commutative diagram:
\[
\begin{tikzcd}
    \Mbar_\Lambda(Y|\partial Y)^\diamond\arrow{d} \arrow{rr}& &\mathsf{Ev}_\Lambda(Y|\partial Y)\arrow{d}\\
    \mathsf a\mathcal T_{\mathsf v,k}^{\bm \mu}(Y|\partial Y)\arrow{r} & \mathsf a T_{\mathsf v,k}^{\bm \mu}(Y|\partial Y)\arrow{r} & P_{\mathsf v,k}^{\bm \mu}(Y|\partial Y).
\end{tikzcd}
\]

We will later be interested in passing to subdivisions such that 
\[
\mathsf{a}\mathcal T_{\mathsf v,k}^{\bm \mu}(Y|\partial Y)\to \mathsf{a}P_{\mathsf v,k}^{\bm \mu}(Y|\partial Y)
\]
is flat. We can always choose 
\[
\mathsf a\mathcal T_{\mathsf v,k}^{\bm \mu}(Y|\partial Y)\to \mathsf a T_{\mathsf v,k}^{\bm \mu}(Y|\partial Y)
\]
so it is already flat, so we can focus on the latter map
\[
\mathsf a T_{\mathsf v,k}^{\bm \mu}(Y|\partial Y)\to \mathsf{a}P_{\mathsf v,k}^{\bm \mu}(Y|\partial Y).
\]
Flattening this by subdivisions and pulling back along stabilization, we obtain a uniform construction of flattening, as needed for the degeneration formula of Theorem~\ref{thm: degeneration-formula}, without genus restrictions. 

\subsection{The exotic/non-exotic manoeuvre}\label{sec: exotic-nonexotic-move} We introduce the required class of GW strata classes for our inductive analysis.  We emphasize that everything in this section assumes disjointness of the contact order matrix. 

We do a few things to reduce the burden of the notation. First, we replace the standard moduli space $\Mbar_\Lambda(Y|\partial Y)$ with a modification as above, so there is a morphism
\[
\Mbar_\Lambda(Y|\partial Y)\to \mathsf{a} T^{\bm \mu}_{\sf v,k}(Y|\partial Y).
\]
Next, we drop discrete data from the notation and use the symbols $\mathsf M$, $T$, $\mathsf{Ev}$ and $\mathsf P$, in place of $\Mbar_\Lambda(Y|\partial Y)$, $T_{\mathsf v,k}^{\bm\mu}(Y|\partial Y)$, $\mathsf{Ev}_\Lambda(Y|\partial Y)$, and $P_{\mathsf v,k}^{\bm\mu}(Y|\partial Y)$, respectively. The discrete data will not change in this section, so there should be no room for confusion. 

In the following, a {\it stratum} of $\mathsf M$ is the preimage of the closure of a point in $\mathfrak{a}T$ under the map
\[
\mathsf{t}\colon \mathsf M \to \mathfrak{a}T.
\]
Note that this is the map to the space of {\it stable} Chow $1$-complexes, so this gives a coarser notion of stratum than standard one. This is in line with Remark~\ref{rem: genus-independence}.

The strata are, of course, in bijection with cones in $T$. If we have a cone $\sigma$, it maps to a cone $\tau$ in $\mathsf P$, we obtain a corresponding stratum in $\mathsf{Ev}$. 

\begin{definition}\label{def: stratum-GW-class}
A {\it stratum GW class} is any class obtained as follows. Choose any subdivisions
\[
\mathsf M^\diamond\to \mathsf{Ev}^\diamond
\]
and let $T^\diamond$ be the corresponding subdivision of $T$. Consider a stratum $\mathsf S\subset\mathsf M^\diamond$ and the corresponding evaluation stratum by $\mathsf{Ev}^\diamond_{\mathsf S}$ leading to
\[
\mathsf{ev}_{\mathsf S}\colon\mathsf S\to\mathsf{Ev}^\diamond_\mathsf{S}. 
\]
Define the GW cycle as the pushforward to $\Mbar_{g,n}$ of
\[
\left(\mathsf{ev}^\star_{\mathsf S}(\upalpha)\cup \mathsf t^\star(\vartheta)\right)\cap [\mathsf S]^{\sf vir},
\]
where $\vartheta\in H^\star(\mathfrak a T^\diamond;\QQ)$ and $\upalpha\in H^\star(\mathsf{Ev}^\diamond_\mathsf{S};\QQ)$. 

A stratum GW cycle is called {\it non-exotic} if $\upalpha$ is pulled back from a product of blowups of the stratum in $\mathsf{Ev}$ to which it maps, {\it strongly non-exotic} if it can be obtained from this stratum without blowups, and {\it exotic} if it is not {\it non-exotic}. 

We make analogous definitions in Chow instead of (co)homology
\end{definition}

We come to the main result of this section. 

\begin{theorem}\label{thm: exotic-to-strata}
The exotic stratum GW cycles associated to the moduli space $\Mbar_\Lambda(Y|\partial Y)$ can be effectively reconstructed from the strongly non-exotic stratum GW cycles. 
\end{theorem}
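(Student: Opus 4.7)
The plan is to argue by Noetherian induction on the star partial order from Section~\ref{sec: stars-partial-ordering}, combined with an inner induction on the number of blowups appearing in a chosen factorization of $\mathsf{Ev}^\diamond \to \mathsf{Ev}$ into blowups along smooth strata. The base case is a stratum GW class whose insertion is already pulled back from $\mathsf{Ev}_\mathsf{S}$ itself, which is strongly non-exotic by definition. The inductive step will either decrease the number of blowups or strictly decrease the underlying star in the partial order.

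Fix an exotic stratum GW class given by data $(\mathsf{S}, \upalpha, \vartheta)$ on some $\mathsf{M}^\diamond$, and factor the subdivision of the evaluation side so that the last blowup is $b: \mathsf{Ev}^\diamond \to \mathsf{Ev}^{\diamond'}$ along a smooth stratum $Z$, with exceptional divisor $\iota: E \hookrightarrow \mathsf{Ev}^\diamond$. The projective bundle formula for a blowup gives
\[
\upalpha \;=\; b^\star \gamma \,+\, \sum_{j\geq 1} \iota_\star\bigl(h^{j-1} \cdot p^\star \eta_j\bigr),
\]
where $h = c_1(\mathcal{O}_E(-1))$ and $p: E \to Z$ is the natural projection. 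The pullback contribution $b^\star \gamma$ can be pushed down to a stratum GW class for the less-blown-up model $\mathsf{M}^{\diamond'}$ with insertion $\gamma$: this follows from the projection formula applied to the natural proper birational map between the relevant virtual models of the mapping space, together with the compatibility of virtual classes under subdivisions. Iterating on the pullback part eliminates all blowups in finitely many steps.

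The serious work is the exceptional term. Geometrically, $E$ is the exceptional divisor over a proper stratum $Z \subset \mathsf{Ev}^{\diamond'}$, and the tropicalization of $Z$ sits strictly deeper in $\mathsf{P}$ than the original stratum of the evaluation. Pulling back through the map $T \to \mathsf{P}$, the corresponding cones of $T^\diamond$ parameterize Chow $1$-complexes with at least one vertex displaced into a deeper cone of $\Sigma_Y$, so by Definition~\ref{def: general-ordering}(i) their stars are strictly smaller in the partial order. Capping $\iota_\star(h^{j-1}\cdot p^\star \eta_j)$ against the virtual class of $\mathsf{S}$ and pushing along $\mathsf{t}$, I would show that this contribution is identified with a stratum GW class supported on a deeper stratum $\mathsf{S}' \subset \mathsf{M}^\diamond$ of a logarithmic moduli problem whose discrete data has strictly smaller star.

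The central technical obstacle is executing the last step rigorously: showing that the exceptional contribution admits a genuine interpretation as a stratum GW class of a moduli problem with smaller discrete data. Concretely this requires (a) identifying the preimage of $E$ in $\mathsf{M}^\diamond$ with a union of deeper strata indexed by smaller stars; (b) checking that the virtual class of $\mathsf{S}$ restricted to each such deeper locus agrees with the virtual class of the corresponding smaller logarithmic moduli problem, which reduces to the compatibility of obstruction theories along strata already implicit in the decomposition framework of Section~\ref{sec: decomposition}; and (c) repackaging $h^{j-1}\cdot p^\star\eta_j$ together with $\vartheta$ as a legitimate pair (insertion on the new evaluation stratum, class on the Artin fan $\mathfrak{a}T^\diamond$) matching Definition~\ref{def: stratum-GW-class}. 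Once these ingredients are in place, the finite-below property for stars (Proposition~\ref{prop: ordering}) guarantees that the double induction terminates, producing an effective algorithm that expresses any exotic stratum GW class as a $\QQ$-linear combination of strongly non-exotic ones.
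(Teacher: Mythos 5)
Your outer induction is aimed at the wrong target, and the step you flag as ``the central technical obstacle'' is in fact the genuine gap. Theorem~\ref{thm: exotic-to-strata} only asks you to re-express an exotic class as a combination of stratum GW classes of the \emph{same} moduli space $\Mbar_\Lambda(Y|\partial Y)$, supported on possibly deeper cones $\sigma_j$ of $T$, with non-exotic insertions and $T$-classes $\vartheta_j$ (this is exactly Proposition~\ref{prop: precise-version} in the paper); no decrease of the star is needed or claimed at this stage. You instead try to identify the exceptional contributions with GW classes of \emph{smaller} logarithmic moduli problems indexed by strictly smaller stars, and your item (b) --- that the virtual class of $\mathsf S$ restricted to a deeper locus ``agrees with the virtual class of the corresponding smaller logarithmic moduli problem'' by compatibility of obstruction theories --- is precisely the splitting statement that the paper emphasizes is \emph{not} available directly: strata involve non-rigid (rubber) targets whose automorphisms act non-locally, and converting a stratum class into classes of smaller problems is the content of the separate Theorem~\ref{thm: rigid-split}, proved via rigidification, rubber calculus and the degeneration formula. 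Also, your order-theoretic bookkeeping is off: the deeper strata produced by the exceptional divisor are indexed by Chow $1$-complexes of the \emph{same} discrete data (same asymptotic star $\Lambda$); stars only shrink after splitting at a vertex, so invoking Definition~\ref{def: general-ordering}(i) here does not give a strictly decreasing invariant for your Noetherian induction.

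There is a second, more technical omission. You decompose the insertion $\upalpha$ on $\mathsf{Ev}^\diamond$ by the blowup formula and then cap against $[\mathsf S]^{\sf vir}$, but the squares relating $\Mbar_\Lambda(Y|\partial Y)^\diamond$ to $\mathsf{Ev}^\diamond$ are not Cartesian and the evaluation map is neither flat nor proper onto its image in any useful sense, so the identification of the exceptional terms with classes supported on deeper strata, and the comparison of strict versus total transforms of strata, cannot be done naively on the virtual moduli space. The paper's device is to first pass to the intermediate Artin stack $\mathscr M = \mathsf a T\times_{\mathsf a\mathsf P}\mathsf{Ev}$, where the problem becomes one of classical intersection theory: strict transforms of strata are compared with total transforms via Fulton's refined blowup formula, the excess terms are computed by Aluffi's Segre-class formula for monomial centers, the normal bundles are controlled by boundary and cotangent classes (Lemmas~\ref{lem: normal-1}--\ref{lem: normal-2}-type statements), the $T$-class is carried along by push/pull up to corrections on deeper strata, and only at the end is everything transported back by virtual pullback along $\mathsf M\to\mathscr M$. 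Without this (or an equivalent mechanism), your manipulation of $b^\star\gamma$ and $\iota_\star(h^{j-1}p^\star\eta_j)$ against the virtual class is unsupported, and the inner induction on the number of blowups does not close.
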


\begin{proof}
The proof proceeds in several steps. 

\noindent
{\sc Step I. Removing the virtual structure.} We introduce an intermediate space that translates the problem into one about traditional intersection theory, without the presence of the virtual class. Define a space $\mathscr M$, replacing $\mathsf M$, by the following fiber product:
\[
\begin{tikzcd}
\mathscr M\arrow{d}\arrow{r} & \mathsf{Ev} \arrow{d}\\
\mathsf a T\arrow{r} &\mathsf{a} \mathsf P.
\end{tikzcd}
\]
The fiber product is a stack of Artin type, since  the map of cone complexes $T\to \mathsf P$ has positive dimensional fibers. Geometrically, this corresponds to an expansion of $Y$ along $\partial Y$ together with a labelled collection of points on the various expansions of $\mathsf{Ev}_i$'s, compatible with the discrete data of $\mathsf P$. There is a morphism
\[
\mathsf M\to \mathscr M
\]
that has two key properties: (i) it is {\it combinatorially flat} since $\mathsf M\to \mathsf a T$ is combinatorially flat, so compatible with subdivision, and (ii) since the morphism stabilizing $1$-complexes from $\mathcal T\to\mathsf T$ can be chosen to be combinatorially flat, the map above is equipped with a virtual pullback, and this gives rise to the standard virtual class on $\mathsf M$.

Due to the these compatibilities, it suffices to analyze intersections of strata in blowups $\mathscr M^\diamond$, arising from subdivisions of $T$, with classes from $T^\diamond$ and $\mathsf{Ev}^\diamond$. Our goal is to reduce these intersections to statements about strata on the original $\mathscr M$. We can then just pull back to the geometric moduli spaces $\sf M$ themselves.

We prove the theorem by establishing the following, more precise, proposition. Fix a cone $\sigma$ of $T^\diamond$, a cohomology class $\vartheta$ on the Artin fan $\mathfrak{a}T$, and a evaluation class $\alpha$ on the corresponding evaluation space ${\sf Ev}^\diamond_\sigma$. 

Denote by 
\[
[\![ \sigma,\vartheta,\upalpha ]\!]
\]
the pushforward $\mathscr M$ of the class obtained by pulling back $\upalpha$ and $\vartheta$ to $\mathscr M^\diamond$, and capping with the stratum corresponding to $\sigma$.

The class $\upalpha$ if it is pulled back from a class on the product space ${\sf Ev}_S$. 

\begin{proposition}\label{prop: precise-version}
Any, possibly exotic, class of the form $[\![ \sigma,\vartheta,\upalpha ]\!]$ can be expressed in the form
\[
[\![ \sigma,\vartheta,\upalpha ]\!] = \sum_j [\![ \sigma_j,\vartheta_j,\upalpha_j ]\!]
\]
where $\sigma_j$ is a cone of $T$ and $\upalpha_j$ is non-exotic. 
\end{proposition}

\noindent
{\sc Step II. Geometric setup and the plan.} We describe the intersection problems that must be analyzed. By an application of the projection formula, we limit ourselves to considering cases when $\mathsf{Ev}^\diamond\to \mathsf{Ev}$ is a sequence of blowups along smooth centers. Take $\mathscr M^\diamond$ to be {any} logarithmic birational model compatible with this blowup of the evaluation space, i.e. so we have lifting
\[
\mathscr M^\diamond\to\mathsf{Ev}^\diamond
\]
of the evaluation. We take a closed stratum $\mathscr S^\diamond$ in $\mathscr M^\diamond$ whose map $\mathsf{Ev}^\diamond$ factors as $\mathscr{S}^\diamond\to\mathsf{Z}^\diamond$. We then consider two cohomology classes on $\mathscr{S}^\diamond$, namely, a pull back of a class from $\mathsf Z^\diamond$ and a class from the space $\mathfrak aT^\diamond$. We refer to these in short hand as the {\it $T$-class} and the {\it evaluation class}. We take the product of these classes on $\mathscr S^\diamond$, pair with the fundamental class, and push down to $\mathscr M$. Note that $\mathscr M^\diamond$ is proper and birational over $\mathscr M$, and the stratum $\mathscr S^\diamond$ is closed, so the pushforward is well-defined.

\noindent
{\sc Step III. Tuning blowups to evaluation strata.} The goal is to understand intersection products of the following form: select a stratum $\mathsf Z^\diamond$ of $\mathsf{Ev}^\diamond$, then select a stratum $\mathscr S^\diamond$ in its preimage $\mathscr M^\diamond$ that maps to it, pull back an evaluation and a $\vartheta$-class, pair and push forward. 

We first exert some control over the shape of the blowup. This is for notation compactness, and helps avoid having to keep track of various different dimensions. The claim is that it suffices to treat the case when $\mathsf Z^\diamond$ is a divisor or $\mathsf{Ev}^\diamond$ itself. If $\mathsf Z^\diamond$ is not a divisor, then we can achieve this by blowing up $\mathsf Z^\diamond$ inside the ambient space $\mathsf{Ev}^\diamond$. Take the strict transform of $\mathscr M^\diamond$ under the map to $\mathsf{Ev}^\diamond$. The result is a diagram
\[
\begin{tikzcd}
\widetilde{\mathscr M^\diamond}\arrow{r}\arrow{d} & \widetilde {\mathsf{Ev}^\diamond}\arrow{d}\\
\mathscr M^\diamond\arrow{r}& \mathsf{Ev}^\diamond
\end{tikzcd}
\]
We can find a stratum of $\widetilde{\mathscr M^\diamond}$ that dominates $\mathscr S^\diamond$. Since this stratum will map as a broken toric bundle, by passing to a stratum, we can find a stratum that maps birationally onto $\mathscr S^\diamond$, identifying fundamental classes. Now pull back the cohomology insertion from $\mathsf Z^\diamond$ to the exceptional over $\mathsf Z^\diamond$. We can also pull back the $T$-class, so this reduces us to the case where $\mathsf Z^\diamond$ is a divisor. 


The next goal is to move this down to an integral on a stratum of $\mathscr M$ itself, and this forms the bulk of the analysis. 

\noindent
{\sc Step IV. The blowup and bundle analysis.} The case when the stratum that we start with is $\mathsf{Ev}$ itself is essentially the same but easier as when it is a proper substratum, so we only treat the latter case. The system of blowups of a simple normal crossings pair obtained by sequences of blowups along smooth centers is cofinal in the system of all blowups. Therefore we can assume that $\mathsf Z^\diamond$ is obtained as follows: 
\[
Z^\diamond = \mathsf Z(m) \rightarrow\cdots\rightarrow \mathsf Z(2)\rightarrow \mathsf Z(1)\rightarrow\mathsf Z(0),
\]
where $Z(0)$ is a stratum of $\mathsf{Ev}$ and $\mathsf Z(1)$ is a weighted projective space bundle\footnote{It is not really necessary to use weighted blowups here, but it allows us to just do one blowup to create a birational copy of $\mathsf Z^\diamond$ and then the rest of the blowups are to ensure that it dominates $\mathsf Z^\diamond$. It can be done with standard blowups, at the cost of throwing in some initial blowups of strata that meet $Z$ transversely.} over $\mathsf Z(0)$. After this, each step is a blowup, induced by a blowup of the ambient $\mathsf{Ev}(i)$ containing it, at a center that is smooth and transverse to $\mathsf Z(1)$.

Let us first get the intersection problem from $\mathsf Z(m)$ down to $\mathsf Z(1)$, and deal with the bundle formula at the end. We start with a stratum $\mathscr S(m)$, obtained as a stratum in the strict transform of $\mathscr M$ under the ambient blowup of $\mathsf{Ev}(m)\to \mathsf{Ev}$. Let $\mathscr S(m-1)$ be the smallest closed stratum in $\mathscr M(m-1)$ that this $\mathscr S(m)$ maps to. We now have the following diagram:
\[
\begin{tikzcd}
\mathscr S(m)\arrow{r}&{\mathscr F(m)}\arrow{d}\arrow{r} & \mathsf Z(m)\arrow{d} \\
 &\mathscr S(m-1)\arrow{r} & \mathsf Z(m-1)
\end{tikzcd}
\]
where the square is Cartesian and the map from $\mathscr S(m)$ is proper birational onto the strict transform. There are two possibilities for the map $\mathscr S(m-1)\to \mathsf Z(m-1)$: it either has image that is not contained in the center or it is contained in the center. In the first case, the space $\mathscr F(m)$ is the total transform under the blowup, and $\mathscr S(m)$ is the strict transform. In the second case, the vertical map presents $\mathscr F(m)$ as a projective bundle. We take the two possibilities in turn. 

In the first case, we use the analysis outlined in~\cite[Section~3]{MR20} -- apply Fulton's blowup formula and Aluffi's formula for the Segre classes of monomial subschemes. Suppose that $V\subset \mathsf Z(m-1)$ is the center of the blowup and let $\mathbb P_V$ denote the exceptional projective bundle in $\mathsf Z(m)$. Our goal is to move the integral to $\mathscr F(m)$, and use pull/push compatibility to reduce to an integral on $\mathscr S(m-1)$. 

Since we are pushing forward to $\mathscr S(m-1)$, by an application of the projection formula, we can assume that the evaluation class on $\mathsf Z(m)$ is pushed forward from the exceptional divisor $\mathbb P_V$. Fulton's blowup formula expresses the class of the intersection product, which we denote $[\mathscr F(m)]$, as a sum:
\[
[\mathscr F(m)] = [\mathscr S(m)]+[\mathsf{excess}].
\]
The class $[\mathsf{excess}]$ is pushed forward from the pullback of $\mathbb P_V$ to $\mathscr S(m-1)$. Each is a sum of terms involving the following ingredients; the precise formula does not matter to us:
\begin{enumerate}[(i)]
\item The pullback of the projective bundle to a stratum of $\mathscr S(m-1)$ that maps into the center of $\mathsf Z(m-1)$. 
\item Chern classes of the line bundles associated to the boundary divisors of $\mathscr S(m-1)$.
\item The Chern class of the hyperplane line bundle of $\mathbb P_V\to V$. 
\end{enumerate}
Momentarily ignoring the $T$-class, we see that since any class on $\mathbb P_V$ is the pullback of a class on $V$, capped with some power of the relative hyperplane bundle, some bookwork allows us to conclude that the integral can be recovered from a strata invariant associated to $\mathscr S(m-1)$, up to correction terms that are strata GW cycles on deeper strata, all of which still live on $\mathscr M(m-1)$. 

To include the $T$-class, observe that $\mathscr S(m)\to\mathscr S(m-1)$ is an lci proper birational morphism, and push/pull shows that the $T$-class is pulled back, up to terms supported over the exceptional locus of this map. The correction terms relating the original $T$-class with the push/pull is therefore supported on deeper strata. Inducting on the depth of the strata (for each fixed number $m$ of modifications to the evaluation space). We conclude that when $\mathscr S(m-1)$ does not map into the center of the blowup. 

The second case occurs when $\mathscr F(m)\to\mathscr S(m-1)$ is a projective bundle, namely the pullback of $\mathbb P_V\to V$. Now, the space $\mathscr F(m)$ will be smooth and smooth over $\mathscr S(m-1)$, but possibly of the wrong dimension. In this case the map
\[
\mathscr S(m)\to\mathscr S(m-1)
\]
is a broken toric bundle. Again, let us momentarily ignore the $T$-class. Then by the projective bundle formula, the class of $\mathscr S(m)$ pushed forward to $\mathscr F(m)$ is the pullback of $\mathscr S(m-1)$ times some power of the Chern class of the relative hyperplane bundle. Again, the only relevant insertions are pullbacks from $V$ to $\mathbb P_V$, times some power of this same class. A diagram chase express this as a strata integral on $\mathscr S(m-1)$. 

To include the $T$-class, we proceed as above. The $T$-class is pulled back from $\mathscr S(m-1)$ up to deeper strata on $\mathscr S(m)$, and similarly, a second induction on the strata depth moves the integral to $\mathscr S(m-1)$. 

The final step is to deal with the projective bundle $\mathsf Z(1)\to\mathsf Z(0)$. The analysis is actually identical to the final step above. We have $\mathscr S(1)$ which maps onto some $\mathscr S(0)$. The pullback of $\mathscr S(0)$ under $\mathsf Z(1)\to\mathsf Z(0)$ has two possibilities, depending on whether $\mathscr S(0)$ maps into the center or not. In the former case, we analyze it by the weighted projective bundle formula, and in the latter case, the weighted blowup formula~\cite{AOA}.\footnote{As we work rationally, after appropriate root stacks, this is just the ordinary blowup formula~\cite[Section~3.6]{NR19}.}

The outcome is an inductive procedure that express a strata intersection on $\mathscr M^\diamond$ with a $T^\diamond$-class and an evaluation class with a series of strata classes on $\mathscr M$, with evaluations from $\mathsf{Ev}$ and $T$-class. Now using the fact that $\mathsf M\to\mathscr M$ and $\mathsf M^\diamond\to\mathscr M^\diamond$ has a perfect relative obstruction theory. Apply virtual pull back to the formula to conclude the theorem statement.

The proofs of the proposition and the theorem are complete.
\end{proof}

\section{Rigidification and strata geometry}\label{sec: rigidification}

\subsection{Overview and the ideas}  Let's recap where we are in the big picture. We have converted GW cycles on the general fiber into exotic invariants on bundles over strata of the special fiber, and then into non-exotic strata GW cycles on these moduli spaces. 

We would like to now say that the strata GW cycles can be decomposed into contributions associated with the vertices of that complex. A basic technical issue arises -- the degeneration formula describes the virtual irreducible components of degenerate moduli spaces, rather than virtual strata of higher codimension. The results of~\cite{MR23} do not directly apply\footnote{The formalism of punctured maps includes a strata statement~\cite{ACGS17}, but does not provide a splitting result sufficient for our purposes.}. The geometric issue is that descriptions of strata typically involve maps to {\it non-rigid} targets, sometimes called {\it rubber} geometries. 

In the smooth pair case, a rigidification/rubber calculus was developed in~\cite{MP06}. In the general simple normal crossings setup, the rubber theory is significantly more complicated; as explained by Carocci--Nabijou, the relevant rubber automorphism groups act non-locally~\cite{CN21}. A description of strata analogous to the degeneration formula of~\cite{MR23} has not yet been achieved. 

We offer a different way out, via a simple geometric idea. Given a stratum of $\Mbar_\Lambda(Y|\partial Y)$, we degenerate the entire moduli space to the normal cone of that stratum. The key point is that this component is virtually birational to a component in the degeneration problem; this can be done constructively and explicitly. We then provide a calculus that converts a GW strata class into an integral on this compactified normal cone. This allows us to invoke the results of~\cite{MR23}. Together with an analysis of the cohomology class coming from the stack $\mathsf a T$, this implies the reconstruction\footnote{For readers familiar with~\cite{MP06}, an important idea is to reverse the logical order of operations -- there, a stratum is split first and then rigidified. Here, we rigidify first, and then split; this allows us to skirt around the non-locality of action of the torus. }. 

\subsection{Discrete data and genus decorations} Fix a moduli space $\Mbar_\Lambda(Y|\partial Y)$ of logarithmic stable maps. As explained in the previous section, we make a subdivision so there is a morphism
\[
\Mbar_\Lambda(Y|\partial Y)\to \mathsf{a} T^{\bm \mu}_{\sf v,k}(Y|\partial Y).
\]
We consider non-exotic stratum GW invariants, coming from the previous section. These are specified by a cone $\Theta$ of the moduli space $T^{\bm \mu}_{\sf v,k}(Y|\partial Y)$, and determine a stratum
\[
S_\Theta\subset \Mbar_\Lambda(Y|\partial Y).
\]
We are interested in classes formed by capping $[S_\Theta]^{\sf vir}$ with non-exotic evaluation classes and $T$-classes. 

The cone $\Theta$ gives rise to a combinatorial type of Chow $1$-complex $\Gamma$. However, if we specify the genus at each vertex of $\Gamma$, we obtain a refinement of the stratum:
\[
S_\Theta = \bigcup_{\hat \Gamma} S_{\hat\Gamma}, \ \ \ [S_\Theta]^{\sf vir} = \sum_{\hat \Gamma} [S_{\hat\Gamma}]^{\sf vir}.
\]
Our goal is to control the classes derived from $S_\Theta$, but during the induction, it will be useful to prove a more refined statement about these $S_{\hat \Gamma}$ spaces. We will explain, as we go, where the genus aspect of the induction is useful. 

\subsection{Strata reconstruction theorem} To recap the notation, we have a cone $\Theta$ in the $T$-space, a Chow $1$-complex type $\Gamma$, and when we enhance this with genus distributions, we denote this $\hat \Gamma$. 

The stars of vertices of $\Gamma$, together with the genus distribution on $\hat \Gamma$, determine partial data for a logarithmic stable maps problem. As recorded in Remark~\ref{rem: stars-discrete-data}, if the vertex of a star is in a cone $\sigma$ in $\Sigma$, then the associated problem is one on broken toric bundle of rank $\dim \sigma$, over the closed stratum $W_\sigma$ in $X$. We note that the tangency at these stars is not fully specified, but there are only finitely many spaces, so we can just take the disjoint of the spaces taken over all tangencies. 

Just as the data $\Gamma$ determines a stratum of $\Mbar_\Lambda(Y|\partial Y)$ the additional decoration $\hat \Gamma$ also describes a stratum of this space. The notion of an exotic stratum GW cycle is defined in the same sense as before -- the $T$-class, which was denoted $\vartheta$, is still pulled back from $\mathfrak aT$ just as in the previous section. 

We now state the main theorem. 

\begin{theorem}\label{thm: rigid-split}
Let $\mathsf S_\Theta$ be a stratum of $\Mbar_\Lambda(Y|\partial Y)$ corresponding to a combinatorial type of Chow $1$-complex, with genus distribution, together denoted $\hat \Gamma$. Any strongly non-exotic stratum GW cycle associated to $S_\Theta$ can be effectively reconstructed from non-exotic logarithmic GW cycles associated with (i) stars of vertices of $\Gamma$, (ii) stars of strictly lower order, and (iii) stars with strictly smaller genus than $\Lambda$. 
\end{theorem}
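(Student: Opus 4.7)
The plan is to realize the stratum $S_{\hat\Gamma}$ as a virtual irreducible component of a degenerate moduli space, so that the degeneration formula of Theorem~\ref{thm: degeneration-formula} can be applied directly rather than the (not yet available) strata version of it. The construction is as follows. The cone $\Theta$ in $T^{\bm\mu}_{\sf v,k}(Y|\partial Y)$ has generic interior point whose Chow $1$-complex has type $\Gamma$; choose a ray in $\Theta$ and use the associated integral piecewise linear function on $\Sigma$ to construct a logarithmic modification $\mathcal Y\to\mathbb A^1$ whose special fiber $\mathcal Y_0$ is snc, with a distinguished face in its dual complex whose rigid tropical maps of the prescribed discrete data are precisely those of type $\hat\Gamma$. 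The first step is to verify a \emph{rigidification statement}: the virtual irreducible component $\Mbar_{\gamma_0}(\mathcal Y_0)$ corresponding to $\hat\Gamma$ is proper birational over $S_{\hat\Gamma}$, identifying virtual classes under pushforward. This is the principal obstacle -- one must match the relative obstruction theories over the respective $T$-spaces and deal with potential automorphism contributions on the stratum side, possibly by passing to a finite cover.

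Granting rigidification, apply the decomposition theorem of Section~\ref{sec: decomposition} to the family $\mathcal Y\to\mathbb A^1$. One term, up to a combinatorial multiplicity, is $[S_{\hat\Gamma}]^{\sf vir}$; every other term corresponds to a rigid tropical curve $\gamma$ that is a strict specialization of $\Gamma$, and hence by construction and by the analysis of Section~\ref{sec: stars-partial-ordering} has at least one vertex star strictly smaller than the corresponding star of $\Gamma$ in the order of Definition~\ref{def: general-ordering}. Apply the degeneration formula of Theorem~\ref{thm: degeneration-formula} to the main term: it expresses $\kappa_\star[S_{\hat\Gamma}]^{\sf vir}$ as a product over vertices $V$ of $\Gamma$ of the logarithmic GW cycles of the broken toric bundles $Y_V$ over the associated strata, capped with a diagonal class $\Delta_{\gamma_0}^\diamond$ pulled back from the evaluation space. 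This yields the vertex contributions of case (i) and, via the other terms in the decomposition, contributions from strictly smaller stars, giving case (ii).

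It remains to account for the insertions. The strongly non-exotic class $\upalpha$, being pulled back from the unblown-up stratum of $\mathsf{Ev}$, splits as a K\"unneth product across the vertex factors. The Artin-fan class $\vartheta\in H^\star(\mathfrak aT)$ decomposes across the vertex $T$-spaces, up to terms supported on deeper cones, via the same blowup/Artin-fan calculus used in the proof of Theorem~\ref{thm: exotic-to-strata}; the deeper-cone terms correspond to stars strictly lower in the ordering and feed into case (ii). The exotic insertions arising from $\Delta_{\gamma_0}^\diamond$ are in turn converted to non-exotic vertex cycles by Theorem~\ref{thm: exotic-to-strata}; applying that theorem on each vertex factor produces strata classes whose underlying Chow $1$-complexes either have a strictly smaller star (case (ii)) or, when the splitting refines a vertex and redistributes genus among the new vertices, have strictly smaller genus at each new vertex (case (iii)). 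The induction terminates by the finite-below property in Proposition~\ref{prop: ordering}, combined with the fact that genus can only decrease when vertices split, yielding an effective reconstruction.
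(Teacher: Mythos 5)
Your overall architecture matches the paper's: realize the stratum via a degeneration built from a point of $\Theta$, apply the degeneration formula to the resulting component, convert the Künneth-induced exotic insertions by Theorem~\ref{thm: exotic-to-strata}, handle $\vartheta$ by pushing it into deeper strata, and induct on the star order and genus. But the step you flag as "the principal obstacle" is where the proposal genuinely breaks: the component $S_\Theta^{\sf rig}\subset \Mbar(\mathcal Y_r/\mathbb A^1)_0$ is \emph{not} proper birational over $S_{\hat\Gamma}$, and its virtual class does not push forward to $[S_{\hat\Gamma}]^{\sf vir}$. The map $S_\Theta^{\sf rig}\to S_\Theta$ is an equivariant compactification of a torsor under the rubber torus $N_\Theta\otimes\mathbb G_m$, of relative (virtual) dimension $\dim\Theta\geq 1$; equivalently, the special fiber component has the virtual dimension of the whole space $\Mbar_\Lambda(Y|\partial Y)$, not of the codimension-$\dim\Theta$ stratum. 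So the obstruction is not finite automorphisms (a finite cover cannot fix a positive-dimensional fiber), and "matching obstruction theories" only yields compatibility under virtual \emph{pullback}, not pushforward. The missing idea is the rigidifying insertion $\eta$ of Section~\ref{sec: rigid-calculus}: a class of degree $\dim\Theta$ built from evaluation geometry -- either by adding tangency-$0$ markings and using the divisor equation (resp.\ a $\psi$-class proportionality at $\beta$-zero vertices), or by nodal evaluations together with the at-least-trivalency of balanced $\beta$-zero vertices (Proposition~\ref{prop: atleast-trivalent}) -- chosen so that the evaluation map embeds the rubber torus into the product of fiber tori and a power of a relatively ample class cuts it down, giving $p_\star([S_\Theta^{\sf rig}]^{\sf vir}\cap\eta)=[S_\Theta]^{\sf vir}$ (Proposition~\ref{prop: rigidifying-insertion}). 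Without this, the "main term" you feed into Theorem~\ref{thm: degeneration-formula} is simply not the class you are trying to compute, and the rest of the argument computes the wrong thing.

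A second, smaller gap is the termination of your induction. You assert that after splitting, each new star is either strictly smaller (case (ii)) or has strictly smaller genus (case (iii)), but the delicate case is a \emph{translated star}: a vertex lying over a deeper cone whose star, viewed in $\Sigma_{Y|\partial Y}$, is translation-equivalent to the one you started with, with all curve class and all genus still concentrated there -- then neither the order nor the genus drops. The paper closes this loop by the genus induction plus the observation (Section~\ref{sec: translated-stars}, Lemma~\ref{lem: uncorrected-translated}) that when the genus at the secondary vertices vanishes, the evaluation map from the secondary factors is combinatorially flat, so the degeneration formula needs no blowups and the Künneth splitting produces no new exotic insertions at the main vertex; your proposal needs an argument of this kind (and the genus-independent rigidifying construction that makes it usable) to terminate. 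Your treatment of $\vartheta$ via boundary-supported pullbacks is in the right spirit and matches the paper's rubber calculus.
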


The plan for the proof is as follows:
\begin{enumerate}[(i)]
\item We construct a target degeneration such that the associated degeneration of the moduli space has a component that maps to $\mathsf S_\Theta$ as a ``virtual fibration'';
\item we then construct an insertion on this component that allows us to transfer the integral on the stratum $\mathsf S_\Theta$ to this rigidification;
\item we use intersection theory on the Artin fan, generalizing the ``rubber calculus'' argument of~\cite{MP06}, to remove the $T$-class above. This puts us exactly in the situation of~\cite{MR23};
\item finally, we carry out an inductive proof of the theorem by splitting the strata $S_{\hat \Gamma}$, re-running the exotic-to-stratum conversion, splitting again, and so on.
\end{enumerate}

\begin{remark}[Dependence of the steps on the genus parameter]
Step (i) will be done uniformly in the genus parameter, in other words, we will construct a rigidified space $S_\Theta^{\sf rig}$ that virtually fibers over $S_\Theta$, and analogous rigidifications of the genus-decorated substrata are obtained by restricting to virtual irreducible components of $S_\Theta^{\sf rig}$. In Step (ii) we provide two constructions, one of which is genus-dependent, in the sense that it is a different construction for each distribution, and the other is genus independent. Step (iii) is again genus independent, and step (iv) is genus dependent.     
\end{remark}

\subsection{A degeneration of the moduli space}\label{sec: degeneration-of-strata} We construct the rigidifying spaces by a well-chosen degeneration to the normal cone of the moduli space. This is best explained using subdivisions.

\subsubsection{A reminder on subdivisions} Two types of subdivisions, also known as logarithmic modifications, naturally occur in logarithmic Gromov--Witten theory: subdivisions of the target, and subdivisions of the moduli space. A subdivision of the target naturally forces a subdivision of the moduli space. Let us explain how.

Consider a space of stable $1$-complexes in $\Sigma$ with a chosen cone structure.\footnote{The numerical data does not need to be fixed for this discussion}. We have a moduli diagram of cone spaces:
\[
\begin{tikzcd}
\mathbb G \arrow{dr} \arrow{rr}& & \Upsilon\arrow{dl}\arrow{r} & \Sigma\\
&\mathsf T_{\mathsf v,k}^{\bm\mu}(Y|\partial Y).& & 
\end{tikzcd}
\]
The family $\mathbb G$ is the family of Chow $1$-complexes, while the family $\Upsilon$ is the natural family of complete subdivisions of $\Sigma$ by polyhedral complexes; the first arrow in the top arrow is, fiberwise, an inclusion of a $1$-dimensional polyhedral subcomplex.

Now consider a subdivision $\Sigma'\to \Sigma$. We can pull back this subdivision to $\Upsilon$, and to $\mathbb G$, which is just a subcomplex of $\Upsilon$. We can resolve singularities in $\Upsilon$ as well\footnote{The precise requirement is only that $\mathbb G$ is smooth as a cone complex.}. The resulting map
\[
\Upsilon'\to \mathsf T_{\mathsf v,k}^{\bm\mu}(Y|\partial Y)
\]
may no longer be combinatorially flat. We can apply semistable reduction to obtain a new diagram
\[
\begin{tikzcd}
\mathbb G' \arrow{dr} \arrow{rr}& & \Upsilon''\arrow{dl}\arrow{r} & \Sigma'\\
&\mathsf T_{\mathsf v,k}^{\bm\mu}(Y|\partial Y).& & 
\end{tikzcd}
\]
In this way, a subdivision of the target induces a subdivision of the moduli space of Chow $1$-complexes. By pulling this back along the stabilization map
\[
\mathcal T_{\mathsf v,k}^{\bm\mu}(Y|\partial Y)\to \mathsf T_{\mathsf v,k}^{\bm\mu}(Y|\partial Y)
\]
we get a subdivision of $\Mbar_\Lambda(Y|\partial Y)$. 

The key trick is to use this idea above, but to apply it to $Y\times\mathbb A^1$, with logarithmic structure given by $\partial Y\times\A^1\cup Y\times 0$.

\subsubsection{The rigidified moduli space} The construction will be performed with respect to a choice of $r$ in the interior of the cone $\Theta$. It corresponds to a Chow $1$-complex
\[
\Gamma\to \Sigma. 
\]
We therefore get an associated simple normal crossings degeneration
\[
\mathcal Y_r\to \mathbb A^1,
\]
by the correspondence between polyhedral subdivisions and degenerations, see~\cite[Section~1,2]{MR23}. There is an associated moduli space of logarithmic curves
\[
\mathsf M(\mathcal Y_r/\mathbb A^1)\to \mathbb A^1.
\]
Taking the natural lift of the numerical data, we identify that the Chow $1$-complex $\Gamma$ with a rigid Chow $1$-complex for the degeneration problem above, and we isolate the component
\[
S_\Theta^{\sf rig}\hookrightarrow \mathsf M(\mathcal Y_r/\mathbb A^1)_0.
\]
This component carries a virtual fundamental class. 

The space $S_\Theta^{\sf rig}$ can be understood as follows. We have earlier used notation akin to $T(Y|\partial Y)$, but this will quickly become burdensome so we drop $\partial Y$ and the like from the notation. Take the space $T(Y\times\mathbb A^1)$ and perform the blowup
\[
\mathcal Y_r\to Y\times\mathbb A^1. 
\]
This gives rise to an induced subdivision $T(\mathcal Y_r)\to T(Y\times\mathbb A^1)$ and of the moduli space. The space $S_\Theta^{\sf rig}$ is the preimage in $\mathsf M(\mathcal Y_r/\mathbb A^1)$ of the exceptional ray in  $T(\mathcal Y_r)\to T(Y\times\mathbb A^1)$. By construction this ray maps to $\sigma$, sending the generator of the ray to $p$. 

In particular, we get a morphism
\[
S_\Theta^{\sf rig}\to S_\Theta.
\]
This morphism is the pullback of an exceptional divisor in a blowup of Artin fans, so automatically is an equivariant compactification of a torus bundle. 

The following result is not strictly speaking necessary, but gives a sense of what is going on. Given a point $p\in\sigma$, we can perform that ``$r$-weighted'' deformation to the normal cone. Namely, take the cone space
\[
T(Y\times\mathbb A^1) = T(Y)\times\mathbb R_{\geq 0}. 
\]
and consider the point $(r,1)\in\sigma\times\mathbb R_{\geq 0}$. Perform stellar subdivision with respect to this point, adding the ray through this point. See Figure~\ref{fig: p-weighted-deformation} for an example when $\sigma$ is a point. 

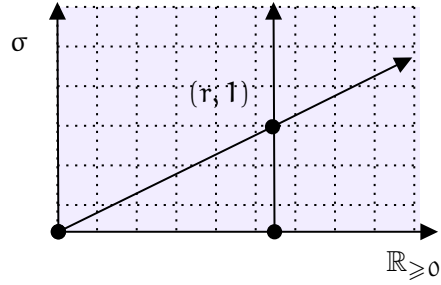
\begin{figure}

\tikzset{every picture/.style={line width=0.75pt}} 

\begin{tikzpicture}[x=0.75pt,y=0.75pt,yscale=-1,xscale=1]

\draw  [draw opacity=0][fill={rgb, 255:red, 223; green, 210; blue, 255 }  ,fill opacity=0.4 ][dash pattern={on 0.84pt off 2.51pt}] (416.5,1011) -- (599.5,1011) -- (599.5,1124.5) -- (416.5,1124.5) -- cycle ; \draw  [dash pattern={on 0.84pt off 2.51pt}] (416.5,1011) -- (416.5,1124.5)(436.5,1011) -- (436.5,1124.5)(456.5,1011) -- (456.5,1124.5)(476.5,1011) -- (476.5,1124.5)(496.5,1011) -- (496.5,1124.5)(516.5,1011) -- (516.5,1124.5)(536.5,1011) -- (536.5,1124.5)(556.5,1011) -- (556.5,1124.5)(576.5,1011) -- (576.5,1124.5)(596.5,1011) -- (596.5,1124.5) ; \draw  [dash pattern={on 0.84pt off 2.51pt}] (416.5,1011) -- (599.5,1011)(416.5,1031) -- (599.5,1031)(416.5,1051) -- (599.5,1051)(416.5,1071) -- (599.5,1071)(416.5,1091) -- (599.5,1091)(416.5,1111) -- (599.5,1111) ; \draw  [dash pattern={on 0.84pt off 2.51pt}]  ;
\draw    (417,1124.5) -- (416.51,1011.5) ;
\draw [shift={(416.5,1008.5)}, rotate = 89.75] [fill={rgb, 255:red, 0; green, 0; blue, 0 }  ][line width=0.08]  [draw opacity=0] (8.93,-4.29) -- (0,0) -- (8.93,4.29) -- cycle    ;
\draw [shift={(417,1124.5)}, rotate = 269.75] [color={rgb, 255:red, 0; green, 0; blue, 0 }  ][fill={rgb, 255:red, 0; green, 0; blue, 0 }  ][line width=0.75]      (0, 0) circle [x radius= 3.35, y radius= 3.35]   ;
\draw    (526,1124.5) -- (525.51,1011.5) ;
\draw [shift={(525.5,1008.5)}, rotate = 89.75] [fill={rgb, 255:red, 0; green, 0; blue, 0 }  ][line width=0.08]  [draw opacity=0] (8.93,-4.29) -- (0,0) -- (8.93,4.29) -- cycle    ;
\draw [shift={(526,1124.5)}, rotate = 269.75] [color={rgb, 255:red, 0; green, 0; blue, 0 }  ][fill={rgb, 255:red, 0; green, 0; blue, 0 }  ][line width=0.75]      (0, 0) circle [x radius= 3.35, y radius= 3.35]   ;
\draw    (417,1124.5) -- (605.5,1124.5) ;
\draw [shift={(608.5,1124.5)}, rotate = 180] [fill={rgb, 255:red, 0; green, 0; blue, 0 }  ][line width=0.08]  [draw opacity=0] (8.93,-4.29) -- (0,0) -- (8.93,4.29) -- cycle    ;
\draw [shift={(417,1124.5)}, rotate = 0] [color={rgb, 255:red, 0; green, 0; blue, 0 }  ][fill={rgb, 255:red, 0; green, 0; blue, 0 }  ][line width=0.75]      (0, 0) circle [x radius= 3.35, y radius= 3.35]   ;
\draw    (417,1124.5) -- (525,1071.5) ;
\draw [shift={(525,1071.5)}, rotate = 333.86] [color={rgb, 255:red, 0; green, 0; blue, 0 }  ][fill={rgb, 255:red, 0; green, 0; blue, 0 }  ][line width=0.75]      (0, 0) circle [x radius= 3.35, y radius= 3.35]   ;
\draw    (525,1071.5) -- (592.81,1037.83) ;
\draw [shift={(595.5,1036.5)}, rotate = 153.6] [fill={rgb, 255:red, 0; green, 0; blue, 0 }  ][line width=0.08]  [draw opacity=0] (8.93,-4.29) -- (0,0) -- (8.93,4.29) -- cycle    ;

\draw (481,1046.5) node [anchor=north west][inner sep=0.75pt]    {$( r,1)$};
\draw (391,1025) node [anchor=north west][inner sep=0.75pt]    {$\sigma $};
\draw (581,1133) node [anchor=north west][inner sep=0.75pt]    {$\mathbb{R}_{\geq }{}_{0}$};

\end{tikzpicture}
\caption{The subdivision giving rise to the $r$-weighted deformation to the normal cone.}\label{fig: p-weighted-deformation}
\end{figure}

\begin{proposition}
The map $S_\Theta^{\sf rig}\to S_\Theta$ factors through the $r$-weighted deformation to the normal cone of $\mathsf M(X)$. 
\end{proposition}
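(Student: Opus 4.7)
The plan is to realize both $S_\Theta^{\sf rig}$ and the $r$-weighted deformation to the normal cone of $\mathsf M(X)$ along $S_\Theta$ as logarithmic modifications of $\mathsf M(X) \times \mathbb A^1$ induced by subdivisions of the cone complex $T \times \mathbb R_{\geq 0}$, where $T = T^{\bm\mu}_{\mathsf v, k}(Y|\partial Y)$, and to show that the first modification refines the second. All passages between cone complexes and algebraic stacks are the standard Artin-fan bookkeeping used throughout the paper, so the content of the argument sits in the combinatorial comparison of the two subdivisions.

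First I would unpack the target: by construction, the $r$-weighted deformation to the normal cone at $S_\Theta$ is the logarithmic modification associated to the stellar subdivision of $T \times \mathbb R_{\geq 0}$ at the interior point $(r,1)$, where $r$ lies in the relative interior of $\Theta$. The new ray $\rho_r$ cuts out the weighted projective completion of the normal bundle of $S_\Theta$ in $\mathsf M(X)$ with weights prescribed by $r$, and this exceptional locus carries a natural projection onto $S_\Theta$.

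Next I would trace the construction of $S_\Theta^{\sf rig}$. The degeneration $\mathcal Y_r \to \mathbb A^1$ is prescribed by a Cayley-type subdivision of $\Sigma \times \mathbb R_{\geq 0}$ built from $\Gamma$ and the weight $r$. Applying the recipe recalled in Section~\ref{sec: degeneration-of-strata} -- pullback along the universal family $\Upsilon \to \mathsf T$ followed by semistable reduction -- propagates this target subdivision to an induced subdivision of $T \times \mathbb R_{\geq 0}$. The key claim to verify is that this induced subdivision already contains the stellar ray $\rho_r$. Granting this claim, the induced modification of $\mathsf M(X) \times \mathbb A^1$ factors through the $r$-weighted deformation, and restricting to the preimage of $\rho_r$ -- which is precisely where $S_\Theta^{\sf rig}$ sits inside $\mathsf M(\mathcal Y_r/\mathbb A^1)_0$ by construction -- produces the desired factorization onto $S_\Theta$ via the exceptional divisor.

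The main obstacle is this coincidence of rays. It should reduce to a direct unpacking of definitions: the tropical moduli point in $T \times \mathbb R_{\geq 0}$ of the Chow $1$-complex $\Gamma$ placed at height one is exactly $(r,1)$, by the choice of $r$ as an interior point of $\Theta$, so the new ray in the subdivision of $\Sigma \times \mathbb R_{\geq 0}$ pushes forward under $\Upsilon \to \mathsf T$ to $\rho_r$ itself. One must also check that the semistable reduction used to enforce combinatorial flatness of $\mathcal T \to \mathsf T$ does not collapse or distort $\rho_r$; this is automatic because $\Theta \times \mathbb R_{\geq 0}$ is smooth in a neighbourhood of $(r,1)$, and any further subdivisions needed for flatness can be chosen to dominate the stellar subdivision at $(r,1)$, preserving the factorization.
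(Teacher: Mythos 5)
Your overall strategy---realizing both sides as logarithmic modifications of $\mathsf M(X)\times\mathbb A^1$ attached to subdivisions of $T\times\mathbb R_{\geq 0}$ and comparing the two subdivisions---is the combinatorial unpacking of what the paper's one-line proof invokes (the universal property of weighted blowing up). The gap is the step where you pass from ``the induced subdivision contains the stellar ray $\rho_r$'' to ``the induced modification factors through the $r$-weighted deformation to the normal cone.'' Containing the ray is strictly weaker than refining the stellar subdivision, and it is refinement (equivalently: the order function of the weighted blowup is linear on every cone of the induced subdivision; equivalently: the pullback of the weighted ideal of $S_\Theta\times\{0\}$ is invertible) that the universal property actually requires. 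Already in rank $3$ the implication fails: inside $\mathbb R_{\geq 0}^3$, complete the simplicial cone spanned by $(1,1,1)$, $(2,2,1)$, $(2,1,2)$ to a subdivision of the orthant. This subdivision contains the ray through $(1,1,1)$, yet $\min(u_1,u_2,u_3)$---the order function of the stellar subdivision at $(1,1,1)$---is not linear on that cone (it takes the value $1$ on both $(2,2,1)$ and $(2,1,2)$ but the value $3$ on their sum), so the associated toric modification does not factor through the blowup of the origin stratum. So after checking that $\rho_r$ is a ray of $T(\mathcal Y_r)$, the real content is still owed: every cone of the induced subdivision must be contained in a single cone of the stellar subdivision at $(r,1)$, i.e.\ the $r$-weighted order function of $S_\Theta$ must be linear on each combinatorial type of tropical map to the subdivided target.

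Your closing patch does not repair this. The space $S_\Theta^{\sf rig}$ is cut out of $\mathsf M(\mathcal Y_r/\mathbb A^1)$, whose tropicalization over $T\times\mathbb R_{\geq 0}$ is determined by the choice of $\mathcal Y_r$; you are not free to ``choose further subdivisions to dominate the stellar subdivision at $(r,1)$,'' and if you do insert such auxiliary subdivisions you have changed the model and must then compare it with $S_\Theta^{\sf rig}$---which is essentially the original proposition again. What needs to be argued (and what the paper's appeal to the universal property of weighted blowing up is packaging) is that the canonically induced subdivision refines the stellar one: for instance, show that a cone of $T(\mathcal Y_r)$ records, for each vertex and edge of the universal tropical map, the cone of the $\Gamma$-induced subdivision of $\Sigma\times\mathbb R_{\geq 0}$ in which it lies, and deduce that the piecewise-linear function measuring the multiplicity of the exceptional ray is given by a single linear functional on each such cone; equivalently, verify directly that the pullback of the weighted ideal of $S_\Theta\times\{0\}$ to $\mathsf M(\mathcal Y_r/\mathbb A^1)$ is the (invertible) ideal of the boundary divisor lying over $\rho_r$. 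Without one of these checks, the factorization claim is unsupported.
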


\begin{proof}
Follows from the universal property of (weighted) blowing up. 
\end{proof}

We also have the following equivalence of virtual classes. This is a formal from the existence of the obstruction theories over the appropriate Artin fans. 

\begin{proposition}
The pull back of the virtual class along $S_\Theta^{\sf rig}\to S_\Theta$ coincides with the natural virtual class on $S_\Theta^{\sf rig}$. 
\end{proposition}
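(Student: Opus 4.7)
The plan is to exhibit both virtual classes as restrictions of a single relative obstruction theory over Artin fans and then to deduce the desired equality from a Cartesian square. By construction, $S_\Theta^{\sf rig}$ is cut out of $\mathsf{M}(\mathcal{Y}_r/\mathbb{A}^1)$ as the preimage of the exceptional ray produced by the stellar subdivision inserting $(r,1)$ into $T(Y\times\mathbb{A}^1)$, while $S_\Theta$ is the preimage of the cone $\Theta$ itself. In particular, the morphism $S_\Theta^{\sf rig} \to S_\Theta$ is realized as the base change, in fs log algebraic stacks, of a morphism of Artin fan strata
\[
\mathfrak{a} E \longrightarrow \mathfrak{a}\Theta,
\]
where $\mathfrak{a} E$ denotes the Artin fan attached to the new exceptional ray. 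The first step of the proof would be to nail down this square carefully, using the fact that the Artin fan of $\mathsf{M}(\mathcal{Y}_r/\mathbb{A}^1)$ is, up to the stabilization layer, the pullback of $\mathfrak{a} T(\mathcal{Y}_r)$.

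Next, I would invoke the standard relative obstruction theories of~\cite{GS13, AC11}: both $\mathsf{M}(\mathcal{Y}_r/\mathbb{A}^1)$ and $\Mbar_\Lambda(Y|\partial Y)\times\mathbb{A}^1$ carry perfect obstruction theories relative to their respective Artin fan stacks. Because $\mathcal{Y}_r\to Y\times\mathbb{A}^1$ is a logarithmic modification, the induced map between moduli spaces is a logarithmically étale base change at the level of Artin fans, and the birational invariance principle of~\cite{AW} implies that the two obstruction theories are strictly compatible under pullback. Restricting first to the special fiber over $0\in\mathbb{A}^1$ and then to the substack cut out by the exceptional cone yields the perfect obstruction theory defining $[S_\Theta^{\sf rig}]^{\sf vir}$; the analogous restriction downstairs recovers $[S_\Theta]^{\sf vir}$.

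With the Cartesian square and a common relative obstruction theory in place, the desired equality follows from virtual pullback compatibility applied to the log étale morphism of Artin fan strata $\mathfrak{a} E \to \mathfrak{a}\Theta$. The main subtlety, and what I expect to be the hardest part, is verifying that the class arising from the ACGS decomposition of $\mathsf{M}(\mathcal{Y}_r/\mathbb{A}^1)_0$ indexed by $\Gamma$ matches the class produced by this Cartesian construction, with no spurious multiplicity or automorphism factors. Here genericity of $r$ in the interior of $\Theta$ is crucial: a generic choice guarantees that $\Gamma$ is the unique rigid tropical type whose image lies over the new ray, so $S_\Theta^{\sf rig}$ appears as a single virtual irreducible component with multiplicity one and the comparison is clean. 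If a non-trivial multiplicity did appear, one would instead have to divide by it explicitly; either way the identification is formal once the relative obstruction-theoretic picture is set up.
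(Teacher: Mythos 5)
Your proposal is correct and follows essentially the same route as the paper, which disposes of this statement in one line as being formal from the existence of the obstruction theories over the appropriate Artin fans: you spell out exactly that formalism (common relative obstruction theory, compatibility under the logarithmic modification $\mathcal Y_r\to Y\times\mathbb A^1$ via birational invariance, and virtual pullback compatibility along the Cartesian square of Artin fan strata). Your closing worry about ACGS multiplicities is not needed, since the proposition compares the intrinsic virtual class of the component $S_\Theta^{\sf rig}$ with the pullback of $[S_\Theta]^{\sf vir}$ rather than the decomposition contribution $m_\gamma[\Mbar_\gamma]^{\sf vir}$, but including it does no harm.
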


Finally, let us record how the evaluation spaces are related. The cone $\Theta$ in $T(Y)$ determines a cone in $\mathsf P(Y)$, namely just the cone it maps to. This in turn gives us a stratum 
\[
\mathsf Z_\Theta\subset \mathsf {Ev},
\]
which is itself a product of strata 
\[
\mathsf Z_\Theta = \prod_j \mathsf Z_j
\]
where $j$ runs over the marked legs and internal markings of the Chow $1$-complex $\gamma$. Recall that the insertions in our strata GW cycles come from cohomology classes $\upalpha$ on $\mathsf Z_\Theta$.

Next, on the rigid space, we again have an evaluation space, as discussed in~\cite{MR23}. Again we have an evaluation space
\[
\mathsf Z^{\sf rig}_\Theta = \prod_j \mathsf Z_j^{\sf rig}.
\]
where each $\mathsf Z_j^{\sf rig}\to \mathsf Z_j$ is an equivariant compactification of a torus torsor over $\mathsf Z_j$. 

\subsection{Rigidification calculus}\label{sec: rigid-calculus} We now explain how to convert a stratum invariant on $S_\Theta$ into a rigid invariant associated to the component $S_\Theta^{\sf rig}$ of the degeneration constructed above. By basic compatibilities, we have a map:
\[
p\colon S_\Theta^{\sf rig}\to S_\Theta
\]
Our goal is to find a {\it rigidifying insertion} $\eta$, naturally arising from the evaluation maps, such that
\[
p_\star\left([S_\Theta^{\sf rig}]^{\sf vir}\cap \eta \right) = [S_\Theta]^{\sf vir}. 
\]
%
%

The basic idea is to use evaluation conditions in the bundle direction of $Z_\Theta^{\sf rig}\to Z_\theta$ to cut down $S_\Theta^{\sf rig}$ until its virtual class maps to that of $S_\Theta$. This is especially straightforward when each vertex carries a a tangency $0$ marked point. We now explain how to engineer a setup where we can assume this, and what the workarounds are when we do not. 

We work vertex-by-vertex. Suppose $V$ is a vertex of $\Gamma$ such that $\beta_V$ is nonzero. We can add an internal marking at this vertex. By constraining this marked point to a divisor $E\subset X$, we obtain a new GW class. The divisor equation can be used to recover the original stratum cycle, as the two classes differ by a multiplicative factor of $E\cdot\beta$. 

For the purpose of determining GW cycles, this means we can assume that every vertex $V$ with $\beta_V$ nonzero carries an marking of tangency $0$. 

If we have a vertex $V$ with $\beta_V$ equal to $0$, the divisor axiom does not apply in the obvious way. There are two options. 

\noindent
{\bf Genus-dependent option.} For the first, we pass to genus-decorated strata, as described at the strata of the section. In other words, we will describe a rigidification procedure that takes as input a stratum {\it together} with a genus distribution. Given a vertex of this GW stratum, we can add a marked point $q$ with contact order $0$ at that vertex. This gives rise to a proper map on strata of logarithmic moduli spaces of maps:
\[
S_{\hat\Gamma\cup q}\to S_{\hat \Gamma}.
\]
We can pair with a $\psi$-class at this point $q$ and push forward to obtain a non-zero proportionality:
\[
\psi_q\cdot [S_{\hat\Gamma\cup q}]^{\sf vir} \sim_{\mathbb Q} [S_{\hat \Gamma}]^{\sf vir}.
\]
The proportionality constant is the Euler characteristic of the punctured curve corresponding to the vertex. Note that this is nonzero -- the curve class at this vertex is $0$ by hypothesis, so stability ensures this number is nonzero.  

By this construction, we can assume that {\it every} vertex carries a marked point of tangency order $0$, since we can add a point and recover the same GW cycle, either by the construction above or by the divisor equation. 

Taking this option, we can construct the rigidifying insertion as follows. 

\begin{construction}[Rigidifying insertion, first option]
    We utilize strata of the ``intermediate'' spaces $\mathscr M$ introduced in the previous section. Recall this was defined by the diagram:
\[
\begin{tikzcd}
\mathscr M\arrow{d}\arrow{r} & \mathsf{Ev} \arrow{d}\\
\mathsf a T\arrow{r} &\mathsf{a} \mathsf P.
\end{tikzcd}
\]
We then replace the strata of $\mathsf M$ with strata of $\mathscr M$, which we denote $\mathscr S$. We will construct the rigidifying insertion at the level of these intermediate spaces, and then pull them back to the full moduli spaces. 

By the discussion above, we can replace the GW strata class with an equivalent class, but now can assume that every component carries a marked point of tangency order $0$. Taking these evaluation maps together, we have a commutative diagram:
\[
\begin{tikzcd}
\mathscr S_\Theta^{\sf rig}\arrow[swap]{d}{p}\arrow{r}{{\sf e}} & {\mathsf  Z^{\sf rig}_\Theta} \arrow{d}{q} \\
\mathscr S_\Theta \arrow{r}{{\sf ev}} &{\mathsf Z_\Theta}.
\end{tikzcd}
\]
Both vertical arrows are toric compatifications of torus torsors, and the evaluation diagram is equivariant. By following the associated moves on fans, we can describe the torsors explicitly. Let $N_\Theta$ be the cocharacter lattice of the rubber torus, namely $\Theta^{\sf gp}$. The morphism $p$ is an equivariant compactification of a torsor under $N_\Theta\otimes \mathbb G_m$. Similarly, for each vertex $V$ the corresponding component of $\mathcal Y_p$ is generically a torus torsor over a stratum in $Y$. The cocharacter lattice of this torus is denoted $N_V$. The morphism $q$ is generically a torsor under a product of tori, one for each vertex $V$. 

If we pull back $q$ to $\mathscr S_\Theta$, we claim the resulting map from $\mathscr S_\Theta^{\sf rig}$ to this pullback is an embedding of torus torsors, i.e. generically over the base. Indeed, this can be checked at the level of cocharacter lattice. By the assumption that every $\beta$-nonzero vertex has an internal marking, the map on cocharacter lattices induced by $e$ is an injection. 

We can now construct the rigidifying insertion. Choose a Cartier divisor in ${\mathsf Z}_\Theta^{\sf rig}$ that is $q$-relatively ample. By appropriately scaling and taking a self-intersection, we find a class $\eta$ on $\mathscr S_\Theta^{\sf rig}$ represented by a cycle that maps onto the base birationally under $p$. Virtually pull this back to $S_\Theta^{\sf rig}$ to obtain the rigidifying insertion. 

\qed
\end{construction}

\noindent
{\bf Genus-independent option.} We record the second way, which avoids adding a marked point when we do not have access to the divisor axiom. One reason this is sometimes better is the lack of genus-independence in the construction above. We need the following observation. 

\begin{proposition}\label{prop: atleast-trivalent}
Let $\Gamma$ ba a Chow $1$-complex and let $V$ be a vertex. If $\beta_V$ is equal to $0$, then $V$ is at least trivalent and balanced. 
\end{proposition}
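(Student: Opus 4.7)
The plan is to split the proposition into its two assertions and attack them separately, using the convention that $\Gamma$ comes from a stratum of the moduli space and so may be taken to be stable in the sense of the definition preceding Proposition~\ref{prop: atleast-trivalent}.

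The balancedness claim should follow immediately from unpacking the definitions. If $\beta_V = 0$, then $\beta_V \cdot [E_i] = 0$ for every component $E_i$ of $\partial Y$, so the balancing condition for a star $(v_1,\ldots,v_m,\beta_V)$ collapses to $\sum_j \ell_i(v_j) = 0$ for all $i$, which is exactly traditional balancing $\sum_j v_j = 0$.

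For the trivalence claim, I would work case-by-case to rule out lower valencies. A free vertex with $\beta_V = 0$ is forbidden by stability outright. A univalent vertex with one tangent vector $v_1$ would need $v_1 = 0$ by (traditional) balancing, but $v_1$ is by definition a positive integer multiple of a primitive integral vector, so nonzero -- contradiction. The real work is in the bivalent case, where we have $v_1 + v_2 = 0$, i.e., $v_2 = -v_1$, and the goal is to show $V$ is linear (hence excluded by stability).

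The key observation for the bivalent case is to exploit the embedding $|\Sigma|\subset \mathbb R^r_{\geq 0}$ described at the start of Section~\ref{sec: stars-partial-ordering}, in which each ray of $\Sigma$ sits along a coordinate axis. Let $\tau$ be the smallest cone whose relative interior contains $V$. For any coordinate index $j$ with $V_j = 0$, a tangent vector $v$ at $V$ must satisfy $v_j \geq 0$, since $V + tv$ has to remain in a cone of $\Sigma$, and hence in $\mathbb R^r_{\geq 0}$, for small $t > 0$. Applying this to both $v_1$ and $v_2 = -v_1$ forces $v_{1,j} = 0$ whenever $V_j = 0$, which is precisely the condition that $v_1$ lies in the linear span of $\tau$. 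Then $V \pm t v_1$ remains in the relative interior of $\tau$ for small $t$, and lies on a straight line inside $\tau$, so $V$ is linear. This contradicts stability and completes the argument. The only mild obstacle is the coordinate-wise argument in the bivalent case; once the positive orthant embedding is invoked, everything drops out cleanly.
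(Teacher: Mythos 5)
Your proof is correct and follows essentially the same route as the paper's: with $\beta_V=0$ the balancing ray is trivial so the star is traditionally balanced, valence $0$ and $1$ are excluded (by stability and by balancing against nonzero weighted tangent vectors, respectively), and the bivalent case is forced to be linear, hence excluded by the stability condition on Chow $1$-complexes. Your coordinate-wise argument in the bivalent case, using the embedding of $|\Sigma|$ into the nonnegative orthant, is just an explicit rewriting of the paper's geometric observation that a flag leaving the cone containing $V$ would correspond to positive intersection with a boundary divisor, contradicting $\beta_V=0$.
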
 

\begin{proof}
We are working in a general logarithmic setting, so $\Gamma$ may not be traditionally balanced, but we have seen it can be balanced by adding a single ray. We know that this balancing ray depends only on $\beta_V$ and is trivial when $\beta_V = 0$. Therefore, we know that $V$ is already balanced. This rules out the possibility of free vertices and $1$-valent vertices. Suppose $V$ is $2$-valent. If the two flags are not contained in the interior of the same cone that contains $V$ in $\Sigma$, then $\beta_V$ cannot be $0$, as these flags correspond to intersections with proper closed strata contained in the closure of the stratum determined by $V$. The only remaining possibility is a linear $2$-valent vertex. We have removed these by hand using the stability condition on Chow $1$-complex. The result follows. 
\end{proof}

What this means for us is the following. Given a vertex $V$ of $\Gamma$, the associated component of the degeneration $\mathcal Y_r$ described above is a torus torsor over a stratum of $(Y|\partial Y)$. The edges incident to $V$ are torsors for {\it quotient} tori, for quotients by $1$-parameter subgroups corresponding to the edge directions. The trivalency and balancing conditions imply that this larger torus injects into the product of the smaller tori at the edges. 

We now construct the rigidifying insertion {\it without} adding a marked point at $\beta$-zero vertices. 

\begin{construction}[The rigidifying insertion, second option]\label{constr: rigidifying-insertion}
As in the previous construction we work with the intermediate spaces $\mathscr M$ and replace the strata of $\mathsf M$ with strata of $\mathscr M$, which we denote $\mathscr S$. 

We use ``nodal evaluations'' for the construction. Consider the Chow $1$-complex $\Gamma$ given by $p$ in $\Theta$. Fix an edge $E$, with multiplicity $m_E$. We pass to the locus the tangency data at this edge is given by an {\it ordered} partition $\mu_E$, as these appear as space-level components in the degeneration formula. The construction will apply to any choice $\mu_E$. For each entry of $\mu_E$, we have an evaluation map to the stratum $Z_E$, which is an irreducible element of the double locus in the special fiber of $\mathcal Y_p$. As in previous situations, this $Z_E$ is an equivariant compactification of a torus bundle over a stratum of $Y$. Similarly, for $S_\Theta$ itself, we an evaluation map to a stratum of $Y$ directly. 

Perform this extra labelling for every $V$ such that $\beta_V = 0$. By augmenting the evaluation maps with additional factors for every edge adjacent to a $\beta$-zero vertex, we end up with an augmented evaluation diagram:
\[
\begin{tikzcd}
\mathscr S_\Theta^{\sf rig}\arrow[swap]{d}{p}\arrow{r}{{\sf e}} &\widetilde {\mathsf  Z^{\sf rig}_\Theta} \arrow{d}{q} \\
\mathscr S_\Theta \arrow{r}{{\sf ev}} &\widetilde{\mathsf Z_\Theta}.
\end{tikzcd}
\]
At $\beta$-nonzero vertices we proceed as in the previous construction. At $\beta$-zero vertices, we use Proposition~\ref{prop: atleast-trivalent} to come to the same conclusion -- the map $\mathscr S_\Theta^{\sf rig}$ to pullback $q$ to $\mathscr S_\Theta$ is an embedding. Tracing through the toric geometry, this follows from the fact that the images of the vertex position of a $\beta$-zero vertex in the projections along the edges adjacent to it determine the position of the vertex. This implies the claim. 

The rigidifying insertion is now constructed in parallel to the previous discussion -- by taking a power of the Chern class of a relative ample divisor, and scaling appropriately. 

\qed
\end{construction}

In any event, we end up with a rigidifying insertion, either on $S_\Theta^{\sf rig}$ or on the genus-decorated space $S_{\hat\Gamma}^{\sf rig}$

\begin{proposition}\label{prop: rigidifying-insertion}
The rigidifying insertion satisfies
\[
p_\star\left([S_{\hat \Gamma}^{\sf rig}]^{\sf vir}\cap \eta \right) = [S_{\hat \Gamma}]^{\sf vir}. 
\]
The analogous statement holds replacing $S_{\hat \Gamma}$ and $S_{\hat \Gamma}^{\sf rig}$ by corresponding spaces $S_\Theta$ and $S_\Theta^{\sf rig}$, provided we use the genus-independent rigidifying insertion above. 
\end{proposition}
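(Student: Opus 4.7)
The plan is to reduce the proposition to a transparent projection-formula calculation after matching up virtual classes. First, the virtual class compatibility proved earlier in Section~\ref{sec: degeneration-of-strata} gives $p^\star[S_{\hat\Gamma}]^{\sf vir} = [S_{\hat\Gamma}^{\sf rig}]^{\sf vir}$, and since $p$ is a proper morphism of stacks --- an equivariant compactification of a torus bundle --- the projection formula reduces the desired identity to verifying the cycle-theoretic statement $p_\star\eta = [S_{\hat\Gamma}]$. This converts a question about virtual integrals into one about the transverse degree of a cohomology class along the toric fibers of $p$.

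Next, I would further reduce to the intermediate stacks $\mathscr S_{\hat\Gamma}^{\sf rig}\to\mathscr S_{\hat\Gamma}$, where the rigidifying insertion was actually constructed and where $p$ is manifestly a toric morphism. The rubber torus at the base of $p$ has cocharacter lattice $N_\Theta = \Theta^{\sf gp}$, and the evaluation enhancement $e$ of Construction~\ref{constr: rigidifying-insertion} (or the first construction) identifies $\mathscr S_{\hat\Gamma}^{\sf rig}$ generically with a closed subtorsor of the pullback along ${\sf ev}$ of the evaluation target. In the second construction, this uses Proposition~\ref{prop: atleast-trivalent}: any $\beta$-zero vertex is forced to be at least trivalent and already balanced, so that the map from its attached torus into the product of quotient tori along its incident edges is an injection. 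Combined with the $\beta$-nonzero case handled by added markings and the divisor equation, this assembles into the required injection of cocharacter lattices.

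The third step is an explicit degree calculation. By construction $\eta = c \cdot e^\star(h^{r})$, where $h = c_1(L)$ for a $q$-relatively ample line bundle $L$ on the evaluation target and $r = \dim N_\Theta$. Since $e$ is generically a closed immersion of torsors, restricting $h^r$ to a generic fiber of $p$ computes the top self-intersection of $L$ on the broken toric compactification of the rubber torus, a positive rational number $d$ depending only on the chosen polarization and the polytope attached to $\Theta$. Taking $c = 1/d$ yields $p_\star\eta = [\mathscr S_{\hat\Gamma}]$ at the intermediate level, and pulling back along the smooth strict morphism $\mathsf M\to\mathscr M$ together with the virtual pullback compatibility then promotes this to the virtual identity on the moduli of maps. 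The genus-independent version requires no additional input since none of the steps above depend on the genus decoration of $\hat\Gamma$, which is the reason we favor Construction~\ref{constr: rigidifying-insertion} over the option using added $\psi$-classes at $\beta$-zero vertices.

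The main obstacle is the verification that $e$ is generically a closed immersion of torsors in the genus-independent construction. This reduces to the cocharacter injection at each $\beta$-zero vertex, which in turn is the tropical content of Proposition~\ref{prop: atleast-trivalent} together with a careful identification of the quotient tori attached to incident edges --- one has to check that the positions of the nodal evaluations over all edges adjacent to such a vertex genuinely determine the position of that vertex after passing to the appropriate toric quotient, and that this is compatible with the boundary strata of the broken compactifications. Once that is in hand, the rest of the argument is a routine projection-formula and virtual-pullback manipulation, and the geometric content of the proposition is concentrated at this combinatorial step.
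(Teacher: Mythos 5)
Your proposal is correct and is essentially the paper's own argument: the paper's proof of this proposition is literally ``True by construction,'' with all the content residing in the construction of $\eta$ (a scaled power of a $q$-relatively ample class chosen so that it is represented by a cycle mapping birationally onto the base, then virtually pulled back), and your projection-formula, torsor-embedding, and fiber-degree normalization steps simply spell out what that construction encodes. The only cosmetic caveat is that $\mathsf M\to\mathscr M$ is combinatorially flat with a virtual pullback rather than smooth, but this does not affect the argument.
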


\begin{proof}
True by construction. 
\end{proof}

\subsection{Rubber geometry}\label{sec: rubber-calculus} To recap, we have now started off with a stratum cycle of the form
\[
\left(\mathsf{ev}^\star_{\mathsf S}(\upalpha)\cup \mathsf t^\star(\vartheta)\right)\cap [\mathsf S_\Theta]^{\sf vir}
\]
and converted it into an integral of the form
\[
\left ( \mathsf{ev}^\star_{\mathsf S}(\upalpha)\cup \mathsf t^\star(\vartheta)\cup \eta\right)\cap [\mathsf S_\Theta^{\sf rig}]^{\sf vir}.
\]
The $T$-class in the second integral is pulled back from the first. To analyze it, observe that the cone $\Theta$ determines a stratum
\[
\mathsf B_\Theta\subset \mathfrak aT(Y),
\]
which is typically a negative dimensional Artin stack. It has generic stabilizer of dimension equal to the dimension of $\Theta$. However, the composite morphism
\[
\mathsf S_\Theta^{\sf rig}\to \mathsf S_\Theta\to \mathsf B_\Theta
\]
factorizes through an Artin fan, i.e. a zero-dimensional Artin stack. Specifically, take the relative Artin fan $T(\mathcal Y_r)/\mathbb A^1$ and take the component over $0$ corresponding to $p$. Denote this Artin fan $\mathsf B_\Theta^{\sf rig}$. We have a factorization
\[
\mathsf S_\Theta^{\sf rig}\to \mathsf B_\Theta^{\sf rig} \to \mathsf B_\Theta.
\]
Now, given any class $\vartheta$ on $\mathsf B_\Theta$, its pullback to $\mathsf B_\Theta^{\sf rig}$ is necessarily supported on boundary strata (unless $\vartheta$ is the fundamental class). This is a new stratum invariant. However, it is one whose $T$-class has lower cohomological degree. 

The rigidification procedure only introduces evaluation classes, and never $T$-classes, so it follows that if we iterate the procedure, we obtain a finite number of invariants associated to components of degenerations, none of them carry a nontrivial $T$-class, and a linear combination of them yields the original one.

\subsection{A remark on the structure of the induction}\label{sec: translated-stars} We will shortly carry out an inductive proof of Theorem~\ref{thm: rigid-split} based on the ordering on tropical curves constructed earlier, as well as a genus induction. We record some preparatory comments.

Fix a target $(Y|\partial Y)$ and let $\Sigma$ denote its cone complex. The goal is to show that exotic GW cycles for $(Y|\partial Y)$ for a given discrete data are determined by non-exotic GW classes for $(Y|\partial Y)$ and the non-exotic invariants of strata, determined by stars less than or equal to a fixed star. 

The nature of the proof will produce inductively smaller problems, smaller in the ordering by stars. The high level approach is: (i) start with a star $\sf v$ and an exotic insertion associated to it; (ii) argue that the exotic GW class differs from a non-exotic one by non-exotic GW classes of strata, each attached to some Chow $1$-complex; (iii) apply the rigidification and rubber calculus to reduce to a finite collection of new stars, that are smaller than $\sf v$. 

In (iii) the insertions on the star coming from the degeneration formula are typically exotic. 

Let us flag something. Say we have started with $\sf v$ above, ended up with a non-exotic stratum invariant with $1$-complex $\Gamma$, and isolated a new star $\sf w$, whose base point lies in some cone of $\Sigma$. The next step in the induction is to reapply the procedure with $\sf w$.

When we apply the argument to $\mathsf w$, we will end up with a new Chow $1$-complex $\Gamma'$ -- or more correctly a cone of tropical curves with the same type as $\Gamma'$ -- to analyze. Going from this cone to a single Chow $1$-complex is a combinatorial choice in the rigidification above; it is the choice of point in the interior of the cone $\Theta$ in the preceding discussion. 

However, the ordering that we have discussed has to do with stars in the cone complex $\Sigma$. In order to appeal to the ordering, one needs to view the vertices of $\Gamma'$ as giving stars in here. When doing this, it is entirely possible that the star $\mathsf u$ is equivalent to the star $\mathsf w$ (recall that stars in a cone are identified if they differ by translation of the base point). 

\subsubsection{Translated stars} Let us explain this in an example. Suppose that we are at the second level in the induction -- we start with $\mathsf v$, and then chose to analyze the star $\mathsf w$ at a vertex of $\Gamma$. Suppose further that the base of $\mathsf w$ is a higher dimension cone of $\Sigma$. This means that the target determined by $\mathsf w$ after rigidification is a broken toric bundle $(E|\partial E)\to (B|\partial B)$, over a stratum $B$ of $Y$. 

The fan of $(E|\partial E)$ is a refinement
\[
\Sigma_{E|\partial E}\to \mathbb R^k\times\Sigma_{B|\partial B}
\]
where $k$ is the generic fiber dimension of $E\to B$. Now let $\sf w$ be a star based at the origin in $\Sigma_{E|\partial E}$. A {\it translated star} $\sf w'$ is obtained by translating $\sf w$ by the $\RR^k$ action on the first factor above. 

We can also identify $\Sigma_{E|\partial E}$ with the star neighborhood in $\Sigma$ of the base of the star $\mathsf w$. Under this identification, observe that the stars $\mathsf w$ and $\mathsf w'$ differ by translation only. 

The caution is that $\sf w$ and $\sf w'$ are equivalent stars in $\Sigma$, but if viewed in $\Sigma_{E|\partial E}$, we might conclude that $\sf w'$ is smaller than $\sf w$. But this latter ordering is not valid for us -- the ordering concerns only the stars the fixed cone complex $\Sigma_{Y|\partial Y}$. See Figure~\ref{fig: translated-star}.

\begin{figure}[h!]

\begin{tikzpicture}
  \definecolor{softlavender}{RGB}{223,210,255}
  \colorlet{bluecol}{blue}
  \colorlet{violetcol}{violet}

  \fill[softlavender, opacity=0.4] (-2,-2) rectangle (3,3);

  \tikzset{>=latex}

  \draw[bluecol, line width=1pt, ->] (-2,0) -- (3,0);
  \draw[bluecol, line width=1pt, ->] (0,-2) -- (0,3);
  \draw[bluecol, line width=1pt, <-] (-2,0) -- (0,0);
  \draw[bluecol, line width=1pt, <-] (0,-2) -- (0,0);

  \fill[bluecol] (0,0) circle (1.2pt);
  \fill[violetcol] (0,2) circle (1.2pt);
  \fill[violetcol] (1,0) circle (1.2pt);

  \begin{scope}[shift={(1,2)}]
    \draw[violetcol,->] (-3,0) -- (2,0);
    \draw[violetcol,->] (0,-3) -- (0,1);
    \draw[violetcol,<-] (-3,0) -- (0,0);
    \draw[violetcol,<-] (0,-3) -- (0,0);
    \fill[violetcol] (0,0) circle (1.2pt);
  \end{scope}
\end{tikzpicture}

\caption{A translated star obtained by translating the star in blue. After rigidification, the translated star becomes the star that we began with.}\label{fig: translated-star}
\end{figure}
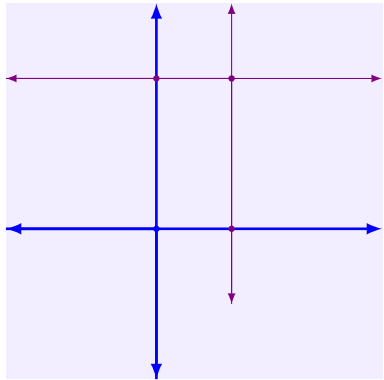

However, we note two important points for the coming proof:
\begin{enumerate}[(i)]
\item The rigidifying insertion constructed in this section can always be chosen to be a non-exotic class. This follows immediately from the construction. 

\item When the genus at the secondary vertices are equal to $0$, the degeneration formula applied to translated stars takes a simplified form.
\end{enumerate}

We record the point (ii) precise for later use. Fix a translated star $\Gamma$ as above. Let $\sf M_1$ be the moduli space associated to the main vertex and let $\sf M_2$ be the product of all moduli associated to the secondary vertices. 

As in Section~\ref{sec: degeneration-of-strata}, the choice of $\Gamma$ determines a target degeneration, and partial discrete data associated to $\Gamma$. As noted previously, we can further specify the genus decoration. 

Given such a genus distribution, we obtain a moduli space, let us call it $\mathsf M$, to which the degeneration formula applies. 

\begin{lemma}\label{lem: uncorrected-translated}
     Fix $\Gamma$ as above. Assume that every connected component of every curve corresponding to a secondary vertex is $0$. Then the evaluation map
    \[
    \mathsf M_2\to \mathsf E
    \]
    is combinatorially flat.
\end{lemma}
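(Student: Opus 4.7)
The plan is to reduce to a single secondary vertex and then apply an explicit tropical description of genus $0$ moduli. First, observe that $\mathsf{M}_2 = \prod_V \mathsf{M}_V$ where $V$ ranges over secondary vertices, and correspondingly $\mathsf{E} = \prod_V \mathsf{E}_V$, with the splitting respected by the evaluation map. Since combinatorial flatness is preserved under products of maps of cone complexes, it suffices to verify it separately for each $\mathsf{M}_V \to \mathsf{E}_V$.

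Fix a secondary vertex $V$. By the translated star construction of Section~\ref{sec: translated-stars}, the base point of $V$ lies in the fiber direction of the broken toric bundle $(E|\partial E)\to (B|\partial B)$ attached to the primary vertex; equivalently, in the cone complex $\Sigma$, the star at $V$ differs from the primary star by a translation in the linear span of the containing cone. Regardless of the value of $\beta_V$, the star at $V$ satisfies the balancing condition of Section~\ref{sec: stars-partial-ordering}; when $\beta_V$ vanishes, Proposition~\ref{prop: atleast-trivalent} gives that $V$ is at least trivalent. Under the hypothesis that the curve component at $V$ has genus $0$, the moduli space $\mathsf{M}_V$ parameterizes genus $0$ logarithmic stable maps to the broken toric bundle with the prescribed tangencies at $V$, and its cone complex is the classical moduli space of rational tropical maps to $\Sigma_E$ with fixed asymptotic rays, in the sense of~\cite{NS06}.

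On cone complexes, each cone of $\mathsf{M}_V$ corresponds to a combinatorial type of rational tropical curve, with coordinates given by the base point of $V$, the edge lengths, and the positions at nearby tropical vertices. The evaluation $\mathsf{M}_V \to \mathsf{E}_V$ is a linear map sending such a configuration to the positions of the distinguished markings (i.e.\ the nodes corresponding to the edges of $\Gamma$ incident to $V$). I would then argue that this linear map is surjective onto its image cone: the genus $0$ hypothesis means the dual graph of the tropical curve is a tree, so there are no cycle constraints on the edge lengths, and one can independently vary the edge lengths along the path from $V$ to each distinguished marking. This freedom produces independent variation of the evaluations, yielding surjection of each source cone onto its image cone, which is precisely combinatorial flatness.

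The main obstacle will be the bookkeeping for the broken toric bundle target rather than a single toric variety, together with verifying surjectivity in high dimensions. For the former, since $V$ sits in the fiber direction, the argument reduces cleanly to a toric target together with translation data from the base, and the translation data contributes only to the overall vertex position, not to the edge combinatorics. For the latter, one can apply the projection techniques used in the proof of Proposition~\ref{prop: ordering}: combinatorial flatness can be verified after sufficiently many generic projections to $\mathbb{R}^2$, where the claim reduces to the elementary Newton polygon picture of trivalent rational tropical curves with prescribed directions.
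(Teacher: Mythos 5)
Your first step---splitting $\mathsf M_2\to\mathsf E$ as an external product over secondary vertices and reducing to a single secondary vertex with a single connecting edge---is the same opening move as the paper's proof. The gap is in the core argument. First, you have misstated the property to be verified: combinatorial flatness is not the assertion that each cone surjects onto its image (which is automatic for any map of cone complexes), but that the image of each cone is a cone \emph{of the target complex}; here this amounts to showing that every cone of the source maps onto the full cone of the evaluation stratum. Second, and more seriously, the mechanism you propose cannot produce the needed surjectivity. At a secondary vertex the star consists of two opposite directions parallel to a ray of $\Sigma$, and the balancing condition (with the balancing appendage, which is also parallel to this direction since the class at such a vertex is a fiber class) forces every tropical curve in the corresponding moduli factor to be contained in a line parallel to that direction; in particular every edge of such a curve is parallel to the contact direction. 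The tropical evaluation at the connecting end is the image of a point of that end under the quotient by precisely this direction, so varying edge lengths along the path from the vertex to the marking moves the point only along the line and does not change the evaluation at all. The ``independent variation of the evaluations'' that your tree argument relies on therefore does not exist. What actually gives cone-onto-cone surjectivity is the freedom to translate the whole configuration transversally to the distinguished line (the position of the curve is an unconstrained modulus), so that the projection sweeps out the entire target cone; this translation argument, resting on the contained-in-a-line consequence of balancing, is the content of the paper's proof and is the idea missing from yours.

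Two smaller points. The appeal to Proposition~\ref{prop: atleast-trivalent} is off target: secondary vertices of a translated star are bivalent vertices lying on walls, and by the proof of that proposition their curve classes are necessarily nonzero (fiber classes), so the trivalence statement never applies in this situation. And the proposed fallback of verifying combinatorial flatness after generic projections to $\RR^2$ has no justification---the property that a cone maps onto a cone of the target is neither preserved nor detected by such projections, unlike the volume and degeneration arguments for which projections are used in Section~\ref{sec: stars-partial-ordering}.
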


\begin{proof}
The condition is purely tropical. By the genus $0$ assumption and the fact that translated stars are trees, it follows that the map $\mathsf M_2\to \mathsf E$ is an {\it external} product -- it is a product of evaluation maps, one for each connected component corresponding each vertex. Therefore it suffices to consider the case where there is a single secondary vertex and a single connecting edge to the main vertex. By the balancing condition, every tropical curve corresponding to the secondary vertex is contained in a line parallel to the span of the star of this vertex.  Every such tropical curve has exactly one end corresponding to connecting edge; the tropical evaluation map is given by taking any point on this end, and looking at its image under the quotient by this line distinguished line. It follows easily that if this projection hits any point in the interior of the cone of the evaluation divisor, it can be translated to hit every such point. This is precisely the required condition.
\end{proof}

To see why this lemma simplifies the degeneration formula, observe that we usually deduce from the degeenration formula, a proportionality:
\[
[\sf M]^{\sf vir} \sim_{\mathbb Q} [\sf M^\diamond_1\times_{E^\diamond} \sf M^\diamond_2]^{\sf vir},
\]
say after pushforward to the homology of $\sf M_1\times_E \sf M_2$, for sufficiently fine blowups of $\mathsf E$. By the lemma, for translated stars, where all the genus is concentrated at the main vertex, we obtain the stronger proporitionality
\[
[\sf M]^{\sf vir} \sim_{\mathbb Q} [\sf M_1\times_{E} \sf M _2]^{\sf vir},
\]
It follows that the degeneration formula holds without needing to blowup the evaluation space. 

In particular, a rigidified stratum GW cycle with non-exotic insertions can immediately be converted into a non-exotic GW cycle on the main vertex.

\subsection{Proof of Theorem~\ref{thm: rigid-split}} We proceed by double induction, on the complexity of stars and the genus parameter. 

Fix $\Theta$ as above, and a substratum 
\[
S_{\hat \Gamma}\subset S_\Theta
\]
given by fixing a genus distrubution. 

Start with a stratum class 
\[
(\mathsf{ev}^\star_{\mathsf S}(\upalpha)\cup \mathsf t^\star(\vartheta))\cap [\mathsf S_{\hat \Gamma}]^{\sf vir}.
\]
Pass to the rigidified space $\mathsf S_{\hat \Gamma}^{\sf rig}$. By the discussion on rigidification, we can add $0$-tangency markings where needed and then choose a rigidifying insertion $\eta$ using Proposition~\ref{prop: rigidifying-insertion}. Apply the rubber calculus of Section~\ref{sec: rubber-calculus} to terms that lie over deeper strata in $T(Y)$, so by inducting on the depth of we can assume that the class $\vartheta$ is equal to $1$. Now apply the logarithmic degeneration formula~\cite[Section~8]{MR23}. 

We examine the constituent terms in the degeneration formula. The terms are indexed by stable Chow $1$-complexes with genus decorations. Every vertex in the Chow $1$-complex described above defines a GW cycle associated to an equivariant compactification of a torus bundle over a stratum of $(Y|\partial Y)$. The insertion on the target associated to the star has two parts: one coming from coming from ${\sf ev}_S^\star(\alpha)$, which remains is non-exotic, and a new set of insertions coming from the  K\"unneth splitting, which produces {\it new} exotic insertions at this vertex at marked points corresponding to gluing nodes in the degeneration. 

We now want to induct, arguing that everything that is exotic either gets smaller according to the order in Section~\ref{sec: stars-partial-ordering}, or when it stays the same, that the genus drops. Repeatedly applying the procedure, we will conclude. 

To explain this precisely, first note that, after rigidification, a vertex $V$ of a Chow $1$-complex determines a broken toric bundle
\[
E\to B
\]
where $B$ is a stratum of $(Y|\partial Y)$. Up to logarithmic birational maps, there are finitely many targets that appear as targets associated to vertices -- depending on the choice of stratum $B$. The choice of logarithmic birational model is immaterial to us. We refer to the codimension of $B$ as as the {\it depth} of $E$ over $(Y|\partial Y)$. Combinatorially, it is the dimension of the cone in $\Sigma_{Y|\partial Y}$ that contains $V$. 

Let us now spell out the induction, examining the possibilities for the vertex $V$. 

If $V$ is the origin of $\Sigma_{Y|\partial Y}$, then by the ordering in Section~\ref{sec: stars-partial-ordering} and noting in light of Section~\ref{sec: translated-stars} that there is no nontrivial translation equivalence here, the star at this vertex is smaller than the star that we started with. The contribution in the degeneration formula is an exotic GW class associated to this smaller star. 

If $V$ lies in a higher dimensional cone, i.e. if $B$ has positive codimension, then the issues discussed in Section~\ref{sec: translated-stars} appear. After rigidifying $\Gamma$, this vertex gives rise to a star for a GW class in a broken toric bundle $E\to B$, and new exotic insertions coming from the splitting. Call this star $\sf w$. 

We carry out the rigidification and splitting procedure again, this time for the exotic GW class with star $\sf w$, on the target $(E|\partial E)$. Note that the curve class on $E$ is specified by its pushforward $B$ and the intersection numbers with respect to the horizontal divisors of $E\to B$. In our case, both of these are determined by the Chow $1$-complex that we started with and the choice of $V$ on it. Call the curve class $\beta_{\sf w}$

The outcome of running the exotic-to-stratum algorithm for the exotic invariant with star $\sf w$ is that we replace $\sf w$ with a Chow $1$-complex $\Gamma'$ in $\Sigma_{E|\partial E}$, and study the stratum GW class on the corresponding stratum of the mapping space to $E$, with a {\it non-exotic} insertion. We then run the rigidification and splitting and focus our attention on another star $\sf u$. Again, all the exotic insertions at this star come from the K\"unneth splitting. 

The star $\sf u$ of this form could have higher depth over $Y$ than $\sf w$, or they lie over the origin in the map
\[
\Sigma_{E|\partial E}\to \Sigma_{B|\partial B}.
\]
We analyze these two possibilities separately. 

\noindent
{\sc Case I.} If the curve class $\beta_{\sf w}$ is not concentrated at $\sf u$, then by induction on the curve class, we can conclude. If the entire curve class lies on this single vertex $V'$ of $\Gamma'$, then after adding the $\beta_{\sf w}$-balancing ray to this vertex, the $1$-complex $\Gamma'$ becomes balanced in the traditional sense. Adding this ray to $\sf w$ gives a star that is balanced in the traditional sense. Therefore, unless the star at $V'$ of $\Gamma'$ is equivalent to $\mathsf w$, then $\mathsf u$ is strictly smaller than $\mathsf w$ and we can conclude. 

In the case where $\mathsf w$ and $\mathsf u$ are equivalent as stars in $\Sigma_{Y|\partial Y}$, we can also conclude. Indeed, by saw the genus parameter induction, we can assume that the full genus is concentrated at $\mathsf w$. In this case the K\"unneth splitting in the degeneration formula does not produce exotic insertions, as noted in the preceding discussion. 

\noindent
{\sc Case II.} In the second case, the star ${\sf u}$ in $\Gamma'$ lies over a larger cone in $\Sigma_{B|\partial B}$. We now repeat the whole procedure again, noting that the depth has increased. Since the depth eventually runs out, by induction again we conclude. Geometrically, the target eventually becomes a bundle over stratum in $(Y|\partial Y)$ that, viewed as a pair on its own, has no boundary divisor. 

The two cases exhaust the possibilities. The proof is complete. 

\qed

\section{The universal projective line}\label{sec: uni-P1-bundle}

\subsection{Status} Our overall goal is to convert GW cycles of a general fiber of $\cX\to B$ to invariants of the strata of the special fiber without any logarithmic structure involved. By Theorem~\ref{thm: primitive-in-exotic} and the degeneration formula, we can express these in terms of GW cycles of toric bundles over strata of the special fiber, i.e. components of expansions. By Theorem~\ref{thm: exotic-to-strata} and Theorem~\ref{thm: rigid-split} we turn this into ordinary logarithmic GW invariants. 

Two steps remain: convert GW cycles of these toric bundles over strata into cycles on the base and then convert logarithmic GW cycles of a pair to ordinary GW cycles of the strata. 

\subsection{The provenance of $\mathbb P^1$-bundles} The two issues steps that remain are closely related, and $\mathbb P^1$-bundles are the key to both. 
\begin{enumerate}[(i)]
    \item Any toric bundle $V\to W$ over a stratum $W$ is logarithmically birational to a fiber product of $\mathbb P^1$-bundles, where the logarithmic structures are along $0$ and $\infty$. 
    \item The way to add logarithmic structure from a pair $(Y|\partial Y)$, say along $D$, is to degenerate to the normal cone of $D$. The degeneration formula links these two logarithmic structures:
    \[
    (Y|\partial Y)\leadsto (Y|\partial Y+D) \star (\mathbb P_D|\partial \PP_D).
    \] 
     The pair $(\mathbb P_D|\partial \PP_D)$ is the projective normal cone over $D$, a $\mathbb P^1$-bundle, with logarithmic structure along one of the sections and the divisors induced along $D$ by $\partial Y$. By the degeneration formula, the theory of $\mathbb P_D$ can be viewed as a prescribing a transformation rule between the two theories. Our goal is to {\it invert} this transformation. 
\end{enumerate}

Both steps boil down to an understanding of $\mathbb P^1$-bundles over a base with logarithmic structure, where the bundle has $0$ and/or $\infty$ sections as part of its logarithmic structure. 

Our next goal is to prove that GW cycles of a projectivization of a split rank $2$ vector bundle over a base can be reconstructed from the GW cycles of the base. For brevity, we will use the phrase $\mathbb P^1$-bundle to mean projectivization of a split rank $2$ vector bundle. The base is allowed to have logarithmic structure and the bundle can also have logarithmic structure, coming from one or both of the $0$ and $\infty$ sections. 

The key difficulty in achieving this goal is the presence of logarithmic structure on the base. This removes access to relative virtual localization~\cite{GV05}, which is the key technique used to prove the analogous result when the base has trivial logarithmic structure~\cite{MP06}.

\noindent
{\bf Temporary suspension of disjointness.} In this section and the next, we drop the disjointness condition on the contact order matrix $[c_{ij}]$. We will be interested in $\mathbb P^1$-bundles over base logarithmic targets $\mathbb P\to Y$. The disjointness and $\mathbb P^1$-bundle properties are in tension with each other -- if we want disjoint contact orders we must allow $\PP\to Y$ to degenerate over the boundary, but if we want a genuine $\mathbb P^1$-bundle, then the contact order may not be disjoint. 

The key result of the next two sections is the following.

\begin{theorem}\label{thm: P1-bundle-formula}
Let $\mathbb P\to Y$ be a $\mathbb P^1$-bundle with the relative structure given by the preimage of $\partial Y$, possibly together with one or both of the $0$ and $\infty$ sections. The Gromov--Witten cycles of $\mathbb P$ lie in the span of the Gromov--Witten cycles of $(Y|\partial Y)$. 
\end{theorem}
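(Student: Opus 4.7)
The standard proof of the analogous statement when $(Y|\partial Y)$ is smooth, due to \cite{MP06}, uses relative virtual localization for the fibrewise $\mathbb{G}_m$-action on $\mathbb P$. This tool is unavailable once $Y$ carries a nontrivial logarithmic structure, as the paper flags in the introduction. The plan is instead to follow the strategy announced there: lift the moduli problem to the universal Picard stack over $\Mbar_\Lambda(Y|\partial Y)$, where the bundle $\mathbb P$ admits a universal description and the relevant cycles can be expressed via higher double ramification (DR) cycle formulae, and then push down and promote the result to logarithmic cohomology.

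\textbf{Factoring through the Picard stack.} Write $\mathbb P = \mathsf{Proj}(\mathcal O_Y \oplus L)$ for a line bundle $L$ on $Y$. A stable map $f : C \to \mathbb P$ over $Y$ is the same data as its underlying map $g : C \to Y$ together with a section of the pulled-back bundle $g^*\mathbb P$, equivalently a line bundle $\mathcal M$ on $C$ of degree determined by the curve class, plus a pair of sections $(s_0,s_\infty)$ of $\mathcal M$ and of $\mathcal M \otimes g^*L$ that do not simultaneously vanish. First I would use this to factor the forgetful map
\[
\Mbar_\Lambda(\mathbb P) \to \Mbar_\Lambda(Y|\partial Y)
\]
through the universal Picard stack $\mathsf{Pic}$ of such $\mathcal M$ over $\Mbar_\Lambda(Y|\partial Y)$, realising the source as a cycle on the relative projectivization $\mathbb P(\pi_*\mathcal M \oplus \pi_*(\mathcal M \otimes g^*L))$ over $\mathsf{Pic}$ cut out by vanishing orders at the marked points. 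Contact orders with $p^{-1}(\partial Y)$ are controlled purely by $g$, so factor trivially through $\mathsf{Pic}$ and reduce to invariants of $(Y|\partial Y)$; contact with $Y_0$ and $Y_\infty$ becomes a condition on the sections $s_0,s_\infty$.

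\textbf{DR-type loci and promotion to logarithmic cohomology.} The vanishing conditions on $s_0,s_\infty$ are exactly the data that produce double ramification type loci on $\mathsf{Pic}$. Using the formalism of higher DR cycles developed in \cite{BHPSS,HMPPS,MR21,RUK22} together with the logarithmic refinement, I would express these loci as explicit tautological classes on $\mathsf{Pic}$ built from $\psi$-classes, boundary strata and the universal line bundle. Pushforward to $\Mbar_\Lambda(Y|\partial Y)$ then converts the pushforward of the virtual class of $\Mbar_\Lambda(\mathbb P)$ into a combination of GW classes of $(Y|\partial Y)$. The four variants of logarithmic structure on $\mathbb P$ listed in the theorem correspond to four choices of which vanishing orders are constrained; each is handled by the same template, the distinction being whether one imposes conditions on $s_0$, on $s_\infty$, on both, or on neither, and by taking the appropriate logarithmic subdivision of $\mathsf{Pic}$ so the constructions live in logarithmic cohomology.

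\textbf{Main obstacle.} The hard part will be matching virtual classes across the factorization through $\mathsf{Pic}$ in the presence of a logarithmic base. The space $\Mbar_\Lambda(\mathbb P)$ has its own perfect obstruction theory, and one needs the virtual fibre over $\mathsf{Pic}$ to coincide with the DR-type class determined by the tangency data; over a logarithmic base this requires a careful comparison of relative obstruction theories and of logarithmic subdivisions of $\mathsf{Pic}$ with the subdivisions of evaluation spaces used elsewhere in the paper. This is precisely where the connection to the higher DR cycle literature replaces the role of logarithmic localization, and is the reason the paper frames Theorem~\ref{thm: bundle-reconstruction} as the technical heart en route to Theorem~\ref{thm: log-absolute}.
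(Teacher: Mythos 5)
Your overall architecture matches the paper's: factor maps to $\mathbb{P}$ through maps to the universal $\mathbb{P}^1$-bundle $[\mathbb{P}^1/\mathbb{G}_m]\to B\mathbb{G}_m$ over a Picard-type stack, prove a tautological statement there, and then promote it to logarithmic cohomology by subdivisions and a virtual-pullback comparison (this is exactly what Sections~\ref{sec: uni-P1-bundle} and~\ref{sec: uni-log-bundle} do). The genuine gap is in your engine for the universal step. You assert that the vanishing conditions on $(s_0,s_\infty)$ cut out ``double ramification type loci'' whose classes are supplied by the higher DR formalism. That identification is only available for the rubber theory relative to \emph{both} sections: in that case the space of logarithmic maps to $[\mathbb{P}^1/\mathbb{G}_m]$ with its toric structure is a closed substack of the Picard stack and its class is essentially the Abel--Jacobi computation of \cite{BHPSS} — the paper records this in a remark and deliberately does not build on it. For the theorem you need the parameterized theory, with one, both, or neither section in the fiber direction, with destabilizations of the domain and expansions of the target, a genuinely virtual obstruction theory $R^\bullet\pi_\star s^\star T^{\mathrm{log}}$, and arbitrary evaluation insertions; these moduli are not DR loci and no existing higher-DR formula computes their virtual pushforwards. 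Relatedly, your model of the source as a projectivization of $\pi_\star\mathcal{M}\oplus\pi_\star(\mathcal{M}\otimes g^\star L)$ cut out by vanishing orders does not exist as stated: the pushforwards are not vector bundles and the compactification requires bubbling, so neither the space nor its virtual class is what you describe (you flag virtual-class matching as the obstacle, but the problem starts one step earlier, with the absence of any formula to match against).

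What the paper does at precisely this point is different: Theorem~\ref{thm: universal-bundle} is proved by a geometric approximation that \emph{reduces to} \cite{MP06} rather than replacing it. Given a family over an irreducible base, one alters the base so the curve acquires sections through every component, twists by a large multiple of these sections so that both $\mathcal{M}(nD)$ and $\mathcal{O}(nD)$ have vanishing higher cohomology, and thereby realizes the line bundle as the pullback of $\mathcal{O}(1,-1)$ along an unobstructed map to $\mathbb{P}^\ell\times\mathbb{P}^m$; injectivity of pullback from the base to a high-codimension open subset of a bundle of sections then lets one deduce the tautological property from the known, localization-based statement for $\mathbb{P}(\mathcal{O}(1,-1)\oplus\mathcal{O})$ over $\mathbb{P}^\ell\times\mathbb{P}^m$. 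Only after that does a second layer (Section~\ref{sec: uni-log-bundle}) address the difficulty you correctly locate at the end — over a logarithmic base the comparison square is cartesian only in the log category — by constructing logarithmic enhancements via strict transforms and flattening subdivisions, proving they are logarithmically tautological through Fulton/Aluffi blowup calculus and a boundary-splitting induction, and finally transferring to $(Y|\partial Y)$ by commuting virtual pullbacks. So your proposal identifies the right staging area but is missing the idea that actually carries the universal step: the approximation-by-projective-embeddings argument, without which the DR-based route has no input to run on.
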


To emphasize, when we say GW cycle of $\mathbb P$ here, the insertion space is the one discussed in Section~\ref{sec: stable-maps-to-pairs}. In particular, all insertions will can be written as pullbacks of classes on strata of $(Y|\partial Y)$ along the restriction of the $\mathbb P^1$-bundle, times the Chern class of the relative $\mathcal O(1)$. 

We would also like to treat insertions from targets of the form $\widetilde\PP\to\PP\to Y$, where the first map is a logarithmic blowup. We do this in Section~\ref{sec: compressed-evaluations} using rigidification and rubber calculus arguments, similar to the exotic/non-exotic moves used in the proofs of Theorem~\ref{thm: exotic-to-strata} and Theorem~\ref{thm: rigid-split}.  The final results are stated for broken toric bundles in Section~\ref{sec: broken-bundles}.

The proof of Theorem~\ref{thm: P1-bundle-formula} will do something more refined. The morphism $\mathbb P\to Y$ can be used to produce GW classes in the homology of the space of logarithmic stable maps to $Y$. We will produce an algorithm that expresses these as sums of tautological cohomology classes acting on the virtual class of the space of logarithmic stable maps to $Y$. 
%
\subsection{Naive strategy, its issues, and fixes} The approach is based on a naive strategy which, as we explain, does not immediately work. Nevertheless, it paints the right geometric picture. We then explain why it fails, and then the fix. 

Let $(Y|\partial Y)$ be a simple normal crossings pair and let $\mathbb P\to Y$ be a $\mathbb P^1$-bundle. By adding the $0$ and/or $\infty$ sections to the logarithmic structure, we obtain a pair which we denote by $(\mathbb P|\partial \mathbb P)$. We refer to these extra divisors as horizontal, and denote them by $\partial_{\sf hor} \mathbb P$. There is a basic diagram of logarithmic mapping spaces: 
\[
\begin{tikzcd}
\Mbar(\mathbb P|\partial \mathbb P)\arrow{r}\arrow{d} & \Mbar(\mathbb P|\partial_{\sf hor} \mathbb P)\arrow{d}\\
\Mbar(Y|\partial Y)\arrow{r} & \Mbar(Y). 
\end{tikzcd}
\]
All spaces in the diagram are equipped with virtual fundamental classes. The naive approach is to proceed as follows:
\begin{enumerate}[(i)]
\item Argue that the diagram is Cartesian at the level of spaces,
\item argue that $[\Mbar(\mathbb P|\partial \mathbb P)]^{\sf vir}$ is the pullback of $[\Mbar(\mathbb P|\partial_{\sf hor}\mathbb P]^{\sf vir}$ along the bottom arrow, and 
\item use reconstruction results for $\Mbar(\mathbb P|\partial_{\sf hor} \mathbb P)$ proved in~\cite{MP06} to deduce that the GW pushforwards of $\Mbar(\mathbb P|\partial \mathbb P)$ to $\Mbar(Y|\partial Y)$ are pullbacks of tautological classes in $\Mbar(Y)$. 
\end{enumerate}

\noindent
{\bf Every step of the plan fails.} First, the diagram is {\it not} Cartesian unless $\partial_{\sf hor}\mathbb P$ is empty, already in the case where the bundle is trivial~\cite{MR21,NR19}. It is Cartesian in the category of fine and saturated logarithmic stacks but not in the standard category. Even setting this aside, the second and third steps do not even make sense -- it is not clear that the horizontal arrow is equipped with a perfect obstruction theory that identifies virtual classes by pullback. 

A few key ideas allow us to overcome these issues. The first is to utilize the methods of~\cite{MR21}, which explains that after logarithmic blowups and taking a type of strict transform, the diagram {\it becomes} Cartesian in the usual sense. 

The second idea, which is the main input here, is that the intersection theoretic issues on the bottom horizontal arrow are related to the virtual structure on the target of that morphism. So we try to find a better pullback diagram that achieves the same outcome, but has a smooth bottom right corner. If the $\mathbb P^1$-bundle were trivial, one could analyze the GW cycles by using the product formula in GW theory, and the bottom right becomes the stack of curves. In general, we absorb the twistedness of the $\mathbb P^1$-bundle into the Picard stack. 

Observe that 
\[
[\mathbb P^1/\mathbb G_m]\to B\mathbb G_m
\] 
is the universal $\mathbb P^1$-bundle with $0$ and $\infty$ section. Therefore, the bundle $\mathbb P\to Y$ is a pullback
\[
\begin{tikzcd}
{\mathbb P} \arrow{d}\arrow{r} & {[\mathbb P^1/{\mathbb G}_m]}\arrow{d} \\
Y\arrow{r} & B\mathbb G_m.
\end{tikzcd}
\]
The main gain here is that maps to $B\mathbb G_m$, namely the Picard stack, is an unobstructed moduli problem. 

Working with maps to algebraic stacks often poses difficulties when it comes to properness. However, we identify an appropriate ``stability'' condition on the space of logarithmic maps to $[\mathbb P^1/\mathbb G_m]$ that makes it proper over the Picard stack, and this is sufficient to carry out a modified version of (ii) in the strategy. We then use the results of~\cite{MP06} to approximate $B\mathbb G_m$ and deduce that GW classes produced from $[\mathbb P^1/\mathbb G_m]$ are tautological on the Picard stack. This is the key result of this section. In Section~\ref{sec: uni-log-bundle} we adapt the methods of~\cite{MR21} to enhance this to logarithmic cohomology.

\begin{remark}
The strategy laid out is in some ways parallel to the series of papers that lead to a proof that the logarithmic GW theory of toric varieties relative to their {\it full} toric boundary is tautological~\cite{BHPSS,HMPPS,JPPZ2,MR21,RUK22}. 
\end{remark}

\subsection{Curves in the universal projective line} 

Consider the moduli $\fM_{g,n}(B\mathbb G_m)$ of prestable maps from nodal curves to $B\mathbb G_m$, otherwise known as the universal Picard stack of the universal curve over the moduli space of curves. We have universal structure maps:
\[
\textsf{Tot}(\mathcal L)\to \mathcal C\to \fM_{g,n}(B\mathbb G_m),
\]
where $\mathcal C$ is the universal curve and $\mathsf{Tot}(\mathcal L)$ is the total space of the universal line bundle over the universal curve. Pass to the projective completion to obtain:
\[
\mathcal P\to \mathcal C\to \fM_{g,n}(B\mathbb G_m). 
\]

\subsubsection{Absolute theory}  Given a stable map from $C'$ to a projective completion $\mathbb P_Y$ of some line bundle $L$ over some $Y$, we obtain a stabilization $C'\to C$ after composing with $\mathbb P_Y\to Y$. We also get a rational section of the pullback of the line bundle $L$ on every non-contracted component of $C'\to C$. We axiomatize this over the Picard stack. 

A family of pointed nodal curves $\mathcal C/S$ can be equipped with a canonical logarithmic structure -- pull back the logarithmic structure along the map from $S$ to the moduli space of curves, and similarly along the map from $\mathcal C$ to the universal curve. This makes it logarithmically smooth with nontrivial logarithmic structure along nodes and markings. Similarly, a $\mathbb P^1$-bundle over a logarithmically smooth curve is certainly also logarithmically smooth with the pullback logarithmic structure. 

Given a logarithmically smooth curve $C$ and a $\mathbb P^1$-bundle $\PP$ over it, we want to compactify the space of rational sections of $L$. To do so, we study maps
\[
C'\ ``\to"  \PP
\]
from destabilizations $C'\to C$. We have used the quotation marks because the target is singular, so one must be careful in order to have a reasonable deformation theory for maps from curves. We can equip $\PP$ with the logarithmic structure described above, and study logarithmic maps 
\[
C'\to \PP
\]
using the formalism of Abramovich--Chen--Gross--Siebert~\cite{AC11,Che10,GS13}.

\begin{definition}[Maps the universal bundle, collapsed logarithmic version]
Let $\Mbar_{g,n}(\mathcal P/B\mathbb G_m)$ denote the moduli problem on the category of logarithmic schemes, parameterizing flat families of the following data:
\[
\left(C,L,\delta\colon C'\to C, s\colon C'\to{\mathbb P}= \mathbb P(L\oplus\mathcal O)\right)
\]
where
\begin{enumerate}[(i)]
\item $C$ is a genus $g$ nodal $n$-marked curve with logarithmic structure and $L$ is a line bundle on it,
\item the morphism $[\delta\colon C'\to C]$ is a partial destabilization of curves with logarithmic structure,
\item the map $s\colon C'\to \mathbb P$ is a logarithmic morphism any contracted subcurve of $C'$ must be stable,
\item every component of $C'$ that is contracted by $\delta$ maps onto a smooth fiber of $\mathbb P$,
\item every component of $C'$ that is not contracted by $\delta$ maps isomorphically onto its image under the composition
\[
C'\to {\mathbb P}\to C
\]
with the bundle projection.
\end{enumerate}
\end{definition}

Objects in the space may be visualized as show in Figure~\ref{fig: maps-to-p1-bundle}.

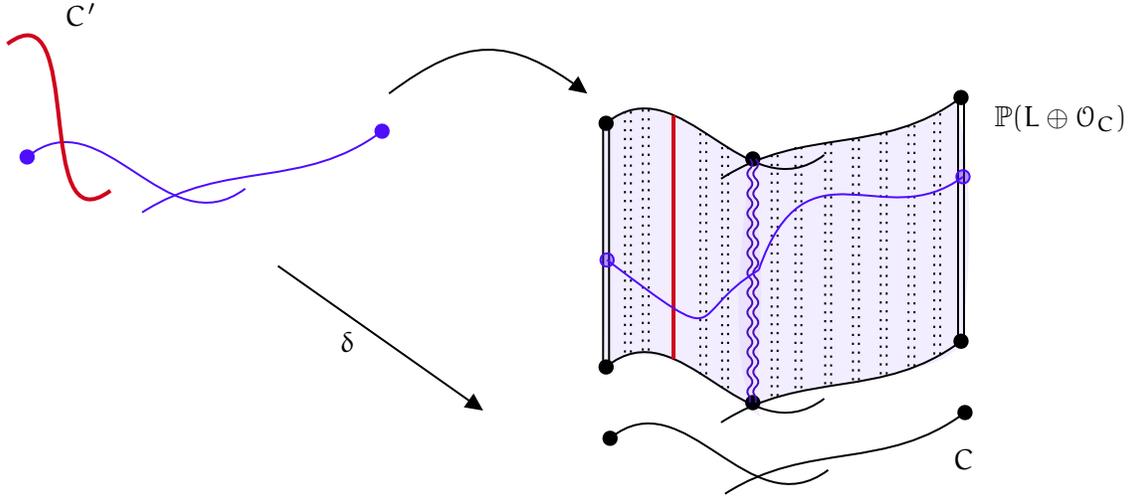
\begin{figure}[h!]

\tikzset{every picture/.style={line width=0.75pt}} 

\begin{tikzpicture}[x=0.75pt,y=0.75pt,yscale=-1,xscale=1]

\definecolor{softlavender}{RGB}{223,210,255}

\draw    (426.5,2147) .. controls (468.5,2120) and (507.5,2136) .. (547.5,2106) ;
\draw [shift={(547.5,2106)}, rotate = 323.13] [color={rgb, 255:red, 0; green, 0; blue, 0 }  ][fill={rgb, 255:red, 0; green, 0; blue, 0 }  ][line width=0.75]      (0, 0) circle [x radius= 3.35, y radius= 3.35]   ;
\draw    (368.5,2119) .. controls (406.5,2088) and (438.5,2165) .. (478.5,2135) ;
\draw [shift={(368.5,2119)}, rotate = 320.79] [color={rgb, 255:red, 0; green, 0; blue, 0 }  ][fill={rgb, 255:red, 0; green, 0; blue, 0 }  ][line width=0.75]      (0, 0) circle [x radius= 3.35, y radius= 3.35]   ;
\draw [color={rgb, 255:red, 78; green, 13; blue, 253 }  ,draw opacity=1 ]   (132.5,2005) .. controls (174.5,1978) and (213.5,1994) .. (253.5,1964) ;
\draw [shift={(253.5,1964)}, rotate = 323.13] [color={rgb, 255:red, 78; green, 13; blue, 253 }  ,draw opacity=1 ][fill={rgb, 255:red, 78; green, 13; blue, 253 }  ,fill opacity=1 ][line width=0.75]      (0, 0) circle [x radius= 3.35, y radius= 3.35]   ;
\draw [color={rgb, 255:red, 78; green, 13; blue, 253 }  ,draw opacity=1 ]   (74.5,1977) .. controls (112.5,1946) and (144.5,2023) .. (184.5,1993) ;
\draw [shift={(74.5,1977)}, rotate = 320.79] [color={rgb, 255:red, 78; green, 13; blue, 253 }  ,draw opacity=1 ][fill={rgb, 255:red, 78; green, 13; blue, 253 }  ,fill opacity=1 ][line width=0.75]      (0, 0) circle [x radius= 3.35, y radius= 3.35]   ;
\draw [color={rgb, 255:red, 208; green, 2; blue, 27 }  ,draw opacity=1 ][line width=1.5]    (64.5,1920) .. controls (104.5,1890) and (76.5,2024) .. (116.5,1994) ;
\draw    (257,1945) .. controls (296.2,1915.6) and (317.63,1915.97) .. (354.71,1943.29) ;
\draw [shift={(357,1945)}, rotate = 216.99] [fill={rgb, 255:red, 0; green, 0; blue, 0 }  ][line width=0.08]  [draw opacity=0] (8.93,-4.29) -- (0,0) -- (8.93,4.29) -- cycle    ;
\draw    (201,2032) -- (302.05,2103.27) ;
\draw [shift={(304.5,2105)}, rotate = 215.2] [fill={rgb, 255:red, 0; green, 0; blue, 0 }  ][line width=0.08]  [draw opacity=0] (8.93,-4.29) -- (0,0) -- (8.93,4.29) -- cycle    ;
\draw  [draw opacity=0][fill={rgb, 255:red, 223; green, 210; blue, 255 }  ,fill opacity=0.4 ] (440.5,1978) .. controls (453.5,1973.5) and (524.5,1966.5) .. (545.5,1947) .. controls (539.5,1946) and (557.5,1986) .. (545.5,2070) .. controls (540.5,2090.5) and (448.5,2090.5) .. (443.5,2105.5) .. controls (424.5,2088.5) and (437.5,1986) .. (440.5,1978) -- cycle ;
\draw  [draw opacity=0][fill={rgb, 255:red, 223; green, 210; blue, 255 }  ,fill opacity=0.4 ] (366.5,1960) .. controls (384.5,1937.5) and (421.5,1969.5) .. (440.5,1978) .. controls (434.5,1977) and (452.5,2017) .. (440.5,2101) .. controls (460.5,2131) and (400.5,2055) .. (366.5,2083) .. controls (364.5,2062.5) and (363.5,1968) .. (366.5,1960) -- cycle ;
\draw    (424.5,2111) .. controls (466.5,2084) and (505.5,2100) .. (545.5,2070) ;
\draw [shift={(545.5,2070)}, rotate = 323.13] [color={rgb, 255:red, 0; green, 0; blue, 0 }  ][fill={rgb, 255:red, 0; green, 0; blue, 0 }  ][line width=0.75]      (0, 0) circle [x radius= 3.35, y radius= 3.35]   ;
\draw    (366.5,2083) .. controls (404.5,2052) and (436.5,2129) .. (476.5,2099) ;
\draw [shift={(366.5,2083)}, rotate = 320.79] [color={rgb, 255:red, 0; green, 0; blue, 0 }  ][fill={rgb, 255:red, 0; green, 0; blue, 0 }  ][line width=0.75]      (0, 0) circle [x radius= 3.35, y radius= 3.35]   ;
\draw    (440.5,2101) ;
\draw [shift={(440.5,2101)}, rotate = 0] [color={rgb, 255:red, 0; green, 0; blue, 0 }  ][fill={rgb, 255:red, 0; green, 0; blue, 0 }  ][line width=0.75]      (0, 0) circle [x radius= 3.35, y radius= 3.35]   ;
\draw    (424.5,1988) .. controls (466.5,1961) and (505.5,1977) .. (545.5,1947) ;
\draw [shift={(545.5,1947)}, rotate = 323.13] [color={rgb, 255:red, 0; green, 0; blue, 0 }  ][fill={rgb, 255:red, 0; green, 0; blue, 0 }  ][line width=0.75]      (0, 0) circle [x radius= 3.35, y radius= 3.35]   ;
\draw    (366.5,1960) .. controls (404.5,1929) and (436.5,2006) .. (476.5,1976) ;
\draw [shift={(366.5,1960)}, rotate = 320.79] [color={rgb, 255:red, 0; green, 0; blue, 0 }  ][fill={rgb, 255:red, 0; green, 0; blue, 0 }  ][line width=0.75]      (0, 0) circle [x radius= 3.35, y radius= 3.35]   ;
\draw    (440.5,1978) ;
\draw [shift={(440.5,1978)}, rotate = 0] [color={rgb, 255:red, 0; green, 0; blue, 0 }  ][fill={rgb, 255:red, 0; green, 0; blue, 0 }  ][line width=0.75]      (0, 0) circle [x radius= 3.35, y radius= 3.35]   ;
\draw    (368,1960) -- (368,2083)(365,1960) -- (365,2083) ;
\draw    (547,1947) -- (547,2070)(544,1947) -- (544,2070) ;
\draw  [dash pattern={on 0.84pt off 2.51pt}]  (388,1953) -- (388,2076)(385,1953) -- (385,2076) ;
\draw [color={rgb, 255:red, 208; green, 2; blue, 27 }  ,draw opacity=1 ][line width=1.5]    (400.5,1956) -- (400.5,2079) ;
\draw  [dash pattern={on 0.84pt off 2.51pt}]  (417,1965) -- (417,2088)(414,1965) -- (414,2088) ;
\draw  [dash pattern={on 0.84pt off 2.51pt}]  (379,1954) -- (379,2077)(376,1954) -- (376,2077) ;
\draw  [dash pattern={on 0.84pt off 2.51pt}]  (535,1955) -- (535,2078)(532,1955) -- (532,2078) ;
\draw  [dash pattern={on 0.84pt off 2.51pt}]  (522,1961) -- (522,2084)(519,1961) -- (519,2084) ;
\draw  [dash pattern={on 0.84pt off 2.51pt}]  (508,1965) -- (508,2088)(505,1965) -- (505,2088) ;
\draw  [dash pattern={on 0.84pt off 2.51pt}]  (494,1968) -- (494,2091)(491,1968) -- (491,2091) ;
\draw  [dash pattern={on 0.84pt off 2.51pt}]  (480,1970) -- (480,2093)(477,1970) -- (477,2093) ;
\draw  [dash pattern={on 0.84pt off 2.51pt}]  (465,1972) -- (465,2095)(462,1972) -- (462,2095) ;
\draw  [dash pattern={on 0.84pt off 2.51pt}]  (453,1976) -- (453,2099)(450,1976) -- (450,2099) ;
\draw [color={rgb, 255:red, 74; green, 13; blue, 182 }  ,draw opacity=1 ]   (442,1978) .. controls (443.67,1979.67) and (443.67,1981.33) .. (442,1983) .. controls (440.33,1984.67) and (440.33,1986.33) .. (442,1988) .. controls (443.67,1989.67) and (443.67,1991.33) .. (442,1993) .. controls (440.33,1994.67) and (440.33,1996.33) .. (442,1998) .. controls (443.67,1999.67) and (443.67,2001.33) .. (442,2003) .. controls (440.33,2004.67) and (440.33,2006.33) .. (442,2008) .. controls (443.67,2009.67) and (443.67,2011.33) .. (442,2013) .. controls (440.33,2014.67) and (440.33,2016.33) .. (442,2018) .. controls (443.67,2019.67) and (443.67,2021.33) .. (442,2023) .. controls (440.33,2024.67) and (440.33,2026.33) .. (442,2028) .. controls (443.67,2029.67) and (443.67,2031.33) .. (442,2033) .. controls (440.33,2034.67) and (440.33,2036.33) .. (442,2038) .. controls (443.67,2039.67) and (443.67,2041.33) .. (442,2043) .. controls (440.33,2044.67) and (440.33,2046.33) .. (442,2048) .. controls (443.67,2049.67) and (443.67,2051.33) .. (442,2053) .. controls (440.33,2054.67) and (440.33,2056.33) .. (442,2058) .. controls (443.67,2059.67) and (443.67,2061.33) .. (442,2063) .. controls (440.33,2064.67) and (440.33,2066.33) .. (442,2068) .. controls (443.67,2069.67) and (443.67,2071.33) .. (442,2073) .. controls (440.33,2074.67) and (440.33,2076.33) .. (442,2078) .. controls (443.67,2079.67) and (443.67,2081.33) .. (442,2083) .. controls (440.33,2084.67) and (440.33,2086.33) .. (442,2088) .. controls (443.67,2089.67) and (443.67,2091.33) .. (442,2093) .. controls (440.33,2094.67) and (440.33,2096.33) .. (442,2098) -- (442,2101) -- (442,2101)(439,1978) .. controls (440.67,1979.67) and (440.67,1981.33) .. (439,1983) .. controls (437.33,1984.67) and (437.33,1986.33) .. (439,1988) .. controls (440.67,1989.67) and (440.67,1991.33) .. (439,1993) .. controls (437.33,1994.67) and (437.33,1996.33) .. (439,1998) .. controls (440.67,1999.67) and (440.67,2001.33) .. (439,2003) .. controls (437.33,2004.67) and (437.33,2006.33) .. (439,2008) .. controls (440.67,2009.67) and (440.67,2011.33) .. (439,2013) .. controls (437.33,2014.67) and (437.33,2016.33) .. (439,2018) .. controls (440.67,2019.67) and (440.67,2021.33) .. (439,2023) .. controls (437.33,2024.67) and (437.33,2026.33) .. (439,2028) .. controls (440.67,2029.67) and (440.67,2031.33) .. (439,2033) .. controls (437.33,2034.67) and (437.33,2036.33) .. (439,2038) .. controls (440.67,2039.67) and (440.67,2041.33) .. (439,2043) .. controls (437.33,2044.67) and (437.33,2046.33) .. (439,2048) .. controls (440.67,2049.67) and (440.67,2051.33) .. (439,2053) .. controls (437.33,2054.67) and (437.33,2056.33) .. (439,2058) .. controls (440.67,2059.67) and (440.67,2061.33) .. (439,2063) .. controls (437.33,2064.67) and (437.33,2066.33) .. (439,2068) .. controls (440.67,2069.67) and (440.67,2071.33) .. (439,2073) .. controls (437.33,2074.67) and (437.33,2076.33) .. (439,2078) .. controls (440.67,2079.67) and (440.67,2081.33) .. (439,2083) .. controls (437.33,2084.67) and (437.33,2086.33) .. (439,2088) .. controls (440.67,2089.67) and (440.67,2091.33) .. (439,2093) .. controls (437.33,2094.67) and (437.33,2096.33) .. (439,2098) -- (439,2101) -- (439,2101) ;
\draw  [dash pattern={on 0.84pt off 2.51pt}]  (428,1972) -- (428,2095)(425,1972) -- (425,2095) ;
\draw [color={rgb, 255:red, 78; green, 13; blue, 253 }  ,draw opacity=1 ][line width=0.75]    (367,2029) .. controls (430.5,2077) and (405.5,2057) .. (443.5,2034) ;
\draw [shift={(367,2029)}, rotate = 37.09] [color={rgb, 255:red, 78; green, 13; blue, 253 }  ,draw opacity=1 ][fill={rgb, 255:red, 78; green, 13; blue, 253 }  ,fill opacity=0.4 ][line width=0.75]      (0, 0) circle [x radius= 3.35, y radius= 3.35]   ;
\draw [color={rgb, 255:red, 78; green, 13; blue, 253 }  ,draw opacity=1 ][line width=0.75]    (443.5,2034) .. controls (467.5,1965) and (506.5,2017) .. (546.5,1987) ;
\draw [shift={(546.5,1987)}, rotate = 323.13] [color={rgb, 255:red, 78; green, 13; blue, 253 }  ,draw opacity=1 ][fill={rgb, 255:red, 78; green, 13; blue, 253 }  ,fill opacity=0.4][line width=0.75]      (0, 0) circle [x radius= 3.35, y radius= 3.35]   ;

\draw (561,1949) node [anchor=north west][inner sep=0.75pt]    {$\mathbb{P}( L\oplus \mathcal{O}_{C})$};
\draw (540,2123) node [anchor=north west][inner sep=0.75pt]    {$C$};
\draw (231,2064) node [anchor=north west][inner sep=0.75pt]    {$\delta $};
\draw (92,1898) node [anchor=north west][inner sep=0.75pt]    {$C'$};

\end{tikzpicture}
\caption{Stable maps to a projective bundle over a nodal curve.}\label{fig: maps-to-p1-bundle}
\end{figure}

As the space is modelled on a space of maps to projective bundles, it inherits some of the basic properties of these spaces, but relative to the Picard stack. 

\begin{proposition}
The moduli problem $\Mbar_{g,n}(\mathcal P/B\mathbb G_m)$ is represented by an Artin stack equipped with a logarithmic structure. The morphism 
\[
{\Mbar_{g,n}}(\mathcal P/B\mathbb G_m)\to {\mathfrak{M}}_{g,n}(B\mathbb G_m)
\]
is proper and relatively Deligne--Mumford, and is equipped with a relative logarithmic perfect obstruction theory given by $R^\bullet \pi_\star s^\star T^{\sf log}$ where $T^{\sf log}$ of $\mathcal P$ over the stack $\Mbar_{g,n}(\mathcal P/B\mathbb G_m)$. The parallel statements hold for $\Mbar^{\sf exp}_{g,n}(\mathcal P/B\mathbb G_m)$. 
\end{proposition}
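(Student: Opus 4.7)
The strategy is to prove the proposition by working relatively over the Picard stack $\fM_{g,n}(B\mathbb G_m)$. Over a geometric point $(C, L)$ of this stack, the fiber of the moduli problem parametrizes logarithmic maps $s\colon C' \to \mathbb P(L \oplus \mathcal O_C)$ from a partial destabilization $\delta\colon C' \to C$, subject to stability conditions (iii)--(v) of the definition. The target $\mathbb P(L\oplus\mathcal O_C)$ is a fixed logarithmically smooth variety, so the fiberwise problem falls squarely within the Abramovich--Chen--Gross--Siebert framework~\cite{AC11,Che10,GS13}, after restriction to the locally closed substack cut out by (iii)--(v). This fiberwise reduction is the conceptual engine behind the proof.

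First I would construct the stack $\Mbar_{g,n}(\mathcal P/B\mathbb G_m)$ by passing to the universal curve $\pi\colon \mathcal C \to \fM_{g,n}(B\mathbb G_m)$, pulling back the universal line bundle, and forming the universal bundle $\mathcal P$. The resulting logarithmic target $\mathcal P$ is logarithmically smooth over $\fM_{g,n}(B\mathbb G_m)$ with respect to the pullback of the canonical logarithmic structures on the curve, together with the $0$ and $\infty$ sections as desired. Representability as an Artin stack with logarithmic structure then follows by applying the foundational theory of~\cite{AC11,Che10,GS13} and globalizing over the Picard stack; the stability conditions (iii)--(v) are locally closed and carve out a substack of the same type. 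Properness and the Deligne--Mumford property over $\fM_{g,n}(B\mathbb G_m)$ are both checked fiberwise. Over $(C,L)$, condition (v) rules out any continuous automorphism groups, while the valuative criterion is verified by combining properness of fiberwise logarithmic stable maps with a case analysis showing that conditions (iii)--(v) are preserved under specialization; the only mild subtlety is ensuring that limits of destabilizations $\delta$ remain admissible, which is enforced by the requirement that bundle-contracted components map to smooth fibers of $\mathcal P$.

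The relative logarithmic perfect obstruction theory is the standard one for logarithmic maps: the complex $R^\bullet\pi_\star s^\star T^{\sf log}_{\mathcal P/\fM_{g,n}(B\mathbb G_m)}$ governs first-order deformations and obstructions of $s$ with $(C,L)$ fixed, and is perfect of amplitude $[0,1]$ by logarithmic smoothness of $\mathcal P$ over $\fM_{g,n}(B\mathbb G_m)$. The step I expect to require the most care is the compatibility check -- that the destabilization $\delta$ and the stability conditions do not perturb this formal deformation theory. This is the case because $\delta$ is intrinsic to $s$ (it is recovered from the Stein factorization of the composition $C' \to \mathcal P \to \mathcal C$) and the stability conditions are open, so every infinitesimal deformation of $s$ automatically carries a compatible deformation of $\delta$ and remains stable.

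For the expanded version $\Mbar^{\sf exp}_{g,n}(\mathcal P/B\mathbb G_m)$, the arguments transfer verbatim. The universal target is now the universal expansion of $\mathcal P$ over a suitable subdivision of its tropicalization, which remains logarithmically smooth relative to its base; the fiberwise moduli is again governed by~\cite{AC11,Che10,GS13}, and representability, properness, the Deligne--Mumford property, and the perfect obstruction theory all follow by the same reasoning, with the relative logarithmic tangent complex of the expanded target in place of $T^{\sf log}_{\mathcal P/\fM_{g,n}(B\mathbb G_m)}$.
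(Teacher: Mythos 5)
There is a genuine gap, concentrated in the properness step. You propose to verify the valuative criterion by ``combining properness of fiberwise logarithmic stable maps with a case analysis,'' but properness of $\Mbar_{g,n}(\mathcal P/B\mathbb G_m)\to \fM_{g,n}(B\mathbb G_m)$ is not a fiberwise statement: in the valuative criterion one is handed a family of curves $\mathcal C_R$ with a line bundle $L_R$ over a DVR $R$, and one must produce a limit of a map into the $\mathbb P^1$-bundle over this \emph{varying, not necessarily projective} target. The properness results of~\cite{AC11,Che10,GS13} that you want to quote are proved for (log smooth) projective targets, and a family of prestable curves over $R$ need not be projective (think of unmarked rational chains), so they do not apply off the shelf. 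The paper's proof supplies exactly the missing mechanism: write $L_R$ as a difference of relatively ample line bundles, embed $\mathcal C_R$ into a product of projective spaces so that $L_R$ is pulled back from $\mathcal O(1,-1)$, and observe that the $\mathbb P^1$-bundle is then pulled back from a fixed projective bundle over that product; existence of limits follows from properness of stable maps to this projective target, and uniqueness follows from the stability condition together with separatedness of the stable maps space. Your proposal contains no substitute for this reduction, and without it the appeal to ``fiberwise properness'' does not get off the ground. (A further point you would still need to address even after this reduction is why the limit, taken in a space of maps to the ambient projective bundle, satisfies conditions (iii)--(v); openness of these conditions works against you here, not for you, since the issue is the special fiber.)

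The representability step is also under-justified, though less seriously. You invoke the foundational theory of~\cite{AC11,Che10,GS13} ``fiberwise'' and then ``globalize over the Picard stack,'' but the target here is a family over a non-quasi-compact Artin stack, and even smooth-locally on $\fM_{g,n}(B\mathbb G_m)$ the family $\mathcal P_U\to U$ need not be projective, so the ACGS representability theorems do not directly apply; the globalization is where the actual content lies. The paper instead deduces representability from Wise's results on (logarithmic) mapping stacks~\cite{Wis16a,Wis16b}, which identify the logarithmic mapping stack as closed inside the ordinary mapping stack~\cite{AHR19} and in particular produce minimal objects; some such general input is needed in your argument as well. Your treatment of the Deligne--Mumford property (no relative automorphisms of the bundle once $(C,L)$ is fixed, plus stability of contracted components) and of the obstruction theory $R^\bullet\pi_\star s^\star T^{\sf log}_{\mathcal P/\fM}$ matches the paper and is fine.
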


\begin{proof}
The results are standard, so we provide a sketch. Representability of the logarithmic version can be deduced immediately from Wise's results on mapping stacks~\cite{Wis16a,Wis16b}, which recognizes the logarithmic mapping stack as a closed inside the ordinary mapping stack~\cite{AHR19}. In particular, Wise's proof shows that the logarithmic mapping stack admits minimal objects. To see the Deligne--Mumford property, it suffices to check the relative finiteness of stabilizers -- this has been rigged to be true. There are no automorphisms of the projective bundle $\mathbb P$ relative to the base, since $(C,L)$ is fixed.
	
	For the properness, note that the map is of finite type, so we only need to check the valuative criterion of properness. Take a one-parameter family of curves $\mathcal C/K$, a line bundle on the family, and a map from the general fiber to the universal projective bundle. By expressing the line bundle as a difference of amples, we can view the line bundle as a pullback of an embedding from $\mathcal C$ into a product of projective spaces, and the $\mathbb P^1$-bundle is pulled back from a $\mathbb P^1$-bundle on this space here. The existence part of the valuative criterion follows from this. Uniqueness follows from the stability condition -- if there were two limits, stable in the sense above, embedding in a projective bundle over a product of projective spaces as above, the separatedness of the stack of stable maps would be violated. So we conclude. 
	
	The virtual structure is given by the cohomology groups of the relative logarithmic tangent bundle by standard arguments~\cite{BF97,GS13}.
\end{proof}

\begin{remark}[Minimal objects and tropicalizations]
    The moduli stack $\Mbar_{g,n}(\mathcal P/B\mathbb G_m)$ is representable by an algebraic stack with logarithmic structure, and therefore defines a moduli problem on logarithmic schemes -- which was given as the definition -- as well as a moduli problem on schemes. We record the latter. If $S$ is a scheme, the $S$-valued points of $\Mbar_{g,n}(\mathcal P/B\mathbb G_m)$ is  the groupoid of families over logarithmic schemes $(S, M_S)$ that satisfy a condition called {\it minimality}. In our case, a family $(C,L,C',s)$ over a logarithmic base $(S,M_S)$ is minimal if and only if the moduli map to the logarithmic moduli space of curves given by $[C']$
    \[
    (S,M_S)\xrightarrow{[C']} (\fM_{g,n},\partial \fM_{g,n})
    \]
    is strict. 

    Before justifying this, let us spell it out; this material is essentially~\cite[Section~1]{GS13}\footnote{What we call minimality is called {\it basicness} in~\cite{GS13}. Our terminology is consistent with~\cite{AC11,Che10}.}. Minimality is a pointwise condition on $S$, and we explain what it is when $S$ is a geometric point, with monoid $P$, given by the characteristic monoid $\overline M_S$. In our case, a family $(C,L,C',s)$ as above over a logarithmic point $S = \Spec(P\to \mathbb C)$ is minimal if and only if $P$ is $\mathbb N^E$, where $E$ is the set of nodes of $C'$, and the at the level of the characteristic monoids, the logarithmic morphism
    \[
    (C,M_C)\to (S,M_S)
    \]
    is given, locally at a node $q$ by $\overline x_q+\overline y_q = \overline t_q)$, where $t_q$ is a generator for the corresponding factor in $\mathbb N^E$ and $\overline x_q$ and $\overline y_q$ are the images in the characteristic sheaf of $(C,M_C)$ of the functions cutting out the branches at $q$. 

    To deduce this the claimed description of minimial monoids, we apply the observation of~\cite[Remark~1.18 \& 1.21]{GS13} that characterizes the minimal monoids tropically. It asserts that a family over  $\Spec(P\to \mathbb C)$ if the associated family of tropical curves is identified with the dual cone of $P$. The description in~\cite[Section~1.5]{GS13} immediately gives the claim. Indeed, given $(C,L,C'/L,s)$, the associated tropical moduli space is the space of lengths enhancing the map on dual graphs $\Gamma'\to \Gamma$ given by destabilizations. But this is exactly the set of lengths of edges of the domain $\Gamma'$. The depiction in Figure~\ref{fig: tropical-universal-bundle} may be helpful. The claim follows. 
\end{remark}

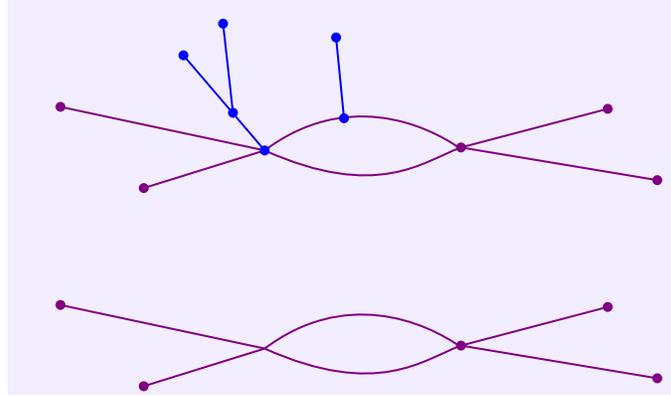
\begin{figure}

\tikzset{every picture/.style={line width=0.75pt}} 

\begin{tikzpicture}[x=0.75pt,y=0.75pt,yscale=-1,xscale=1]

\definecolor{softlavender}{RGB}{223,210,255}

\def\vr{2}

\colorlet{violetcol}{violet}
\colorlet{bluecol}{blue}

\fill[softlavender, opacity=0.4] (120,2650) rectangle (460,2850);


\draw[violetcol] (348.5,2825.5) -- (422.5,2806);
\draw[violetcol, fill=violetcol] (422.5,2806) circle [x radius=\vr, y radius=\vr];
\draw[violetcol, fill=violetcol] (348.5,2825.5) circle [x radius=\vr, y radius=\vr];

\draw[violetcol] (348.5,2825.5) -- (447.5,2842);
\draw[violetcol, fill=violetcol] (447.5,2842) circle [x radius=\vr, y radius=\vr];

\draw[violetcol] (146.5,2805) -- (249.5,2827);
\draw[violetcol, fill=violetcol] (146.5,2805) circle [x radius=\vr, y radius=\vr];

\draw[violetcol] (188.5,2846) -- (249.5,2827);
\draw[violetcol, fill=violetcol] (188.5,2846) circle [x radius=\vr, y radius=\vr];

\draw[violetcol] (249.5,2827) .. controls (289.5,2797) and (329.5,2812.5) .. (349.5,2827);
\draw[violetcol, fill=violetcol] (349.5,2827);

\draw[violetcol] (249.5,2827) .. controls (306.5,2852.5) and (329.5,2832.5) .. (348.5,2825.5);

\begin{scope}[shift={(0,-100)}]

\draw[violetcol] (348.5,2825.5) -- (422.5,2806);
\draw[violetcol, fill=violetcol] (422.5,2806) circle [x radius=\vr, y radius=\vr];
\draw[violetcol, fill=violetcol] (348.5,2825.5) circle [x radius=\vr, y radius=\vr];

\draw[violetcol] (348.5,2825.5) -- (447.5,2842);
\draw[violetcol, fill=violetcol] (447.5,2842) circle [x radius=\vr, y radius=\vr];

\draw[violetcol] (146.5,2805) -- (249.5,2827);
\draw[violetcol, fill=violetcol] (146.5,2805) circle [x radius=\vr, y radius=\vr];

\draw[violetcol] (188.5,2846) -- (249.5,2827);
\draw[violetcol, fill=violetcol] (188.5,2846) circle [x radius=\vr, y radius=\vr];

\draw[bluecol] (228.5,2763) -- (233.5,2808);
\draw[bluecol] (208.5,2779) -- (249.5,2827);
\draw[bluecol] (285.5,2770) -- (289.5,2810.75);

\draw[violetcol] (249.5,2827) .. controls (289.5,2797) and (329.5,2812.5) .. (349.5,2827);
\draw[violetcol, fill=violetcol] (349.5,2827) ;

\draw[violetcol] (249.5,2827) .. controls (306.5,2852.5) and (329.5,2832.5) .. (348.5,2825.5);

\draw[bluecol, fill=bluecol] (233.5,2808) circle [x radius=\vr, y radius=\vr];
\draw[bluecol, fill=bluecol] (228.5,2763) circle [x radius=\vr, y radius=\vr];
\draw[bluecol, fill=bluecol] (289.5,2810.75) circle [x radius=\vr, y radius=\vr];
\draw[bluecol, fill=bluecol] (285.5,2770) circle [x radius=\vr, y radius=\vr];
\draw[bluecol, fill=bluecol] (249.5,2827) circle [x radius=\vr, y radius=\vr];
\draw[bluecol, fill=bluecol] (208.5,2779) circle [x radius=\vr, y radius=\vr];

\end{scope}

\end{tikzpicture}
\caption{The dual tropical map corresponding to a logarithmic map to the universal projective bundle. The tropical map is constant on the blue part of the graph corresponding to the destabilization. The corresponding tropical moduli space has has a system of coordinates given by all the edges of $\Gamma'$}\label{fig: tropical-universal-bundle}
\end{figure}

\begin{remark}\label{prop: turns-out-theyre-the-same}
In place of the logarithmic mapping space, one could use expanded degenerations to study maps to the bundle $\mathbb P\to C$. Call the space $\Mbar^{\sf exp}_{g,n}(\mathcal P/B\mathbb G_m)$. By general results of Abramovich--Marcus--Wise~\cite{AMW12}, one can compare this to the logarithmic mapping space constructed above. Going through the details of the analysis there, it turns out the moduli spaces $\Mbar_{g,n}(\mathcal P/B\mathbb G_m)$ and $\Mbar^{\sf exp}_{g,n}(\mathcal P/B\mathbb G_m)$ are naturally isomorphic as algebraic stacks. 
\end{remark}

\subsubsection{Logarithmic theory}\label{sec: univ-log-theory} We study stable maps to the universal projective bundle, relative to the $0$-section, the $\infty$-section, or both. The stack $[\mathbb P^1/\mathbb G_m]$ is equipped with a logarithmic structure, as appropriate in each of these cases.

Consider the space $\Mbar_{g,n}(\mathcal P/B\mathbb G_m)$. An object in the space is a map to a universal projective bundle and by composition, we have:
\[
C'\to \mathbb P(L\oplus\mathcal O)\to [\mathbb P^1/\mathbb G_m].
\]
A choice of logarithmic structure on $[\mathbb P^1/\mathbb G_m]$ equips the universal $\mathbb P^1$-bundle $\mathbb P(L\oplus\mathcal O)$ with a logarithmic structure. We define the space of logarithmic stable maps to the universal logarithmic $\mathbb P^1$-bundle as follows. 

Now let $\mathsf A$ be the Artin fan of $[\mathbb P^1/\mathbb G_m]$ with the logarithmic structure coming from $D$. This could either be $[\mathbb A^1/\mathbb G_m]$ or $[\mathbb P^1/\mathbb G_m]$. We fix additional discrete data -- the tangency of the section. Specifically, the non-degenerate objects in the moduli space are maps
\[
s\colon C\to \mathbb P(L\oplus\mathcal O)
\]
such that, at each marked point of $p$, along the $0$ and/or $\infty$-section of the bundle -- depending on the logarithmic structure -- we fix the order of vanishing of $s$ along this divisor at $p$. We are interested in degenerations of maps with these tangency, and the tangency condition forms discrete data. Altogether, we use $\Lambda$ to denote the genus, number of markings, and the tangency. 

Define the space of logarithmic stable maps as the following fiber product in the category of fine and saturated logarithmic stacks:
\[
\begin{tikzcd}
    \Mbar_{\Lambda}(\mathcal P|D/B\mathbb G_m)\arrow{d}\arrow{r} & \Mbar_{g,n}(\mathcal P/B\mathbb G_m)\arrow{d}\\
    \mathfrak M^{\sf log}_\Lambda(\mathsf A)\arrow{r}& \mathfrak M_{g,n}(\mathsf A),
\end{tikzcd}
\]
where $\mathfrak M_{g,n}(\mathsf A)$ is the stack of all maps from nodal $n$-marked curves of genus $g$ to the Artin stack $\mathsf A$, equipped with the standard logarithmic structure coming from $\fM_{g,n}$. Also $\mathfrak M^{\sf log}_\Lambda(\mathsf A)$ is the space of logarithmic maps to the Artin fan, constructed in~\cite{AW}.

\begin{proposition}
The moduli problem $\Mbar_{\Lambda}(\mathcal P|D/B\mathbb G_m)$ is represented by an Artin stack with logarithmic structure. The morphism 
\[
\Mbar_{\Lambda}(\mathcal P|D/B\mathbb G_m)\to \fM_{g,n}(B\mathbb G_m) 
\]
is proper and relatively Deligne--Mumford. Furthermore, there is a relative perfect obstruction theory
\[
\Mbar_{\Lambda}(\mathcal P|D/B\mathbb G_m)\to \mathsf{Log}(\fM_{g,n}(B\mathbb G_m)) 
\]
over the stack of logarithmic structures over $\fM_{g,n}(B\mathbb G_m)$. The associated obstruction bundle is given by $R^\bullet \pi_\star s^\star T^{\sf log}_{\mathcal P/\mathfrak M}$, where $T_{\mathcal P/\mathfrak M}$ is the relative logarithmic tangent bundle of the universal projective bundle. 
\end{proposition}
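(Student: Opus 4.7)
The plan is to deduce each assertion by combining the non-logarithmic analogue established in the previous proposition with the foundational results on logarithmic mapping stacks to Artin fans. By construction,
\[
\Mbar_{\Lambda}(\mathcal P|D/B\mathbb G_m) = \Mbar_{g,n}(\mathcal P/B\mathbb G_m)\times^{\mathsf{fs}}_{\mathfrak M_{g,n}(\mathsf A)} \mathfrak M^{\sf log}_\Lambda(\mathsf A),
\]
so representability reduces to representability of each factor: the right-hand factor is algebraic by the results of Abramovich--Wise on logarithmic mapping stacks to Artin fans, the lower-right is algebraic by standard arguments, and the upper-left was established in the preceding proposition. Since fs fiber products exist in the category of logarithmic algebraic stacks, the moduli problem is represented by a logarithmic Artin stack.

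For the relative Deligne--Mumford property, I would factor the forgetful map through $\Mbar_{g,n}(\mathcal P/B\mathbb G_m)$. The previous proposition gives that the latter is relatively DM over $\fM_{g,n}(B\mathbb G_m)$, and the remaining factor $\Mbar_{\Lambda}(\mathcal P|D/B\mathbb G_m)\to \Mbar_{g,n}(\mathcal P/B\mathbb G_m)$ is obtained by fs base change from $\mathfrak M^{\sf log}_\Lambda(\mathsf A)\to \fM_{g,n}(\mathsf A)$. This latter map is representable and unramified: a logarithmic enhancement of a given underlying morphism with fixed tangency data $\Lambda$ has no nontrivial infinitesimal automorphisms.

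The properness assertion is the main technical point. The morphism $\mathfrak M^{\sf log}_\Lambda(\mathsf A)\to \fM_{g,n}(\mathsf A)$ is proper for fixed discrete data by the foundational papers of Abramovich--Chen and Gross--Siebert; concretely, the tropical data compatible with fixed $\Lambda$ sweeps out a bounded cone complex. Properness is preserved under fs base change, so the map $\Mbar_{\Lambda}(\mathcal P|D/B\mathbb G_m)\to \Mbar_{g,n}(\mathcal P/B\mathbb G_m)$ is proper, and composing with the proper map to $\fM_{g,n}(B\mathbb G_m)$ established above yields the claim. An alternative route is a direct valuative-criterion argument, mirroring the one in the previous proposition but replacing the line-bundle filling by a logarithmic completion; both routes rely ultimately on boundedness of tangency data.

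For the obstruction theory, the strategy is to pass to $\mathsf{Log}(\fM_{g,n}(B\mathbb G_m))$, over which the moduli problem becomes strict, and then invoke the standard Behrend--Fantechi machinery in its logarithmic form as in Gross--Siebert. Over the stack of logarithmic structures the logarithmic deformation theory of a map $s\colon C'\to \mathcal P$ is governed by the logarithmic cotangent complex of $\mathcal P$ relative to $\mathfrak M$; pushing forward under the universal curve yields $R^\bullet\pi_\star s^\star T^{\sf log}_{\mathcal P/\mathfrak M}$, and the difference between the ordinary and logarithmic tangent bundles accounts for the tangency constraint along $D$. Perfectness of amplitude $[0,1]$ is immediate from the one-dimensionality of the fibers of $\pi$. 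The delicate point will be checking that the obstruction theory constructed factor-wise on the fs fiber product glues correctly with the one from the previous proposition; this is a routine but careful compatibility check, using that the strictification of $\mathcal P\to[\mathbb P^1/\mathbb G_m]$ along $D$ replaces only the tangent bundle by its logarithmic variant, leaving the remainder of the deformation complex unchanged.
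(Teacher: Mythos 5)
Your proposal is correct and follows essentially the same route as the paper's (much terser) proof: representability from Wise/Abramovich--Wise via the defining fs fiber product, properness and the relative Deligne--Mumford property by factoring through the underlying mapping space $\Mbar_{g,n}(\mathcal P/B\mathbb G_m)$ of the previous proposition together with quasi-finiteness/properness of the logarithmic space over it, and the standard Behrend--Fantechi/Gross--Siebert obstruction theory relative to the stack of logarithmic structures. The only small remark is that your claim that the map to the underlying mapping space is ``representable and unramified'' is stronger than needed; the standard fact invoked by the paper is that it is quasi-finite and proper, which already suffices for both properness and the relative DM property.
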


\begin{proof}
	The representability of the logarithmic mapping space, in this generality, follows from results of Wise~\cite{Wis16a}. The logarithmic mapping space is quasi-finite and proper over the underlying mapping space (disregarding logarithmic structures) and we have already shown relative properness in that case. The remaining statement is the obstruction theory, which again is standard. 
\end{proof}

\begin{remark}[Rubber theory]\label{rem: uni-rubber} When studying relative stable maps to a $\mathbb P^1$-bundle over a smooth projective variety, say $\mathbb P\to Y$, with relative divisor given by both the $0$ and $\infty$ sections, its often natural to consider the associated {\it rubber theory}, e.g.~\cite{FP,MP06}. In the interior, we consider maps 
\[
C\to \mathbb P
\]
meeting the sections with prescribed tangency, but consider two such maps to be the same if the differ by $\mathbb G_m$-dilation action on the fibers. The foundational theory can be treated in many ways. One way is to start with the locus of relative stable maps to $\mathbb P$ using Jun Li's expanded theory, and then delete the closed locus where the map to the main component of the bundle is stable under $\mathbb G_m$ action. We then take a quotient by the free action of $\mathbb G_m$. 

A different approach can be carried out using the logarithmic multiplicative group, see for instance~\cite{RUK22,RW19}. We start with the moduli space of logarithmic maps to the $\mathbb G_{\sf log}$-bundle associated to $\mathbb P\to Y$. Stability for such maps is the same as stability of the underlying map to $Y$. By arguments from~\cite{RUK22}, the moduli space admits a free action of $\mathbb G_{\sf log}$, and the quotient is the rubber moduli space. 

Using either of these approaches, we can define a rubber variant for the space $\Mbar_{\Lambda}(\mathcal P|D/B\mathbb G_m)$, when $D$ consists of both $0$ and $\infty$ sections. The rubber variant carries a virtual fundamental class over the Picard stack as above. It arises naturally when describing strata of the moduli space of relative maps. For a geometric picture of the discussion, see~\cite[Section~2]{GV05}.
\end{remark}

\subsection{Compatibility with pullbacks} The universal bundle theories are compatible with stable maps to projective lines over any particular target. 

To spell this out, let $
\fM_{g,n}(\mathcal P)$ denote the stack parameterizing data:
\[
(C,L,\delta\colon C'\to C,s\colon C'\to \mathbb P(L\oplus \mathcal O)),
\]
where, as before, $C$ is a prestable curve of genus $g$ with $n$ markings, the map $\delta\colon C'\to C$ is a partial destabilization, and $s\colon C'\to \mathbb P(L\oplus \mathcal O)$. 

This is a very large stack, but we only need it to formulate a clean statement, so we don't bother cutting it down any further. Standard arguments show that it is an algebraic stack, and we equip it with a logarithmic structure coming from 
\[
\fM_{g,n}(\mathcal P)\to\fM_{g,n}.
\]
This coincides with the logarithmic structure coming from viewing this as a logarithmic mapping stack, but equipping $\mathbb P(L\oplus\mathcal O)$ with the trivial logarithmic structure. 

There is certainly a morphism
\[
\Mbar_{g,n}^{\sf log}(\mathcal P/B\mathbb G_m)\to 
\fM_{g,n}(\mathcal P).
\]
It is the underlying morphism of a morphism between logarithmic algebraic stacks; this is a consequence of the fact that the morphism from any logarithmic scheme to the underlying scheme with the trivial logarithmic structure is a morphism in the logarithmic category. 

Let $Y$ be a smooth projective variety and let $\mathbb P\to Y$ be the projective completion of a line bundle $L$. Given a stable map
\[
C\to Y
\]
the pullback of $L$ to $C$ defines a morphism from the stack of all stable maps to $Y$:
\[
\Mbar_{g,n}(Y)\to \fM_{g,n}(B\mathbb G_m).
\]
Again, we view these as logarithmic mapping stacks where the target has trivial logarithmic structure, so both domain and target are strict over $\fM_{g,n}$ with these logarithmic structures, and the morphism is the underlying morphism of a logarithmic morphism with these structures. 

Given a stable map $[C'\to \mathbb P]$, we can stabilize the composition with $\mathbb P\to Y$ to obtain a partial stabilization $C$. Furthermore, given a map to $Y$, a lift to $\mathbb P$ is exactly the data of a rational section of the line bundle $L$, pulled back to the curve. Indeed, the diagram
\[
\begin{tikzcd}
    {\mathbb P}\arrow{d}\arrow{r} & {[\mathbb P^1/\mathbb G_m]}\arrow{d}\\
    Y\arrow{r} & B\mathbb G_m
\end{tikzcd}
\]
is cartesian. The relative functor of points of $[\mathbb P^1/\mathbb G_m]$ over $B\mathbb G_m$ is, over a line bundle $L$ on a scheme $S$, a pair of line bundle-section pairs $(L_1,s_1)$ and $(L_2,s_2)$ such that $L_1\otimes L_2^{-1} = L$ and $\mathbb V(s_1)\cap\mathbb V(s_2) = \emptyset$. The rational section is $\frac{s_1}{s_2}$. 

Therefore, given the map $C'\to \PP$ and recording these line bundles and sections, we obtain a morphism
\[
\Phi\colon \Mbar_{g,n}(\mathbb P,\beta)\to \fM_{g,n}(\mathcal P).
\]
of logarithmic algebraic stacks. 

We have the following basic fact:
\begin{lemma}
    The morphism $\Phi$ of logarithmic algebraic stacks above factors as
    \[
    \Mbar_{g,n}(\mathbb P,\beta)\to \Mbar_{g,n}(\mathcal P/B\mathbb G_m) \to \fM_{g,n}(\mathcal P)
    \]
\end{lemma}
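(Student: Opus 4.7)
The plan is to construct the factorization by explicitly producing, from a stable map $f \colon C \to \mathbb P$ to the fixed projective bundle, a tuple of the form required by $\Mbar_{g,n}(\mathcal P/B\mathbb G_m)$, and then to check this construction is functorial and compatible with logarithmic structures.

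\textbf{Construction.} Given $[f\colon C \to \mathbb P]$ in $\Mbar_{g,n}(\mathbb P,\beta)$, I would first compose with $\pi\colon \mathbb P \to Y$ to obtain $\pi \circ f \colon C \to Y$. This may fail to be stable, so I contract the components on which $\pi \circ f$ is unstable, obtaining a stabilization $\delta\colon C \to C_{\mathrm{st}}$ together with an induced (stable) map $g \colon C_{\mathrm{st}} \to Y$. Now set $L := g^\star(L_{\mathbb P})$, the pullback to $C_{\mathrm{st}}$ of the line bundle defining $\mathbb P = \mathbb P(L_{\mathbb P}\oplus \cO_Y)$. The key compatibility $\pi\circ f = g\circ\delta$ gives, by the universal property of the fiber product, a map
\[
s\colon C \longrightarrow \mathbb P \times_Y C_{\mathrm{st}} = \mathbb P(L \oplus \cO_{C_{\mathrm{st}}}).
\]
The tuple $(C_{\mathrm{st}}, L, \delta\colon C \to C_{\mathrm{st}}, s)$ is then the proposed object over the Picard stack, with the $n$ markings carried along $\delta$.

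\textbf{Verification of the moduli conditions.} I would check each item in the definition of $\Mbar_{g,n}(\mathcal P/B\mathbb G_m)$. Conditions (i) and (ii) are immediate from the stabilization construction. For (v), the morphism $\delta$ is an isomorphism off the components it contracts, so any non-contracted component maps isomorphically to its image, hence $s$ restricted to such a component maps isomorphically onto its image in $\mathbb P(L\oplus \cO_{C_{\mathrm{st}}})$. For (iv), a component of $C$ contracted by $\delta$ is, by construction, contracted by $\pi\circ f$ but not by $f$; its image in $\mathbb P$ therefore lies in a single fiber, and since fibers of $\mathbb P \to Y$ are smooth $\mathbb P^1$'s, the component maps onto a smooth fiber. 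For the stability part of (iii), any contracted subcurve of $C$ maps non-constantly to a fiber of $\mathbb P$, so is stable as an unmarked logarithmic map into that fiber, inheriting the stability of $f$ itself.

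\textbf{Functoriality, logarithmic structure, and compatibility with $\Phi$.} The stabilization $\delta$ is functorial in families of prestable curves with a semistable map, and the line bundle $L$, as a pullback along $g$, also varies in families. The map $s$ is defined by a universal property, so the construction gives a morphism of stacks $\Mbar_{g,n}(\mathbb P,\beta) \to \Mbar_{g,n}(\mathcal P/B\mathbb G_m)$. The logarithmic structures on both sides are pulled back from $\fM_{g,n}$ via the respective partial destabilizations and are matched by $\delta$ itself, so the morphism is logarithmic. Finally, unwinding the definition of $\Phi$ — which records the line bundle on $C_{\mathrm{st}}$ and the rational section of $L$ on $C$ coming from the cartesian square defining $\mathbb P$ over $Y$ — one sees that composing the constructed arrow with the forgetful map $\Mbar_{g,n}(\mathcal P/B\mathbb G_m) \to \fM_{g,n}(\mathcal P)$ reproduces exactly $\Phi$.

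\textbf{Main obstacle.} The only subtle step is the stability verification in (iii)–(iv): one must know that the unstable components that get contracted by $\delta$ are precisely the rational components whose image in $\mathbb P$ lies in a single fiber, and that the resulting map to this fiber is non-constant. This is a local statement that follows from the stability of the original $f$ combined with the fact that $\pi$ contracts a component exactly when its image is a point of $Y$; once phrased correctly, it is standard, but it is the only point where something could slip.
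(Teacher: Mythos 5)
Your construction is the same as the paper's: stabilize the composite $\pi\circ f$, pull back the line bundle, and define $s$ via the universal property of $\mathbb P\times_Y C_{\mathrm{st}} = \mathbb P(L\oplus\cO_{C_{\mathrm{st}}})$. The scheme-level verifications (iv)--(v) and the stability of contracted subcurves are fine, and indeed the paper dismisses them as following by construction. But you have misplaced the genuinely delicate point, which is exactly the one you wave through in a single sentence: condition (iii) of the moduli problem requires $s\colon C\to\mathbb P(L\oplus\cO_{C_{\mathrm{st}}})$ to be a \emph{logarithmic} morphism, and the universal property you invoke is only the scheme-theoretic one. The sentence ``the logarithmic structures on both sides are pulled back from $\fM_{g,n}$ \dots and are matched by $\delta$ itself, so the morphism is logarithmic'' conflates the log structures on the moduli stacks with the log structures on the objects, and does not establish that $s$ itself is a map of log schemes.

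To close this, you need the argument the paper actually gives: equip $C_{\mathrm{st}}$ with the log structure pushed forward along the stabilization $\delta$ (so that $\delta$ is a logarithmic morphism), note that $f\colon C\to\mathbb P$ is logarithmic when $\mathbb P$ carries the trivial log structure, and observe that the square defining $\mathbb P(L\oplus\cO_{C_{\mathrm{st}}}) = \mathbb P\times_Y C_{\mathrm{st}}$ is cartesian not only in schemes but also in (fs) logarithmic schemes --- precisely because $Y$ (and hence $\mathbb P$) carries the trivial log structure in this absolute setting. Only then does the universal property, now taken in the logarithmic category, produce $s$ as a logarithmic morphism, and hence an object of $\Mbar_{g,n}(\mathcal P/B\mathbb G_m)$. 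This is a short fix, but without it the factorization through the logarithmic moduli problem is not established; the stability check you single out as the ``main obstacle'' is, by contrast, immediate.
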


\begin{proof}
    We verify that, given a stable map $[C'\to \mathbb P]$, the data obtained satisfies the conditions defining $\Mbar_{g,n}(\mathcal P/B\mathbb G_m) \to \fM_{g,n}(\mathcal P)$ as a subfunctor of the logarithmic functor of points. The stability and the condition that components of $C'\to C$ that are contracted must map vertically follows by construction. 

    The more subtle condition, at least at first glance, is that any object in $\fM_{g,n}(\mathcal P)$ that happens to arise from a stable map to $\mathbb P$ necessarily satisfies the conditions to lie in $\Mbar_{g,n}(\mathcal P/B\mathbb G_m)$. To see this, observe that the stabilization morphism
    \[
    \delta\colon C'\to C
    \]
    is logarithmic. The arrows in the following diagram are all logarithmic:
    \[
    \begin{tikzcd}
    C'\arrow{r} \arrow[swap]{dr}{\delta} & \mathbb P(L\oplus \mathcal O)\arrow{d}\arrow{r} & \mathbb P\arrow{d}\\
     & C\arrow{r} & Y.
     \end{tikzcd}
    \]
    Here $Y$ and $\mathbb P$ are given the trivial logarithmic structure, while $C'$ is given the minimal logarithmic structure; 
    the logarithmic structure on $C$ is defined by pushforward along the stabilization, see for instance~\cite[Appendix~B]{AMW12}. The square is cartesian in the logarithmic category as well as the ordinary category; the latter is evident, and the former follows because the logarithmic structure on $Y$ is trivial. The map
    \[
    C'\to\mathbb P(L\oplus \mathcal O)
    \]
    is logarithmic by universal property. 
\end{proof}

We state our comparison results. Fix a projective bundle $\mathbb P\to Y$ as before. Let $\beta$ be a curve class on $\mathbb P$ and let $\overline \beta$ be the pushforward on $Y$. 

\begin{proposition}
    The diagram
    \[
    \begin{tikzcd}
        \Mbar_{g,n}(\PP,\beta)\arrow{d}\arrow{r} & \Mbar_{g,n}(\mathcal P/B\mathbb G_m)\arrow{d}\\
        \Mbar_{g,n}(Y,\overline\beta)\arrow{r} &\fM_{g,n}(B\mathbb G_m).
    \end{tikzcd}
    \]
    is cartesian in the category of algebraic stacks. 
\end{proposition}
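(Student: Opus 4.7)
The plan is to exhibit the square as cartesian by unpacking both sides against the tautological description $\mathbb P = Y\times_{B\mathbb G_m}[\mathbb P^1/\mathbb G_m]$ recalled just before the statement. Set $\mathsf F := \Mbar_{g,n}(Y,\overline\beta)\times_{\fM_{g,n}(B\mathbb G_m)}\Mbar_{g,n}(\mathcal P/B\mathbb G_m)$. We have a natural morphism $\Mbar_{g,n}(\mathbb P,\beta)\to\mathsf F$ induced by the commutativity observed in the preceding discussion. The goal is to produce an inverse on $S$-valued points and check it is natural.

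First I would construct the inverse. An $S$-point of $\mathsf F$ is a triple: a stable map $f:\mathcal C\to Y$ of class $\overline\beta$, a pair $(\mathcal C, L)$ determining the same object of $\fM_{g,n}(B\mathbb G_m)$ as $f$ (so $L\cong f^\star\mathcal L$ where $\mathcal L$ defines $\mathbb P\to Y$), and a datum $(\delta:\mathcal C'\to \mathcal C,\, s:\mathcal C'\to\mathbb P(L\oplus\mathcal O))$ satisfying the conditions of $\Mbar_{g,n}(\mathcal P/B\mathbb G_m)$. Using the identification $\mathbb P(L\oplus\mathcal O)\cong\mathcal C\times_Y\mathbb P$ coming from $L=f^\star\mathcal L$, composing $s$ with the projection $\mathbb P(L\oplus\mathcal O)\to\mathbb P$ yields a morphism $h:=\pi\circ s:\mathcal C'\to\mathbb P$ whose composition with $\mathbb P\to Y$ is $f\circ\delta$. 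This is the candidate stable map.

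Next I would verify that $h$ is a stable map to $\mathbb P$ of class $\beta$, and that every stable map to $\mathbb P$ arises this way. Stability: on components of $\mathcal C'$ not contracted by $\delta$, the condition in $\Mbar_{g,n}(\mathcal P/B\mathbb G_m)$ that they map isomorphically to $\mathcal C$ forces the underlying map to $Y$ to be non-constant, hence $h$ has finite automorphism group there. On components of $\mathcal C'$ contracted by $\delta$, the map to $\mathbb P$ lands in a fiber of $\mathbb P\to Y$, and the stability condition in $\Mbar_{g,n}(\mathcal P/B\mathbb G_m)$ (that contracted subcurves of $\mathcal C'$ are stable and map non-trivially into a smooth fiber of $\mathbb P(L\oplus\mathcal O)\to\mathcal C$) translates exactly to stability of $h$ on these components. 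That the class is $\beta$ follows from $\pi_\star\beta=\overline\beta$ and the fact that the tangency/degree in the fiber direction is determined by the pair $(L,s)$, which is what the universal-bundle side parameterizes. Conversely, starting from $h:\mathcal C'\to\mathbb P$ stable of class $\beta$, stabilization of $\pi\circ h$ produces $f:\mathcal C\to Y$ with a canonical destabilization $\delta:\mathcal C'\to\mathcal C$, and the cartesian square $\mathbb P=Y\times_{B\mathbb G_m}[\mathbb P^1/\mathbb G_m]$ turns $h$ into a map $\mathcal C'\to\mathbb P(f^\star\mathcal L\oplus\mathcal O)$; stability of $h$ translates to the defining conditions of $\Mbar_{g,n}(\mathcal P/B\mathbb G_m)$. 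These two constructions are visibly inverse, functorial in $S$, and compatible with isomorphisms, establishing the equivalence of $S$-groupoids.

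The main obstacle is the bookkeeping in matching stability conditions: the destabilization $\delta$ and the "vertical on contracted components" condition in the definition of $\Mbar_{g,n}(\mathcal P/B\mathbb G_m)$ were set up precisely to correspond to the phenomenon of a map to $\mathbb P$ contracting a component to a fiber while the stabilized map $\mathcal C\to Y$ loses that component, so one must check that these matchings are bijective rather than merely compatible. A minor subtlety is that the square is asserted only as cartesian in algebraic stacks, not in logarithmic stacks, so no logarithmic comparison is required here; the logarithmic refinement will be handled separately in the sequel. Once the bijection on objects and morphisms is recorded, the conclusion is immediate.
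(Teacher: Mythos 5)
Your proposal is, in substance, the paper's own argument made explicit: the paper disposes of this proposition in one line ("follows from universal mapping properties and the previous lemma"), where the previous lemma—proved via the cartesian square $\mathbb P \cong Y\times_{B\mathbb G_m}[\mathbb P^1/\mathbb G_m]$ and the description of lifts as rational sections—already contains the matching of a stable map to $\mathbb P$ with the data $(C,L,\delta,s)$ that you spell out on $S$-points. So the route is the same; you have simply written out the two mutually inverse constructions.

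Two caveats on your write-up. First, your stability check on components of $C'$ not contracted by $\delta$ is misjustified: mapping isomorphically onto the image in $C$ does \emph{not} force the induced map to $Y$ to be non-constant (the stable map $f$ may well contract that component, and the rational section may be constant in the fiber direction there, so $h$ can be constant on it). The conclusion still holds, but for a different reason: in that situation $f$ is constant on the image component, so stability of $f$ forces that component to be a stable pointed curve, and the component of $C'$ has at least as many special points, so $h$ is stable there. Second, your claim that the constructed map automatically has class exactly $\beta$ cannot be literally correct: the universal side $\Mbar_{g,n}(\mathcal P/B\mathbb G_m)$ fixes no fiber degree, so the fiber product is really a disjoint union over all classes pushing forward to $\overline\beta$, of which $\Mbar_{g,n}(\PP,\beta)$ is one open-and-closed piece. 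This is an abuse already present in the paper's statement, but your justification ("the fiber degree is determined by $(L,s)$") should be replaced by the observation that the fiber degree is locally constant and cuts out the $\beta$-component.
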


\begin{proof}
    Follows immediately from universal mapping properties and the previous lemma. 
\end{proof}

We have an analogous statement for the logarithmic theory. Fix a logarithmic structure $D$ on $[\mathbb P^1/\mathbb G_m]$ and discrete data $\Lambda$. The square
\[
\begin{tikzcd}
    {\PP}\arrow{d}\arrow{r} & {[\PP^1/\mathbb G_m]}\arrow{d}\\
    Y\arrow{r} & B\mathbb G_m
\end{tikzcd}
\]
determines a logarithmic structure on $\PP$. We slightly abuse notation and use $\Mbar_{\Lambda}(\PP|D,\beta)$ for the space of maps with this logarithmic structure.

\begin{proposition}
    The diagram
    \[
    \begin{tikzcd}
        \Mbar_{\Lambda}(\PP|D,\beta)\arrow{d}\arrow{r} & \Mbar_{\Lambda}(\mathcal P|D/B\mathbb G_m)\arrow{d}\\
        \Mbar_{g,n}(Y,\overline\beta)\arrow{r} &\fM_{g,n}(B\mathbb G_m).
    \end{tikzcd}
    \]
    is cartesian in the category of algebraic stacks. 
\end{proposition}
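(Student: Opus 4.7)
The plan is to reduce this to the previous proposition by observing that the passage from $\Mbar_{g,n}(\PP,\beta)$ to the logarithmically enhanced moduli $\Mbar_{\Lambda}(\PP|D,\beta)$ only affects the $\PP$-factor of the cartesian decomposition, since the base $Y$ contributes no log data beyond what is already recorded in $\fM_{g,n}$.

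First I would rewrite each of the two logarithmic mapping spaces appearing in the statement as an fs fiber product over the Artin fan. From the definition of $\Mbar_\Lambda(\mathcal P|D/B\mathbb G_m)$ given in Section~\ref{sec: univ-log-theory}, and the analogous description of $\Mbar_\Lambda(\PP|D,\beta)$ coming from the Abramovich--Chen--Gross--Siebert formalism recalled in Section~\ref{sec: stable-maps-to-pairs}, we have
\[
\Mbar_{\Lambda}(\PP|D,\beta) = \Mbar_{g,n}(\PP,\beta) \times^{\sf fs}_{\fM_{g,n}(\mathsf A)} \fM^{\sf log}_\Lambda(\mathsf A)
\]
and
\[
\Mbar_{\Lambda}(\mathcal P|D/B\mathbb G_m) = \Mbar_{g,n}(\mathcal P/B\mathbb G_m) \times^{\sf fs}_{\fM_{g,n}(\mathsf A)} \fM^{\sf log}_\Lambda(\mathsf A).
\]
The preceding proposition states that the first factor on the right of the former equation itself decomposes as an ordinary fiber product of algebraic stacks
\[
\Mbar_{g,n}(\PP,\beta) = \Mbar_{g,n}(Y,\overline\beta) \times_{\fM_{g,n}(B\mathbb G_m)} \Mbar_{g,n}(\mathcal P/B\mathbb G_m).
\]

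Next I would substitute this identity into the defining formula for $\Mbar_\Lambda(\PP|D,\beta)$ and reorder the resulting iterated fiber product. The critical observation is that the morphism $\Mbar_{g,n}(Y,\overline\beta) \to \fM_{g,n}(B\mathbb G_m)$ is strict, since both sides carry only the log structure pulled back from the stack of prestable curves $\fM_{g,n}$. As fs fiber products commute with fiber products taken along strict morphisms, the logarithmic enhancement to $\Lambda$-marked log maps to $\mathsf A$ passes through the $\mathbb P^1$-bundle factor alone, yielding
\[
\Mbar_\Lambda(\PP|D,\beta) = \Mbar_{g,n}(Y,\overline\beta) \times_{\fM_{g,n}(B\mathbb G_m)} \Mbar_\Lambda(\mathcal P|D/B\mathbb G_m),
\]
which is the claim.

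The one step that requires genuine verification is the interchange of fs and ordinary fiber products. It suffices to check this at the level of functors of points: the log data used in the enhancement only records the tangency along the horizontal divisor $D \subset [\mathbb P^1/\mathbb G_m]$, and this tangency is seen by the composition to $\mathsf A$ regardless of whether one factors through $\Mbar_{g,n}(\PP,\beta)$ directly or first through $\Mbar_{g,n}(\mathcal P/B\mathbb G_m)$. The strictness of $\Mbar_{g,n}(Y,\overline\beta) \to \fM_{g,n}(B\mathbb G_m)$ ensures that no additional saturation or minimality condition intervenes in the interchange, so $T$-points of one side correspond bijectively to $T$-points of the other, giving the asserted cartesian square of algebraic stacks.
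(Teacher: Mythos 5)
Your argument is correct and is essentially the paper's own proof: the paper likewise identifies $\Mbar_\Lambda(\PP|D,\beta)$ as the pullback of $\fM^{\sf log}_\Lambda(\mathsf A)\to\fM_{g,n}(\mathsf A)$ along $\Mbar_{g,n}(\PP,\beta)\to\fM_{g,n}(\mathsf A)$ and then combines this with the previous proposition by rearranging the iterated fiber product. The strictness of $\Mbar_{g,n}(Y,\overline\beta)\to\fM_{g,n}(B\mathbb G_m)$, which you invoke to justify the interchange of the fs and ordinary fiber products, is exactly the point the paper records in the remark immediately following the two propositions.
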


\begin{proof}
    The stack of logarithmic maps $\Mbar_{\Lambda}(\PP|D,\beta)$ is the pullback of
    \[
    \mathfrak M^{\sf log}_\Lambda(\mathsf A)\to \mathfrak M_{g,n}(\mathsf A)
    \]
    under the map from $\Mbar_{g,n}(\PP,\beta)$ to $\mathfrak M_{g,n}(\mathsf A)$ induced by composition with the map $\PP\to\mathsf A$ to the Artin fan. 
\end{proof}

\begin{remark}
    The fact that the diagrams above are cartesian in algebraic stacks, rather than only in logarithmic algebraic stacks, is a consequence of the fact that $Y$ has trivial logarithmic structure. This makes
    \[
    \Mbar_{g,n}(Y,\beta)\to \fM_{g,n}(B\mathbb G_m)
    \]
    strict. In the next step, we will add a logarithmic structure to $Y$. The same argument above will then guarantee that the analogous diagram is cartesian, but only in the logarithmic category. This complexity will be addressed in the next section. 
\end{remark}

\subsection{GW and tautological classes on the universal bundle} We show that the GW classes of the universal $\mathbb P^1$-bundle, with or without logarithmic structure, are ``tautological'' in the cohomology or Chow ring of $\mathfrak M_{g,n}(B\mathbb G_m)$. It is  useful to have a definition for the latter notion. For foundations of operational Chow cohomology in this setting of locally finite type Artin stacks, we refer to~\cite[Section~2]{BHPSS}.

Tautological classes on the Picard stack can be approached via a notion due to Bae~\cite{Bae19}. 

For the stack of stable maps $\Mbar_{g,n}(X,\beta)$, Bae defines a subring of its total Chow {\it homology} called the {\it tautological subring}. This is achieved, as in the stable curves case, by defining certain {\it decorated strata classes}. It carries a ring structure by its realization it as a subquotient of the operational Chow ring by an ideal. 

An {\it $X$-valued stable graph} is a dual graph with $H_2(X;\ZZ)^+$-decorations at vertices satisfying the standard Kontsevich stability condition. We decorate it in the following ways: (i) its legs can be given $\psi$-classes and evaluation classes, (ii) its edges with evaluation classes, (iii) its half edges with $\psi$-classes from nodes, and (iv) its vertices with {\it twisted} $\kappa$-classes obtained by  pushing forward powers of $\psi$ and evaluation classes from the universal curve over the corresponding component. These decorated strata classes, viewed as formal symbols, carry a natural multiplication induced by edge contractions, and there is a map from this algebra to Chow cohomology of the stratum of $\Mbar_{g,n}(X,\beta)$ attached to this graph. Capping this with the virtual class of the stratum and pushing forward leads to a natural collection of homology classes. 

By definition, classes of the form above span the {\it tautological subgroup}, and Bae shows there is a natural ring structure~\cite[Section~2]{Bae19}.

\subsubsection{Tautological classes on the Picard stack} We steal Bae's setup for use in the setting of the Picard stack. We let $\mathfrak M_{g,n,d}(B\mathbb G_m)$ denote mapping stack from prestable curves to $B\mathbb G_m$, where the universal line bundle has total degree $d$ on the domain curve. The morphism
\[
\mathfrak M_{g,n,d}(B\mathbb G_m)\to \fM_{g,n}
\]
is locally of finite type, of Artin type, and smooth of relative dimension $g-1$. Given a family of maps from a nodal curve
\[
\begin{tikzcd}
\mathcal C\arrow{d}\arrow{r}{f} & B\mathbb G_m\\
\fM_{g,n,d}(B\mathbb G_m)\arrow[bend left]{u}{s_i}&	
\end{tikzcd}
\]
where $s_i$ for $1\leq i\leq n$ are sections into the smooth locus, we can produce classes on the base as follows. Denote $f \circ s_i$ by $\mathsf{ev}_i$. First, we can consider {\it evaluation classes} given by $\mathsf{ev}_i^\star(t)$ where $t$ is the first Chern class of the universal bundle on $B\mathbb G_m$. Next, we have {\it twisted $\kappa$-classes}. Let $\mathfrak M_{g,n+m,d}'(B\mathbb G_m)$ be the moduli space of $m$ points on the universal curve $\mathcal C$, compactified by bubbling when two points collide and when a point falls into the node. The twisted $\kappa$-classes are defined by taking monomials in $\psi_{n+i}$ and $\mathsf{ev}_{n+i}^\star(t)$, over all $i$, and pushing forward to $\mathfrak M_{g,n,d}(B\mathbb G_m)$. 


Similarly, suppose $\mathfrak S \hookrightarrow \mathfrak M_{g,n,d}(B\mathbb G_m)$ is a closed stratum. After passing to a finite cover, we can assume that the dual graph of the generic fiber of the universal curve is identified with a fixed graph $\Gamma$. For each edge $E$ of $\Gamma$, there is an asscoiated nodal section
\[
s_E\colon \mathfrak S\to \mathcal C.
\]
We produce classes on $\mathfrak S$ by pulling back classes along $f\circ s_E$. We can also, for each half edge $H$ of an edge $E$ of $\Gamma$, associate the $\psi$-class of the normalization of $\mathcal C$ at the marked point corresponding to $H$. The earlier constructions of twisted $\kappa$-classes can also be performed for each vertex of $\Gamma$, i.e. each irreducible component of the generic fiber of $\mathcal C$, and similarly for evaluation classes. 

A {\it decorated stratum} is a closed stratum $\mathfrak S\hookrightarrow \mathfrak M_{g,n,d}(B\mathbb G_m)$ together with {\it vertex, edge, and leg} cohomology decorations, as described above. The pushforward of the associated class to $\fM_{g,n,d}(B\mathbb G_m)$ is a {\it decorated strata class}. 

\begin{definition}
A {\it tautological Chow cohomology class} on $\fM_{g,n,d}(B\mathbb G_m)$ is a linear combination of decorated strata classes. 	
\end{definition}

\subsubsection{GW classes from the universal bundle} The Gromov--Witten-type moduli spaces discussed earlier have proper morphisms
\[
\Mbar_{g,n,d}(\mathcal P/B\mathbb G_m)\to \fM_{g,n,d}(B\mathbb G_m)
\]
that give rise to natural classes on $\fM_{g,n,d}(B\mathbb G_m)$ by the usual pull/push procedure. Since $[\mathbb P^1/\mathbb G_m]$ is the universal $\mathbb P^1$-bundle, there are evaluations from the stack of maps:
\[
\mathsf{ev}_i\colon \Mbar_{g,n,d}(\mathcal P/B\mathbb G_m)\to [\PP^1/\mathbb G_m]
\]
for each marked point. We can pull back cohomology classes from $[\mathbb P^1/\mathbb G_m]$ to obtain evaluation classes. Taking a product of these, capping with $[\Mbar_{g,n,d}(\mathcal P/B\mathbb G_m)]^{\sf vir}$, and pushing forward to $\fM_{g,n,d}(B\mathbb G_m)$ produces {\it GW classes associated to the universal $\mathbb P^1$-bundle}. 

Using the logarithmic variants, $\Mbar_{\Lambda}(\mathcal P|D/B\mathbb G_m)$ of maps to the universal $\mathbb P^1$-bundle with logarithmic structure coming from the sections, we obtain {\it logarithmic} GW classes.

\begin{remark}[Abel--Jacobi theory on the Picard stack]
If we take the universal $\mathbb P^1$-bundle with logarithmic structure along both $0$ and $\infty$, and use the rubber theory outlined in Remark~\ref{rem: uni-rubber}, one obtains classes on the universal Picard stack by pushing forward the virtual fundamental class. One can check in this case that, in fact, the morphism
\[
\Mbar_{\Lambda}(\mathcal P|D/B\mathbb G_m)\to \fM_{g,n,d}(B\mathbb G_m)
\]
is a closed immersion of an irreducible stack -- the stack is actually just the stack of logarithmic maps to $[\mathbb P^1/\mathbb G_m]$ with its toric logarithmic structure, and this follows from~\cite[Proposition~1.6.1]{AW}. The class of the pushforward of the fundamental class of this immersion is essentially the calculation performed in~\cite{BHPSS} from the perspective of Abel--Jacobi theory. The Gromov--Witten perspective is more useful to us here.
\end{remark}

The key theorem about the construction is the following. 
\begin{theorem}\label{thm: universal-bundle}
The logarithmic GW classes on $\fM_{g,n,d}(B\mathbb G_m)$ associated to the universal $\mathbb P^1$-bundle are tautological. 
\end{theorem}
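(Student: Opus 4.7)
The plan is to run relative virtual localization with respect to a fiberwise scaling $\mathbb G_m$-action on the universal bundle. The crucial point of working over the Picard stack $\fM_{g,n,d}(B\mathbb G_m)$ rather than a fixed logarithmic target is that the scaling action on $[\mathbb P^1/\mathbb G_m]$ is \emph{vertical}: it preserves the $0$- and $\infty$-sections (hence the logarithmic divisor $D$) and acts trivially on the base $B\mathbb G_m$. Consequently, the ordinary Graber--Pandharipande formula applies to the proper, relatively Deligne--Mumford morphism $\Mbar_{\Lambda}(\mathcal P|D/B\mathbb G_m)\to \fM_{g,n,d}(B\mathbb G_m)$ equipped with its relative perfect obstruction theory, and no logarithmic localization theorem is invoked. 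The equivariant evaluation classes from $[\mathbb P^1/\mathbb G_m]$ lift to equivariant Gromov--Witten classes whose non-equivariant limits recover the classes in the statement.

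\textbf{Structure of fixed loci.} After localization, contributions are indexed by ``localization graphs'' $\Upsilon$: connected, possibly decorated bipartite graphs whose vertices are labelled $0$, $\infty$, or ``rubber'', where vertices of type $0$ or $\infty$ correspond to components of $C'$ contracted into the respective section, and rubber vertices correspond to fully ramified covers $\mathbb P^1\to\mathbb P^1$ of fibers of $\mathcal P$. Each fixed locus $F_\Upsilon$ is a fiber product over the Picard stack of (i) stacks of the form $\fM_{g_v,n_v,d_v}(B\mathbb G_m)$ attached to $0$- and $\infty$-vertices, with extra markings at attachment nodes, and (ii) rubber moduli of relative maps to $([\mathbb P^1/\mathbb G_m],\{0,\infty\})$ fibered over $B\mathbb G_m$. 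The equivariant virtual normal bundle is a combination of $\psi$-classes at attachment nodes, restrictions of the universal line bundle $\mathcal L$, and equivariant weights at $0$ and $\infty$.

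\textbf{Reducing the rubber factors.} The rubber factors in (ii) are exactly the universal-Picard-stack versions of rubber double ramification cycles for $(\mathbb P^1,\{0,\infty\})$. These are tautological on the Picard stack by the theory of twisted/refined DR cycles over the universal Jacobian \cite{BHPSS,HMPPS,RUK22}. Alternatively, we may run a secondary induction on the rubber degree using the rubber calculus of Section~\ref{sec: rubber-calculus} to express them as decorated strata classes on $\fM_{g,n,d}(B\mathbb G_m)$. Substituting into the localization formula, the total GW class becomes a sum over $\Upsilon$ of pushforwards from products of stacks $\fM_{g_v,n_v,d_v}(B\mathbb G_m)$ to $\fM_{g,n,d}(B\mathbb G_m)$, decorated by $\psi$-classes at gluing nodes, evaluation pullbacks of $c_1(\mathcal L)$ at nodes and markings, and twisted $\kappa$-classes obtained from integrating out unmarked rubber components. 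These are precisely Bae's generators for the tautological subring, yielding the theorem.

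\textbf{Main obstacle.} The principal technical point is justifying relative virtual localization in this logarithmic--over--Artin setting and verifying that the equivariant splitting of the relative log obstruction complex along the fixed loci cleanly separates into a ``fixed'' part (tangent to $0$ and $\infty$, contributing to the virtual class of $F_\Upsilon$) and a ``moving'' part (the fiberwise directions, contributing to the virtual normal bundle) with the expected rubber factors. The verticality of the action makes this plausible because the logarithmic structure along $D$ is weight-zero, but care is required at the attachment nodes where smoothing directions interact with the logarithmic Artin fan of $[\mathbb P^1/\mathbb G_m]$. This is the step where the universal-bundle setup genuinely pays off, since the analogous step in a naive approach on $\Mbar_\Lambda(Y|\partial Y)$ would require a logarithmic localization formula that is currently unavailable.
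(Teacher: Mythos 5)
Your route is genuinely different from the paper's, and it hinges on a step that is not available: a relative virtual localization formula for $\Mbar_{\Lambda}(\mathcal P|D/B\mathbb G_m)\to \fM_{g,n,d}(B\mathbb G_m)$. The verticality of the fiberwise scaling action does dispose of the worry about the divisor $D$, but it does not put you in the Graber--Pandharipande (or Graber--Vakil relative) setting: the source is an Artin stack, only \emph{relatively} Deligne--Mumford over the Picard stack, the base is a smooth but non-finite-type Artin stack, and the obstruction theory is perfect only relative to the stack of logarithmic structures over $\fM_{g,n,d}(B\mathbb G_m)$. Standard virtual localization requires (at minimum) a DM stack with an equivariant embedding into a smooth DM stack, properness and finite-type hypotheses to invert the equivariant parameter, and an identification of the fixed-locus obstruction theories with those of rubber moduli --- here rubber moduli \emph{over the Picard stack}, whose virtual theory and "rubber calculus" (target $\psi$-classes, dilaton/divisor-type relations) you would also have to construct; the rubber discussion in Section~\ref{sec: rubber-calculus} of the paper concerns strata rigidification and $T$-classes, not localization rubber integrals, so it cannot be cited for this. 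You flag this as the "main obstacle" and call the splitting "plausible," but that is exactly the content that would need proof, and it is precisely the machinery the paper is engineered to avoid (the authors state that no new localization formula in the logarithmic/universal setting is established or used).

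The paper's proof goes the other way around: it never localizes over the Picard stack. For an irreducible base $B$ with a family $(\mathcal C,\mathcal L)$ it passes to an alteration $B'\to B$ with many sections (the trick of~\cite{BHPSS}), twists $\mathcal L$ by a large multiple of the section divisor so that both $\mathcal L(nD)$ and $\mathcal O(nD)$ have vanishing higher cohomology fiberwise, and thereby realizes the family as pulled back from the unobstructed open locus of $\Mbar_{g,n}(\mathbb P^\ell\times\mathbb P^m,\beta)$ with the bundle $\mathbb P(\mathcal O(1,-1)\oplus\mathcal O)$; injectivity of Chow pullback (high codimension of the complement) then reduces the tautological statement to the already-known, localization-based result of~\cite{MP06} for honest projective bases, where all the issues above are classical. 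If you want to salvage your approach, the missing input is a virtual localization theorem for $\mathbb G_m$-actions on stacks that are proper and relatively DM over a (non-finite-type) Artin base with obstruction theory relative to $\mathsf{Log}$, together with a rubber theory over the Picard stack whose classes are shown tautological in Bae's sense with $\psi$-decorations --- neither of which follows from the references you cite as stated.
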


For a $\mathbb P^1$-bundle over a projective manifold $\mathbb P/X$, this means the GW classes of $\PP$, pushed down to $X$, lie in Bae's tautological ring~\cite{Bae19}. This is known by~\cite{MP06}. We will not give a new proof of this result -- we use it to deduce the universal statement. 

We begin with a precise statement of this older result. 

\begin{theorem}\label{thm: tvgw-projective-bundle}
Let $Y$ be a projective manifold and $L$ a line bundle on $Y$. Let $\mathbb P$ be the bundle $\mathbb P(\mathcal O_Y+L)$. The logarithmic GW classes of $\mathbb P$, possibly with logarithmic structure along the $0$ and/or $\infty$ sections, are tautological. 	
\end{theorem}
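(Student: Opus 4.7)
The strategy is to apply $\mathbb G_m$-equivariant virtual localization with respect to the fiberwise scaling action on $\mathbb P = \mathbb P(\mathcal O_Y \oplus L)$. This is the Maulik--Pandharipande approach from \cite{MP06} in the smooth-pair case, and we would adapt it to the logarithmic setting where the divisor is along one or both of the $\mathbb G_m$-fixed sections $Y_0$ and $Y_\infty$. Since $Y$ itself carries the trivial logarithmic structure, the obstruction discussed elsewhere in the paper -- the lack of a logarithmic localization formula when the base carries its own logarithmic structure -- does not arise here. The $\mathbb G_m$-action on $\mathbb P$ preserves the log structure along $Y_0 \cup Y_\infty$, lifts canonically to the moduli space $\Mbar_\Lambda(\mathbb P|D,\beta)$, and the perfect obstruction theory is equivariant, so the Graber--Vakil relative virtual localization formula applies.

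Next, I would analyze the fixed loci. A fixed stable map decomposes the domain curve into contracted ``vertex subcurves'' mapping into $Y_0$ or $Y_\infty$ and chains of multi-covers of $\mathbb P^1$-fibers joining them, with incidence conditions at the nodes determined by the ramification profiles. In the logarithmic/relative setting, the components meeting $Y_0$ or $Y_\infty$ are captured by \emph{rubber} moduli of maps to $\mathbb P^1$ modulo the $\mathbb G_m$-dilation, and the whole fixed locus is indexed by a decorated bipartite graph: vertices carry absolute GW data for $Y$, edges carry tangency and multiplicity data, and the gluing datum uses the Abel--Jacobi/double ramification geometry on the marked points.

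From here, each fixed locus contribution factors. The vertex pieces produce absolute Gromov--Witten cycles of $Y$. The edge pieces produce rubber integrals over $\mathbb P^1$, whose pushforward to $\Mbar_{g,n}$ is built from double ramification cycles decorated with $\psi$-classes and $\lambda$-classes, together with Hodge integrals appearing in the inverse Euler class of the virtual normal bundle. Each of these building blocks is known to be tautological: Hodge integrals via \cite{FP}, and DR cycles via \cite{JPPZ2,BHPSS,HMPPS}. Once each fixed locus contribution is expressed as (tautological class on $\Mbar_{g,n}$) acting on (GW cycle of $Y$ pushed forward from the vertex moduli), tautologicality of the GW cycles of $\mathbb P$ follows from the assumed/inductive tautologicality for $Y$ combined with closure of the tautological ring under the tautological operations.

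The main obstacle is the bookkeeping of the equivariant inverse Euler class of the virtual normal bundle when the target carries the logarithmic structure along $Y_0 \cup Y_\infty$. One must be careful that the Graber--Vakil formula, originally written for relative maps in the sense of Jun Li, produces the correct contributions in the fine and saturated logarithmic category; this uses the equivalence between Li-style expansions and logarithmic maps for a smooth pair (which is well established) and the fact that the logarithmic structure along $Y_0 \cup Y_\infty$ is itself $\mathbb G_m$-equivariant, so no rigidification subtleties appear. Once this equivariant analysis is in place, the tautologicality of each ingredient reduces to classical inputs, and the theorem follows by assembling contributions and pushing forward to $\Mbar_{g,n}$.
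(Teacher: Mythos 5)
Your proposal is essentially the paper's own proof: the paper simply cites \cite[Theorem~1]{MP06} and observes that the relative virtual localization argument there — with respect to the fiberwise $\mathbb G_m$-action, fixed loci given by stable maps to $Y$ (sitting as the $0$ or $\infty$ section) with Hodge insertions together with rubber maps to $\mathbb P^1$-bundles, the latter handled by the inductive method of \cite{MP06} or directly via \cite{JPPZ2} — goes through verbatim at the level of cycles rather than numerical invariants, exactly as you describe. The one adjustment is the final assembly: ``tautological'' here means the pushed-forward classes lie in Bae's tautological ring of the moduli space of stable maps to $Y$ (tautological operations applied to virtual classes of strata), so no assumed or inductive tautologicality of the GW theory of $Y$ itself is required.
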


\begin{proof}
The statement follows from~\cite[Theorem~1]{MP06}. The result there is stated for invariants, but the proof can be followed step-by-step for GW classes as well. To summarize, apply relative virtual localization with respect to the fiber torus of the bundle. The fixed locus is described in terms of stable maps to $X$, viewed as the $0$ of $\infty$ section, with Hodge insertions, or rubber stable maps to a $\mathbb P^1$-bundle, which can either be handled by the inductive method of~\cite{MP06} or using~\cite{JPPZ2} directly. The GW classes are seen to be tautological. 
\end{proof}

\subsubsection{Proof of Theorem~\ref{thm: universal-bundle}}

We fix a particular Gromov--Witten class on the universal bundle, obtained by capping with evaluation classes from $[\mathbb P^1/\mathbb G_m]$. The proof strategy to control this GW class is to reduce the theorem to the statement of Theorem~\ref{thm: tvgw-projective-bundle}. 

The question is insensitive to whether we have logarithmic structure along $\mathcal P$, so we treat the case where $\mathcal P$ has trivial logarithmic structure and leave the cosmetic changes to the reader. The key point is that whether there is logarithmic structure or not, the reduction to the known case stated in Theorem~\ref{thm: tvgw-projective-bundle} is formally identical, at which point we just plug in the appropriate version of the result. 

\noindent
{\sc Step I. Families over an irreducible base.} We rephrase the problem in terms of families of curves. For any family
\[
\begin{tikzcd}
(\mathcal C,\mathcal L)\arrow{d}{\varpi}\\
B	
\end{tikzcd}
\]
of curves and line bundles, we can consider the associated $\mathbb P^1$-bundle, and the mapping space $\Mbar(\varpi)$ of sections of the $\mathbb P^1$-bundle, compactified as previously, by allowing destabilizations of the curve. We have the basic fiber square
\[
\begin{tikzcd}
\Mbar(\varpi)\arrow{d}\arrow{r}& \Mbar(\mathcal P/B\mathbb G_m)\arrow{d}\\
B\arrow{r} & \mathfrak M(B\mathbb G_m). 	
\end{tikzcd}
\]
If we denote these evaluation classes by 
\[
\eta\in \mathsf{CH}^\star(\Mbar(\varpi);\QQ),
\]
we obtain classes on $B$ by pushing forward $\eta\cap [\Mbar(\varpi)]^{\sf vir}$. 

It suffices to show, for every irreducible $B$, that this push forward is given by the pullback of a tautological Chow cohomology class on $\fM(B\mathbb G_m)$ applied to the fundamental class of $B$. 

\noindent
{\sc Step II. Producing a family with lots of sections.} To carry this out, we steal a trick from~\cite{BHPSS}. First, we can find a proper, surjective, generically finite morphism
\[
a\colon B'\to B
\]
and a destabilization $\mathcal C'$ of the pullback family of curves
\[
\mathcal C'\to a^{-1}\mathcal C
\]
together with sections $s_1,\ldots,s_m$ of $\mathcal C'$ satisfying the following properties:
\begin{enumerate}[(i)]
	\item The images of the $s_i$ are disjoint and lie in the smooth locus, and makes the family $\mathcal C'$ into a family of $m$-pointed stable curves. 
	\item The union of the components which do not carry markings form a disjoint union of trees of rational curves, contracted by the destabilization $\mathcal C'\to a^{-1}\mathcal C$.
\end{enumerate}

The existence of such a modified family is guaranteed by~\cite[Lemma~43]{BHPSS}. It is proved by starting with multi-sections over the generic point of $B$ satisfying analogous properties, base changing to make them section, resolving the moduli map from the resulting family to $\Mbar_{g,m}$, and analyzing the resulting destabilization. 

For us, something a little less fine will be easier to work with: we ask for the $s_i$ to lie in the smooth locus, but do not insist on them being disjoint. This can be obtained, for example, by using Hassett spaces, assigning weight $\epsilon$ to the $m$ points and pulling back the universal family along Hassett's weight reduction morphism $\Mbar_{g,m}\to\Mbar_{g,m\cdot\varepsilon}$, see~\cite{Has03}. Fiberwise, the map $\mathcal C'\to a^{-1}\mathcal C$ contracts semistable chains of rational curves, necessarily carrying marked points. 

Since $B'\to B$ is an alteration, it follows from formal properties of the mapping spaces and their virtual classes that in order to prove that the pushforwards of $\eta\cap [\Mbar(\varpi)]^{\sf vir}$ are tautological, it suffices to show the analogous statement for the family
\[
a^{-1}\mathcal C\to B',
\]
equipped with the pullback line bundle. 

\noindent
{\sc Step III. Virtual compatibility.} We further claim that it suffices to prove that the classes obtained from the family $\mathcal C'\to B'$, equipped with the pullback line bundle $\mathcal L'$, lie in the tautological ring of $B'$.  

We have families
\[
\begin{tikzcd}
\mathcal C'\arrow{rr} \arrow[swap]{dr}{q}& & a^{-1}\mathcal C\arrow{dl}{p}\\
&B'.&	
\end{tikzcd}
\]
By the conditions on the contraction, in every geometric fiber over $B'$, the pullback line bundle is trivial along all components of curves in $\mathcal C'$ that are contracted by the destabilization. In other words, the $\mathbb P^1$-bundle is pulled back. 

The mapping spaces associated to $\mathcal C'$ is obtained by taking a (further) destabilization of a fiber of $\mathcal C'\to B'$ and mapping to the projective completion of $\mathcal L'$ over this fiber, such that the newly destabilized components map onto fibers of the $\mathbb P^1$-bundle. 

There are moduli of maps associated to the family of curves $q$ and the family of curves $p$, denoted $\Mbar(q)$ and $\Mbar(p)$ respectively. These both come equipped with virtual classes, and we have a morphism
\[
h\colon \Mbar(q)\to\Mbar(p). 
\]
We claim that pushforward along $h$ identifies virtual fundamental classes. 

This follows immediately from virtual birational invariance in logarithmic GW theory~\cite{AW}. Indeed, the morphism $\mathcal C'\to a^{-1}\mathcal C$ is, by construction, a logarithmic modification. The family of $\mathbb P^1$-bundles are therefore also related by logarithmic modification and so the map
\[
h\colon \Mbar(q)\to\Mbar(p)
\]
is a logarithmic modification compatible with virtual classes. 

We conclude that it suffices to prove that the invariants constructed from maps out of the family $q$ are tautological.

\noindent
{\sc Step IV. Moving to a mapping space.}  By the previous step, it suffices to prove that the classes obtained from the family $\mathcal C'\to B'$ equipped with the line bundle $\mathcal L'$, are tautological. There exist sections passing through every irreducible component of every geometric fiber. If we let $D$ be the sum of all of these sections, we know that on each geometric fiber $C'_b$, for large $n$, the line bundle $\mathcal L'(nD)$ restricts to one with vanishing higher cohomology groups. Indeed, this follows from Riemann--Roch for nodal curves. Of course, similarly $\mathcal O_{C'_b}(nD)$ itself also has vanishing  higher cohomology for $n$ large. 

Let us now explain the use of these sections. On each geometric fiber of the family $\mathcal C'\to B'$, there is a map to $\mathbb P^\ell$, such that the line bundle $\mathcal L(nD)$ is the pullback of $\mathcal O(1)$. Similarly, a map to $\mathbb P^m$ produces $\mathcal O_{C'_b}(nD)$. Putting these together, we can realize the bundle $\mathcal L$ itself by pulling back $\mathcal O(1,-1)$ along a map to $\mathbb P^\ell\times\mathbb P^m$. Furthermore, since the higher cohomology groups vanish, it follows immediately from the Euler sequence that these maps are have unobstructed deformations. The idea is to now deduce the statement about GW classes being tautological on $B'$ from analogous statements for the $\mathbb P^1$-bundle over $\mathbb P^\ell\times\mathbb P^m$ associated to $\mathcal O(1,-1)$. 

We now provide the technical details. Recalling that $q\colon \mathcal C'\to B'$ denotes the family, we can construct the vector bundle
\[
Rq_\star \mathcal L(nD)^{\oplus \ell+1}\oplus Rq_\star \mathcal O_{\mathcal C'}(nD)^{\oplus m+1}
\]
over $B'$. This is a vector bundle over $B'$. We can pass to the locus of sections that give a morphism to $\mathbb P^\ell\times\mathbb P^m$. Note that this is an open subset defined by the condition that the sections of each of these two line bundles, independently, are basepoint free. The basepoint free condition for $\ell+1$ sections is codimension $\ell$, so by forcing {\it both} $\ell$ and $m$ to be large, we can assume that
\[
U_{B'}(\mathbb P^\ell\times\mathbb P^m)\subset Rq_\star \mathcal L(nD)^{\oplus \ell+1}\oplus Rq_\star \mathcal O_{\mathcal C'}(nD)^{\oplus m+1}
\]
is an open complement of a high codimension subset. In particular, by forcing the codimension to be larger than the dimension of $B$, we can ensure the morphism
\[
U_{B'}(\mathbb P^\ell\times\mathbb P^m)\to B'
\]
induces an {\it injective} pullback on Chow homology groups. 

\noindent
{\sc Step V. Conclusion via mapping spaces.} Since the Chow pullback map induced by the projection $U_{B'}(\mathbb P^\ell\times\mathbb P^m)\to B'$ is injective, it suffices to show the tautological property of classes after replacing (i) the family $B'$ by $U_{B'}(\mathbb P^\ell\times\mathbb P^m)$, (ii) replacing the curve family $\mathcal C'$ with its pullback under $U_{B'}(\mathbb P^\ell\times\mathbb P^m)\to B'$, and (iii) replacing $\mathcal L'$ with the pullback of $\mathcal O (1,-1)$ under the universal map to $\mathbb P^\ell\times\mathbb P^m$. 

We can now appeal to the older results of~\cite{MP06}, as stated in Theorem~\ref{thm: tvgw-projective-bundle} to conclude. Precisely, we have a moduli map
\[
U_{B'}(\mathbb P^\ell\times\mathbb P^m)\to \Mbar_{g,n}(\mathbb P^\ell\times\mathbb P^m,\beta).
\]
Since the degrees of the pullbacks of the two tautological bundles have, by construction, vanishing higher cohomology, this map factors through the locus in $\Mbar_{g,n}(\mathbb P^\ell\times\mathbb P^m,\beta)$ parameterizing unobstructed maps, i.e. the open locus on which the obstruction sheaf is trivial and the fundamental class agrees with the virtual class. In particular, by compatibility of the virtual class with open immersions, this means that in order to show that the GW class associated to the family $\mathcal C'\to B'$ and the line bundle $\mathcal L'$ are tautological, it suffices to show that the corresponding GW cycle in $\mathsf{CH}_\star(\Mbar_{g,n}(\mathbb P^\ell\times\mathbb P^m,\beta);\QQ)$ associated to the $\mathbb P^1$-bundle
\[
\mathbb P(\mathcal O(1,-1)\oplus\mathcal O)\to \mathbb P^\ell\times\mathbb P^m.
\]
Indeed, restricting this GW cycle to the open locus of unobstructed maps, and applying Poincar\'e duality, pulling back to the irreducible space $U_{B'}(\mathbb P^\ell\times\mathbb P^m)$ and capping with the fundamental class, we recover the original GW cycle on $U_{B'}(\mathbb P^\ell\times\mathbb P^m)$ that we are trying to control. 

The result now follows from the fact that GW classes associated to $\mathbb P^1$-bundles are tautological, as stated in Theorem~\ref{thm: tvgw-projective-bundle}.

\qed

\section{Logarithmic geometry and the universal projective line}\label{sec: uni-log-bundle} 

\subsection{Goal of the section} Let $(Y|\partial Y)$ be an snc pair and let $(\mathbb P|\partial \PP)$ be a $\mathbb P$-bundle over $Y$ with logarithmic structure given by pulling back $\partial Y$ and adding one or both of the $0/\infty$-sections. The goal of this section is to show that the (logarithmic) pushforwards of GW classes under
\[
\Mbar_\Lambda(\mathbb P|\partial \PP)\to \Mbar_\Lambda(Y|\partial Y)
\]
can be expressed in terms of tautological operators, acting on the virtual class of $\Mbar_\Lambda(Y|\partial Y)$. This will prove Theorem~\ref{thm: tvgw-projective-bundle}. 

The cohomology of $\mathbb P$ satisfies a Leray--Hirsch formula, both in Chow and in cohomology. This means that any insertion in a GW pushforward can be written in terms of a pullback class on $Y$, times a power of the Chern class of the relative hyperplane bundle on $\mathbb P\to Y$. Since we are pushing forward to $\Mbar_\Lambda(Y|\partial Y)$, we can actually assume all the evaluation classes are Chern classes of the relative hyperplane bundle. As noted earlier, we will study more general insertions from broken $\mathbb P^1$-bundles in Section~\ref{sec: compressed-evaluations}.

Both $\mathbb P\to Y$ and the insertions, can therefore be assumed to be pulled back from the universal situation 
\[
[\mathbb P^1/\mathbb G_m]\to B\mathbb G_m.
\]
In particular, given any GW pushforward class with this class of insertions, there is a ``precursor'' problem on the space $\Mbar_\Lambda(\mathcal P|D/B\mathbb G_m)$, and we have control over this class by Theorem~\ref{thm: universal-bundle}.

The goal of the section is to compare the class on $\Mbar_\Lambda(Y|\partial Y)$ with this universal class. This relation is less straightforward than when $\partial Y$ is empty, because the natural diagram of logarithmic mapping spaces
\[
\begin{tikzcd}
    \Mbar_\Lambda(\mathbb P|\partial \PP)\arrow{r}\arrow{d} & \Mbar_\Lambda(\mathcal P|D/B\mathbb G_m)\arrow{d} \\
    \Mbar_\Lambda(Y|\partial Y)\arrow{r} & \fM_{g,n}(B\mathbb G_m). 
\end{tikzcd}
\]
is not a cartesian diagram of algebraic stacks, but only in the logarithmic category. Therefore one cannot just pull back the relevant universal classes from $\fM_{g,n}(B\mathbb G_m)$ to  $\Mbar_\Lambda(Y|\partial Y)$. 

We will show the analogous statement {\it does} hold in the logarithmic Chow ring, or equivalently, after a sufficient blowup. Logarithmic enhancements of such classes have been previously studied in nearby contexts~\cite{HMPPS,HS21,MR21}. The general framework outlined in~\cite[Section~3]{MR21} is particularly relevant for us.

\subsection{Logarithmic Chow lifts} We construct a lift of the universal bundle GW classes to logarithmic Chow and then prove they are tautological. 

\subsubsection{Setting up the problem} We return to the space $\Mbar_\Lambda(\mathcal P|D/B\mathbb G_m)$ of maps to the universal $\mathbb P^1$-bundle over $\fM_{g,n,d}(B\mathbb G_m)$. The logarithmic structure $D$ will either or both of the $0$ or $\infty$ sections\footnote{When $\mathcal P$ is given the trivial logarithmic structure, the logarithmic Chow upgrade is the trivial one.}.

We have an evaluation map
\[
\mathsf{ev}\colon \Mbar_\Lambda(\mathcal P|D/B\mathbb G_m)\to \mathcal P^k\times B\mathbb G_m^{n-k}
\]
where $k$ among the $n$ markings have tangency order $0$ and the remaining have positive tangency with one of the divisors. For the whole discussion, we fix a cohomology class 
$$
\eta\in \mathsf{CH}^\star(\mathcal P^k\times B\mathbb G_m^{n-k})
$$
and refer to it as the {\it evaluation class}. We will define a logarithmic enhancement of the Gromov--Witten class
\[
\mathsf{ev}^\star(\eta)\cap [\Mbar_\Lambda(\mathcal P|D/B\mathbb G_m)]^{\sf vir}
\]
and its pushforward to the Picard stack $\fM_{g,n,d}(B\mathbb G_m)$. 

\subsubsection{Defining the logarithmic enhancement} In order to deal with the lack of finiteness of the moduli space, we slightly modify the Picard stack and the mapping space as follows. 

\noindent
{\bf Curve class semigroup.} Let $\mathbb H$ be a finitely generated, torsion free commutative semigroup with the following two properties: (i) the element $0$ cannot be written non-trivially as a sum of two elements, and (ii) any element in $\mathbb H$ can only be written as a sum of two elements in $\mathbb H$ in finitely many ways. We refer to this as a {\it curve class semigroup}. The relevant example is the semigroup of {\it effective} curve classes on a projective variety $X$, where these properties are easily verified.

\noindent
{\bf Chern class functional.} Given $\mathbb H$, we fix a $\mathbb Z$-valued homomorphism
\[
c \colon \mathbb H\to \mathbb Z
\]
We refer to it as the {\it Chern class functional}. A typical example is given by taking $\mathbb H$ to be the semigroup of effective curve classes on a projective variety $Y$ and $c$ is defined by taking intersection product with the first Chern class of some fixed line bundle on $Y$. 

Given a map from a nodal curve
\[
C\to Y
\]
to $Y$ and a line bundle $L$ on $Y$, we obtain curve class decorations on the components by looking at the curve class, e.g. in $H_2(Y;\ZZ)$. The Chern class functional is given by $c_1(L)\cdot \beta_V$. 

We make the following definition, inspired by Costello~\cite{Cos06}. See also~\cite{BS22}. 

\begin{definition}
	Fix a monoid $\mathbb H$ and Chern class function $c$ on it, as above. The {\it $(\mathbb H,c)$-decorated Picard stack} is the stack
	\[
	\fM_{g,n,d}(B\mathbb G_m,\mathbb H,c)\to \fM_{g,n,d}(B\mathbb G_m)
	\]
	parameterizing pairs $(C,L)$ of curves with a line bundle, together with a decoration of each irreducible component by an element of $\mathbb H$, such that additionally, at each component $C_V$ of $V$ with decoration $\beta_V$, we have an equality
	\[
	c(\beta_V) = \deg(L|_{C_V}).
	\]
	The {\it total curve class} of a decorated curve is the sum of all the component decorations. We say a decorated curve is {\it $\mathbb H$-stable} if every genus $0$ component with zero degree decoration has at least three distinguished points. 
	
	With $(\mathbb H,c)$ fixed from context, we let $\Mbar_{g,n,d}(B\mathbb G_m,\beta)$ denote the locus of $\mathbb H$-stable curves with degree $d$ line bundles having total curve class $\beta$. 
\end{definition}

Note that despite the use of the word {\it stable} in $\mathbb H$-stable, we are not making any claims about properness of this space.

We now fix $\mathbb H$, the function $c$, and the total curve class $\beta$ for the rest of the discussion in the section. The following proposition is standard. 

\begin{proposition}
	The stack $\Mbar_{g,n,d}(B\mathbb G_m,\beta)$ is of finite type and the morphism
	\[
	\Mbar_{g,n,d}(B\mathbb G_m,\beta)\to \fM_{g,n,d}(B\mathbb G_m)
	\]
	is \'etale.
\end{proposition}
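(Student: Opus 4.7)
The plan is to verify finite type and étaleness separately, using in each case one of the two defining properties of the curve class semigroup $\mathbb H$.

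For finite type, I would show that only finitely many dual graphs are realized by points of $\Mbar_{g,n,d}(B\mathbb G_m,\beta)$. Fix the total class $\beta$. Since by hypothesis $0 \in \mathbb H$ is indecomposable and $\beta$ admits only finitely many expressions as a sum in $\mathbb H$, the number of vertices of the dual graph carrying a nonzero decoration is uniformly bounded. The remaining vertices satisfy $\beta_V = 0$, and by $\mathbb H$-stability each such genus-zero vertex carries at least three distinguished points. Combined with the fixed arithmetic genus $g$ and the fixed number $n$ of marked points, this bounds both the number of vertices and the number of edges of any dual graph that occurs. Each such topological type cuts out a locally closed substack of $\fM_{g,n,d}(B\mathbb G_m)$ of finite type, and the finite-type property for $\Mbar_{g,n,d}(B\mathbb G_m,\beta)$ follows.

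For étaleness, I would verify that the forgetful morphism is representable, unramified, and formally smooth. The fiber over a geometric point $(C,L)$ is the finite discrete set of $\mathbb H$-decorations $(\beta_V)$ on the vertices of the dual graph satisfying $c(\beta_V) = \deg L|_{C_V}$ and summing to $\beta$, which gives representability and unramifiedness immediately. For formal smoothness, consider a square-zero extension $A \twoheadrightarrow A_0$ of Artinian local rings and a lift of a decorated pair $(C_0,L_0,(\beta_V))$ over $A_0$ to an underlying pair $(C,L)$ over $A$. Deformations of a nodal curve may smooth a subset of its nodes, merging the incident components; since $\mathbb H$ is a semigroup and degree is additive under specialization, the decoration of each merged component is forced to be the sum of the decorations of its constituents, and this rule yields the unique decoration on $(C,L)$ compatible with both the pointwise constraint $c(\beta_V) = \deg L|_{C_V}$ and the total-class constraint summing to $\beta$. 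The lift therefore exists and is unique, establishing formal étaleness. Combined with local finite presentation inherited from the Picard stack, this gives étaleness.

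There is no real obstacle here: the proof is essentially bookkeeping once one identifies the two combinatorial inputs, namely finiteness of decompositions of $\beta$ and closure of $\mathbb H$ under addition, as the respective sources of finite type and étaleness. The subtlest point is accounting for smoothings of nodes when checking deformation uniqueness, which is dispatched by additivity.
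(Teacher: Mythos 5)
The paper offers no argument here at all: it declares the proposition standard (in the tradition of Costello's degree-decorated prestable curves and its use in Bae--Schmitt), so your proposal is supplying the bookkeeping the paper omits, and the route you take is indeed the standard one -- bound the decorated dual graphs for finite type, and observe that the decoration is rigid discrete data for \'etaleness. Two points need tightening, one of which is a genuine (if easily repaired) error in the written argument.

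First, the finite-type step. Your claim that ``each such topological type cuts out a locally closed substack of $\fM_{g,n,d}(B\mathbb G_m)$ of finite type'' is false as stated: over the stratum of curves with a fixed dual graph $\Gamma$ (with at least two vertices), the line bundles of total degree $d$ split into infinitely many components indexed by the multidegree $(\deg L|_{C_V})_V$, so that stratum is only locally of finite type. The repair is available inside your own setup: a point of $\Mbar_{g,n,d}(B\mathbb G_m,\beta)$ carries a decoration $(\beta_V)$ with $\sum_V\beta_V=\beta$ and $c(\beta_V)=\deg L|_{C_V}$, and by finiteness of decompositions of $\beta$ there are only finitely many possible decorations, hence only finitely many multidegrees occur. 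So the space decomposes into finitely many locally closed pieces indexed by pairs (dual graph, decoration), each isomorphic to the finite-type stratum of $\fM_{g,n,d}(B\mathbb G_m)$ with that graph and the induced multidegree; you should state the conclusion this way rather than via topological type alone.

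Second, the formal-smoothness step is framed around a phenomenon that does not occur in the infinitesimal criterion: under a square-zero extension $A\twoheadrightarrow A_0$ of Artinian local rings the underlying geometric fiber of the curve is literally unchanged, so no nodes are smoothed and no components merge; a decoration of the family over $A$ is the same data as a decoration over $A_0$, and existence and uniqueness of the lift are immediate. Your ``sum the labels of merged components'' rule is not wrong, but it belongs elsewhere: it is the specialization-compatibility built into the definition of the decorated stack over general (non-Artinian) bases, which is what makes the moduli problem well posed and the map \'etale (rather than merely formally \'etale) when the dual graph jumps -- the resulting space over a DVR is several copies of the base glued along the generic point, which is \'etale but not separated. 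Relatedly, ``finite discrete fibers give unramifiedness immediately'' is too quick as a general principle (Frobenius has one-point fibers); what you actually use, correctly, is that the decoration admits no infinitesimal deformations, i.e.\ the uniqueness half of your lifting argument, together with the absence of automorphisms of the decoration for representability. With these adjustments the proof is complete and matches the standard argument the paper invokes.
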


The morphism 
\[
\mathcal P\to B\mathbb G_m
\] 
admits a {\it relative Artin fan}, given by
\[
\mathcal P\to \mathsf A\times B\mathbb G_m\to B\mathbb G_m,
\]
where $\mathsf A$ is either $[\mathbb A^1/\mathbb G_m]$ or $[\mathbb P^1/\mathbb G_m]$, depending on whether $D$ has one or two components, respectively. We equip these toric stacks with their toric logarithmic structure. With this understood, the first arrow above is {\it strict and smooth of relative dimension $1$}, while the second arrow is logarithmically smooth of relative dimension $0$, but not strict. 

\begin{remark}We warn the reader that when there are two divisors, this copy of $[\mathbb P^1/\mathbb G_m]$ in the relative Artin fan should not be confused with $\mathcal P$, which we view as positive dimensional over $B\mathbb G_m$. \end{remark}

We can pull back the sequence of logarithmic mapping 
\[
\Mbar_\Lambda(\mathcal P|D/B\mathbb G_m)\to \fM_{\Lambda}(\mathsf A\times B\mathbb G_m)\to \fM_{g,n,d}(B\mathbb G_m). 
\]
along this \'etale map to obtain
\[
\Mbar_\Lambda(\mathcal P|D,\beta)\to \Mbar^{\sf ub}_\Lambda (\mathsf A\times B\mathbb G_m,\beta)\to \Mbar_{g,n,d}(B\mathbb G_m,\beta).
\]

The composite of the two maps is proper, and in particular, of finite type, but the second arrow alone, at this stage, is not. This can be fixed by imposing the {\it balancing condition}. Concretely, an object in $\Mbar^{\sf ub}_\Lambda (\mathsf A\times B\mathbb G_m,\beta)$ consists of the following data:
\begin{enumerate}[(i)]
	\item nodal curve $C$ with dual graph $G_C$,
	\item a line bundle $L$ on $C$ of degree $d$,
	\item an $\mathbb H$-decoration on the vertices of $\Gamma_C$ of total degree $d$, whose $c$-value at each vertex is the degree of $L$,
	\item a combinatorial type of piecewise linear maps $\Gamma\to\mathsf A^{\sf trop}$ such that $\Gamma$ has underlying graph $G_C$. Note that $\mathsf A^{\sf trop}$ is either $\mathbb R_{\geq 0}$ or $\RR$, depending on the Artin fan. 
\end{enumerate}

Note that the item (iv) comes from the logarithmic map $C\to\mathsf A$ implicit in a point of $\Mbar^{\sf ub}_\Lambda (\mathsf A\times B\mathbb G_m,\beta)$. Indeed, a logarithmic map to an Artin fan is a piecewise linear map between the respective tropicalizations, see for instance~\cite[Section~2]{ACGS15}. 

\begin{definition}[Universal balancing]
	With the notation above, a point of $\Mbar^{\sf ub}_\Lambda (\mathsf A\times B\mathbb G_m,\beta)$ is {\it balanced} if at every vertex $V$ of the tropical map $\Gamma\to\mathsf A^{\sf trop}$, the sum of the outgoing slopes is equal to the degree of $L|_{C_V}$. The open substack of balanced maps will be denoted $\Mbar_\Lambda (\mathsf A\times B\mathbb G_m,\beta)$. 
\end{definition}

By similar arguments to~\cite{AW} and the boundedness results in~\cite{GS13}, we make the following conclusions about the triple of spaces 
\[
\Mbar_\Lambda(\mathcal P|D,\beta)\to \Mbar_\Lambda (\mathsf A\times B\mathbb G_m,\beta)\to \Mbar_{g,n,d}(B\mathbb G_m,\beta).
\]
\begin{enumerate}[(i)]
	\item The space $\Mbar_\Lambda (\mathsf A\times B\mathbb G_m,\beta)$ is of finite type,
	\item the first arrow is now {\it strict and admits a perfect relative obstruction theory} with obstruction bundle given by $R^\bullet \pi_\star s^\star T^{\sf log}_{\mathcal P/\mathfrak M}$, as in Section~\ref{sec: univ-log-theory}. 
	\item The second arrow is logarithmically \'etale with smooth codomain. 
\end{enumerate}

With this preparation we can construct the logarithmic enhancements of the Gromov--Witten virtual classes. 

Endow $\Mbar_{g,n,d}(B\mathbb G_m,\beta)$ with the logarithmic structure coming from the components of the divisor of singular curves. The components are indexed by graphs with a single edge, together with a $\mathbb H$-decoration. In other words, we take the pullback of the standard boundary structure on $\frak M_{g,n}$, and take the logarithmic structure given by its irreducible components. 

We consider a logarithmic blowup
\[
\Mbar^\diamond_{g,n,d}(B\mathbb G_m,\beta)\to \Mbar_{g,n,d}(B\mathbb G_m,\beta).
\]
For ease of exposition, and because we lose no generality, we assume this is a sequence of blowups along smooth centers. 

We now perform two constructions: we can either pullback $\Mbar_\Lambda (\mathsf A\times B\mathbb G_m,\beta)\to \Mbar_{g,n,d}(B\mathbb G_m,\beta)$ along this blowup in the category of algebraic stacks, or alternatively we can pull back in the fine and saturated logarithmic category. The latter is the {\it strict} transform. Pulling back the mapping space $\Mbar_\Lambda(\mathcal P|D,\beta)$ as well, we end up with the following diagram:
\[
\begin{tikzcd}
\Mbar^\diamond_\Lambda(\mathcal P|D,\beta)\arrow{r}\arrow{d} & \mathsf F_\Lambda(\mathcal P|D,\beta) \arrow{r}\arrow{d} & \Mbar_\Lambda(\mathcal P|D,\beta)\arrow{d}\\
\Mbar^\diamond_\Lambda (\mathsf A\times B\mathbb G_m,\beta) \arrow{r} & \mathsf F_\Lambda (\mathsf A\times B\mathbb G_m,\beta) \arrow{r}\arrow{d} & \Mbar_\Lambda (\mathsf A\times B\mathbb G_m,\beta)\arrow{d}\\
 & \Mbar^\diamond_{g,n,d}(B\mathbb G_m,\beta)\arrow{r}& \Mbar_{g,n,d}(B\mathbb G_m,\beta).
\end{tikzcd}
\]
We make a few notes about the diagram:
\begin{enumerate}[(i)]
	\item All three squares are Cartesian. 
	\item The vertical arrows in the first row are equipped with compatible perfect obstruction theories.
	\item All horizontal arrows are proper. The horizontals in the left square are finite\footnote{In fact, if we work in the category of fine rather than fine and saturated logarithmic schemes, which is essentially harmless, they are closed immersions.}.
	\item The space $\mathsf F_\Lambda (\mathsf A\times B\mathbb G_m,\beta)$ is the pullback of an irreducible space along a birational map; it is typically reducible. The space $\Mbar^\diamond_\Lambda (\mathsf A\times B\mathbb G_m,\beta)$ is its strict transform, or ``main component''. 
	\item The composition of the two vertical arrows in the second and third columns are proper. 
\end{enumerate}

We view $\Mbar^\diamond_\Lambda(\mathcal P|D,\beta)$, or more properly the virtual cycle given by the pullback of the fundamental class of $\Mbar^\diamond_\Lambda(\mathsf A\times B\mathbb G_m,\beta)$ as a ``virtual strict transform'' of the virtual cycle $[\Mbar_\Lambda(\mathcal P|D,\beta)]^{\sf vir}$.

We record names for the compositions:
\[
p\colon \Mbar_\Lambda(\mathcal P|D,\beta)\to \Mbar_{g,n,d}(B\mathbb G_m,\beta) \ \ \ \ p^\diamond\colon \Mbar^\diamond_\Lambda(\mathcal P|D,\beta)\to \Mbar^\diamond_{g,n,d}(B\mathbb G_m,\beta).
\]
We are finally ready to name the logarithmic lifts. 

\begin{definition}[Logarithmic lifts]
	Given a blowup $\Mbar^\diamond_{g,n,d}(B\mathbb G_m,\beta)\to \Mbar_{g,n,d}(B\mathbb G_m,\beta)$ and a Gromov--Witten cycle class given by
	\[
	p_\star(\mathsf{ev}^\star(\eta)\cap [\Mbar_\Lambda(\mathcal P|D,\beta)]^{\sf vir})
	\]
	its {\it logarithmic enhancement} is the system of classes in the Chow groups of the system of all logarithmic blowups, whose value on the Chow group $\Mbar^\diamond_{g,n,d}(B\mathbb G_m,\beta)$ is given by
	\[
	p^\diamond_\star(\mathsf{ev}^\star(\eta)\cap [\Mbar^\diamond_\Lambda(\mathcal P|D,\beta)]^{\sf vir}).
	\]
\end{definition}

A priori this merely defines a class on each blowup $\Mbar^\diamond_{g,n,d}(B\mathbb G_m,\beta)$. By compatibility of the obstruction theories in the big diagram above, these classes are identified under pushforward~\cite[Section~3]{R19}. They define a class in the inverse limit of Chow homology groups, with transition maps given by proper pushforward. More is true.

\begin{proposition}
	The logarithmic enhancements of the GW class $p_\star(\mathsf{ev}^\star(\eta)\cap [\Mbar_\Lambda(\mathcal P|D,\beta)]^{\sf vir})$ are eventually compatible under pullback. In other words, there exists a blowup $\Mbar_{g,n,d}^\vartriangle(B\mathbb G_m,\beta)$ such that for all further blowups, the GW classes are related by cohomological pullback. 
\end{proposition}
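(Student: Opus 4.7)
The obstruction to pullback compatibility at the level of arbitrary blowups is that the intermediate morphism
\[
\Mbar_\Lambda(\mathsf{A}\times B\mathbb G_m,\beta) \to \Mbar_{g,n,d}(B\mathbb G_m,\beta)
\]
is only logarithmically \'etale, not flat as a morphism of algebraic stacks. As a consequence, the strict transform $\Mbar^\diamond_\Lambda$ appearing in the definition of the logarithmic enhancement differs from the stack-theoretic pullback $\mathsf F_\Lambda$, and absent flatness of the base morphism this discrepancy spoils naive pullback compatibility across the various blowups $\diamond$.

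The plan is to flatten the situation combinatorially. By iterated barycentric subdivision, as in~\cite{AK00} and as used at the end of Section~\ref{sec: deg-formula}, we find a logarithmic blowup $\Mbar^\vartriangle_{g,n,d}(B\mathbb G_m,\beta) \to \Mbar_{g,n,d}(B\mathbb G_m,\beta)$ such that the pulled back morphism from $\Mbar_\Lambda(\mathsf{A}\times B\mathbb G_m,\beta)$ becomes combinatorially flat, meaning every cone in the cone complex of the source surjects onto its image cone in the target. Once this holds, for any further logarithmic blowup $\Box\to\vartriangle$, the strict transform construction coincides with the fine and saturated logarithmic fiber product; combinatorial flatness forces the saturation step to be trivial, and hence the resulting square
\[
\begin{tikzcd}
\Mbar^\Box_\Lambda(\mathcal P|D,\beta) \arrow{r}{\tilde q} \arrow{d}[swap]{p^\Box} & \Mbar^\vartriangle_\Lambda(\mathcal P|D,\beta) \arrow{d}{p^\vartriangle}\\
\Mbar^\Box_{g,n,d}(B\mathbb G_m,\beta) \arrow{r}{q} & \Mbar^\vartriangle_{g,n,d}(B\mathbb G_m,\beta)
\end{tikzcd}
\]
is Cartesian in algebraic stacks, not merely in the logarithmic category.

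Since $\Mbar^\vartriangle_{g,n,d}(B\mathbb G_m,\beta)$ is smooth over $\fM_{g,n,d}(B\mathbb G_m)$ and $q$ is an iterated blowup along smooth centers, refined Gysin pullback $q^!$ is defined. By compatibility of the relative perfect obstruction theories for the vertical arrows (inherited from the universal relative obstruction theory of $\Mbar_\Lambda(\mathcal P|D,\beta)\to\Mbar_{g,n,d}(B\mathbb G_m,\beta)$ discussed earlier in this section) we obtain
\[
[\Mbar^\Box_\Lambda(\mathcal P|D,\beta)]^{\sf vir} = \tilde q^!\, [\Mbar^\vartriangle_\Lambda(\mathcal P|D,\beta)]^{\sf vir}.
\]
The evaluation class $\eta$ lives on the fixed target $\mathcal P^k\times B\mathbb G_m^{n-k}$, so $\mathsf{ev}^\star(\eta)$ pulls back cleanly along $\tilde q$. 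The projection formula together with proper base change for the Cartesian square then yield
\[
p^\Box_\star\bigl(\mathsf{ev}^\star(\eta)\cap [\Mbar^\Box_\Lambda(\mathcal P|D,\beta)]^{\sf vir}\bigr) = q^!\, p^\vartriangle_\star\bigl(\mathsf{ev}^\star(\eta)\cap [\Mbar^\vartriangle_\Lambda(\mathcal P|D,\beta)]^{\sf vir}\bigr),
\]
establishing the desired pullback compatibility for all blowups $\Box\to\vartriangle$.

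The main technical obstacle is the step asserting that combinatorial flatness promotes the fine and saturated logarithmic fiber product to an ordinary stack-theoretic fiber product. Saturation is precisely the source of the extra, virtually irrelevant components in $\mathsf F_\Lambda$ that obstruct pullback compatibility, and controlling it is exactly what combinatorial flatness of a logarithmically \'etale morphism delivers. Once this reduction is in place, the remainder is an essentially formal consequence of virtual pullback and proper base change.
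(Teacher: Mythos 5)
Your overall strategy coincides with the paper's: blow up the base $\Mbar_{g,n,d}(B\mathbb G_m,\beta)$ once and for all so that the map from the space of maps to the relative Artin fan becomes flat in a suitable sense, observe that for all further blowups the strict transform then agrees with the honest pullback, and conclude via the compatible relative obstruction theories, virtual pullback, and pushforward. The paper's proof (which defers to~\cite[Section~3.2]{RUK22}) runs exactly along these lines, with genuine flatness as the key property and flat base change for the cotangent complex identifying the two lci pullbacks.

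The gap is in the pivot of your argument: the claim that ``combinatorial flatness forces the saturation step to be trivial, and hence the resulting square is Cartesian in algebraic stacks.'' Combinatorial flatness -- every cone of the source surjecting onto a cone of the target -- is only an equidimensionality statement and says nothing about the induced maps of lattices. The discrepancy between the fine-and-saturated pullback (which is what the strict transform in this construction \emph{is}, by definition) and the ordinary stack-theoretic pullback is governed by integrality and saturatedness of the morphism, i.e.\ by lattice-level data; in these logarithmic mapping spaces the cone maps typically have image sublattices of finite index, coming from expansion factors and tangency orders, so non-saturatedness is the generic situation rather than an exceptional one, and it is not killed by making cones surject onto cones. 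What you can legitimately extract from combinatorial flatness is honest flatness of the underlying morphism (the base, being an iterated blowup of a smooth stack along smooth centers, is smooth, and the source is toroidal hence Cohen--Macaulay, so equidimensionality gives flatness by miracle flatness) -- but even then the ordinary pullback along a further blowup is only flat and reduced, and it may fail to be normal, whereas the fs pullback is toroidal; so Cartesianness in algebraic stacks requires in addition the saturation/reduced-fibre condition (weak semistability in the sense of Abramovich--Karu: cones onto cones \emph{and} lattices onto lattices), which your argument never arranges. This is precisely the delicate point the paper's route addresses by first achieving flatness and then invoking flat base change for the cotangent complex to identify the two lci pullbacks, rather than by asserting that the fs and ordinary fibre products coincide on combinatorial grounds.

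A smaller point: achieving the flattening by blowing up the \emph{base} and taking strict transforms is a toroidal Raynaud--Gruson-type statement (subdivide the base so that the images of all cones of the source become unions of cones, then pull back), not the barycentric-subdivision statement of~\cite[Section~4]{AK00} quoted elsewhere in the paper, which concerns subdividing the source of a fixed map. The statement you need is true, but the citation as given does not support it, and since the existence of a single base blowup $\vartriangle$ working for all further blowups is exactly what the proposition asserts, this step deserves an argument rather than a reference to source-side subdivision.
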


\begin{proof}
The proof is essentially identical to~\cite[Section~3.2]{RUK22}, and has been promised in a forthcoming survey by Herr, Molcho, Pandharipande, and Wise. To adapt the argument from~\cite{RUK22}, replace the moduli space of stable curves there with $\Mbar_{g,n,d}(B\mathbb G_m,\beta)$. Once a sufficient blowup of $\Mbar_{g,n,d}(B\mathbb G_m,\beta)$ is made, replacing $\Mbar_{g,n,d}(B\mathbb G_m,\beta)$ in the big diagram above by this blowup, and considering further blowups, we can assume the right-vertical arrow in the bottom square is flat. This in turn makes the horizontal arrows in the top left square isomorphisms. By flat base change for the cotangent complex, the two possible different lci pullbacks are identified. An elementary diagram chase yields the result. 
\end{proof}

\subsection{Tautological property for enhanced classes} The next goal is to control ths logarithmic enhancement, and show it is ``logarithmically tautological''. 

We have distinguished a collection of tautological classes in the Chow cohomology of the stack $\fM_{g,n,d}(B\mathbb G_m)$. Replacing the dual graphs in that construction with the degree decorated graphs of $\Mbar_{g,n,d}(B\mathbb G_m,\beta)$ we obtain an analogous tautological ring\footnote{This is not quite the same as the pullback of the tautological ring from the undecorated space -- the latter may contain classes that can be written as a sum of tautological classes in the sense we have just outlined, without the summands being tautological.}. By definition, the strata of this space themselves have a notion of tautological class coming from their description as a fiber product.

For each blowup 
\[
\Mbar_{g,n,d}(B\mathbb G_m,\beta)^\diamond\to \Mbar_{g,n,d}(B\mathbb G_m,\beta)
\]
along a sequence of smooth strata, we have a pullback map in Chow. The logarithmic Chow ring is the direct limit:
\[
\mathsf{CH}^\star_{\sf log}(\Mbar_{g,n,d}(B\mathbb G_m,\beta)) = \varinjlim \mathsf{CH}^\star_{\sf log}(\Mbar_{g,n,d}(B\mathbb G_m,\beta)^\diamond)
\]
with transitions given by pullback. 

We would like to lift the tautological ring of $\Mbar_{g,n,d}(B\mathbb G_m,\beta)$ to this direct limit in a sensible way. This can now be done formally. In~\cite{PRSS}, a formalism is presented that takes as input a smooth and logarithmically smooth pair $(X,D)$, together with a system of subrings of Chow rings of all its strata closed under pushforward -- a ``Chow subsystem''. These can then be upgraded to a subring of logarithmic Chow rings.

The output is a {\it logarithmic tautological ring} for $\Mbar_{g,n,d}(B\mathbb G_m,\beta)$. We only need a few properties about this ring, which follow from the construction. First, it contains the usual tautological ring. Second, the ring is closed under the following iterative reciple: start with a tautological class on a stratum, pull back to the stratum of some blowup of the ambient that maps to this stratum, and push forward.

\begin{theorem}\label{thm: log-enhancement}
	The logarithmic enhancement of the GW class
	\[
	p_\star(\mathsf{ev}^\star(\eta)\cap [\Mbar_\Lambda(\mathcal P|D,\beta)]^{\sf vir})
	\]
	lies in the logarithmic tautological ring of $\Mbar_{g,n,d}(B\mathbb G_m,\beta)$.
\end{theorem}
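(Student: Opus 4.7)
The plan is to bootstrap from the non-enhanced tautological property of Theorem~\ref{thm: universal-bundle} by controlling the difference between the logarithmic virtual class on a blowup and the full fibered pullback. Fix a sufficiently refined blowup $\Mbar^\diamond_{g,n,d}(B\mathbb G_m,\beta)$, and let $q\colon \mathsf F_\Lambda(\mathcal P|D,\beta)\to\Mbar^\diamond_{g,n,d}(B\mathbb G_m,\beta)$ denote the composition in the middle column of the big diagram preceding the statement, with $\Mbar^\diamond_\Lambda(\mathcal P|D,\beta)\subset \mathsf F_\Lambda(\mathcal P|D,\beta)$ the strict transform. By the cartesianity of the squares in that diagram and compatibility of perfect obstruction theories with base change, the pushforward $q_\star\bigl([\mathsf F_\Lambda(\mathcal P|D,\beta)]^{\sf vir}\cap \mathsf{ev}^\star(\eta)\bigr)$ is the pullback along $\Mbar^\diamond_{g,n,d}(B\mathbb G_m,\beta)\to\Mbar_{g,n,d}(B\mathbb G_m,\beta)$ of the un-enhanced class. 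By Theorem~\ref{thm: universal-bundle} this pullback lies in the logarithmic tautological ring.

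It remains to analyze the ``excess'' cycle $[\mathsf F_\Lambda(\mathcal P|D,\beta)]^{\sf vir} - [\Mbar^\diamond_\Lambda(\mathcal P|D,\beta)]^{\sf vir}$. The decomposition theorem of Section~\ref{sec: decomposition}, applied relative to the Picard stack, writes this excess as a sum over nontrivial rigid tropical maps $\gamma$ for the subdivision of the tropical moduli of universal bundle data. Each contribution $[\Mbar_\gamma(\mathcal P|D,\beta)]^{\sf vir}$ is supported over a positive-codimension boundary stratum of $\Mbar^\diamond_{g,n,d}(B\mathbb G_m,\beta)$ indexed by a degree-decorated dual graph. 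To each such $\gamma$ I would then apply the logarithmic degeneration formula of Theorem~\ref{thm: degeneration-formula}, phrased relative to $B\mathbb G_m$, and the exotic/non-exotic and rigidification machinery of Sections~\ref{sec: exotic-evaluations}--\ref{sec: rigidification}. The outcome is an expression for the vertex contributions as products of logarithmic GW classes for simpler universal $\mathbb P^1$-bundle problems over strata of the Picard stack, possibly in rubber form, coupled by K\"unneth components of a diagonal class on the (blown up) evaluation space.

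With this reduction in hand, I would proceed by induction on the ordering of stars introduced in Section~\ref{sec: stars-partial-ordering}. The base case is handled by Theorem~\ref{thm: universal-bundle} applied to each vertex target. Rubber contributions, which appear when $D$ contains both the $0$ and $\infty$ sections, are handled by the rubber calculus of Section~\ref{sec: rubber-calculus} together with the universal rubber construction of~\cite{RUK22,RW19}; this reduces them to non-rubber universal bundle problems over the same vertex strata. The K\"unneth pieces of the diagonal along the evaluation space produce tautological classes on the nodal strata of the Picard stack, and by the closure of the logarithmic tautological ring under strata pushforward (as in the framework of~\cite{PRSS}), each vertex contribution, and hence each tropical contribution to the excess, lands in the logarithmic tautological ring. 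Summing over $\gamma$ yields the theorem.

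The main obstacle will be verifying that the rigidification, exotic-to-non-exotic conversion, and rubber calculus developed earlier transport faithfully from the setting of a smooth projective base $(Y|\partial Y)$ to the setting of the Picard stack $\Mbar_{g,n,d}(B\mathbb G_m,\beta)$. This is essentially a check that none of those constructions used properness or smoothness of the base beyond the smoothness of the morphism to $\fM_{g,n}$, which $\fM_{g,n,d}(B\mathbb G_m)\to\fM_{g,n}$ does satisfy. A secondary issue is compatibility across successive blowups: the pullback-stability established in the proposition preceding the theorem guarantees that the classes so produced assemble into a well-defined element of $\mathsf{CH}^\star_{\sf log}(\Mbar_{g,n,d}(B\mathbb G_m,\beta))$, and by construction this element lies in the logarithmic tautological subring.
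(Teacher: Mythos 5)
Your overall shape---compare the strict transform with the fibered pullback, push the discrepancy into the boundary, split, and induct on blowups and stars---is the same as the paper's, but the mechanism you give for the central step does not hold up. The difference $[\mathsf F_\Lambda(\mathcal P|D,\beta)]^{\sf vir}-[\Mbar^\diamond_\Lambda(\mathcal P|D,\beta)]^{\sf vir}$ is not produced by the decomposition theorem: that theorem decomposes the virtual class of maps to the special fiber of a degeneration $\cX\to B$, whereas here one is blowing up the base $\Mbar_{g,n,d}(B\mathbb G_m,\beta)$ of a family of moduli spaces, and no decomposition or degeneration formula ``relative to $B\mathbb G_m$'' is available or proved anywhere in the paper. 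The actual control of the discrepancy is a blowup calculus: Fulton's refined blowup formula exhibits it as a class supported on the exceptional divisor, built from the Segre class of the (monomial) intersection of the center with $\mathfrak a\Mbar(k)$; Aluffi's formula computes that Segre class in terms of boundary strata and normal bundles; and Lemmas~\ref{lem: normal-1} and~\ref{lem: normal-2} are exactly what is needed to show the relevant virtual normal bundles are generated by cotangent lines and boundary divisors, so that the correction terms become tautological operators on strata classes. None of this appears in your proposal, and without it the excess terms are not identified as decorated strata classes at all.

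The second gap is your reliance on the degeneration formula together with the exotic/non-exotic and rigidification machinery of Sections~\ref{sec: exotic-evaluations}--\ref{sec: rigidification}, which were proved for geometric snc pairs $(Y|\partial Y)$; you flag the transport to the Picard-stack setting as ``essentially a check,'' but the paper's proof is organized precisely to avoid that transport. It establishes the boundary splitting directly over the Picard stack (Propositions~\ref{prop: diagonal} and~\ref{prop: pic-strata}), in the style of the smooth-divisor expanded theory: the evaluations land in $B\mathbb G_m$ or $\mathcal P$, so no K\"unneth failure forces exotic insertions; rubber factors appear at the bubble vertices; and vertical edges require an extra diagonal condition in $\mathcal P\times_{B\mathbb G_m}\mathcal P$ rather than a blown-up evaluation space. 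The double induction is then on the number of blowups and on the star ordering of Remark~\ref{rem: ordering}, with the inductive hypothesis being the logarithmically enhanced statement itself---your proposed base case via Theorem~\ref{thm: universal-bundle} only yields the un-enhanced statement, so as written the induction does not close. In short, the skeleton is right, but the two load-bearing steps---the excess computation via Fulton/Aluffi with normal-bundle control, and the boundary splitting proved natively over the Picard stack---are either missing or supported by theorems that do not apply in this setting.
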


The structure of the proof is similar (and in fact, a little simpler) than the exotic/non-exotic manoeuvre in Section~\ref{sec: exotic-nonexotic-move}.  We apply Fulton's blowup formula, use Aluffi's formula for Segre classes, and induct on the strata complexity~\cite{Alu16,Ful98}. 

We require two preliminaries, after which the proof is formal: the data type indexing the stratification of the boundary of the universal mapping space and the decomposition of each stratum and its virtual class into ``inductively smaller'' pieces. 

\subsubsection{Strata, blowups, and normal bundles} We explain the stratification of $\Mbar_\Lambda(\mathcal P|D,\beta)$ and its subdivisions. For expository simplicity, we suppose $D$ is the universal $0$-section in $\mathcal P$. The case where we take both sections is similar. 

We begin with $\Mbar_\Lambda(\mathcal P|D,\beta)$. It is convenient to work with an ``expanded'' version of $\Mbar_\Lambda(\mathcal P|D,\beta)$, which can be obtained by the methods of~\cite{R19}. A point of $\Mbar_\Lambda(\mathcal P|D,\beta)$ determines, among other things, a logarithmic morphism 
$$
C\to [\mathbb P^1/\mathbb G_m].
$$ 
By adding in the map to the Artin fan of $[\mathbb P^1/\mathbb G_m]$ this gives:
\[
C\to [\mathbb P^1/\mathbb G_m]\to\mathsf A.
\]
Consider the universal $\mathcal C$ curve over $\Mbar_\Lambda(\mathcal P|D,\beta)$ and project to $\mathsf A$. We can apply the combinatorial and virtual semistable reduction of~\cite[Section~2 {\it \&} 3]{R19}: blowup the source and target of the map
\[
\mathcal C\to \mathsf A\times \Mbar_\Lambda(\mathcal P|D,\beta)
\]
so it meets strata in the expected dimension. The target is now a relative Artin fan over $\Mbar_\Lambda(\mathcal P|D,\beta)$. Flatten the resulting maps to $\Mbar_\Lambda(\mathcal P|D,\beta)$. As noted in~\cite[Section~2.8]{R19}, there is a canonical such modification in this case. 

This blows up the moduli space to 
\[
\Mbar^{\sf exp}_\Lambda(\mathcal P|D,\beta)\to \Mbar_\Lambda(\mathcal P|D,\beta).
\]
It now carries, as universal families, a modified universal curve, an expanded family of $[\mathbb P^1/\mathbb G_m]$'s, say $[\mathbb P^1/\mathbb G_m]^\sim$. The equidimensionality condition means that the map to this expansion of $[\mathbb P^1/\mathbb G_m]$ is dimensionally transverse along strata.

We have, by pullback, a universal $\mathbb P^1$-bundle over the universal curve, say $\mathcal P$, and by pulling back $[\mathbb P^1/\mathbb G_m]^\sim\to [\mathbb P^1/\mathbb G_m]$ to obtain, over every point in this blowup, a map to an expansion of it:
\[
C\to \widetilde{\mathcal P}\to \mathcal P.
\]
Note that we have only made $C\to\widetilde {\mathcal P}$ partially transverse -- points of $C$ can still map to the corners of the reducible surface $\widetilde {\mathcal P}$\footnote{This type of geometry is not uncommon but hides in plain sight. For example, given an admissible cover or relative stable map to a curve, the map to the graph has exactly this form. Indeed, if the projective line bundle here is trivial, we exactly recover this situation.}.

The space is stratified by the same data structure as the combinatorial types as relative stable maps, recorded by the following data associated to a map $[C\to{\widetilde{\mathcal P}}\to\mathcal P]$ to an expansion:
\begin{enumerate}[(i)]
	\item The dual graph $\Gamma$ of the marked domain curve $C$,
	\item the dual graph $T$ of the expanded Artin fan $[\PP^1/\mathbb G_m]^\sim$, inducing the expansion of the target ${\widetilde{\mathcal P}}$ along the horizontal boundary; this a line graph, ordered from left to right, with a single unbounded edge on the right, 
	\item a morphism of graphs $\Gamma\to T$, induced by recording, for each vertex corresponding to an irreducible component of $C$ the vertex corresponding to the component of the target to which its generic point maps,
	\item for each edge of $\Gamma$ mapping to an edge of $T$, the ramification order at the corresponding node. If an edge maps to a vertex, we set the integer to be $0$. This integer is the {\it expansion factor}
\end{enumerate}

Each combinatorial type $\Theta = [\Gamma\to T]$ determines a cone $\sigma_\Theta$, corresponding to the moduli of possible edge length assignments to $\Gamma$ and $T$, such that the induced map of graphs with the given slopes is continuous. See Figure~\ref{fig: combinatorial-type} for a visualization.

\begin{figure}
    
\begin{tikzpicture}[x=0.75pt,y=0.75pt,yscale=-1,xscale=1]

  \definecolor{softlavender}{RGB}{223,210,255}

  \fill[softlavender, opacity=0.4] (170,260) rectangle (395,385);

  \tikzset{mylines/.style={violet, line width=0.75pt}}

  \draw[mylines] (185,366) -- (250,366);
  \draw[mylines] (182,275) -- (249,293);
  \draw[mylines] (184,309) -- (249,293);
  \draw[mylines] (250,366) -- (306,366);

  \draw[mylines] (249,293) .. controls (279,267.5) and (297,291) .. (306,292);
  \draw[mylines] (249,293) .. controls (278,317.5) and (300,295) .. (306,292);

  \draw[mylines, ->, >=stealth] (306,292) -- (386,269);
  \draw[mylines, ->, >=stealth] (306,292) -- (388,311);
  \draw[mylines, ->, >=stealth] (306,366) -- (388,366.5);

  \draw[mylines, dashed] (182,275) -- (185,366);
  \draw[mylines, dashed] (249,293) -- (250,366);
  \draw[mylines, dashed] (305,293) -- (306,366);

  \foreach \pt in {
    (185,366), (250,366), (182,275), (184,309),
    (249,293), (306,292), (306,366)
  } {
    \fill[violet] \pt circle (1.5pt);
  }

  \node at (419,272) {\(\Gamma\)};
  \node at (417,365.5) {\(T\)};  

\end{tikzpicture}
\caption{A visualization of the combinatorial type of a point in $\Mbar_\Lambda^{\sf exp}(\mathcal P|D,\beta)$. This corresponds to a codimension $2$ stratum in the moduli space.}\label{fig: combinatorial-type}
\end{figure}
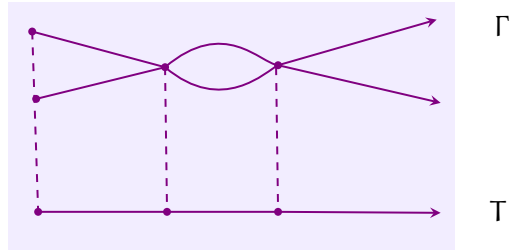

Ranging over all combinatorial types, we get a collection of cones. These glue to form a cone stack, which we denote $T^{\sf exp}_\Lambda(\mathcal P|D,\beta)$. We refer the reader to~\cite{CCUW,R19} for similar constructions and additional details. 

\begin{remark}[An ordering on stars]\label{rem: ordering}
	There is a natural ordering on the combinatorial data above, obtained from the specialization relation. We do this similarly to Section~\ref{sec: stars-partial-ordering}. The only difference is that we keep track of the genus. 
	
	We define a {\it star} to be a dual graph $[\Gamma\to T]$ where $\Gamma$ has a single vertex, and $T$ is, as a consequence of stability, a single vertex with either one two two rays attached. Given a combinatorial type $[\Gamma\to T]$ as above, each vertex of $\Gamma$ determines a ``star''. We then define the order as before -- one star $\mathsf v$ is smaller than $\mathsf w$ if $\mathsf v$ appears as the star of a vertex of a degeneration of $\mathsf w$.
\end{remark}

We will be interested in logarithmic blowups of $\Mbar^\diamond_\Lambda(\mathcal P|D,\beta)\to\Mbar^{\sf exp}_\Lambda(\mathcal P|D,\beta)$ coming from subdivisions of $T^{\sf exp}_\Lambda(\mathcal P|D,\beta)$. A stratum is determined by a cone in a subdivision
\[
T^\diamond_\Lambda(\mathcal P|D,\beta)\to T^{\sf exp}_\Lambda(\mathcal P|D,\beta).
\]
Choose such a cone $\sigma$ and let $\overline \sigma$ be the cone to which it maps, under the subdivision above. Let $\Mbar(\overline\sigma)$ be the closed stratum of $\Mbar_\Lambda(\mathcal P|D,\beta)$ corresponding to $\overline \sigma$, and $\Mbar^\diamond(\sigma)$ the stratum on the blowup.

\begin{lemma}\label{lem: normal-1}
The natural morphism
\[
\Mbar^\diamond(\sigma)\to \Mbar(\overline\sigma)
\]	
is an equivariant compactification of a torus bundle.
\end{lemma}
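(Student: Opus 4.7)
The plan is to reduce the assertion to a Cartesian square involving the corresponding Artin fan strata, and then verify the toric-geometric claim there. By construction (and following~\cite[Section~3]{R19}), the logarithmic blowup $\Mbar^\diamond_\Lambda(\mathcal P|D,\beta)\to \Mbar^{\sf exp}_\Lambda(\mathcal P|D,\beta)$ is obtained as the fine and saturated pullback of the subdivision $\mathsf a T^\diamond_\Lambda(\mathcal P|D,\beta)\to \mathsf a T^{\sf exp}_\Lambda(\mathcal P|D,\beta)$ along the canonical morphism from the expanded moduli space to the Artin fan. The strata of either moduli space are, by definition, scheme-theoretic preimages of the corresponding closed strata of the Artin fans. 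In particular, the square
\[
\begin{tikzcd}
\Mbar^\diamond(\sigma) \arrow{r}\arrow{d} & [\sigma] \arrow{d}\\
\Mbar(\overline\sigma)\arrow{r} & [\overline\sigma]
\end{tikzcd}
\]
is Cartesian in algebraic stacks. Hence, once the right vertical is shown to be an equivariant compactification of a torus bundle, the left vertical inherits the same structure by base change, and the lemma follows.

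The key step is to verify the right-hand claim. Here one uses the standard fact, extended to Artin fans, that the closure of the open stratum attached to a cone $\overline\sigma$ in $\mathsf a T^{\sf exp}_\Lambda$ is the toric Artin stack associated to the star $\mathrm{Star}(\overline\sigma)$, i.e.\ the quotient fan in the ambient cocharacter lattice modulo the linear span of $\overline\sigma$; analogously for $[\sigma]$. The subdivision $T^\diamond\to T^{\sf exp}$ restricts to a subdivision of $\mathrm{Star}(\overline\sigma)$ refining the cone into which $\sigma$ projects, so the induced map of toric Artin stacks is a proper birational morphism, equivariant for the dense torus of $[\overline\sigma]$. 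By the orbit--cone correspondence, over the open substack $B\mathbb G_m^{\dim \overline\sigma}\subset [\overline\sigma]$ the preimage in $[\sigma]$ is an equivariant compactification of a torsor under the torus with cocharacter lattice the quotient $N(\sigma)/N(\overline\sigma)$ inside the star. This is exactly the asserted structure on the closed stratum.

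The main obstacle is the toric description of stratum closures in Artin fans and their behaviour under subdivisions. In the smooth cone-stack setting, this is provided by the general framework of~\cite{CCUW}, and after passing to an \'etale chart of $T^{\sf exp}_\Lambda$ it reduces to the classical toric picture. A minor subtlety, which must be addressed combinatorially, is that $\sigma$ may be a proper subcone of $\overline\sigma$ spanning a strictly smaller linear subspace; in that case the relevant quotient lattice and its torus must be extracted from the star of $\sigma$ in $T^\diamond$ rather than from $\overline\sigma$ itself. Once this is dealt with, Cartesian base change against $\Mbar(\overline\sigma)\to [\overline\sigma]$ completes the argument.
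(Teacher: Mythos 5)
Your proof is correct and takes essentially the same route as the paper's, which simply observes that the subdivision $\Mbar^\diamond_\Lambda(\mathcal P|D,\beta)\to \Mbar^{\sf exp}_\Lambda(\mathcal P|D,\beta)$ is, locally on the target, a pullback of a proper birational toric morphism and then invokes the structure theory of toric morphisms; your version just spells out the Cartesian base change from the Artin fan strata and the star/orbit--cone computation. The only nitpick is notational: the fiber torus has cocharacter lattice $\langle\overline\sigma\rangle/\langle\sigma\rangle$, the kernel of the surjection of quotient lattices, rather than what the expression ``$N(\sigma)/N(\overline\sigma)$'' literally suggests, but that is clearly what you intend.
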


\begin{proof}
The morphism $\Mbar^\diamond_\Lambda(\mathcal P|D,\beta)\to \Mbar^{\sf exp}_\Lambda(\mathcal P|D,\beta)$ is a subdivision. Such morphisms are, locally on the target, pullbacks of proper birational maps of toric varieties. The claim follows from the basic structure theory of toric morphisms.
\end{proof}

We need some control over various normal bundles to these strata. Each stratum of $\Mbar^\diamond_\Lambda(\mathcal P|D,\beta)$ is the pullback of a stratum on the Artin fan corresponding to $T^\diamond_\Lambda(\mathcal P|D,\beta)$. This Artin fan is a smooth and logarithmically \'etale stack, and so this stratum has a well-defined normal bundle. Its pullback is called the {\it virtual normal bundle} of the stratum. 

\begin{lemma}\label{lem: normal-2}
	The line bundle factors of the virtual normal bundle to $\Mbar^\diamond(\sigma)$ in $\Mbar^\diamond_\Lambda(\mathcal P|D,\beta)$ is contained in the subgroup of the Picard group of $\Mbar^\diamond(\sigma)$ generated by (i) cotangent line bundles associated to nodes/markings of the domain curve, and (ii) classes of boundary divisors. 
\end{lemma}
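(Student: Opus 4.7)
The strategy is to use that the virtual normal bundle decomposes toric-style, indexed by rays in the star of $\sigma$, and then identify each ray-component with one of the two allowed class types.

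First I would use that the structure morphism
\[
\Mbar^\diamond_\Lambda(\mathcal P|D,\beta)^{\sf exp}\to \mathfrak a T^\diamond_\Lambda(\mathcal P|D,\beta)
\]
is strict and logarithmically smooth (cf. the proof of Lemma \ref{lem: normal-1}), so the virtual normal bundle to $\Mbar^\diamond(\sigma)$ is the pullback of the normal bundle to the corresponding locally closed stratum $\mathfrak a(\sigma)$ in the Artin fan. After passing to a smooth subdivision (which is harmless for our purposes), the Artin fan is a smooth Artin cone stack, and the normal bundle to $\mathfrak a(\sigma)$ decomposes as a direct sum of line bundles, one factor $\mathcal N_{\rho}$ for each ray $\rho$ of the star of $\sigma$ in $T^\diamond_\Lambda(\mathcal P|D,\beta)$. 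This reduces the lemma to identifying each $\mathcal N_\rho$.

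Next I would reduce from $T^\diamond_\Lambda$ to $T^{\sf exp}_\Lambda$. Since $T^\diamond_\Lambda\to T^{\sf exp}_\Lambda$ is a subdivision, every primitive ray generator of the star of $\sigma$ in $T^\diamond_\Lambda$ lies in some cone $\overline\sigma\in T^{\sf exp}_\Lambda$ and is therefore a non-negative integer combination of the primitive ray generators of the star of $\overline\sigma$ in $T^{\sf exp}_\Lambda$. On the level of Picard groups this means $\mathcal N_\rho$ is a tensor product (with positive multiplicities) of the line bundles $\mathcal N_{\overline\rho}$ attached to the primitive rays $\overline\rho$ of the star of $\overline\sigma$ in $T^{\sf exp}_\Lambda$. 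So it suffices to control the $\mathcal N_{\overline\rho}$.

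Now I would split according to the two types of primitive rays of $T^{\sf exp}_\Lambda$ recorded in the combinatorial data of Section~\ref{sec: uni-log-bundle}: namely, edges of the domain graph $\Gamma$ and edges of the target graph $T$. For an edge $e$ of $\Gamma$ mapping to an edge of $T$ with expansion factor $w_e$, the associated bundle $\mathcal N_{\overline\rho_e}$ is the standard smoothing-parameter bundle at the node $q_e$ of the universal curve: by the familiar local calculation at a node this is $(\mathbb L_{q_e}^{(1)}\otimes \mathbb L_{q_e}^{(2)})^{\otimes w_e}$, which lies in subgroup (i) of the lemma. For an edge $e$ of $\Gamma$ mapping to a vertex of $T$ (i.e.\ an internal node not meeting the relative target expansion), the same analysis gives $\mathbb L_{q_e}^{(1)}\otimes \mathbb L_{q_e}^{(2)}$. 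Finally, for a ray $\overline\rho$ coming from an edge of $T$, the bundle $\mathcal N_{\overline\rho}$ is pulled back from the relative Artin fan of the target expansion $[\mathbb P^1/\mathbb G_m]^\sim\to\mathsf A$, hence (on the stratum) identifies with the class of the boundary divisor corresponding to $\overline\rho$, which lies in subgroup (ii).

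The main obstacle is not any deep input, but rather the bookkeeping in the second reduction step: one has to check that the multiplicity with which a ray of $T^\diamond_\Lambda$ writes as a combination of rays of $T^{\sf exp}_\Lambda$ really is a non-negative integer (so that the resulting expression is an honest combination of $\psi$-classes at nodes and boundary classes and not merely a $\mathbb Q$-linear combination), which follows from smoothness of the subdivision together with the fact that each cone of $T^{\sf exp}_\Lambda$ is simplicial in the directions coming from edges of $\Gamma$ and $T$. Once this is in place, combining the three identifications above over all rays of the star of $\sigma$ gives the claimed containment.
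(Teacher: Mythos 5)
There is a genuine gap, and it sits in your second step. Writing the primitive generator of a ray $\rho$ of $\sigma$ (in the subdivided complex $T^\diamond_\Lambda$) as a non-negative integral combination of the primitive generators of the cone $\overline\sigma$ of $T^{\sf exp}_\Lambda$ containing it does \emph{not} express the normal-bundle factor $\mathcal N_\rho$ as the corresponding tensor product of (pullbacks of) the factors $\mathcal N_{\overline\rho}$; toric boundary divisors simply do not transform this way under subdivision. Already for the blowup of $\mathbb A^2$ at the origin, the new ray through $e_1+e_2$ gives the exceptional stratum $E\cong\mathbb P^1$ with normal bundle $\mathcal O_E(E)=\mathcal O_{\mathbb P^1}(-1)$, whereas your prescription ($e_1+e_2=e_1+e_2$, so tensor the two factors attached to the rays of the ambient cone, pulled back from the point stratum) yields the trivial bundle. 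So the integrality bookkeeping you flag as the ``main obstacle'' is not where the difficulty lies; the reduction itself fails. The conclusion can still be salvaged, but only by the kind of argument the paper actually runs: an induction on the blowups in which each new stratum is either a blowup of an old stratum (normal bundle = pullback twisted by boundary strata) or a projective bundle over one (normal bundle = relative $\mathcal O(-1)$, whose Chern class is representable by strata). Nothing in your proposal supplies this step.

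The second problem is your treatment of the target edges, which is circular under the reading of the lemma that its application requires. Identifying the factor attached to an edge of $T$ with ``the class of the boundary divisor corresponding to $\overline\rho$'' is a tautology: that divisor \emph{contains} the stratum, so its restriction is precisely the normal direction being computed, not a boundary divisor of $\Mbar^\diamond(\sigma)$, and in the Segre-class/Aluffi computation of Theorem~\ref{thm: log-enhancement} such an answer gives no traction (one would just be re-expanding the normal bundle in terms of itself). The actual content of the lemma at a target edge is that its smoothing parameter is, up to twists by \emph{deeper} boundary strata, a positive multiple of the smoothing parameter of a domain node mapping onto that edge, hence a combination of cotangent lines; the paper obtains this by comparing the piecewise linear functions ``length of the target edge'' and ``length of a domain edge over it'' on the cone complex, which agree up to a multiple on the cone and differ only over deeper cones -- these discrepancies are exactly the boundary corrections, and they are absent from your argument (also in your contracted-edge case). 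Relatedly, your indexing double counts: an edge of $\Gamma$ with expansion factor $w_e>0$ does not give a ray independent of the target edge it maps to (its length is determined), so the factor $(\mathbb L^{(1)}_{q_e}\otimes\mathbb L^{(2)}_{q_e})^{\otimes w_e}$ you attach to it is, up to the missing boundary twists, the target-edge factor itself rather than an additional summand.
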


\begin{proof}
First consider the stratum $\Mbar(\overline\sigma)$ on the non-blown up space. It has a combinatorial type $\Theta = [\Gamma\to T]$. The normal bundle of the stratum is the direct sum of line bundles corresponding to (i) smoothing parameters of the nodes of the target expansion, corresponding to each edge of $T$, and (ii) smoothing parameters for the nodes of $C$ corresponding to edges in $\Gamma$ whose expansion factor is $0$. Since the virtual normal bundle is pulled back from the Artin fan, this again follows from toric computations. 

Now, we examine the smoothing parameter for a target edge. Working on the Artin fan, this corresponds to a piecewise linear function on  $T^\diamond_\Lambda(\mathcal P|D,\beta)$ that, on the cone $\sigma$, is equal to the length of one of the target edges. It is therefore, up to a multiple, equal to the length of any edge of the domain tropical curve that maps to this edge. Choose such an edge. Away from higher dimensional cones containing $\overline \sigma$, this piecewise linear function is equal to the pullback of the piecewise linear function on the moduli stack of all tropical curves that measures the length of this edge. Therefore, up to twisting by boundary strata, this target smoothing parameter is equal to the smoothing parameter for the domain. These are easily seen to be of the required form. 

For strata on blowups, the statement follows by induction on the number of blowups. For each blowup, a stratum is either a blowup of a stratum on a smaller blowup, or a projective bundle over a stratum. In the former case, the normal bundle of the new stratum is the pullback of the old normal bundle, twisted by strata. In the latter case, the normal bundle is the relative $\mathcal O(-1)$ on the projective bundle. The Chern class of this line bundle can be represented by strata, so we conclude. 
\end{proof}

\subsubsection{Inductive structure of the boundary} We recall the description of the boundary strata of the space of relative stable maps. These facts are well known for $\mathbb P^1$-bundles over schemes, and we closely follow~\cite{GV05}. 

We first work on the space $\Mbar^{\sf exp}_\Lambda(\mathcal P|D,\beta)$ and choose a combinatorial type $\Theta = [\Gamma\to T]$, or equivalently, a cone $\overline \sigma$ of the tropical space $T^{\sf exp}_\Lambda(\mathcal P|D,\beta)$. This determines a stratum $\Mbar(\overline\sigma)$. The domain graph $\Gamma$, together with its natural decorations, determines a stratum $\Mbar_\Gamma(B\mathbb G_m)$ of the decorated Picard stack, and the natural morphism to the Picard stack factorizes through
\[
\Mbar(\overline\sigma)\to \Mbar_\Gamma(B\mathbb G_m).
\]
We will be interested in the pushforward of the virtual class to this substack, and related classes. We first describe how to reconstruct the space $\Mbar(\overline\sigma)$ from smaller spaces, and then describe how this related to the corresponding decomposition of $\Mbar_\Gamma(B\mathbb G_m)$ into smaller spaces.

Let $V$ be a vertex of the target graph $T$ and let $V_1,\ldots, V_m$ be the vertices of $\Gamma$ mapping to it. Each vertex $V_i$ determines a discrete data for a stack of maps. If $V$ is a vertex corresponds to a bubble component, then this determines a stack of maps of the form
\[
C\to [\mathbb P^1/\mathbb G_m]
\]
relative to both $0$ and $\infty$. For each $V$, we distinguish, arbitrarily, a vertex over it, which we may as well denote $V_1$. For such $V$, corresponding to bubble components, we obtain {\it a stack associated to $V$} is defined to be 
\[
\Mbar(V):=\Mbar(V_1)^{\sf rub}\times\prod_{i=2}^m\Mbar(V_i).
\]
The first factor $\Mbar(V_1)^{\sf rub}$ is the stack of to an unparametrized, or ``rubber'' $\mathbb P^1$-bundle, see Remark~\ref{rem: uni-rubber}.

Similarly, if $V$ is the distinguished vertex of the target graph $T$ corresponding to the main component, we take the analogous product, but where all spaces are parameterized. 

Say an edge $E$ of $\Gamma$ is {\it horizontal} if expansion factor is nonzero, and vertical otherwise. At each horizontal edge we $E$ incident to $V_i$ have evaluation maps
\[
\mathsf{ev}_E\colon \Mbar(V_i)\to B\mathbb G_m.
\]
For vertical edges, there is also always this evaluation to $B\mathbb G_m$. However, if $E$ is incident to $V_i$ and $V_i$ is not parameterized\footnote{This could either be because it is attached to a vertex mapping to the main vertex, or because $i\neq 1$, i.e. $V_i$ is not the distinguished vertex.} there is a more refined evaluation map to $\mathcal P$.

There is a morphism
\[
\Mbar(\overline\sigma)\to \prod_V \Mbar(V)
\]
where $V$ ranges over the vertices of the target graph $T$ associated to the combinatorial type $\overline \sigma$. Note that both domain and target have natural virtual classes.

Note that unlike in situations where we work with schematic targets, this morphism is not a closed immersion, or even finite. Nevertheless, we can relate the virtual classes on the two sides. 

Let $\mathfrak E$ denote the consolidated evaluation space, i.e. the product taken over all pairs of $(V_i,E)$, where $V_i$ is a vertex of $\Gamma$ and $E$ is an edge incident to it, of the evaluation maps $\mathsf{ev}_E$ described above. 

\begin{proposition}\label{prop: diagonal}
	The space $\Mbar(\overline\sigma)$ maps to the fiber product below:
	\[
	\begin{tikzcd}
	\Mbar(\overline\sigma)\arrow{r}{h}&\mathsf G(\overline \sigma)\arrow{d}\arrow{r}&	\prod_V \Mbar(V)\arrow{d}\\
	&\mathfrak E\arrow[swap]{r}{\Delta}&\mathfrak E^2.
	\end{tikzcd}
	\]
	The space $\mathsf G(\overline\sigma)$ is endowed with a virtual class by diagonal pullback. The pushforward of the virtual class along $h$ identifies virtual classes up to a positive rational multiple. 
\end{proposition}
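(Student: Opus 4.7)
The plan is to verify the statement in two stages: first construct $h$ and analyze it at the level of underlying stacks, then match the virtual classes via the relative obstruction theories over the relevant Artin fans.

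To construct $h$, observe that for a logarithmic stable map $[C\to\widetilde{\mathcal P}]$ of combinatorial type $\Theta = [\Gamma\to T]$, we may normalize $C$ at the nodes corresponding to edges of $\Gamma$ that map to edges of $T$, and partition the resulting connected pieces according to the target vertex they map to. At the distinguished vertex over each bubble component, we produce an element of the parameterized moduli $\Mbar(V_1)$, and then pass to the rubber quotient $\Mbar(V_1)^{\sf rub}$ by dividing by the fiberwise $\mathbb G_m$-action on the bubble. At the remaining vertices $V_i$ over the same component, the maps are canonically parameterized. This produces a morphism to $\prod_V \Mbar(V)$. By construction, at every edge the two vertex evaluations agree, so the map factors through the diagonal locus $\mathsf G(\overline\sigma)\subset\prod_V \Mbar(V)$.

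To compare virtual classes, I would pass to the Artin fan. The morphism $\Mbar(\overline\sigma)\to \mathsf G(\overline\sigma)$ fits into a commutative diagram where both spaces are logarithmically smooth over their respective relative Artin fans, and the obstruction theories arise from $R^\bullet\pi_\star s^\star T^{\sf log}_{\mathcal P/\mathfrak M}$. Along each cut node, the logarithmic tangent bundle of the target acquires a direction corresponding to the smoothing parameter; these are matched by the diagonal pullback of the evaluation tangent bundles on $\mathfrak E$. Thus the relative obstruction theory on $\Mbar(\overline\sigma)$ pulled back from $\prod_V\Mbar(V)$ differs from the diagonally pulled-back one exactly by contributions associated with the edges of $\Gamma$. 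By the standard argument used to establish the splitting axiom in logarithmic GW theory (cf.~Theorem~\ref{thm: degeneration-formula} and~\cite[Section~8]{MR23}), this identifies the virtual classes up to a rational factor. The factor accounts for (i)~the product of edge expansion factors appearing in the gluing multiplicity, (ii)~the order of the group of automorphisms of the combinatorial type $\Theta$, and (iii)~the torus of fiberwise $\mathbb G_m$-actions on bubble components already quotiented out in the passage to rubber.

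The main obstacle will be bookkeeping the rubber automorphisms in the present universal-bundle setting: unlike the smooth pair case of~\cite{GV05,MP06}, the target Artin fan here has nontrivial logarithmic automorphisms at each bubble, and they act non-locally on the expanded target in the sense of~\cite{CN21}. To handle this cleanly, I would follow the philosophy of Section~\ref{sec: rigidification}: first work on the rigidified space attached to the cone $\overline\sigma$, where the target has been fixed, and prove the stronger splitting there; then descend to the rubber quotient at the distinguished bubble vertex $V_1$ of each non-main component, absorbing the resulting $\mathbb G_m$-torsor into the overall rational proportionality factor. Since the entire statement is one of virtual cycles over a fixed stratum with the same discrete combinatorics on both sides, positivity of the proportionality constant follows from the fact that each contribution (edge weights, automorphism orders, rubber torus factor) is positive.
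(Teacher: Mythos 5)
Your proposal is essentially the paper's own argument: the paper proves this proposition simply by observing that the cutting map, the diagonal pullback along $\mathfrak E\to\mathfrak E^2$, and the comparison of the relative obstruction theories are identical to the schematic-target case of~\cite{GV05,Li01}, which is exactly the splitting you sketch. The one caveat is that your detour through the rigidification machinery to handle ``non-local'' rubber actions in the sense of~\cite{CN21} is unnecessary here: the target expansions are indexed by the line graph $T$, so each bubble carries a single classical rubber $\mathbb G_m$ acting exactly as in the smooth-divisor setting of~\cite{GV05,MP06}, which is precisely why the citation suffices.
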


\begin{proof}
The proof is identical to analogous results for schematic targets~\cite{GV05,Li01}.
\end{proof}

Recall that our goal is to study the pushforward of GW classes from $\Mbar(\overline \sigma)$ to the stratum $\Mbar_\Gamma(B\mathbb G_m)$ of the (decorated) Picard stack. To that end, we note that the stack $\Mbar_\Gamma(B\mathbb G_m)$ has an analogous decomposition over the vertices. For each vertex $W$ of $\Gamma$, we have a Picard stack stack $\Mbar_W(B\mathbb G_m)$, and
\[
\Mbar_W(B\mathbb G_m) \to B\mathbb G_m^{E(W)},
\] 
where $E(W)$ is the set of edges incident to $W$ in $\Gamma$. Standard properties of mapping stacks then guarantee the following:

\begin{proposition}\label{prop: pic-strata}
	There is an identification of the stratum $\Mbar_\Gamma(B\mathbb G_m)$ as a fiber product:
\[
\begin{tikzcd}
\Mbar_\Gamma(B\mathbb G_m)\arrow{r}\arrow{d} & \prod_W \Mbar_W(B\mathbb G_m)\arrow{d}\\
\prod_E B\mathbb G_m \arrow{r} & \prod_W	 B\mathbb G_m^{E(W)}.
\end{tikzcd}
\]
The product in the top right is over the vertices of $\Gamma$, the bottom left is taken over the edges of $\Gamma$, and the bottom right is the product over all vertices of $\Gamma$ of one copy of $B\mathbb G_m$ for each edge incident to $W$.

Both horizontal arrows are lci morphisms with compatible cotangent complexes. In particular, pullback along identifies fundamental classes in the top row.
\end{proposition}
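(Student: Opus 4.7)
The strategy is to identify both horizontal arrows with a product, indexed by the edges of $\Gamma$, of the diagonal $\Delta\colon B\mathbb G_m\to B\mathbb G_m\times B\mathbb G_m$, and then read off all the claimed properties from a single base-change calculation.

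First, I would analyze the bottom horizontal arrow. By construction, $\prod_E B\mathbb G_m\to \prod_W B\mathbb G_m^{E(W)}$ decomposes as a product over edges $E=(W^+,W^-)$ of the diagonal $B\mathbb G_m\to B\mathbb G_m\times B\mathbb G_m$ into the two half-edge factors of the target. The diagonal is representable, affine, and smooth of relative dimension one: its fiber over $(L_1,L_2)$ is the $\mathbb G_m$-torsor $\underline{\mathrm{Isom}}(L_1,L_2)$. Consequently, the bottom arrow is representable, affine, and smooth of relative dimension equal to $\#E(\Gamma)$.

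Next, I would produce the natural morphism from $\Mbar_\Gamma(B\mathbb G_m)$ to the fiber product. Given a family of prestable curves $\mathcal C/S$ with dual graph $\Gamma$ and a line bundle $\mathcal L$ on $\mathcal C$, restriction to each irreducible component $\mathcal C_W$ yields an $S$-point of $\Mbar_W(B\mathbb G_m)$, and evaluation at the node $p_E$ determines an $S$-point of the copy of $B\mathbb G_m$ indexed by $E$. The compatibility with the right vertical map -- namely, that the two evaluations at $p_E^\pm$ land on the diagonal up to isomorphism -- is automatic, because both are canonically the stalk $\mathcal L_{p_E}$ pulled back to the respective branch. This defines the top arrow.

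The key step is showing that the top arrow is an isomorphism. This is a classical descent statement: a line bundle on a nodal curve $C$ is equivalent to the datum of a line bundle on each irreducible component of $C$ together with, for each node, a choice of isomorphism between the fibers of the two branches at the node. Tautologically, $\coprod_E \{p_E^+\sqcup p_E^-\}\rightrightarrows \widetilde C\to C$ is a hypercover in the \'etale topology, so pushforward $\mathrm{Hom}(-,B\mathbb G_m)$ yields an equalizer description of $\underline{\mathrm{Pic}}(C)$, and the equalizer of two maps to $B\mathbb G_m$ is precisely the fiber product along the diagonal. Relativizing over the base $S$ gives the isomorphism on $S$-points functorially, hence an isomorphism of stacks.

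Finally, the LCI property, the identification of cotangent complexes, and the statement about fundamental classes all follow formally. Since the square is Cartesian and the bottom arrow is representable, smooth, and affine of relative dimension $\#E(\Gamma)$, base change implies that the top arrow is representable, smooth of the same relative dimension, with relative cotangent complex pulled back from the bottom. In particular both are LCI. Since smooth pullback preserves fundamental classes, the virtual/lci pullback of $[\prod_W \Mbar_W(B\mathbb G_m)]$ along the top arrow equals $[\Mbar_\Gamma(B\mathbb G_m)]$. The only potentially delicate step is the gluing argument in the third paragraph, but it is standard and reduces entirely to the universal case $C = \mathrm{Spec}\,k[x,y]/(xy)$.
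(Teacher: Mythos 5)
Your proposal is correct, and it supplies an argument where the paper offers none: the paper simply asserts the proposition as following from "standard properties of mapping stacks," and the proof you give -- decomposing the bottom arrow as a product over edges of the diagonal of $B\mathbb G_m$, identifying the fiber product with $\Mbar_\Gamma(B\mathbb G_m)$ by descent of line bundles from the normalization, and then reading off smoothness, lci, compatibility of cotangent complexes, and preservation of fundamental classes by base change along a representable smooth affine morphism -- is exactly the kind of standard argument the authors have in mind. One technical point deserves correction: the normalization $\widetilde C\to C$ of a nodal curve is finite but not flat at the nodes, so $\coprod_E\{p_E^+\sqcup p_E^-\}\rightrightarrows\widetilde C\to C$ is not an \'etale hypercover, and the equalizer description of line bundles on $C$ should instead be justified by descent along the pinching (conductor/Milnor square) that exhibits $C$ as the pushout of $\widetilde C$ along the node sections; this descent statement for line bundles is classical, works in families of curves glued along disjoint sections, and reduces to precisely the local model $\operatorname{Spec} k[x,y]/(xy)$ that you invoke, so the conclusion is unaffected. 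You might also remark that the identification is compatible with the $\mathbb H$-decorations and degree conditions on the decorated Picard stacks, since these are discrete data that match on the two sides; this is implicit in the paper's formulation and costs nothing to add.
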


We can use these basic propositions to control the virtual class attached to $\overline\sigma$.

\begin{corollary}
	If $[\Gamma\to T]$ has no vertical edges, then the pushforward of the virtual class along
	\[
	\Mbar(\overline\sigma)\to \Mbar_\Gamma(B\mathbb G_m)
	\]
	is a rational multiple of the pullback along
	$$
	\Mbar_\Gamma(B\mathbb G_m)\to \prod_W \Mbar_W(B\mathbb G_m)
	$$
	of the pushforward of the class $[\prod_V \Mbar(V)]^{\sf vir}$. If $[\Gamma\to T]$ contains vertical edges, the pushforward of $[\Mbar(\overline\sigma)]^{\sf vir}$ is the same pullback, times a tautological class on $\Mbar_\Gamma(B\mathbb G_m)$.
\end{corollary}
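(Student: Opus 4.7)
The plan is to combine Propositions~\ref{prop: diagonal} and~\ref{prop: pic-strata} into a single diagram in which the diagonal pullback over the vertex moduli is identified with pullback along the Picard-stack fiber product. First I would apply Proposition~\ref{prop: diagonal} to write
\[
h_\star[\Mbar(\overline\sigma)]^{\sf vir} \sim_{\QQ} \Delta^!\left[\prod_V \Mbar(V)\right]^{\sf vir},
\]
where $\Delta\colon \mathfrak E\to \mathfrak E^2$ is the edge-wise diagonal. The space $\mathfrak E$ factors into contributions for each edge of $\Gamma$: each endpoint-evaluation takes values in $B\mathbb G_m$ for horizontal edges and for vertical edges at parameterized vertices, and in $\mathcal P$ for vertical edges at unparameterized vertices. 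The Picard-stack fiber product in Proposition~\ref{prop: pic-strata} uses the $B\mathbb G_m$-diagonal at every edge, so the task reduces to matching these two diagonals.

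In the case with no vertical edges, every factor of $\mathfrak E$ is a $B\mathbb G_m$, so $\Delta$ is exactly the Picard-stack diagonal pulled back to $\prod_V \Mbar(V)$. Assembling yields a Cartesian square
\[
\begin{tikzcd}
\mathsf G(\overline\sigma)\arrow{r}\arrow{d} & \prod_V \Mbar(V)\arrow{d}\\
\Mbar_\Gamma(B\mathbb G_m)\arrow{r} & \prod_W \Mbar_W(B\mathbb G_m)
\end{tikzcd}
\]
in which the bottom horizontal identifies fundamental classes (Proposition~\ref{prop: pic-strata}). Proper base change then expresses $\Delta^![\prod_V\Mbar(V)]^{\sf vir}$, pushed forward to $\Mbar_\Gamma(B\mathbb G_m)$, as the pullback of the pushforward of $[\prod_V\Mbar(V)]^{\sf vir}$. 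The rational constant is inherited from Proposition~\ref{prop: diagonal}.

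In the case with vertical edges, the extra $\mathcal P$-factors require a refinement. At each such edge I would factor the $\mathcal P$-diagonal through the relative diagonal over $B\mathbb G_m$:
\[
\mathcal P \xrightarrow{\ \Delta_{\mathcal P/B\mathbb G_m}\ } \mathcal P\times_{B\mathbb G_m}\mathcal P \hookrightarrow \mathcal P\times\mathcal P.
\]
The second inclusion is the base change of $\Delta_{B\mathbb G_m}$ along the natural projection, which feeds back into the analysis of the previous case. The first map is a codimension-one regular embedding whose class can be computed using the Euler sequence of the universal $\PP^1$-bundle as a combination of pullbacks of the first Chern class of the universal bundle via the two node evaluations and the relative hyperplane class. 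Pulling back to the moduli space, these become evaluation classes and $\psi$-classes at the node, which lie in the tautological ring on $\Mbar_\Gamma(B\mathbb G_m)$ in the sense of Bae. This yields the claimed tautological factor. The main obstacle is the careful bookkeeping at bubble components, where the rubber $\mathbb G_m$-quotient modifies the $\mathcal P$-factors by a common dilation; this dilation is absorbed in the diagonal pullback and only introduces further tautological contributions via Lemma~\ref{lem: normal-2}.
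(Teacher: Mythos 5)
Your treatment of the no-vertical-edge case is correct and is essentially the paper's argument: one identifies the edge-wise diagonal $\mathfrak E\to\mathfrak E^2$ of Proposition~\ref{prop: diagonal} with the bottom arrow of the square in Proposition~\ref{prop: pic-strata} and concludes by base change, the rational constant coming from Proposition~\ref{prop: diagonal}.

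The vertical-edge case, however, has two genuine problems. First, your factorization $\mathcal P\to\mathcal P\times_{B\mathbb G_m}\mathcal P\hookrightarrow\mathcal P\times\mathcal P$ presupposes refined evaluations to $\mathcal P$ at both flags of a vertical edge; these exist only when both adjacent vertices are rigid. At a vertical edge meeting a rubber vertex the fiberwise position is destroyed by the $\mathbb G_m$-quotient, so there is no $\mathcal P$-valued evaluation to glue, and the correct statement -- the one the paper uses -- is that at such an edge no condition beyond the $B\mathbb G_m$-gluing is imposed at all. Your appeal to Lemma~\ref{lem: normal-2}, which concerns virtual normal bundles of strata under blowups, and to a ``dilation absorbed in the diagonal pullback'' does not substitute for this case distinction and proves nothing at rubber-adjacent edges. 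Second, the class of the relative diagonal $\mathcal P\to\mathcal P\times_{B\mathbb G_m}\mathcal P$ pulls back under the two node evaluations to node \emph{evaluation} classes (combinations of the universal first Chern class $t$ and the section/relative hyperplane class on $[\PP^1/\mathbb G_m]$), not to $\psi$-classes of the domain curve. The hyperplane-class part is not pulled back from $B\mathbb G_m$, so it does not descend to a tautological class on $\Mbar_\Gamma(B\mathbb G_m)$ in Bae's sense; it survives as an evaluation insertion on the vertex moduli spaces, which is exactly how the paper states the conclusion (``we impose evaluation classes at these half edges''). As written, your reduction of the correction term to a tautological class on the Picard stratum is therefore not justified, even though the overall shape of the argument -- relative diagonal over $B\mathbb G_m$ at rigid--rigid vertical edges, nothing extra elsewhere -- is the right one.
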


\begin{proof}
When there are no vertical edges, this follows from first identifying $\mathfrak E\to \mathfrak E^2$ with the bottom arrow in the square of Proposition~\ref{prop: pic-strata}.	If there are vertical edges, the proof must be modified slightly, as the evaluations at the nodes have only been required to glue after projection to $B\mathbb G_m$, whereas we need to impose the condition that they glue in $\mathcal P$. There are two cases. If the edges connect two rigid vertices, then for each edge there is an additional gluing condition on the preimages of the nodes associated to each such edge. The condition is given by pulling back under the fiberwise diagonal embedding
\[
\mathcal P\to\mathcal P\times_{B\mathbb G_m}\mathcal P.
\]
If one of the two vertices is a rubber vertex, there is no additional condition. In both cases we impose evaluation classes at these half edges, which gives the statement.
\end{proof}

\subsubsection{Proof of Theorem~\ref{thm: log-enhancement}} We complete the proof of the theorem that logarithmic enhancements of Gromov--Witten cycles in the Picard stack are tautological. 

We prove the result by nested induction. We will show that, for a fixed GW cycle associated to the universal target $(\mathcal P|D)$, if 
\[
\Mbar^\diamond_{g,n,d}(B\mathbb G_m,\beta)\to \Mbar_{g,n,d}(B\mathbb G_m,\beta)
\]
is a composition of $k$ blowups along smooth strata, the logarithmic enhancement of the GW cycle lies in the tautological ring of $\Mbar^\diamond_{g,n,d}(B\mathbb G_m,\beta)$. The plan for the induction is as follows:
\begin{enumerate}[(i)]
	\item We first prove the full result for the case of $\mathcal P$ relative to both $0$ and $\infty$, and then treat the one-sided case. 
	\item We then induct on the number of blowups along smooth strata in the morphism above.
	\item We finally induct on the order on stars, as adapted in Remark~\ref{rem: ordering}.
\end{enumerate}

\noindent
{\sc Step I. Setup and notation.} To ease the burden of notation during the course of the proof, we use $B$ to denote $\Mbar_{g,n,d}(B\mathbb G_m,\beta)$ and we use $\Mbar$ to denote the space $\Mbar^{\sf exp}_\Lambda(\mathcal P|D,\beta)$. We denote the space $\Mbar_\Lambda(\mathsf A\times B\mathbb G_m,\beta)$ by $\mathfrak a \Mbar$. As discussed above, we have maps
\[
\Mbar\to\mathfrak a\Mbar\to B,
\]
where first arrow is virtually smooth and strict, while the second is logarithmically \'etale. Finally, we let 
\[
B(k)\to B
\]
be a sequence of $k$ blowups at smooth centers. At the inductive step, we have the following standard diagram consisting of three Cartesian squares:
\[
\begin{tikzcd}
\Mbar(k+1)\arrow{d}\arrow{r} & F(k+1)\arrow{d}\arrow{r} & \Mbar(k)\arrow{d}\\
\mathfrak a\Mbar(k+1)\arrow{r} & \mathfrak a F(k+1)\arrow{r}\arrow{d} & \mathfrak a \Mbar(k)\arrow{d} \\
&B(k+1)\arrow{r} & B(k),
\end{tikzcd}
\]
of the form that we have also seen in Section~\ref{sec: exotic-nonexotic-move}. We let $W$ be the center of the blowup 
\[
B(k+1)\to B(k)
\]
and 
\[
\mathbb P_W\to W
\]
denote the exceptional divisor. We let
\[
\mathfrak a E(k+1) \ \ \textnormal{and } \ E(k+1)
\]
be the pullbacks of the exceptional divisor to the middle and top row respectively. 

\noindent
{\sc Step II. Exceptional terms.} As noted previously, the three vertical arrows in the top row are all equipped with compatible perfect obstruction theories. We work with the middle row first, control the exceptional terms, pullback to the top row and then push down, applying the standard blowup yoga from~\cite{MR21}, already used several times. 

By Fulton's refined blowup formula~\cite[Section~6.7]{Ful98} the difference
\[
[\mathfrak a F(k+1)]^{\sf exc}-[\mathfrak a \Mbar(k+1)]
\]
between the excess class on the total transform and the strict transform is given by the pushforward of a class supported on $\mathfrak a E(k+1)$. This class, in turn, is obtained from the Segre class
\[
s(W\times_{B(k)} \mathfrak a\Mbar(k),\mathfrak a\Mbar(k))
\]
by pulling back to $\mathfrak a E(k+1)$ and applying tautological Chern class operators. This fiber product is a monomial substack of $\mathfrak a\Mbar(k)$. Since are free to replace $\mathfrak a\Mbar(k)$ by a further logarithmic modification, we can assume it is a simple normal crossings pair. We can therefore apply Aluffi's formula~\cite{Alu16}. 

Now Aluffi's formula involves strata of $\mathfrak a\Mbar(k)$ and normal bundles of the hypersurfaces that meet these strata. By Lemmas~\ref{lem: normal-1} and~\ref{lem: normal-2}, these normal bundles can be represented, on any given stratum, by further boundary divisors and pullbacks of classes on the moduli stack of curves. The map to the latter factors through $B(k)$. By an application of the projection formula, we can assume the Segre class term is a strata class on $\mathfrak a\Mbar(k)$. 

\noindent
{\sc Step III. Reduction to strata contributions.} By the discussion above, we are left to analyze a stratum of $\mathfrak a\Mbar(k)$. This stratum is a compactification of a torus bundle over a stratum of $\Mbar$ itself. Specifically, it can be obtained as a stratum of the form $\Mbar^\diamond(\sigma)$ associated to a cone $\sigma$, and the compactified torus bundle structure is given by the morphism
\[
\Mbar^\diamond(\sigma)\to \Mbar(\overline\sigma)
\] 
discussed previously. We denote the stratum $Z$ until we need its precise form above.

In order to calculate the excess class above, pull back the exceptional divisor $\mathbb P_W\to W$ along $Z\to W$. This pullback is again a compactified torus bundle, now typically of higher dimension, over the same base $\Mbar(\overline\sigma)$. Fulton's formula asserts that we should apply some polynomial in the Chern classes of the excess bundle of $\mathbb P_W\to W$. This is a polynomial in the Chern classes of $W\hookrightarrow B(k)$ and in the relative hyperplane bundle of $\mathbb P_W$. However, since our goal is to eventually push forward to $B(k+1)$, another application of the projection formula means that we can simply study classes on
\[
\mathbb P_W\times_W Z
\]
given by powers of the first Chern class of the relative hyperplane bundle. At this stage, we perform virtual pullback along the top square and push back down. The virtual pullback for the top row is bivariant, and so compatible with Chern classes. Moreover, since the Chern class of the bundle $\mathbb P_W$ is represented by a relative stratum, we conclude that the correction terms are all of the following form: take a stratum of $\Mbar(k)$, push forward to a stratum contained in the center, through which it factors, pull back to the exceptional divisor $\mathbb P_W$ of $B(k+1)$ over $B(k)$, apply a tautological class, and push forward to $B(k+1)$. 

\noindent
{\sc Step IV. Concluding via boundary splitting.} It now suffices to show that terms of the form described in the paragraph above are inductively tautological, in other words, we need to understand the pushforward of the virtual class $[\Mbar^\diamond(\sigma)]^{\sf vir}$ to $B(k)$. The basic idea is that $B(k)$ maps to $B$ as an equivariant compactification of a torus bundle. The space $B$ receives a map a stratum of $\Mbar(\overline\sigma)$. By a formal diagram chase, the pushforward class that we're aiming to understand is obtained by taking the logarithmic enhancement of the pushforward of the virtual class under 
\[
\Mbar(\overline\sigma)\to B,
\]
then pulling back to a broken toric bundle that dominates $B(k)$ and then pushing forward. 

We now give the details. We have fixed $\overline \sigma$, equivalently $[\Gamma\to T]$. The map $B(k)\to B$ is a broken toric bundle. If $\Gamma$ is the graph associated to the stratum $B$, we can use the strata formula in the previous section to write
\[
\begin{tikzcd}
B\arrow{d}\arrow{r} & \prod_{V\in \Gamma} B_V\arrow{d}\\
B\mathbb G_m^E\arrow{r} & B\mathbb G_m^{2E},
\end{tikzcd}
\]
where the spaces $B_V$ are the decorated Picard stacks associated to the vertices of $V$ in $\Gamma$, the set $E$ is the edge set of $\Gamma$. 

We now observe that the horizontal morphisms above, importantly the top one, is strict. We also note that the stratum $\Mbar(\overline\sigma)$ can be chosen to be indexed by a map $[\Gamma\to T]$ of dual graphs without vertical edges.

We now note that there is a map to the space $\prod B_V$ above:
\[
\prod_V \Mbar_V\to \prod_V B_V
\]
from the space of maps to the universal bundle determined by the start at $V$, taken over all vertices of $\Gamma$. This map is an external product, i.e. given factorwise when viewed as a product over the vertices $V$ of $\Gamma$. We therefore can pass -- factorwise-- to the blowups of $B_V$ and the corresponding strict transforms of $\Mbar_V$ to produce the logarithmic enhancements:
\[
\prod_V \Mbar^\diamond_V\to \prod_V B^\diamond_V.
\]
By the inductive hypothesis, the pushforward of this virtual class is tautological in the logarithmic sense. 

Note also that since the map $B\to\prod B_V$ is strict, and so this factorwise subdivision determines a subdivision of $B^\diamond$, which we denote $B$. Now consider the fiber product:
\[
B^\diamond\times_{\prod_V B^\diamond_V} \prod_V \Mbar^\diamond_V
\]
Since $[\Gamma\to T]$ has no vertical edges, inspecting the gluing diagrams in Propositions~\ref{prop: diagonal} and~\ref{prop: pic-strata}, this fiber product is a subdivision of $\Mbar(\overline\sigma)$, and furthermore, the pushforward of the virtual class of this subdivision computes the logarithmic enhancement of $[\Mbar(\overline\sigma)]^{\sf vir}$ in the logarithmic Chow ring of $B$.\footnote{The logarithmic structure of $B$ is the divisorial logarithmic structure induced by its intersection with the other divisors of $\Mbar_{g,n,d}(B\mathbb G_m,\beta)$.}

Since the logarithmic enhancement of $[\Mbar(\overline\sigma)]^{\sf vir}$ has been shown, inductively, to be tautological, we can now conclude. The space $B(k)$ is generically a torus bundle over $B^\diamond$, and by blowing up $B(k)$, we can obtain a diagram
\[
\begin{tikzcd}
& \widetilde B(k)\arrow{dr}\arrow{dl} &\\
B(k) & & B^\diamond.	
\end{tikzcd}
\]
Furthermore, a straightforward diagram chase shows that the pushforward of $[\Mbar^\diamond(\overline)]^{\sf vir}$ to $B(k)$ is obtained by pull/push from the logarithmic enhancement on $B^\diamond$ constructed above. We conclude the result. \qed

\subsubsection{Pullback to geometric targets and Theorem~\ref{thm: P1-bundle-formula}} The result that logarithmic enhancements of GW classes of the universal $\mathbb P^1$-bundle is tautological implies the result for logarithmic targets, as stated in Theorem~\ref{thm: P1-bundle-formula}. This follows from a virtual pullback argument. We provide the details.

To reset notation, recall $(Y|\partial Y)$ is an snc pair and $\mathbb P\to Y$ is the projective completion of a line bundle. The logarithmic structure on $\mathbb P$ is given by the pullback $\partial Y$, possibly together with one or both of the $0$ and $\infty$ sections. Recall also that $[\PP^1/\mathbb G_m]\to B\mathbb G_m$ is the universal $\mathbb P^1$-bundle. As a result, there is a Cartesian square
\[
\begin{tikzcd}
\mathbb P\arrow{d}\arrow{r} & {[\PP^1/\mathbb G_m]}	\arrow{d}\\
Y\arrow{r} & B\mathbb G_m. 
\end{tikzcd}
\]
The square is Cartesian in the category of logarithmic schemes as well, with the logarithmic structures above for $Y$ and $\mathbb P$, the trivial logarithmic structure on $B\mathbb G_m$, and the logarithmic structure on $\mathcal P$ given by a subset of $0$ and $\infty$, as appropriate based on the choice of $\mathbb P$. We let $\mathsf A$ denote the Artin fan of $[\mathbb P^1/\mathbb G_m]$ -- it is either $[\mathbb A^1/\mathbb G_m]$ or $[\mathbb P^1/\mathbb G_m]$.

Passing to mapping spaces, with diamonds denoting that we have introduced appropriate subdivisions, we have:
\[
\begin{tikzcd}
\Mbar^\diamond_\Lambda(\PP|\partial \PP,\beta)\arrow{r}\arrow{d}	& \Mbar^\diamond_\Lambda(\mathcal P|D,\beta)\arrow{d}\\
\Mbar^\diamond_\Lambda(Y\times\mathsf A|\partial Y+D,\beta)\arrow{r}\arrow{d}	& \Mbar^\diamond_\Lambda(B\mathbb G_m\times\mathsf A,\beta)\arrow{d}\\
\Mbar^\diamond_\Lambda(Y|\partial Y)\arrow{r} & \Mbar^\diamond_{g,n,d}(B\mathbb G_m,\beta).
\end{tikzcd}
\]
The subdivisions are chosen such that the right vertical in the bottom square is flat with reduced fibers. The morphism is toroidal to begin with, so this is achieved by toroidal semistable reduction. 

Analyzing the diagram, in the top square, both vertical arrows are equipped with compatible perfect obstruction theories given by the logarithmic tangent bundle. This follows from cosmetic changes to~\cite[Section~6]{AW}. The bottom square is Cartesian in the category of schemes and logarithmic schemes, because the right vertical in the bottom square has been made flat by the choice of subdivision. The lowest horizontal arrow is equipped with a perfect logarithmic obstruction theory, i.e. a perfect obstruction theory over the relative stack of logarithmic structures, e.g. by the argument of~\cite[Section~5.2]{ACW}. The flatness in the bottom row means that the middle row is also equipped with a virtual pullback.

By commutativity of virtual pullbacks with each other~\cite{Mano12}, a diagram chase and the application of the universal result in the previous section shows that the pushforward of a GW class along
\[
\Mbar^\diamond_\Lambda(\mathbb P|D,\beta)\to \Mbar_\Lambda^\diamond(Y|\partial Y)
\]
is equal to the virtual pullback of a tautological class from $\Mbar^\diamond_{g,n,d}(B\mathbb G_m,\beta)$. In other words, the GW pushforward on $\Mbar_\Lambda^\diamond(Y,\beta)$ is obtained by taking tautological cohomology class on some blowup of the universal Picard-type stack $ \Mbar^\diamond_{g,n,d}(B\mathbb G_m,\beta)$, pulling back to the mapping space, and capping with the virtual class. Pushing forward this expression to the moduli space of curves, we obtain Theorem~\ref{thm: P1-bundle-formula}.

\subsection{Compressed evaluation spaces}\label{sec: compressed-evaluations} Let $(Y|\partial Y)$ be a pair as above, and let $\mathbb P\to Y$ be a $\mathbb P^1$-bundle. Equip it with logarithmic structure along $\partial Y$ as well as either one or two sections. We refer to these cases as the {\it one sided} and {\it two sided} $\mathbb P^1$-bundles. 

Fix discrete data $\Lambda$ for a logarithmic stable maps problem to $\PP$, noting again that we have suspended the disjointness assumption. Let $\sf v$ be the corresponding star attached to $\Lambda$. 

There is a distinguished projection
\[
\Sigma_{\PP|\partial \PP}\to \Sigma_{Y|\partial Y}
\]
which, ignoring fan structures, has fibers equal to $\RR_{\geq 0}$ in the one sided case and $\RR$ in the two sided case. Let $\sf w$ be a star in $\Sigma_{\PP|\partial \PP}$. The distinguished projection allows us to identify a distinguished $\mathbb P^1$-bundle direction in the target that is determined by $\sf w$. 

In more detail, if we project $\sf w$ onto $\Sigma_{Y|\partial Y}$ we obtain a star $\overline w$ in the target, and (non-canonically or up to subdivision) a component of an expansion of $(Y|\partial Y)$, where $\overline w$ is disjoint. This is a broken toric bundle over a stratum. 

We can therefore view a target determined by $\sf w$ as a blowup of a $\mathbb P^1$-bundle (either one or two sided). We call this the {\it associated $\mathbb P^1$-bundle at $\sf w$}. In the context of mapping spaces, evaluations pulled back from the associated $\mathbb P^1$-bundle will be called {\it compressed insertions}. 

Let $\widetilde \PP\to \PP$ be a blowup such that the canonical lift of $\Lambda$ is disjoint. There are therefore two evaluation spaces associated to $\Lambda$, depending on whether we interpret it on $\PP$ or $\widetilde \PP$ via the canonical lift. See for instance Figure~\ref{fig: blowup-of-bundle}.
\begin{figure}
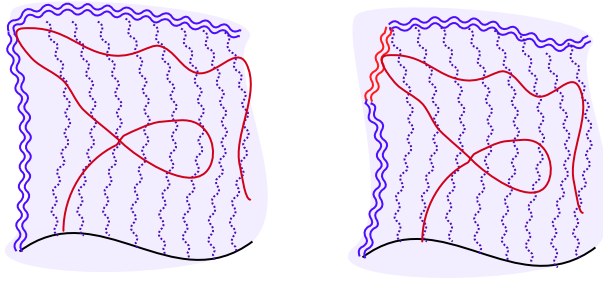


\tikzset{every picture/.style={line width=0.75pt}} 


\caption{A $\mathbb P^1$-bundle over a logarithmic base, and a blowup of it. The curve goes into the corner on the left. On the left we only have access to compressed impression but on the right we have a full dimensional evaluation space.}\label{fig: blowup-of-bundle}
\end{figure}

There is also an obvious map
\[
\mathsf{Ev}\to \overline{\sf Ev}
\]
between the total evaluation spaces spaces, as a product over all marked points. The key issue, which makes this discussion slightly different than the exotic/non-exotic comparison, is that this map typically has large, positive dimensional fibers. We refer to $\overline{\sf Ev}$ as the compressed evaluation space.  

Since any star has a distinguished $\mathbb P^1$-bundle direction, any star has an analogous compressed evaluation space, noting as usual that it only makes sense to say this after choosing a fan structure completing the image $\overline {\sf w}$ of $\sf w$ under $\Sigma_{\PP|\partial \PP}\to \Sigma_{Y|\partial Y}$.

We maintain the notation and terminology. 

\begin{theorem}\label{thm: compression-of-insertions}
The GW cycle pushforwards associated to 
\[
\Mbar_\Lambda(\widetilde\PP|\partial \widetilde\PP)\to \Mbar_\Lambda(Y|\partial Y)
\]
with arbitrary insertions can be expressed in terms of GW pushforwards with compressed insertions associated to $\Lambda$ in $\PP|\partial \PP$, and associated to all stars in $\Sigma_{\PP|\partial\PP}$ that are strictly smaller in the partial ordering by stars. 
\end{theorem}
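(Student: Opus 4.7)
The plan is to deduce the result from the exotic/non-exotic manoeuvre of Theorem~\ref{thm: exotic-to-strata} and the rigidification calculus of Theorem~\ref{thm: rigid-split}, applied to the logarithmic modification
\[
\mu \colon \Mbar_\Lambda(\widetilde\PP|\partial\widetilde\PP) \to \Mbar_\Lambda(\PP|\partial\PP).
\]
By Abramovich--Wise logarithmic birational invariance~\cite{AW}, the map $\mu$ identifies virtual fundamental classes under proper pushforward. A general insertion $\alpha$ for $\widetilde\PP$ is a class on the evaluation space $\mathsf{Ev}_\Lambda(\widetilde\PP|\partial\widetilde\PP)$, which is naturally a strata logarithmic modification of $\mathsf{Ev}_\Lambda(\PP|\partial\PP)$. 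Viewed relative to $(\PP|\partial\PP)$, the class $\alpha$ is therefore precisely an \emph{exotic} insertion in the sense of Definition~\ref{def: exotic-insertion}. The whole theorem is thus reformulated as a statement about exotic GW pushforwards for $(\PP|\partial\PP)$.

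First I would invoke Theorem~\ref{thm: exotic-to-strata} to rewrite the pushforward $\mu_\star\bigl(\mathsf{ev}^\star(\alpha) \cap [\Mbar_\Lambda(\widetilde\PP|\partial\widetilde\PP)]^{\sf vir}\bigr)$ as a linear combination of strongly non-exotic stratum GW classes on $\Mbar_\Lambda(\PP|\partial\PP)$, each decorated at worst by a pullback from the Artin fan $\mathfrak a T$. Next I would apply the rigidification and splitting of Theorem~\ref{thm: rigid-split} together with the rubber calculus of Section~\ref{sec: rubber-calculus} to convert each such stratum class into a GW class on a virtual component of a target degeneration. The logarithmic degeneration formula of Theorem~\ref{thm: degeneration-formula} then decomposes this component into contributions indexed by vertices of a Chow $1$-complex $\Gamma$ in $\Sigma_{\PP|\partial\PP}$, with non-exotic insertions at each vertex pulled back from (products of) strata of $\PP$.

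Each vertex $V$ determines a star $\mathsf w_V$ in $\Sigma_{\PP|\partial\PP}$. By the dichotomy governing the partial order of Section~\ref{sec: stars-partial-ordering}, either $\mathsf w_V$ is strictly smaller than the star $\mathsf v$ associated to $\Lambda$ -- in which case the contribution is of the form explicitly allowed by the theorem statement, with compressed insertions inherited from the splitting -- or $\mathsf w_V$ is equivalent to $\mathsf v$ and we are in the ``main vertex'' case. In the latter situation the associated target $E_V$ is a logarithmic modification of $(\PP|\partial\PP)$ itself, whose associated $\PP^1$-bundle is $\PP\to Y$; the non-exotic insertion at $V$ is pulled back from a stratum of $\PP$, and since every stratum of $\PP$ lives inside the $\PP^1$-bundle and factors through its projection, the insertion is compressed for $\mathsf v$ by construction. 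Thus the main-vertex contribution is a GW pushforward on $\Mbar_\Lambda(\PP|\partial\PP)$ with compressed insertions for $\mathsf v$.

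The principal technical obstacle is the translated-star phenomenon of Section~\ref{sec: translated-stars}: a secondary vertex may carry a star which, \emph{viewed in the ambient $\Sigma_{\PP|\partial\PP}$}, is only equivalent to $\mathsf v$ rather than strictly smaller, so the star-complexity induction does not immediately close. This is handled exactly as in the proof of Theorem~\ref{thm: rigid-split}, by an inner induction on the genus distribution: once all genus is concentrated at the main vertex, Lemma~\ref{lem: uncorrected-translated} guarantees that the relevant evaluation map is already combinatorially flat, so the K\"unneth splitting at the secondary vertex does not introduce any new exotic insertions and the contribution is absorbed into a single GW class on $\Mbar_\Lambda(\PP|\partial\PP)$ with compressed insertions. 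A double induction -- outer on the partial order of stars, inner on genus -- then closes the argument.
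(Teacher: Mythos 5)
There is a genuine gap at the very first step. You identify an arbitrary insertion for $\widetilde\PP$ with an \emph{exotic} insertion for $(\PP|\partial\PP)$ by asserting that $\mathsf{Ev}_\Lambda(\widetilde\PP|\partial\widetilde\PP)$ is a strata logarithmic modification of $\mathsf{Ev}_\Lambda(\PP|\partial\PP)$, and you then invoke Theorem~\ref{thm: exotic-to-strata}. But in the setting of this theorem disjointness of the contact order is suspended, and that is exactly where the claim fails: if a marked point has positive tangency with more than one divisor of $\partial\PP$ (say the zero section and a pullback divisor), its evaluation factor on $\PP$ is a codimension $\geq 2$ stratum, while on $\widetilde\PP$ (after making the contact order disjoint) it is an exceptional divisor lying over that stratum. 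The comparison map $\mathsf{Ev}\to\overline{\mathsf{Ev}}$ therefore has positive-dimensional fibers and is not birational, so the class $\alpha$ does \emph{not} live on a blowup of the evaluation space in the sense of Definition~\ref{def: exotic-insertion}, and Theorem~\ref{thm: exotic-to-strata} does not apply to it. This is precisely the difficulty the section is designed to address (the text flags it as ``the key issue, which makes this discussion slightly different than the exotic/non-exotic comparison''), so your reduction essentially assumes what has to be proven.

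The actual argument inserts a compression step before any induction: one factors the comparison map as $\mathsf{Ev}\to \mathsf F\to \overline{\mathsf{Ev}}$, where $\mathsf F\to\overline{\mathsf{Ev}}$ is a projective space bundle (factorwise in the markings) and $\mathsf{Ev}\to\mathsf F$ is a tower of blowups along smooth centers. The blowup tower is handled by the calculus of Section~\ref{sec: exotic-evaluations} (Fulton/Aluffi error terms becoming stratum GW classes with $T$-class decorations), and the bundle step is handled by the projective bundle formula, which writes any insertion on $\mathsf F$ as a pullback of a compressed class from $\overline{\mathsf{Ev}}$ times a stratum class of $\mathsf F$; only after this does one run the rigidification/splitting induction as in Theorem~\ref{thm: rigid-split}. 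Your final two paragraphs do track that induction correctly in spirit (including the translated-star issue, the genus reduction, and Lemma~\ref{lem: uncorrected-translated}, as well as the need for the rigidifying insertion to be compressed after splitting), but without the compression of the positive-dimensional fiber directions those steps have nothing of the right form to act on, so the proposal as written does not close.
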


The proof has a structure similar to that of Theorem~\ref{thm: rigid-split}, so the reader may wish to review the discussion of Section~\ref{sec: translated-stars}.

\begin{proof}
Consider the morphism $\mathsf{Ev}\to \overline{\sf Ev}$. For each marking $p_i$ we obtain the corresponding factor ${\sf Ev}_i$ from $\overline{\sf Ev}_i$ by adding the ray corresponding to its contact order to the fan of $\PP$, followed by some further subdivisions. By blowing up $\widetilde \PP$ further if required, we can assume there is a factorization:
\[
\mathsf{Ev}_i\to {\sf F_i}\to \overline{\sf Ev}_i
\]
where ${\sf F_i}\to \overline{\sf Ev}_i$ is a projective space bundle. We have corresponding maps, consolidated over the different markings:
\[
\mathsf{Ev}\to {\sf F}\to \overline{\sf Ev}.
\]
We can assume that the first arrow factorizes as a sequence of blowups along smooth centers. After the last step, we have to compose with ${\sf F}\to \overline{\sf Ev}$ which is not a blowup. We start with an insertion $\alpha$ on ${\sf Ev}$ and, for each step in the blowup factorization, we know that $\alpha$ differs from a class pulled back along the blowup by an error term. The error term is an insertion pushed forward from the exceptional divisor. By the exact same calculus as in Section~\ref{sec: exotic-evaluations}, we recognize this invariant as coming from the center of the blowup, together with additional decorations coming from the Artin fan associated to the space of Chow $1$-complexes. Descending down the tower of blowups, we end up with a strata GW classes, including with $T$-class, as in Definition~\ref{def: stratum-GW-class}. However, all the insertions now come from $\sf F$. 

The last step is not a blowup, but we can still apply the formula for the cohomology of a projective bundle factorwise. It means we can write any insertion on $\sf F$ as a pullback of a cohomology class on the compressed evaluation space $\overline{\sf Ev}$, times a stratum of $\sf F$. By the same argument above, this again turns into a combination of strata GW classes, with further decorations, and now all the insertions come from ${\sf Ev}$. 

This puts us in a position to induct, as in the final step of the proof of Theorem~\ref{thm: rigid-split}. The induction proceeds exactly as in that case, with the following two observations. First, we can always add a point of tangency order $0$ to rigidify, and by its construction in {sec: rigid-calculus}, after splitting the rigidifying class always translates into a compressed insertion on the star. Second, for dealing with the case of translated stars, we use the discussion in Section~\ref{sec: translated-stars}. We can further induct on the genus and tangency order here, to reduce to the case in the remark, and observe that the rigidification/splitting of translated stars do not lead to uncompressed insertions. This completes the proof. 
\end{proof}

\subsection{Conclusions for broken bundles}\label{sec: broken-bundles} 

We record two corollaries, which arise from combining the compressed evaluation spaces above with the theorems on collapsing $\PP^1$-bundles. 

\begin{corollary}\label{cor: collapsing-log-toric-bundles}
Let $(E|\partial E)\to (Y|\partial Y)$ be a broken toric bundle. The logarithmic GW cycles in $\Mbar_{g,n}$ of $(E|\partial E)$ lie in the span of logarithmic GW cycles of the pairs $(W|\partial W)$ where $W$ ranges over strata of $Y$ and $\partial W$ is the induced snc boundary. 
\end{corollary}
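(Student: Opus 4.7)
The plan is to reduce to the $\mathbb P^1$-bundle case by iteration, using Theorem~\ref{thm: compression-of-insertions} to convert arbitrary insertions on a logarithmic blowup into compressed insertions on the underlying projective bundle, and Theorem~\ref{thm: P1-bundle-formula} to collapse a single $\mathbb P^1$-bundle onto its base. The argument is a nested induction: the outer induction is on the rank of the toric fiber of $E \to Y$, and the inner induction is on the partial order on stars introduced in Section~\ref{sec: stars-partial-ordering}.

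To set up the reduction, I would realize $E$ as a logarithmic blowup of the projectivization $E_{\sim} = \mathbb P(L_0 \oplus \cdots \oplus L_r) \to Y$ and then, after a harmless further logarithmic modification compatible with the coordinate subspaces, present $E_{\sim}$ as an iterated tower of $\mathbb P^1$-bundles using the canonical complete flag on $L_0 \oplus \cdots \oplus L_r$. At each stage of the tower the induced logarithmic structure is the pullback of the previous stage's snc boundary together with one or both of the coordinate sections, which is precisely what Theorem~\ref{thm: P1-bundle-formula} handles; by virtual birational invariance, passing to this refinement does not change the GW cycles. Applying Theorem~\ref{thm: compression-of-insertions} then expresses the logarithmic GW cycles of $(E|\partial E)$ with arbitrary insertions as a sum of two pieces: (i) logarithmic GW cycles of the top stage of the tower with compressed insertions, and (ii) contributions from strictly smaller stars. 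Piece (i) is precisely of the form appearing in Theorem~\ref{thm: P1-bundle-formula}, so collapsing the top $\mathbb P^1$-bundle rewrites (i) in terms of logarithmic GW cycles of the next stage, which is itself a broken toric bundle over $Y$ of strictly lower rank; the outer induction reduces (i) to logarithmic GW cycles of $(Y|\partial Y)$ itself.

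The smaller-star contributions in (ii) are, by construction and the discussion of Section~\ref{sec: translated-stars}, logarithmic GW cycles of broken toric bundles supported over proper strata $W \subsetneq Y$ of the base, with logarithmic structure induced from $(Y|\partial Y)$; these are handled by the inner induction on the star order. The main subtle point --- and essentially the only place where genuine checking is required --- is verifying that the rigidification of Section~\ref{sec: rigid-calculus} applied to broken toric bundles produces contributions whose logarithmic structures really are the induced snc structures $(W|\partial W)$ on strata of $Y$, rather than exotic enhancements thereof. Tracing through the tropical picture, the coordinate-direction boundary divisors become invisible after fiberwise projection to $\Sigma_Y$, which is exactly the compatibility needed. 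With this in hand the nested induction terminates, and the corollary follows.
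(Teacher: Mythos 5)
Your argument follows essentially the same route as the paper's proof: after harmless logarithmic modifications, present the broken toric bundle as a logarithmic blowup of a $\mathbb P^1$-bundle over a broken toric bundle of strictly smaller fiber rank, trade arbitrary insertions for compressed ones plus smaller-star corrections via Theorem~\ref{thm: compression-of-insertions}, collapse the $\mathbb P^1$-bundle via Theorem~\ref{thm: P1-bundle-formula}, and run a nested induction on fiber rank and the star order. Your explicit tower construction (blowing up the section $\mathbb P(L_0)$, which is a stratum of the boundary, so the blowup is logarithmic, and taking a common refinement with the given blowup $E\to \mathbb P(L_0\oplus\cdots\oplus L_r)$) fills in a step the paper only asserts, and is a welcome addition.

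There is, however, one substantive omission. As the corollary is stated and used, ``logarithmic GW cycles of $(E|\partial E)$'' include \emph{exotic} insertions, i.e.\ classes pulled back from criss-cross blowups of the evaluation space $\mathsf{Ev}_\Lambda(E|\partial E)$ that need not respect the product structure. Theorem~\ref{thm: compression-of-insertions} only treats insertions living on the evaluation space of a logarithmic blowup $\widetilde{\mathbb P}\to\mathbb P$ of the target, so your first step does not reach exotic classes. The paper's proof therefore begins by running the exotic/non-exotic algorithm (Theorems~\ref{thm: exotic-to-strata} and~\ref{thm: rigid-split}) to reduce to non-exotic cycles on broken toric bundles over strata of $(Y|\partial Y)$, and only then performs the compression/collapse induction; you should insert this step at the start, which costs nothing since its outputs are again of the form your induction handles. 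A second, minor point: the smaller-star corrections produced by Theorem~\ref{thm: compression-of-insertions} are not all supported over proper strata $W\subsetneq Y$ --- a smaller star can lie over the same cone, with smaller curve class, genus, or finer tangency data, hence over a bundle on $Y$ itself. This does not break your argument, because the star order of Section~\ref{sec: stars-partial-ordering} is finite below, but the inner induction should be phrased as an induction on that order (together with genus and tangency where the rigidification of Section~\ref{sec: translated-stars} requires it), rather than on strata depth in $Y$.
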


\begin{proof}
We start with a logarithmic GW cycle on $(E|\partial E)$ with a possibly exotic insertion, and some discrete data. Apply the exotic/non-exotic algorithm to reduce to non-exotic GW cycles $(E|\partial E)$ which is itself a broken toric bundle, as well as broken toric bundles over strata of $(Y|\partial Y)$. For each such target, we can express it as a $\mathbb P^1$-bundle over a smaller broken toric bundle. By Theorem~\ref{thm: compression-of-insertions} we can assume the insertions are compressed with respect to this choice of $\PP^1$-bundle, with the correction terms being analogous compressed invariants over the deeper strata of $(Y|\partial Y)$. We are now free to apply Theorem~\ref{thm: P1-bundle-formula}, which collapses us to the base of this $\mathbb P^1$-bundle. Now proceed by induction until every branch of the induction reaches a stratum $(W|\partial W)$ of the claimed form. 
\end{proof}

The analogous corollary holds for one-sided bundles. Let  $(\PP|\partial \PP)\to (Y|\partial Y)$ be a one-sided $\PP^1$-bundle. Let $E \to \PP\to Y$ be logarithmic blowup. 

\begin{corollary}\label{cor: collapsing-one-sided-bundles}
Let $(E|\partial E)\to (Y|\partial Y)$ be as above. The logarithmic GW cycles in $\Mbar_{g,n}$ of $(E|\partial E)$ lie in the span of logarithmic GW cycles of the pairs $(W|\partial W)$ where $W$ ranges over strata of $Y$ and $\partial W$ is the induced snc boundary. 
\end{corollary}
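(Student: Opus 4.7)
The plan is to mirror the argument for Corollary~\ref{cor: collapsing-log-toric-bundles}, with the simplification that here $(\PP|\partial\PP)\to (Y|\partial Y)$ is already a one-sided $\PP^1$-bundle, so only a single bundle-collapse will be required rather than an iterated broken toric collapse. The three basic moves, to be performed in sequence on an arbitrary logarithmic GW cycle of $(E|\partial E)$, are: (i) the exotic-to-non-exotic reduction of Theorem~\ref{thm: exotic-non-exotic} applied to $(E|\partial E)$; (ii) the compression of insertions of Theorem~\ref{thm: compression-of-insertions} along the distinguished $\PP^1$-bundle direction $E\to\PP\to Y$; and (iii) the $\PP^1$-bundle collapse of Theorem~\ref{thm: P1-bundle-formula} in the one-sided case. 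We then induct on the strata depth of $(Y|\partial Y)$ to dispose of the correction terms.

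In more detail, we start with an arbitrary, possibly exotic, insertion on $\Mbar_\Lambda(E|\partial E)$. Applying Theorem~\ref{thm: exotic-non-exotic}, the resulting GW cycle is expressed as a linear combination of non-exotic GW cycles attached to $(E|\partial E)$ and to its strata. Each stratum is either a one-sided $\PP^1$-bundle over a deeper stratum of $(Y|\partial Y)$, up to absorbing the effect of the logarithmic blowup $E\to\PP$, or a component of the horizontal section sitting over some stratum of $Y$. In either case the contribution falls within the inductive hypothesis on the strata depth of $(Y|\partial Y)$.

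Next, we handle the main piece, namely the non-exotic GW cycle of $(E|\partial E)$ itself. By Theorem~\ref{thm: compression-of-insertions}, after introducing error terms attached to stars of strictly smaller order in the sense of Section~\ref{sec: stars-partial-ordering}, we may assume that all insertions are compressed with respect to $E\to \PP\to Y$: each is the pullback of a class from the compressed evaluation space of $(Y|\partial Y)$ coupled with a power of the relative hyperplane class. The smaller-star error terms are handled by induction using the well-foundedness of the star ordering. The main compressed term is then directly amenable to Theorem~\ref{thm: P1-bundle-formula} in the one-sided case, which says exactly that the pushforward to $\Mbar_{g,n}$ lies in the span of logarithmic GW cycles of $(Y|\partial Y)$ acted on by tautological classes.

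The main obstacle, as in the two-sided case, is the bookkeeping needed to verify that at every stage the correction terms land on geometries satisfying the inductive hypothesis. Concretely, we need (a) that the strata produced by the exotic/non-exotic manoeuvre remain one-sided bundles over strata of $(Y|\partial Y)$ once the blowup from $E\to\PP$ is absorbed, and (b) that the residual stars produced at the compression step genuinely decrease in the ordering of Section~\ref{sec: stars-partial-ordering}, so that the induction closes. Both points are controlled by the same arguments already used in Theorems~\ref{thm: exotic-to-strata}, \ref{thm: rigid-split}, and~\ref{thm: compression-of-insertions}, which were explicitly designed to handle translated stars over bundle-type targets. Once these are verified, induction on the strata depth of $(Y|\partial Y)$ completes the reduction to pairs $(W|\partial W)$ of the claimed form.
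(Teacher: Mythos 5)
Your proposal is correct and follows essentially the same route as the paper: the paper proves the two-sided case (Corollary~\ref{cor: collapsing-log-toric-bundles}) by the exotic/non-exotic reduction, then compression of insertions via Theorem~\ref{thm: compression-of-insertions}, then the $\PP^1$-collapse of Theorem~\ref{thm: P1-bundle-formula}, with induction on strata and the star ordering, and for the one-sided case simply declares the induction parallel and leaves the details to the reader. Your sketch is exactly that parallel induction, spelled out.
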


\begin{proof}
The induction is parallel; the details are left to the reader. 
\end{proof}

The results also prove a conjecture of Urundolil Kumaran and the second author, see~\cite[Conjecture~C]{RUK22}.

\begin{corollary}
Let $(Y|\partial Y)$ be a toric variety equipped with any torus invariant boundary divisor. The logarithmic Gromov--Witten theory of $(Y|\partial Y)$ is tautological.
\end{corollary}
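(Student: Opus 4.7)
The plan is to induct on $\dim Y$. The base case $\dim Y = 0$ is immediate. For the inductive step, given $(Y|\partial Y)$ with $Y$ a smooth projective toric variety of positive dimension and $\partial Y$ any torus-invariant boundary, the two collapsing corollaries from Section~\ref{sec: broken-bundles} will reduce log GW cycles of $(Y|\partial Y)$ to log GW cycles of toric pairs of strictly smaller dimension.

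First I would reduce to the case $\partial Y = \partial_{\mathrm{full}} Y$, the full toric boundary. If $D \subset \partial_{\mathrm{full}} Y \setminus \partial Y$ is a missing torus-invariant divisor, degenerate $Y$ to the normal cone of $D$. The logarithmic degeneration formula (Theorem~\ref{thm: degeneration-formula}) expresses log GW cycles of the general fiber $(Y|\partial Y)$ in terms of log GW cycles of the two components of the special fiber: $(Y|\partial Y + D)$, which has strictly more boundary, and the projective normal cone $(\PP_D|\partial \PP_D)$, which is a one-sided $\PP^1$-bundle over $(D|\partial D)$. Corollary~\ref{cor: collapsing-one-sided-bundles} reduces log GW of this one-sided bundle to log GW of the strata of the toric pair $(D|\partial D)$, which is of strictly smaller dimension and so is controlled by the inductive hypothesis. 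Iterating this move over the finitely many missing torus-invariant divisors brings us to the case $\partial Y = \partial_{\mathrm{full}} Y$.

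Second, with $\partial Y$ the full toric boundary, I would exhibit $(Y|\partial_{\mathrm{full}} Y)$, after a toric log blowup, as a broken toric bundle over a lower-dimensional toric pair $(Y_0|\partial_{\mathrm{full}} Y_0)$. Concretely, pick any ray $\rho$ in the fan $\Sigma$ of $Y$ and consider the quotient lattice $N/\langle \rho \rangle$; after unimodular subdivision of the quotient fan into a smooth complete fan and a compatible refinement of $\Sigma$, one obtains a torus-equivariant morphism $Y \to Y_0$ that is a broken $\PP^1$-bundle whose $0$- and $\infty$-sections are contained in $\partial_{\mathrm{full}} Y$. Corollary~\ref{cor: collapsing-log-toric-bundles} then reduces log GW cycles of $(Y|\partial_{\mathrm{full}} Y)$ to log GW cycles of $(W|\partial W)$ for $W$ ranging over strata of $Y_0$. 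Each such $(W|\partial W)$ is again a toric pair with torus-invariant boundary, of strictly smaller dimension, so the inductive hypothesis applies.

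The main obstacle is the combinatorial construction in the second step: arranging, after toric log blowups, that $Y$ literally fits the definition of a broken toric bundle from the preceding subsections. This requires some care with the choice of ray and compatible subdivisions of $\Sigma$, but the constructions are standard in toric geometry, and logarithmic birational invariance of GW cycles (via Abramovich--Wise) guarantees that passage to a toric log blowup does not alter the invariants under consideration. Once this combinatorial bookkeeping is done, the two collapsing corollaries supply all the geometric content, and the induction proceeds cleanly.
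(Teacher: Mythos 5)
Your proposal is correct in substance, and its first step coincides with the paper's argument: the paper inducts on the number of torus-invariant divisors missing from $\partial Y$, and for each missing divisor performs deformation to the normal cone, applies the degeneration formula of~\cite{MR23}, and handles the projective normal cone piece as a one-sided broken $\mathbb P^1$-bundle over the divisor via Corollary~\ref{cor: collapsing-one-sided-bundles}, with the dimension induction absorbing that term. Where you genuinely diverge is the treatment of the full-boundary case: the paper simply takes $(Y|\partial_{\mathrm{full}}Y)$ as the base case of its induction and cites~\cite[Theorem~A]{RUK22}, whereas you re-derive it inside the present framework by projecting along a ray, refining so that both $\pm\rho$ appear and the fan map is a map of fans, and invoking Corollary~\ref{cor: collapsing-log-toric-bundles} to collapse to strata of a lower-dimensional toric base. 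This buys a proof that is self-contained relative to this paper's machinery (no appeal to the DR-cycle-based result of~\cite{RUK22}), at the cost of the combinatorial bookkeeping you flag: to apply Corollary~\ref{cor: collapsing-log-toric-bundles} you must check that, after a common refinement, your fibration is literally a logarithmic blowup of a projectivized split bundle $\mathbb P(\mathcal O\oplus L)\to Y_0$ with the full vertical-plus-horizontal boundary — this requires choosing a piecewise-linear section of the lattice projection that is linear on the cones of the base fan, building the associated bundle fan, and passing to a smooth common refinement; logarithmic birational invariance then ensures nothing is lost. There is no circularity in doing so, since the collapsing corollaries rest on the universal-bundle and rigidification results rather than on the toric full-boundary statement. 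So your route is a valid and slightly more general-purpose alternative; the paper's is shorter because it outsources the base case.
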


\begin{proof}
Let $(Y|\partial Y)$ be as given. We induct on the number of torus invariant divisors that are {\it not} contained in $\partial Y$ and on the dimension. The base case is~\cite[Theorem~A]{RUK22}. In order to perform the induction, choose a boundary divisor $E\subset Y$ that is contained in the toric boundary but not in $\partial Y$. Take deformation to the normal cone and apply the logarithmic degeneration formula~\cite{MR23}. This reduces the problem inductively	to a broken one-sided $\mathbb P^1$-bundle over the toric divisor $E$ with logarithmic structure and the inductive hypothesis.  Since $E$ has smaller dimension, we conclude by induction on the dimension. 
\end{proof}


\section{The logarithmic/absolute correspondence}\label{sec: log-absolute}

\subsection{Goal of the section}

Given a degeneration
\[
\cX\to B
\]
our previous results express any Gromov--Witten (GW) cycle on the general fiber in terms of the logarithmic GW cycles of strata of the special fiber. A key consequence of Sections~\ref{sec: uni-P1-bundle} and~\ref{sec: uni-log-bundle} is that we may work directly with these strata and their natural snc divisors, without introducing additional bundles.

The goal of this section is to remove the logarithmic structure, and study non-exotic GW cycles for an snc pair $(Y|\partial Y)$. Our main result is:

\begin{theorem}\label{thm: invertibility}
The non-exotic Gromov--Witten cycles of $(Y|\partial Y)$ in the cohomology ring of $\Mbar_{g,n}$ lie in the span of the ordinary Gromov--Witten cycles of $Y$ and all strata of $\partial Y$.
\end{theorem}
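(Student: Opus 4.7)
The plan is to prove the theorem by induction on the pair $(Y|\partial Y)$, with primary parameter the number of irreducible components of $\partial Y$ and secondary parameter $\dim Y$ (to handle lower-dimensional strata of $\partial Y$). When $\partial Y = \emptyset$ the statement is vacuous, since logarithmic GW cycles coincide with absolute ones. For the inductive step, fix a component $D \subset \partial Y$ and set $\partial Y_0 := \partial Y \smallsetminus D$; the pair $(Y|\partial Y_0)$ has one fewer boundary component, and each stratum of $D$ lies in strictly smaller dimension, so both are accessible by the induction hypothesis. The goal is to express the non-exotic logarithmic GW cycles of $(Y|\partial Y)$ as a combination of non-exotic logarithmic GW cycles of $(Y|\partial Y_0)$ together with absolute GW cycles of strata of $D$.

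\textbf{Deformation to the normal cone.} The mechanism for producing such an expression is the deformation to the normal cone of $D \subset Y$, viewed as an snc degeneration $\cX \to \A^1$ with generic fiber $(Y|\partial Y_0)$ and special fiber $Y \cup_D \PP_D$, where $\PP_D = \PP(N_{D/Y} \oplus \cO_D)$. The $Y$-component acquires the log structure $\partial Y_0 + D = \partial Y$, and $\PP_D$ is a one-sided $\PP^1$-bundle over $D$ whose log structure is pulled back from $\partial Y_0 \cap D$ together with the $\infty$-section. Applying the logarithmic degeneration formula of Theorem~\ref{thm: degeneration-formula}, followed by the rigidification and splitting package of Theorem~\ref{thm: rigid-split} and the exotic-to-non-exotic conversion of Theorem~\ref{thm: exotic-non-exotic}, writes each non-exotic log GW cycle of $(Y|\partial Y_0)$ as a sum over rigid tropical curves $\gamma$ of non-exotic contributions assembled from factors on $(Y|\partial Y)$ and factors on $(\PP_D|\partial \PP_D)$. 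By Corollary~\ref{cor: collapsing-one-sided-bundles}, every $(\PP_D|\partial \PP_D)$ factor collapses onto $D$ and, by the induction hypothesis applied to the strata of $D$, lies ultimately in the span of absolute GW cycles of strata of $\partial Y$.

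\textbf{Inversion.} Collecting terms yields, for each choice of discrete data $\Lambda$ on $(Y|\partial Y_0)$, a relation of the shape
\[
[\Mbar_\Lambda(Y|\partial Y_0)]^{\sf vir} \;=\; \sum_{\Lambda'} m_{\Lambda,\Lambda'}\,[\Mbar_{\Lambda'}(Y|\partial Y)]^{\sf vir} \;+\; \mathcal R_\Lambda,
\]
where $\Lambda'$ runs over the finitely many lifts of $\Lambda$ that record a nonzero tangency with $D$, the $m_{\Lambda,\Lambda'}$ are positive rational multiplicities produced by the degeneration formula, and $\mathcal R_\Lambda$ is a sum of terms already known to lie in the span of absolute GW cycles of strata of $\partial Y$ by the previous step. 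The inductive step is completed by inverting this linear system to solve for the individual $[\Mbar_{\Lambda'}(Y|\partial Y)]^{\sf vir}$ in terms of the $[\Mbar_\Lambda(Y|\partial Y_0)]^{\sf vir}$ (tautological by the outer induction on boundary components) and the remainders $\mathcal R_\Lambda$.

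\textbf{Main obstacle.} The hard part is this final inversion. The precise claim is that the matrix $(m_{\Lambda,\Lambda'})$ is triangular with nonzero diagonal entries with respect to the partial ordering on stars developed in Section~\ref{sec: stars-partial-ordering}: any trope $\gamma$ contributing off-diagonally contains a vertex whose associated star is strictly smaller than that of $\Lambda'$, so the finiteness-below property of Proposition~\ref{prop: ordering} permits a finite downward recursion. Verifying this triangularity through a careful inspection of the rigid tropical curves that arise in the deformation to the normal cone of $D$, and confirming that the leading multiplicity is nonvanishing, is the combinatorial heart of the argument; the rest is an orchestration of the package assembled in Sections~\ref{sec: exotic-evaluations}--\ref{sec: uni-log-bundle}.
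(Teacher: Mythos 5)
Your overall framing -- induct on the number of boundary components, remove one divisor at a time via deformation to its normal cone, and use the bundle-collapsing results to dispose of the $\PP^1$-bundle factors -- matches the paper's setup. But the inversion step, which you yourself identify as the heart of the matter, has a genuine gap as stated. Applying the degeneration formula to the plain cycle $[\Mbar_\Lambda(Y|\partial Y_0)]^{\sf vir}$ produces exactly \emph{one} equation per choice of discrete data $\Lambda$ on the smaller pair, whereas the unknowns $[\Mbar_{\Lambda'}(Y|\partial Y)]^{\sf vir}$ are indexed by all lifts $\Lambda'$ of $\Lambda$: for fixed genus and class the total tangency $\beta\cdot D$ can be distributed among the markings in many ways, and all of these partitions occur in the expansion of the single left-hand side. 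The system is therefore badly underdetermined -- there is no square matrix $(m_{\Lambda,\Lambda'})$ to be ``triangular with nonzero diagonal,'' and the star ordering of Section~\ref{sec: stars-partial-ordering} does not separate the competing terms, since the secondary contributions carry the \emph{same} genus and curve class at the $Y$-vertex and differ only in the tangency partition along the removed divisor. Your proposed downward recursion has nothing to recurse on among these terms.

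The missing idea is the descendant approximation, which is how the paper (following Maulik--Pandharipande) manufactures one equation per \emph{target} datum $\Lambda'$: given the cycle of $(Y|\partial Y)$ with contact orders $(c_{E,i})$ along the divisor $E$ being removed, one forms the invariant of $(Y|D)$ with the pushed-forward insertion $\iota_\star(\delta)\otimes\eta$ and the extra factor $\prod_i\psi_i^{c_{E,i}}$, and then expands \emph{that} via the deformation to the normal cone of $E$. The $\psi$-powers pin down a principal rigid tropical term $\gamma_\circ$ whose $Y$-vertex carries exactly the star $\Lambda'$ and whose $\PP$-face vertices are genus-$0$ fiber classes; an explicit fiber integral (as in the smooth-pair case) shows its coefficient is nonzero. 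The error terms are then of two kinds: higher-order terms with strictly smaller genus, class, or dimension (handled by your outer induction together with the exotic/non-exotic and bundle results, much as you describe), and secondary terms with the same genus and class but a strictly smaller tangency partition along $E$ in the lexicographic order -- and it is this auxiliary induction on partitions, not the star ordering, that closes the loop. Without the descendant insertions there is no mechanism to isolate a single tangency profile, so your inversion cannot be carried out in the form proposed.
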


By the exotic/non-exotic correspondence from Theorems~\ref{thm: exotic-to-strata} and~\ref{thm: rigid-split}, this implies Theorem~\ref{thm: exotic-non-exotic}.

This statement holds in cohomology in full generality; the Chow-theoretic version holds provided the strata of $(Y|\partial Y)$ admit Chow--K\"nneth decompositions. In our applications, this follows from the linearity of the locally closed strata in the sense of Totaro~\cite{Tot14}.

The main work is to prove an invertibility result for the degeneration formula in the setting of deformation to the normal cone of a divisor. Without logarithmic structure, this invertibility was shown in~\cite{MP06}. The strategy here is similar -- approximate tangency by descendents and control the error terms —- but the logarithmic setting introduces significant additional complexity. We tame these using Corollaries~\ref{cor: collapsing-log-toric-bundles} and~\ref{cor: collapsing-one-sided-bundles}. 

\subsection{The degeneration and its tropicalization} Let $E$ be a component of $\partial Y$ and write 
\[
\partial Y = D+E.
\]
Let
\[
\mathcal Y\to \mathbb A^1
\]
be the degeneration to the normal cone of $E$. We equip the degeneration with the divisorial logarithmic structure given by the closure of 
\[
D\times\mathbb G_m\subset\mathcal Y
\]
together with the special fiber. A general fiber is isomorphic to the pair $(Y,D)$. The special fiber consists of two components
\[
(Y|\partial Y)\cup (\mathbb P_E|\mathbb P_{D\cap E}),
\]
where $\mathbb P_E$ is the projective completion of the normal bundle of $E$ in $Y$, and $\mathbb P_{D\cap E}$ is defined analogously. See Figure~\ref{fig: expansion}.

\begin{figure}[h!]
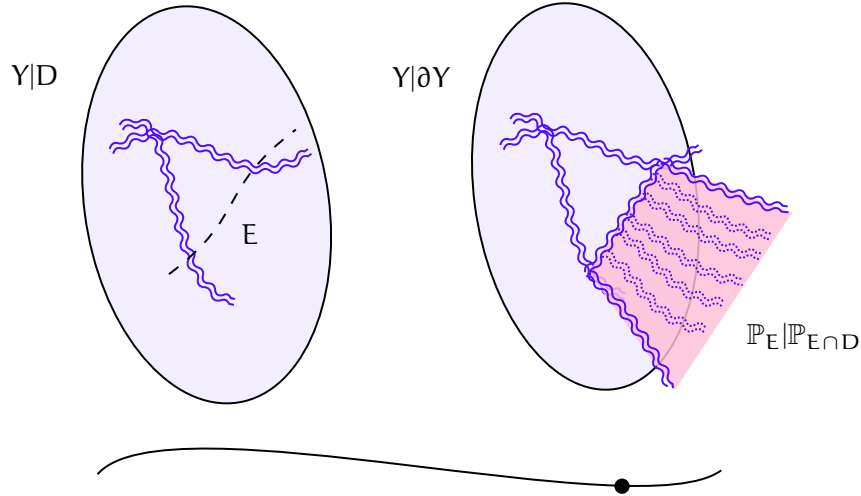


\tikzset{every picture/.style={line width=0.75pt}} 


\caption{The degeneration to the normal cone that adds logarithmic structure along $E$ to the pair $(Y|D)$ by degenerating to the normal cone. The wavy lines depict divisors in the logarithmic structure, while the dashed line $E$ is not part of the logarithmic structure.}\label{fig: expansion}
\end{figure}

The tropicalization of $\mathcal Y\to \mathbb A^1$ induces a map
\[
\Sigma_{\mathcal Y}\to \mathbb R_{\geq 0}.
\]
The fiber over the point $1$ is the dual complex of the special fiber $\mathcal Y_0$ and is denoted $\Delta(\mathcal Y_0)$. Recall that we assume that the components of $\partial Y$, and therefore of $D$, have connected intsersections. As a result, labeling the components of $D$ from $1$ to $k$, we obtain a natural embedding
\[
\Sigma_{Y|D}\hookrightarrow \mathbb R_{\geq 0}^{k}.
\]
After fixing this embedding, we obtain induced embeddings
\[
\Delta(\mathcal Y_0)\hookrightarrow \mathbb R_{\geq 0}^k\times [0,1].
\]
See Figure~\ref{fig: various-tropical-curves} below in case $k = 1$. It will be harmless to assume that this embedding is actually an isomorphism, essentially pretending that all strata intersections are nonempty. 

There is the projection
\[
\Delta(\mathcal Y_0)\to [0,1].
\]
We refer to the preimage of $0$ as the {\it $Y$-face} and the preimage of $0$ as the {\it $\PP$-face}. The complement of the $\mathbb P$-face is a cell complex. Extending the natural face $[0,1)$ to $[0,\infty)$ in the obvious way, we obtain the fan of $(Y|\partial Y)$. Similarly, the complement of the $Y$-face gives the fan of $(\mathbb P_E|\PP_{D\cap E})$. The fiber over any point in the interior of $[0,1]$ is naturally identified with the fan of $E$ with the logarithmic structure given by the divisorial components of the intersection of $E$ with the components of $D$.  

\subsection{Approximation by descendants} Our goal is to calculate a non-exotic GW cycle obtained from
\[
\mathsf{ev}\colon \Mbar_\Lambda(Y|\partial Y)\to \mathsf{Ev}_\Lambda(Y|\partial Y).
\]
This will be done by finding a descendant invariant on the pair $(Y|D)$ that, when calculated via the degeneration formula above, has this GW cycle as a leading term. 

We reminder the reader that the discrete data $\Lambda$ includes the data of the genus $g$, number of markings $n$, the degree $\beta$, and the tangency matrix $[c_{ij}]$. The tangency matrix records the tangency of maps to $D_i$ along a point $p_j$. In particular, if $E$ is a component of $\partial Y$, we have a vector of tangency orders $(c_{E,1},\ldots,c_{E,n})$. 

The discrete data $\Lambda$ naturally induces discrete data for a map to the pair $(Y|D)$ obtained by dropping $E$ from $(Y|\partial Y)$ -- the genus, number of markings, and curve class are unchanged. We delete the appropriate column of $[c_{ij}]$.  We use the symbol $\overline\Lambda$ for this new discrete data. 

Let $S\subset [k]$ be the subset of markings that have positive tangency order with $E$. If we choose $i$ in $S$, the marked point $p_i$ has two evaluation spaces -- one for $(Y|\partial Y)$ and one for $(Y|D)$, respectively denote $W_i$ and $\overline W_i$. These strata are the intersection, in the appropriate pair, of all divisors with which $p_i$ has positive tangency order. There is a natural inclusion
\[
W_i \hookrightarrow \overline W_i.
\]
Consolidating by taking products over all $i$ in $S$ we have an inclusion
\[
\iota\colon \mathsf{Ev}_S\hookrightarrow \overline{\mathsf{Ev}}_S.
\]
We can reorder the markings to write
\[
\mathsf{Ev}_\Lambda(Y|\partial Y) = \mathsf{Ev}_S\times\mathsf{Ev}_S' \ \ \textsf{and} \ \ \ \mathsf{Ev}_{\overline\Lambda}(Y|D) = \overline{\mathsf{Ev}}_S\times\mathsf{Ev}_S'.
\]

Consider an evaluation class
\[
\delta \otimes \eta\in H^\star(\mathsf{Ev}_S)\otimes H^\star(\mathsf{Ev}_S'),
\]
and fix the GW class obtained by pushing forward
\[
\mathsf{ev}^\star(\delta\otimes\eta)\cap [\Mbar_\Lambda(Y|\partial Y)]^{\sf vir}.
\]
to the moduli space of curves. 

\begin{definition}
	The {\it descendant approximation along $E$} of the invariant 
	\[
	\mathsf{ev}^\star(\delta\otimes\eta)\cap [\Mbar_\Lambda(Y|\partial Y)]^{\sf vir}.
	\]
	is
	\[
	\left(\mathsf{ev}^\star(\iota_\star(\delta)\otimes\eta)\cup \prod_{i=1}^n \psi_i^{c_{E,i}}\right)\cap [\Mbar_{\overline \Lambda}(Y|D)]^{\sf vir}
	\]
\end{definition}

Let us emphasize that this approximation makes sense, and has good properties, even if we {\it do not} need to assume disjointness of the contact order. However, since we can always blowup strata and then collapse the bundle by~\cite{Fan21,HLR08}, it is harmless to do so.

\subsection{Setup for controlling the approxmation} Fix the GW cycle of $(Y|\partial Y)$ that is to be computed. We now calculate the descendant approximation along $E$ by applying the degeneration formula to 
\[
\mathcal Y\to \mathbb A^1
\]
constructed above. 

The insertions $\iota_\star(\delta)$ and $\eta$ must be lifted to the degeneration. We do this as follows. Each stratum $W$ in $(Y|D)$ produces a stratum degeneration by taking the closure of $W\times\mathbb G_m$. The strata therefore have maps
\[
{\mathcal W}\to {W}.
\]
We can take a fiber product of this strata degeneration over all points in $[k]\setminus S$ to produce a degeneration of $\mathsf{Ev}_S'$ with a projection
\[
\mathcal{E}v_S'\to \mathsf{Ev}_S'.
\]
\begin{itemize}
	\item Lift $\eta$ to $\mathcal{E}v_S'$ by pulling back along the morphism $\mathcal{E}v_S'\to \mathsf{Ev}_S'$.
\end{itemize}

Given a stratum $\overline W$ of $(Y|D)$ we can consider the intersection $\overline W\cap E$, and taking a product over the points we have
\[
\mathsf{Ev}_S \hookrightarrow\overline{\mathsf{Ev}}_S,
\]
which on each factor is the section coming from intersection with $S$. There is a natural degeneration
\[
\overline{\mathcal{E}v}_S\to \overline{\mathsf{Ev}}_S\times\mathbb A^1.
\]
We can take the strict transform of $\mathsf{Ev}_S\times\mathbb A^1$, which maps to
\[
\mathcal{E}v^\sim_S\to \mathsf{Ev}_S.
\]
\begin{itemize}
	\item Lift $\iota_\star(\delta)$ by pulling back $\delta$ from $\mathsf{Ev}_S$ to $\mathcal{E}v_S^\sim$ and pushing forward to the total space $\overline{\mathcal{E}v}_S$ of the degeneration of the evaluation space. 
\end{itemize}

\subsection{Computation of the approximation} We now apply the degeneration package~\cite[Section~8.3]{MR23} to calculate the descendant approximation. The contributions to the degeneration formula are indexed as:
\[
\left(\mathsf{ev}^\star(\iota_\star(\delta)\otimes\eta)\cup \prod_{i=1}^n \psi_i^{c_{E,i}}\right)\cap [\Mbar_{\overline \Lambda}(Y|D)]^{\sf vir} = \sum_\gamma \left(\prod_{i=1}^n \psi_i^{c_{E,i}}\cup \xi \right)\cap  [\Mbar_\gamma]^{\sf vir}. 
\]
where $\xi$ is notation for the consolidation of all the evaluation classes noted above, where $\gamma$ is a rigid tropical stable map:
\[
\gamma\colon \Gamma\to \Delta(\mathcal Y_0). 
\]
The tropical curve $\Gamma$ has vertex decorations by genus and curve class, and the map must satisfy the modified balancing conditions, as explained in Section~\ref{sec: stars-partial-ordering}. 

\subsubsection{The principal term in the formula} There is a {\it principal term} in the degeneration formula is given by a tropical map
\[
\gamma_\circ\colon \Gamma_\circ\to\Delta(\mathcal Y_0)
\]
with the following features:
\begin{enumerate}[(i)]
	\item The graph $\Gamma_\circ$ is a star graph, consisting of a central vertex $V_Y$ and vertices $V_1,\ldots, V_m$ attached to $V_Y$ by a single edge but not to each other. 
	\item The vertex $V_Y$ maps to the $Y$-vertex of $\Delta(\mathcal Y_0)$, while all other $V_i$'s map to the $\mathbb P$-face of $\Delta(\mathcal Y_0)$. 
	\item The genus label at $V_Y$ is $g$ and the genus label of all other $V_i$'s is $0$. 
	\item The curve class label at $V_Y$ is $\beta$. The curve class labels at each $V_i$ is a class $\beta_i$ that pushes forward to $0$ under the map to $Y$. 
	\item After identifying a neighborhood of the $Y$-vertex of $\Delta(\mathcal Y)$ with a neighborhood of $\Sigma_{Y|\partial Y}$, the edge direction at the vertex $E_i$ joining $V_Y$ and $V_i$, and its slope, are as given by the starting discrete data $\Lambda$ for the corresponding marked point. 
\end{enumerate}

In other words, the star of the vertex $V_Y$ in $\gamma_\circ$ is exactly $
\Lambda$. See the leftmost part of Figure~\ref{fig: various-tropical-curves} and notice that the leftmost vertex locally looks like the star of a generic curve in $(Y|\partial Y)$. 

\subsubsection{Higher order terms in the formula} Now consider the other rigid tropical curves 
\[
\gamma\colon \Gamma\to\Delta(\mathcal Y_0). 
\]
We make some observations about such terms that are different from $\gamma_\circ$. 

\begin{lemma}\label{prop: rigid-tropical-curves}
	Any tropical curve $\gamma$ as above is either $\gamma_\circ$ or satisfies one of the following properties
\begin{enumerate}[(i)]
	\item The curve class decorations of any vertices of $\gamma$ that map to the $Y$-vertex of $\Delta(\mathcal Y_0)$ are smaller than $\beta$.
	\item The genus decoration of any vertices of $\gamma$ that map to the $Y$-vertex of $\Delta(\mathcal Y_0)$ are smaller than $g$.
	\item After identifying the complement of the $\mathbb P$-face with $\Sigma_{Y|\partial Y}$, the restriction of $\gamma$ to this complement is a map from a tree that satisfies the  balancing condition away from the $Y$-vertex of $\Delta(\mathcal Y_0)$. 
\end{enumerate}
\end{lemma}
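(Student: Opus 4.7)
The plan is to establish this dichotomy via a direct combinatorial analysis driven by conservation of curve class and arithmetic genus. For any rigid tropical stable map $\gamma\colon\Gamma\to\Delta(\mathcal Y_0)$ with total discrete data $(g,\beta)$, the vertex decorations satisfy $\sum_V \beta_V = \beta$ after pushforward to $Y$, and $\sum_V g_V + h^1(\Gamma) = g$; both identities are immediate from the decomposition theorem recalled in Section~\ref{sec: decomposition}. I will assume $\gamma\ne\gamma_\circ$ and show that the failure of both (i) and (ii) forces (iii), which establishes the claim by contrapositive.

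If (i) fails, effectivity of the $\beta_V$ together with additivity forces some vertex $V^*$ mapping to the $Y$-vertex to carry the full class $\beta$, with every other $\beta_V$ trivial. Symmetrically, if (ii) fails, some vertex $V^{**}$ at the $Y$-vertex carries the full genus $g$, and hence every other $g_V$ vanishes and $h^1(\Gamma)=0$; that is, $\Gamma$ is a tree. Under these hypotheses, at any vertex $V$ of $\Gamma$ not mapping to the $Y$-vertex one has $\beta_V = 0$, so the generalized balancing condition from Section~\ref{sec: stars-partial-ordering} at $V$ collapses to traditional tropical balancing, which holds by hypothesis on $\gamma$.

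To finish (iii), I would identify the restriction of $\gamma$ to the preimage of the complement of the $\mathbb P$-face as a tree. Since $\Gamma$ is a tree, any subgraph is automatically a forest, and connectivity of the $Y$-side restriction follows because the edges feeding into the $\mathbb P$-face attach to the $Y$-side through single cut points in a tree and so cannot disconnect the $Y$-side component. Together with the balancing argument above this yields (iii). The main obstacle I anticipate is the bookkeeping when $V^*\ne V^{**}$, where curve class and genus are carried at distinct vertices of $\Gamma$; here I expect to invoke rigidity of $\gamma$ and stability constraints on zero-class, zero-genus vertices (in the spirit of Proposition~\ref{prop: atleast-trivalent}) to rule out configurations that could obstruct the tree property or interior balancing.
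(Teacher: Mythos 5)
Your argument is essentially the paper's: assume $\gamma\neq\gamma_\circ$ with (i) and (ii) both failing, use effectivity and additivity of the decorations to concentrate the full class and genus at the $Y$-vertex (so $h^1(\Gamma)=0$, hence $\Gamma$ is a tree, and every other vertex has genus zero and pushforward-trivial class), and observe that the vanishing class correction reduces the generalized balancing to traditional balancing on the complement of the $\mathbb P$-face; the worry about $V^*\neq V^{**}$ is a non-issue, since the class concentration and the genus concentration are used independently (one gives triviality of the corrections, the other gives the tree property). Two small cautions: the assertion that $\beta_V=0$ at \emph{every} vertex not mapping to the $Y$-vertex is too strong — vertices in the $\mathbb P$-face can and do carry nonzero fiber classes (as in $\gamma_\circ$ itself), and it is only on the complement of the $\mathbb P$-face, where the relevant strata embed into $Y$, that pushforward-triviality forces the class to vanish — and your cut-point argument for connectivity of the $Y$-side restriction does not work as stated (deleting the $\mathbb P$-face portion of a tree can leave several components), though the paper's own proof is no more detailed on this point and the tree-versus-forest distinction plays no role in how the lemma is used.
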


\begin{proof}
	If we assume that $\gamma$ is not $\gamma_\circ$, and all the genus and curve class is at the $Y$ vertex, that implies that all other vertices are genus $0$ with trivial curve class decoration. In particular, because the curve class decoration is $0$, all vertices in $\Delta(\mathcal Y_0)$ in the complement of the $\mathbb P$-face are balanced in the traditional sense, i.e. without the curve class correction described earlier. As a result, the complement of the $\mathbb P$-face produces a balanced tropical curve in $\Sigma_{Y|\partial Y}$ as claimed. 

    Examples showing different types of tropical curves appear in Figure~\ref{fig: various-tropical-curves}.

    \begin{figure}
        \begin{tikzpicture}[scale=1.5]

  \definecolor{softlavender}{RGB}{223,210,255}


  \coordinate (A) at (0,0);
  \coordinate (B) at (2,0);
  \coordinate (C) at (0,4);
  \coordinate (D) at (2,4);
  
  \filldraw[fill=softlavender, draw=none,opacity=0.4] (D) -- (B) -- (A) -- (C)-- cycle;

\draw[->,violet!70, line width=0.8pt] (A)--(C);
\draw[->,violet!70, line width=0.8pt] (B)--(D);
\draw[violet!70, line width=0.8pt] (B)--(A);

\draw[-,violet!70, line width=0.8pt] (A)--(2,1);
\draw[-,violet!70, line width=0.8pt] (A)--(2,2);
\draw[-,violet!70, line width=0.8pt] (A)--(2,1.5);

\node at (0,-0.25) {$Y$};
\node at (2,-0.25) {$\mathbb P$};

\node at (1,-0.75) {Principal rigid term};

  \foreach \pt in {A,B} {
\fill[violet!90!black] (\pt) circle (1.1pt);
}
\fill[violet!90!black] (2,1) circle (1.1pt);
\fill[violet!90!black] (2,2) circle (1.1pt);
\fill[violet!90!black] (2,1.5) circle (1.1pt);

\begin{scope}[shift={(3,0)}]
  \coordinate (A) at (0,0);
  \coordinate (B) at (2,0);
  \coordinate (C) at (0,4);
  \coordinate (D) at (2,4);
  
  \node at (0,-0.25) {$Y$};
\node at (2,-0.25) {$\mathbb P$};

\node at (2.5,-0.75) {Higher order rigid terms};

  \filldraw[fill=softlavender, draw=none,opacity=0.4] (D) -- (B) -- (A) -- (C)-- cycle;

\draw[->,violet!70, line width=0.8pt] (A)--(C);
\draw[->,violet!70, line width=0.8pt] (B)--(D);
\draw[->,violet!70, line width=0.8pt] (1,1)--(1,4);
\draw[violet!70, line width=0.8pt] (B)--(A);

\draw[-,violet!70, line width=0.8pt] (A)--(2,1);
\draw[-,violet!70, line width=0.8pt] (A)--(1,1);
\draw[-,violet!70, line width=0.8pt] (2,1)--(1,1);

  \foreach \pt in {A,B} {
\fill[violet!90!black] (\pt) circle (1.1pt);
}
\fill[violet!90!black] (2,1) circle (1.1pt);
\fill[violet!90!black] (1,1) circle (1.1pt);

\end{scope}

\begin{scope}[shift={(6,0)}]
  \coordinate (A) at (0,0);
  \coordinate (B) at (2,0);
  \coordinate (C) at (0,4);
  \coordinate (D) at (2,4);
  
  \node at (0,-0.25) {$Y$};
\node at (2,-0.25) {$\mathbb P$};

  \filldraw[fill=softlavender, draw=none,opacity=0.4] (D) -- (B) -- (A) -- (C)-- cycle;

\draw[->,violet!70, line width=0.8pt] (A)--(C);
\draw[->,violet!70, line width=0.8pt] (B)--(D);
\draw[->,violet!70, line width=0.8pt] (1,1)--(1,4);
\draw[violet!70, line width=0.8pt] (B)--(A);

\draw[-,violet!70, line width=0.8pt] (B)--(1,1);
\draw[-,violet!70, line width=0.8pt] (A)--(1,1);

  \foreach \pt in {A,B} {
\fill[violet!90!black] (\pt) circle (1.1pt);
}
\fill[violet!90!black] (1,1) circle (1.1pt);

\end{scope}

\begin{scope}[shift={(9,0)}]
  \coordinate (A) at (0,0);
  \coordinate (B) at (2,0);
  \coordinate (C) at (0,4);
  \coordinate (D) at (2,4);
  
  \node at (0,-0.25) {$Y$};
\node at (2,-0.25) {$\mathbb P$};

  \filldraw[fill=softlavender, draw=none,opacity=0.4] (D) -- (B) -- (A) -- (C)-- cycle;

\draw[->,violet!70, line width=0.8pt] (A)--(C);
\draw[->,violet!70, line width=0.8pt] (B)--(D);
\draw[->,violet!70, line width=0.8pt] (1,1)--(1,4);
\draw[violet!70, line width=0.8pt] (B)--(A);

\draw[-,violet!70, line width=0.8pt] (2,0.5)--(1,1);
\draw[-,violet!70, line width=0.8pt] (A)--(1,1);

\node at (1,-0.75) {Non-rigid term};
  
  \foreach \pt in {A,B} {
\fill[violet!90!black] (\pt) circle (1.1pt);
}
\fill[violet!90!black] (1,1) circle (1.1pt);
\fill[violet!90!black] (2,0.5) circle (1.1pt);

\end{scope}

\end{tikzpicture}
\caption{The first three figures depict the combinatorics of different possible rigid tropical stable maps that contribute to the degeneration formula. The last one is non-rigid.}\label{fig: various-tropical-curves}
    \end{figure}
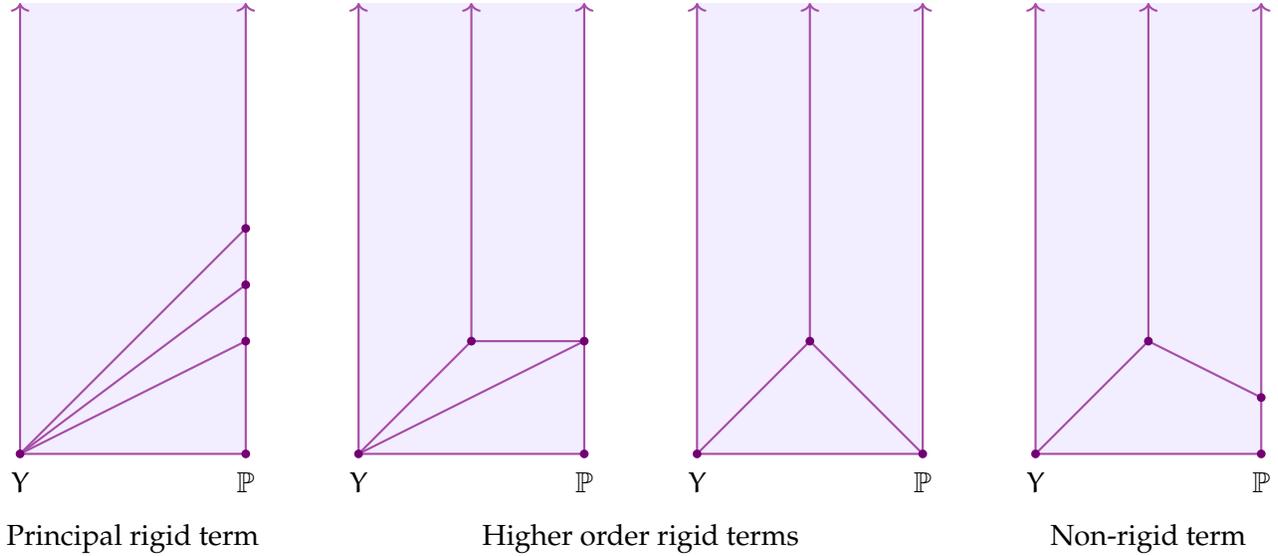

\end{proof}

We refer to rigid tropical maps $\gamma$, different from $\gamma_\circ$, that satisfy only condition (iii) above, but not (i) or (ii) as {\it secondary terms}. All other terms will be called {\it higher order terms}. We now make some observations about the secondary terms. 

\begin{lemma}
	Let $\gamma\colon \Gamma\to \Delta(\mathcal Y_0)$ be a secondary term. Then $\Gamma$ is a star graph with a central vertex $V_Y$ carrying all the genus and curve class. Furthermore, the remaining vertices $V_1,\ldots, V_\ell$ are attached only to this vertex and they map to the $\mathbb P$-face. 
\end{lemma}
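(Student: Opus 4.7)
The plan is to combine the combinatorial implications of conditions (not-(i)) and (not-(ii)) from Lemma~\ref{prop: rigid-tropical-curves}, condition (iii) of the same lemma, the rigidity of the tropical map $\gamma$, and Proposition~\ref{prop: atleast-trivalent}, in order to force the claimed star structure. First I would extract from the failure of (i) and (ii) that there is a single vertex $V_Y$ of $\Gamma$ mapping to the $Y$-vertex with $g_{V_Y} = g$ and $\beta_{V_Y} = \beta$, that every other vertex $V'$ satisfies $g_{V'} = 0$ and has curve class pushing to zero in $Y$, and that $\Gamma$ is a tree, since the genus equation $g = \sum_V g_V + h^1(\Gamma)$ forces $h^1(\Gamma) = 0$. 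In addition, the vertex $V_Y$ is pinned at the origin of the $Y$-face: the support of $\beta$ is all of $Y$, so the stratum corresponding to $V_Y$ has to be $Y$ itself.

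Next I would rule out, using rigidity, any vertex $V \neq V_Y$ mapping outside the $\mathbb{P}$-face. Suppose $V$ is such a vertex. Since $V$ does not lie on the $\mathbb{P}$-face, $V$ carries no fiber class of $\mathbb P_E \to E$, so $\beta_V = 0$, and Proposition~\ref{prop: atleast-trivalent} shows $V$ is at least trivalent and traditionally balanced. By condition (iii), the restriction of $\gamma$ to the complement of the $\mathbb{P}$-face is a balanced tropical tree in $\Sigma_{Y|\partial Y}$ (after appending the $v_\beta$-ray at $V_Y$ to balance that vertex). Pinning $V_Y$ at the origin and pinning the ends of this tree that leave into the $\mathbb{P}$-face gives a system of linear constraints on the remaining vertex positions; one verifies that the presence of such an interior $V$ introduces an unconstrained translational degree of freedom compatible with the balancing and with the combinatorial type, contradicting rigidity of $\gamma$. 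This is essentially the two-dimensional analysis underlying Proposition~\ref{prop: ordering} applied to the restricted tropical tree, and it is the conceptual core of the argument; I expect this to be the main obstacle.

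To finish, I would argue that $\Gamma$ is a star. Since $\Gamma$ is a tree and $V_Y$ is the unique non-$\mathbb{P}$-face vertex, every other vertex is connected to $V_Y$ by a unique path in $\Gamma$. If such a path contained an edge entirely within the $\mathbb{P}$-face, one could again produce a non-trivial deformation --- slide one endpoint along a direction allowed within its cone of the $\mathbb{P}$-face while adjusting the edge length appropriately --- contradicting rigidity. Hence every vertex $V \neq V_Y$ is directly adjacent to $V_Y$ by a single edge, yielding the claimed star structure.
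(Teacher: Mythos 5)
Your skeleton (concentrate all genus and curve class at a single vertex $V_Y$ over the $Y$-vertex, deduce that $\Gamma$ is a tree, then invoke rigidity to eliminate everything else) is close to the paper's, but the paper's proof contains one more ingredient that you omit, and without it your central deformation claim is false as stated. The paper first analyzes the $\mathbb P$-face vertices: since the class at such a vertex must push forward to a fiber class of $\mathbb P_E\to E$, balancing there forbids two distinct directions leaving the $\mathbb P$-face and forbids edges joining $\mathbb P$-face vertices at different points; only after this reduction does the final rigidity argument on the complement of the $\mathbb P$-face become easy. You instead assert that any vertex $V\neq V_Y$ off the $\mathbb P$-face ``introduces an unconstrained translational degree of freedom''. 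That is not true at the level of the underlying weighted graph alone: take $k=1$, so $\Delta(\mathcal Y_0)\cong \mathbb R_{\geq 0}\times[0,1]$, and consider a trivalent vertex $V$ at $(1,1/2)$ with one edge of direction $(-2,-1)$ ending at $V_Y$ at the origin, one edge of direction $(-2,1)$ ending at a vertex placed at the pinned $0$-cell $(0,1)$ of the $\mathbb P$-face, and an unbounded leg of direction $(4,0)$. This is traditionally balanced at $V$ and completely rigid, since $V$ is the unique intersection of the two rays through the pinned $0$-cells; no translation is available. What excludes such a configuration from being a secondary term is not rigidity but the decoration forced at the $\mathbb P$-face endpoint: balancing there would make its class meet $\mathbb P_{D\cap E}$ positively, hence push forward nontrivially to $Y$, contradicting the fact that for a secondary term all of $\beta$ sits at $V_Y$. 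Your sliding argument against edges inside the $\mathbb P$-face suffers from the same issue (an endpoint at the $\mathbb P$-vertex cannot be slid), and in the paper that case too is handled by the fiber-class constraint rather than by a deformation.

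A smaller point: your inference ``$V$ does not lie on the $\mathbb P$-face, so it carries no fiber class and $\beta_V=0$'' is unjustified. A vertex mapping to the interior of $\Delta(\mathcal Y_0)$ is dual to an expansion component which is itself a $\mathbb P^1$-bundle over a stratum of $E$, and its decoration can be a nonzero fiber class pushing forward to $0$ in $Y$; so the appeal to Proposition~\ref{prop: atleast-trivalent} does not apply as written. This is minor in itself, since trivalency is not what your deformation needs, but it points to the same underlying omission: away from $V_Y$ the decorations are only constrained to be fiber-type classes, and it is precisely the balancing against these classes at the $\mathbb P$-face vertices --- not a translation of the graph --- that pins down the star shape.
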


Note that we are not claiming that the secondary term coincides with $\gamma_\circ$. Indeed, these terms are exactly further errors terms that crop up in the descendant approximation. 

\begin{proof}
	Consider a vertex $V$ of $\Gamma$ that maps to the $Y$-face of $\Delta(\mathcal Y_0)$. We claim that no edge incident to $V$ can leave the $Y$-face. Indeed, if there is such an edge, by the balancing condition, the curve class at this vertex cannot be trivial when pushed forward to $Y$, which is part of the definition of being a secondary term. Since $\gamma$ must be rigid it follows that there is a single vertex in the $Y$-face and it must map to the $Y$-vertex of $\Delta(\mathcal Y_0)$. 
	
	Next, consider vertices in the $\mathbb P$-face of $\Delta(\mathcal Y_0)$. A component dual to this vertex is naturally a broken toric bundle over a stratum in $\mathbb P_E$. The curve class at such a vertex must push forward to a fiber class in $\mathbb P_E$. Now consider a vertex in the $\mathbb P$-face. By the balancing condition, if a vertex has two distinct outgoing directions leaving the $\mathbb P$-face, the curve class cannot push forward to a fiber, again by the balancing condition. Similarly, if there is an edge connecting two $\mathbb P$-face vertices that map to different points in the face, the curve class cannot be a fiber on both. Therefore, examining the image of the map $\mathsf{im}(\gamma)$, every vertex in the $\mathbb P$-face has a single edge leaving that face. 
	
	Now the question of rigidity of this map is the same as rigidity of the map to the open cell obtained by deleting the $\mathbb P$-face and restricting the map from $\gamma$, which will now have unbounded edges. This is easily seen to be rigid only under the circumstances of the lemma, so we conclude. 
	
\end{proof}

\subsection{Conclusion via the degeneration formula} We prove the main result by induction on the discrete data, parallel to the structure of Proposition~\ref{prop: rigid-tropical-curves}. 

Consider the difference between a GW cycle of $(Y|\partial Y)$ and its descendant approximation along $E$. By the decomposition structure, this difference is given by GW cycles associated to rigid tropical curves. Consider a rigid tropical curve different from $\gamma_\circ$, which we analyze separately. The degeneration formula expresses the GW cycle associated to $\gamma$ in terms of {\it exotic} GW cycles associated to the vertices. Each vertex gives rise to a GW cycle of the following kind:

\begin{enumerate}[(i)]
	\item A GW cycle associated to a blowup of the target $(Y|\partial Y)$ with smaller genus, curve class. 
	\item A GW cycle associated to a broken toric bundle over a stratum of $(Y|\partial Y)$. 
	\item A GW cycle of a blowup of a $\mathbb P^1$-bundle over a stratum of $E$, relative to one or both of the sections, and pullbacks of divisors on $E$. 
	\item A vertex in a principal or secondary term. 
\end{enumerate}

For higher-order terms, first note that by the degeneration formula, the GW cycle is determined by exotic GW classes at the vertices. By applying the exotic/non-exotic move of Section~\ref{sec: exotic-nonexotic-move}, these are controlled by the induction. Specifically, induct on the dimension of the target, the curve class, and then the genus. By the $\mathbb{P}^1$-bundle and broken toric bundle results of Section~\ref{sec: broken-bundles}, all terms away from the $Y$-vertex lie in the span of GW cycles of corresponding strata. By induction on the curve class and genus, the $Y$-vertex term is also known to be tautological.

It remains to control the secondary terms in the formula. For this, we use the same geometric argument as is used in the smooth pair case~\cite{MP06}. We briefly describe the argument. 

A secondary term has a well-defined tangency order {\it along $E$}. Given $\gamma$, this is determined by the slopes of the rays in the $E$-direction (i.e., the $[0,1]$-direction) in $\Delta(\mathcal Y_0)$. For each $\mathbb{P}$-face vertex of $\gamma$, a subset of the descendant conditions associated to the points in the set $S$ is assigned to that vertex. One particular assignment corresponds to the term $\gamma_\circ$. The secondary terms arise from varying the distribution of descendants. The geometric possibilities are described in detail in~\cite[Section~1.4, Case I]{MP06}.

For partitions $\nu$ and $\nu'$, we say that
\[
\nu > \nu'
\]
if, after arranging the parts in decreasing order, the first entry at which they differ is larger for $\nu$ than for $\nu'$. In particular, the case of tangency $1$ at all points corresponds to the smallest term.

Now proceed by induction, also with respect to the tangency along $E$. To conclude, the principal term in the descendant approximation in this calculation is a nonzero multiple of the cycle being approximated. To see this, we apply the logarithmic degeneration formula to the principal term $\gamma_\circ$. The splitting formula of~\cite[Section~8]{MR23} applies directly. Note that because the $\mathbb{P}$-face vertices are fiber classes that meet the double locus in a single, maximally tangent point, no blowups are required in the degeneration formula. Indeed, the tropical evaluation maps from these vertices are easily seen to be flat; see also Section~\ref{sec: translated-stars}. The result now follows from an explicit evaluation of the genus $0$ fiber integrals. The calculation is identical to~\cite[Section~1.4, Case I]{MP06}.\qed

\section{Putting the pieces together}\label{sec: final-proofs}

We now record how the different results are combined to prove the main theorems about GW cycles being tautological, as stated in the introduction. 

\subsection{Proof of Theorem~\ref{thm: tautological-chow}} Let $\mathbb P$ be a product of $k$ projective spaces and let
\[
Y = X_1\cap\cdots\cap X_m
\]
be a generic complete intersection, with $X_i$ having multidegree $(d_{i1},\ldots, d_{ik})$. We take the ambient insertions to be strata classes on $\mathbb P$. Our goal is to show that all GW cycles of $Y$ with ambient insertions are tautological. 

\noindent
{\sc Step I. A degeneration via Newton polygons.} We will do so by producing a degeneration of $Y$, and in turn by degenerating $\mathbb P$. This is a standard construction in toric and tropical geometry, and goes back to the early mirror symmetry literature. We refer the reader to~\cite{Gub13,NO22} for some recent expositions. 

We first focus on a single hypersurface $X_i$. To it, we associate a Newton polytope. The Newton polytope of projective space of dimension $r$ polarized by $\mathcal O(d)$ is the $d$-times dilated $r$-simplex. In this way, each tuple $(d_{i1},\ldots, d_{ik})$ determines a product of simplices, whose dimensions match the factors of $\mathbb P$ and whose dilations match the $d_{ij}'s$. Let $\Delta_1,\ldots, \Delta_m$ be the Newton polytopes associated with these hypersurfaces. 

Choose a unimodular triangulation $\widetilde \Delta_i$ of each $\Delta_i$. Such triangulations exist. For example, one can construct a unimodular triangulation of a dilated simplex by~\cite[Theorem~4.8]{HPPS21} in each factor and then take the staircase triangulation of the product~\cite[Proposition~2.11]{HPPS21}. 

For each $i$, the dual tropical hypersurface of $\widetilde\Delta_i$ is a polyhedral decomposition of the cocharacter lattice $N_\RR$ of the dense torus of $\mathbb P$. The cone over this polyhedral decomposition produces a degeneration, often called the {\it Mumford degeneration}, of $\mathbb P$ into a union of toric varieties:
\[
\mathbb P(\widetilde \Delta_i)\to\mathbb A^1.
\]
The degeneration is typically not simple normal crossings, but it is, by construction, toric. Since the triangulation $\widetilde \Delta_i$ is into unimodular simplices, each component of the special fiber is a projective space of dimension equal to $\mathbb P$. 

We can now choose a hypersurface in the degeneration
\[
\mathbb P(\widetilde \Delta_i)\to\mathbb A^1
\]
whose general fiber is hypersurface of multidegree $(d_{ij})_j$, which we may as well identify with $X_i$, and whose special fiber is simple normal crossings. See for instance~\cite[Example~3.6]{Gr11}. By standard toric geometry, the intersection of the hypersurface with each of the projective spaces in the general fiber has degree $1$, each component of the degeneration of $X_i$ is a projective space. 

Similarly, we can construct such a polyhedral decomposition for each $\Delta_i$ and take their common refinement to obtained a polyhedral decomposition of $N_\RR$. This produces a degeneration
\[
\widetilde \PP\to \A^1
\]
that dominates each $\mathbb P(\widetilde \Delta_i)$. The morphism $\mathbb P\to \mathbb P(\Delta_i)$ is a toric modification centered in the special fiber, and each hypersurface is transverse to the strata. It follows that the intersection of their pullbacks produces a degeneration $\mathcal Y\to \A^1$ of $Y$, embedded in
\[
\widetilde \PP\to \A^1.
\]

We would like to now apply the degeneration formula to $\mathcal Y$, however since the general fiber $Y$ has not been equipped with any logarithmic structure, the logarithmic structure for the degeneration $\mathcal Y\to \A^1$ must be taken to be the special fiber. 

\noindent
{\sc Step II. Resolving the degeneration.} We now perform toric semistable reduction~\cite{KKMSD} to the morphism $\widetilde \PP\to \A^1$ and so produce a simple normal crossings degeneration
\[
\PP^\dagger\to\A^1.
\]
The semistable reduction procedure involves first passing to a ramified base change and then perform a toric modification of the pullback. The base change does not change the special fiber, but is done so the blowup produces a reduced special fiber. We can arrange for the general fiber to still be $\mathbb P$. 

The new scheme theoretic closure of $\mathcal Y_\eta$ in this degeneration to obtain a new degeneration
\[
\mathcal Y^\dagger\to\A^1,
\]
We will calculate the GW cycles of $Y$ by using this degeneration. Note the embedding $\mathcal Y^\dagger \hookrightarrow \PP^\dagger$, and so the ambient insertions from $\PP$ can be specialized by taking closures of strata. 

We can describe the special fiber concretely by performing ``the same'' base change and blowups to $\mathcal Y$ as we have to $\mathbb P$ -- for each blowup of a stratum of $\mathbb P$ we blowup the intersection of the center with $\mathcal Y$.

\noindent
{\sc Step III. Strata of the degenerate fiber.} By the semistable reduction procedure, each stratum of $\mathcal Y^\dagger_0$ is obtained by taking a stratum of $\mathcal Y_0$, and iteratively passing to toric bundle and then blowups again, and so on. As noted, each component of $\widetilde \PP$ itself is a projective space. By the unimodularity of the subdivision, each stratum of $\mathcal Y_0$ is therefore a linear space, and the natural divisor is a transverse union of hyperplanes. 

From the paragraph above, we see that all strata of the special fiber of $\mathcal Y^\dagger_0$ fall into the class of {\it linear varieties} in Totaro's sense~\cite{Tot14} and in particular, satisfy the Chow--K\"unneth property. We can therefore apply the degeneration formula in Chow~\cite{MR23}. 

We are left to determine the GW cycles of the strata of the special fiber of $\mathcal Y_0^\dagger$ with insertions either coming from the decomposition of the diagonal, which exists in Chow as we have noted, or by strata in $\PP^\dagger$. The strata are obtained from projective spaces relative to a union of hyperplanes by the sequence of projective bundle and blowup moves described above. The centers of the blowups are always smaller strata of this type. We claim the result is always tautological. 

To make the conclusion, we use the logarithmic/absolute correspondence to reduce to a statement about absolute Gromov--Witten theory of the same varieties. For projective space itself, this follows from torus localization~\cite{GP99}. For the blowup and projective bundle procedure, we can use the argument in \cite{MP06}. Each center is an intersection of divisorial strata, so the blowup algorithm in that paper applies.  Since the divisors all satisfy the Chow--K\"unneth property, the argument can be run in Chow, and preserves the tautological property. \qed

The only thing we have used about the specific geometry of products of projective space is that the Newton polygons are {\it breakable} in the sense defined in the introduction. 

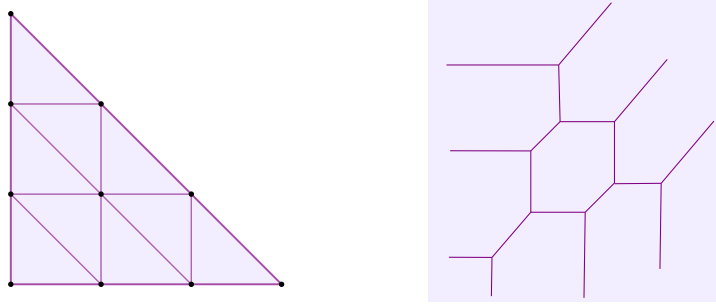
\begin{figure}
\begin{minipage}{0.45\textwidth}
\centering
\begin{tikzpicture}[scale=1.2]

  \definecolor{softlavender}{RGB}{223,210,255}

  \foreach \x in {0,1,2} {
    \foreach \y in {0,1,2} {
      \pgfmathtruncatemacro{\s}{\x+\y}
      \ifnum\s<3
        \fill[softlavender, opacity=0.4] (\x,\y) -- (\x+1,\y) -- (\x,\y+1) -- cycle;
        \draw[violet, line width=0.5pt, opacity=0.4] (\x,\y) -- (\x+1,\y) -- (\x,\y+1) -- cycle;
      \fi
      \pgfmathtruncatemacro{\sright}{\x+\y+2}
      \ifnum\sright<4
        \fill[softlavender, opacity=0.4] (\x+1,\y) -- (\x+1,\y+1) -- (\x,\y+1) -- cycle;
        \draw[violet, line width=0.5pt, opacity=0.4] (\x+1,\y) -- (\x+1,\y+1) -- (\x,\y+1) -- cycle;
      \fi
    }
  }

  \draw[violet, thick, opacity=0.5] (0,0) -- (3,0) -- (0,3) -- cycle;

  \foreach \x in {0,1,2,3} {
    \foreach \y in {0,1,2,3} {
      \pgfmathtruncatemacro{\s}{\x+\y}
      \ifnum\s<4
        \fill[black] (\x,\y) circle (0.3mm);
      \fi
    }
  }

\end{tikzpicture}
\end{minipage}%
\hspace{-2cm}%
\begin{minipage}{0.45\textwidth}
\centering
\begin{tikzpicture}[scale=2.25, rotate=90]

  \definecolor{softlavender}{RGB}{223,210,255}

  \fill[softlavender, opacity=0.4] (3.8,16.0) rectangle (5.6,17.7);

  \draw[violet]   (4.4991,16.5976) -- (4.8617,16.5976) -- (4.8617,16.9189) -- (4.6887,17.092) -- (4.3261,17.092) -- (4.3261,16.7706) -- cycle;
  \draw[violet]    (4.8617,16.5976) -- (5.2298,16.2859);
  \draw[violet]    (4.8617,16.9189) -- (5.1969,16.9272);
  \draw[violet]    (4.4971,16.3217) -- (4.4948,16.5976);
  \draw[violet]    (4.4971,16.3217) -- (4.8652,16.01);
  \draw[violet]    (4.4971,16.3217) -- (3.991,16.3289);
  \draw[violet]    (5.1969,16.9272) -- (5.1969,17.59);
  \draw[violet]    (5.1969,16.9272) -- (5.565,16.6155);
  \draw[violet]    (4.6897,17.5685) -- (4.6887,17.092);
  \draw[violet]    (4.3261,16.7706) -- (3.82,16.7778);
  \draw[violet]    (4.0589,17.3213) -- (4.3261,17.092);
  \draw[violet]    (4.0589,17.3213) -- (3.8299,17.3249);
  \draw[violet]    (4.0599,17.5757) -- (4.0589,17.3213);

\end{tikzpicture}

\end{minipage}
\caption{A unimodular subdivision giving rise to a degeneration of a smooth elliptic curve in $\mathbb P^2$. The $\mathbb P^2$ breaks into $9$ $\mathbb P^2$'s glued as the triangles are glued above. The elliptic curve breaks into $9$ copies of $\mathbb P^1$. The cone over a hexagon causes a toric threefold singularity in the total space.}
\end{figure}

\subsection{Proof of Theorem~\ref{thm: tautological-gw}} Let $\mathcal X\to B$ be a degeneration as stated in the theorem. We apply the degeneration formula of Theorem~\ref{thm: degeneration-formula} and the vanishing cohomology results of Theorem~\ref{thm: primitive-in-exotic}. This converts the GW cycles into a collection of exotic logarithmic GW cycles on broken toric bundles over the strata of a singular fiber $\cX_0$.  Now use the exotic/non-exotic move of Theorem~\ref{thm: exotic-non-exotic} to remove the exotic insertions and reduce to the ordinary insertions on broken toric bundles over strata in the special fiber $\mathcal X_0$. We can then appeal to the bundle reconstruction results of Section~\ref{sec: broken-bundles}, in particular to Corollary~\ref{cor: collapsing-log-toric-bundles}, to reduce to ordinary insertions on the spaces of stable maps to the strata. Finally, switch off the logarithmic structure by Theorem~\ref{thm: log-absolute}. The proof is complete \qed

\subsection{Proof of Corollary~\ref{cor: breakable}} The proof of Theorem~\ref{thm: tautological-chow} recorded above may be followed step-by-step to establish Corollary~\ref{cor: breakable} from Theorem~\ref{thm: tautological-gw}. \qed

\bibliographystyle{siam} 
\bibliography{TautologicalGW}

\end{document}